\newcommand{\R}{{\mathbb R}}
\newcommand{\beq}{\begin{equation}}
\newcommand{\eeq}{\end{equation}}
\newcommand{\ben}{\begin{eqnarray}}
\newcommand{\een}{\end{eqnarray}}
\newcommand{\beno}{\begin{eqnarray*}}
\newcommand{\eeno}{\end{eqnarray*}}
\newtheorem{thm}{Theorem}[section]
\newtheorem{lem}[thm]{Lemma}
\newtheorem{prop}[thm]{Proposition}
\newtheorem{coro}[thm]{Corollary}
\newtheorem{rmk}[thm]{Remark}
\begin{document}

\title[finite Morse index]{Finite Morse Index Implies Finite Ends}

\author[K. Wang]{ Kelei Wang}
 \address{\noindent K. Wang-
 School of Mathematics and Statistics, Wuhan University, Wuhan 430072, China.}
\email{wangkelei@whu.edu.cn}

\author[J. Wei]{Juncheng Wei}
\address{\noindent J. Wei -Department of Mathematics, University of British
Columbia, Vancouver, B.C., Canada, V6T 1Z2 }
\email{jcwei@math.ubc.ca}

\begin{abstract}
We prove that finite Morse index solutions to the Allen-Cahn equation in $\R^2$ have {\bf finitely many ends} and {\bf linear  energy growth}. The main tool is a {\bf curvature decay estimate} on level sets of these finite Morse index solutions, which in turn is reduced to a problem on the uniform second order regularity of clustering interfaces for the singularly perturbed Allen-Cahn equation in $\R^n$. Using an indirect blow-up technique, in the spirit of  the classical Colding-Minicozzi theory in minimal surfaces, we show    that  the {\bf  obstruction} to the uniform second order regularity  of clustering interfaces in $\R^n$ is associated to the existence of nontrivial entire solutions to a (finite or infinite)  {\bf Toda system} in $\R^{n-1}$.  For finite Morse index solutions in $\R^2$, we show that this obstruction does not exist by using information on stable solutions of the Toda system.
\end{abstract}

\keywords{Allen-Cahn equation; Toda system; Morse index; Minimal surfaces; Clustering interfaces }

\subjclass{35B08, 35J62}

\maketitle

\date{}

\tableofcontents

\section{Introduction}
The intricate  connection between the Allen-Cahn equation and minimal surfaces is best  illustrated by the following famous De Giorgi's Conjecture \cite{DG}.

\medskip

\noindent
{\bf Conjecture.} {\em Let $ u \in C^2 (\R^n)$ be a solution to the Allen-Cahn equation
\begin{equation}
-\Delta u= u-u^3 \ \ \ \mbox{in} \ \R^n
\end{equation}
satisfying $ \partial_{x_n} u>0$. If $ n\leq 8$, all level sets $ \{ u=\lambda\}$ of $u$ must be hyperplanes.}

\medskip

In the last twenty years, great advances in De Giorgi's conjecture have been achieved,  having been fully
established in dimensions $n = 2$ by Ghoussoub and Gui \cite{GG} and for $n = 3$ by Ambrosio and
Cabre \cite{A-C}. A celebrated result by Savin \cite{Savin1} established its validity for $4 \leq n \leq 8$ under the following additional assumption
\begin{equation}
\label{extra}
 \lim_{x_n \to \pm \infty} u(x^\prime, x_n)=\pm 1.
 \end{equation}
On the other hand, Del Pino, Kowalczyk and Wei \cite{DKW} constructed a counterexample in dimensions $n\geq 9$.

\medskip

After the classification of monotone solutions, it is natural to consider {\bf stable  solutions}. Unfortunately this has been less successful. The arguments in \cite{A-C, Dancer, GG} imply that all stable solutions in $\R^2$ are one-dimensional.  On the other hand, Pacard and Wei \cite{PW} found a nontrivial stable solution in $\R^8$. (This is later shown to be also global minimizer \cite{LWW}.)

\medskip

In this paper we consider a more difficult  problem of classification of {\bf finite Morse index solutions} in $\R^2$.  Finite Morse index is a spectrum condition which is hard to use to obtain energy estimate. In the literature, another condition--{\bf finite-ended solutions}--is used. Roughly speaking a solution is called finite-ended if the number of components of the nodal set $ \{ u= 0\}$ is finite outside a ball. (In fact more restrictions are needed, see Gui \cite{Gui 1}.)    Analogous to the structure of minimal surfaces with finite Morse index (\cite{Fischer, Gulliver, Gulliver 2}),  a long standing conjecture is  that finite Morse index solutions to the Allen-Cahn equation in $\R^2$ have linear energy growth and hence finitely many ends (see \cite{Gui 1, DKW 2}). In this paper we will prove this conjecture by establishing a {\bf curvature decay estimate} on level sets of these finite Morse index solutions.

\medskip

This curvature estimate is similar to the one for stable minimal surfaces established by Schoen  in \cite{Schoen}. However, the key tool used in minimal surfaces is the so-called Simons type inequality \cite{Simons} which has no analogue  for semilinear elliptic equations. (The closest one may be the so-called Sternberg-Zumbrun inequality \cite{S-Z} for stable solutions.) Here an indirect blow up method will be employed in this paper. Our blow-up procedure  is inspired by the groundbreaking work of Colding and Minicozzi on the structure of limits of
sequences of embedded minimal surfaces of fixed genus in a ball in $\R^3$ (\cite{CM1, CM2, CM3, CM4, CM5, CM6}).

\medskip

We prove the curvature estimate by studying the uniform second order regularity of clustering interfaces in the singularly perturbed Allen-Cahn equation. It turns out the uniform second order regularity does not always hold true and the obstruction is associated to the existence of nontrivial entire solutions to the {\bf Toda system}
  \begin{equation}
  -\Delta f_\alpha = e^{ -\sqrt{2} (f_{\alpha+1}-f_\alpha)} - e^{ -\sqrt{2} (f_\alpha-f_{\alpha-1})}, \ \ \ \ \ \ \ \mbox{in} \ \R^{n-1}.
  \end{equation}

\medskip

  This connection between the Allen-Cahn equation and the Toda system was previously used in \cite{DKP, DKW 2, DKWY} to construct solutions to the Allen-Cahn equation with clustering interfaces. The analysis of clustering interfaces started in Hutchinson-Tonegawa \cite{H-T}. It is shown that the energy at the clustered interfaces is quantized. In \cite{Tonegawa, Tonegawa 2}, the convergence of clustering interfaces as well as regularity of their limit varifolds were studied. However, the uniform regularity of clustering interfaces (see \cite{Tonegawa 3}) and  precise behavior of the solutions near the interfaces and the connection to Toda system (except some special cases such as two end solutions in $\R^3$ studied in \cite{Gui-Liu-Wei}) are still missing.  In this paper we give precise second order estimates and show that when clustering interfaces appear, then a suitable rescaling of these interfaces converge to the graphs of a solution to the Toda system. It is through this blow up procedure we reduce the uniform second order regularity of interfaces to the non-existence of nontrivial entire stable solutions to the Toda system.  We also show that the stability condition is preserved in this blow up procedure. Then using results on stable solutions of the Toda system, we establish the uniform second order regularity of interfaces for stable solutions of the singularly perturbed Allen-Cahn equation, and then the curvature estimate for finite Morse index solutions in $\R^2$.

\medskip

For other related results on De Giorgi  conjecture for Allen-Cahn equation, we refer to \cite{AAC, Ca, FSV, FV1, FV2, FV3, GG2, JM, SSV, Wang} and the references therein.

\section{Main results}
\setcounter{equation}{0}

We consider general  Allen-Cahn equation
\begin{equation}\label{equation}
\Delta u=W^\prime(u), \quad |u|<1, \ \ \ \ \ \ \ \ \ \ \ \ \ \ \mbox{in} \ \R^n
\end{equation}

where $W(u)$ is a double well potential, that is, $W\in C^2([-1,1])$ satisfying
\begin{itemize}
\item $W>0$ in $(-1,1)$ and $W(\pm1)=0$;
\item  $W^\prime(\pm1)=0$ and $W^{\prime\prime}(-1)=W^{\prime\prime}(1)=2$;
\item there exists only one critical point of $W$ in $(-1,1)$, which is assumed to be $0$.
\end{itemize}
A typical model is given by $W(u)=(1-u^2)^2/4$.

\medskip

Under these assumptions on $W$, it is known that there exists a unique solution (up to a translation) to the following one dimensional problem
\begin{equation}\label{1d problem}
g^{\prime\prime}(t)=W^\prime(g(t)),  \ \ g(0)=0, \ \ \quad \lim_{t\to\pm\infty}g(t)=\pm 1.
\end{equation}

After a scaling $u_\varepsilon(x):=u(\varepsilon^{-1}x)$, we obtain the singularly perturbed version of the Allen-Cahn equation:
\begin{equation}\label{equation scaled}
\varepsilon\Delta u_\varepsilon=\frac{1}{\varepsilon}W^\prime(u_\varepsilon) \ \ \ \ \ \ \ \ \ \mbox{in} \ \R^n.
\end{equation}

\subsection{Finite Morse index solutions}

We say a solution $u\in C^2(\R^n)$ has finite Morse index if  there is a finite upper bound on its Morse index in any compact set. By \cite{dev}, this is equivalent to the condition that
$u$ is stable outside a compact set, that is, there is a compact set $K\subset\R^n$ such that
\[\mathcal{Q}(\varphi):=\int_{\R^n}|\nabla\varphi|^2+W^{\prime\prime}(u)\varphi^2\geq0, \quad \forall \varphi\in C_0^\infty(\R^n\setminus K).\]

Our first main result is
\begin{thm}\label{main result 1}
Suppose $u$ is a finite Morse index solution of \eqref{equation} in $\R^2$. Then there exist finitely many distinct rays $L_i:=\{x_i+te_i, t\in[0,+\infty)\}$, where $x_i, e_i\in \R^2$, $1\leq i\leq N$ and $e_i\neq e_j$ for $i\neq j$, such that outside a compact set
$\{u=0\}$ has exactly $N$ connected components which are exponentially close to $L_i$ respectively. Moreover, $u$ has linear energy growth, i.e., there exists a constant $C$ such that
\begin{equation}
\label{lineargrowth}
\int_{B_R(0)} \left[\frac{1}{2}|\nabla u|^2+W(u)\right] \leq CR, \quad \forall R\geq 1.
\end{equation}
\end{thm}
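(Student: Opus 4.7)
The plan is to reduce everything to a curvature decay estimate for the level set $\{u=0\}$ at infinity, and then use this to recover both the finite-ends structure and the linear energy growth.

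First, I would exploit the Morse index hypothesis. By \cite{dev}, $u$ is stable outside some compact set $K\subset\R^2$, so for any $R\gg1$ the rescaled function $u_R(x):=u(Rx)$ solves the singularly perturbed equation \eqref{equation scaled} with $\varepsilon=R^{-1}$ and is stable on, say, $B_2(0)\setminus B_{1/2}(0)$. The objective is to prove a curvature decay estimate: the principal curvature $\kappa(x)$ of $\{u=0\}$ satisfies $|x||\kappa(x)|\to 0$ as $|x|\to\infty$. I would argue by contradiction. If the estimate fails, pick $x_k\in\{u=0\}$ with $|x_k|\to\infty$ and $|x_k||\kappa(x_k)|\geq\delta>0$, and consider the rescalings $u_k(y):=u(x_k+|x_k|y)$. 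These solve \eqref{equation scaled} with $\varepsilon_k:=|x_k|^{-1}\to 0$, are stable on a fixed ball, and their zero sets have curvature $\geq\delta$ at the origin at scale $\varepsilon_k$.

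Second, this is where the Colding--Minicozzi style blow-up sketched in the introduction enters. After extracting a limit varifold $V$ of the zero sets (via Hutchinson--Tonegawa \cite{H-T, Tonegawa, Tonegawa 2}), one of two scenarios occurs. Either a single interface passes through $0$, in which case it converges to a stable minimal hypersurface in $\R^2$, i.e.\ a straight line, contradicting the curvature lower bound; or several interfaces cluster and must be analyzed at the sub-scale where they separate. Rescaling the inter-interface distances as graphs over the limit line and passing to the limit yields an entire solution $(f_\alpha)$ of the Toda system in $\R^{n-1}=\R$; the key technical point is that stability of $u_k$ is preserved under this blow-up, producing a nontrivial stable entire solution of the one-dimensional Toda system. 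The classification of stable solutions to this Toda system (quoted from the analysis alluded to in the paper) rules this out, completing the contradiction. This is the hardest step, since it requires both the precise asymptotics linking clustering interfaces to Toda graphs and the stability-preservation under rescaling.

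Third, once $|x||\kappa(x)|\to 0$, each connected component of $\{u=0\}\cap(\R^2\setminus B_{R_0})$ for $R_0$ large is a smooth curve whose unit tangent has vanishing derivative at infinity, so it possesses a limiting direction $e_i\in S^1$; moreover, standard ODE comparison gives exponential convergence of the curve to a ray $L_i=x_i+\R_+ e_i$. Distinctness of the $e_i$ and finiteness of the number $N$ of such components follow from a topological argument: if two components had the same direction $e$, they would bound a thin strip in which $u$ is forced (by the curvature decay and stability) to be close to a shifted one-dimensional profile, leading to a contradiction either with the nodal count or with stability, cf.\ Gui \cite{Gui 1}.

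Finally, I would derive the energy bound \eqref{lineargrowth} from the finite-ends structure. On each end, $u$ is exponentially close in $C^2$ to $g(\mathrm{dist}(\cdot,L_i))$ by the convergence to the heteroclinic profile, so
\[\int_{B_R(0)\cap\Omega_i}\!\left[\tfrac12|\nabla u|^2+W(u)\right]\,dx\;=\;\sigma_0\,R\,(1+o(1)),\qquad \sigma_0=\int_\R\!\left[\tfrac12(g')^2+W(g)\right]dt,\]
where $\Omega_i$ is a tubular neighborhood of $L_i$. The contribution from a fixed compact core is bounded, and away from the $N$ ends $u$ is exponentially close to $\pm1$, so the complement contributes $O(1)$. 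Summing over the $N$ ends yields $\int_{B_R}[\tfrac12|\nabla u|^2+W(u)]\leq CR$ for all $R\geq 1$.
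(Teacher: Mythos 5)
Your overall strategy—derive a curvature decay estimate by blow-up and Toda reduction, then read off geometric and energy consequences—matches the paper's plan, and your sketch of steps 1 and 2 is on target. However, step 3 contains a genuine gap. You prove only $|x|\,|\kappa(x)|\to 0$, i.e.\ $\kappa=o(1/|x|)$; this is not summable along the arc length and does not by itself guarantee that the unit tangent converges (nor, a fortiori, that the curve converges to a ray). The paper must work harder: it upgrades the decay to $|B(u)|\le C/|x|^{8/7}$ (Theorem~\ref{quadratic curvature decay}), which \emph{is} integrable and yields a limit direction $e_\infty$ with only the polynomial rate $|X'(t)-e_\infty|\le C/t^{1/7}$ (Proposition~\ref{Lip regularity at infinity}). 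From curvature decay alone, ``standard ODE comparison'' does not give exponential convergence of $\{u=0\}$ to a ray—this is precisely what Remark~\ref{rmk 1.3.3} flags: even a rate $1/t^{1+\sigma}$ would be needed just to separate limit directions directly, and that decay is not attained in the paper.

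Because of this, your proposed order of deduction (exponential asymptotics $\Rightarrow$ finite ends $\Rightarrow$ energy growth) cannot be carried out as written. The paper reverses it: finiteness of components of $\{u=0\}$ outside a large ball is established independently of any exponential asymptotics, via the stability-based Proposition~\ref{finiteness of nodal domains II} (every component of $\{u_e\neq 0\}$ must meet $B_{R_0}$, proved by a surgery/logarithmic cut-off argument) combined with the Lipschitz-at-infinity information from the $1/t^{1/7}$ rate. Only then does one obtain the linear energy growth \eqref{lineargrowth} via the Hamiltonian identity applied cone-by-cone (as in Gui), and only \emph{after} the energy bound is in hand does one invoke the refined asymptotic analysis from \cite{DKP, F0, Gui, Wang 3} to show exponential closeness to rays. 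Your sketch skips the stability argument on nodal domains of $u_e$ entirely and replaces it with a vague ``thin strip'' contradiction, which is not adequate: with only polynomial control on the limit directions, ruling out two ends with the same asymptotic direction is exactly the point where Proposition~\ref{finiteness of nodal domains II} is essential. The energy computation you give in the final step is the right shape, but it presumes the exponential end structure that cannot yet be justified at that point in the argument.
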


\
\medskip

A component of $\{u=0\}$ is called an {\bf end} of $u$. In $\R^2$ we know that stable solutions (Morse index $0$) are one dimensional, i.e. after rigid motions in $\R^2$, $u(x_1,x_2)\equiv g(x_2)$.
In the above terminology, this solution has $2$ ends. All two-ended solutions are one-dimensional and the number of ends must be even. Near each end the solution approaches to the one-dimensional profile exponentially,   see Del Pino-Kowalczyk-Pacard \cite{DKP}, Gui \cite{Gui} and  Kowalczyk-Liu-Pacard  \cite{KLP 1, KLP 2, KLP 3}.  The existence of multiple-ended solutions and infinite-ended solutions to Allen-Cahn equation in $\R^2$ have been constructed in \cite{AM, DKW 3, KLW, KLPW}. The structure and classification of four end solutions have been studied extensively in \cite{Gui 1, DKP, KLP 1, KLP 2, KLP 3}.  It is shown that the four-ended solutions have even symmetries and the moduli space of  four-ended solutions is one-dimensional.

\medskip

As a byproduct of our analysis, for solutions with Morse index $1$ we can show that
\begin{thm}\label{main result 2}
Any solution to \eqref{equation} in $\R^2$ with Morse index $1$ has four ends.
\end{thm}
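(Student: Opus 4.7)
The plan is to invoke Theorem~\ref{main result 1} to reduce to the analysis of a solution with finitely many ends, and then pin down the number of ends $N$ via the Morse-index hypothesis.

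By Theorem~\ref{main result 1}, $u$ has ends $L_1,\dots,L_N$ with distinct asymptotic directions $e_i$, exponentially close to the one-dimensional profile $g$ transverse to the corresponding ray. The $N$ rays split a neighbourhood of infinity into $N$ angular sectors, and in each sector $u\to\pm 1$ exponentially. Adjacent sectors are separated by an end on which $u$ vanishes, so the limits must alternate, forcing $N$ to be even, $N=2k$. If $k=0$, then $\{u=0\}$ is compact and $u$ has a definite sign outside a ball, which is incompatible with $|u|<1$ by a Liouville-type argument for Allen--Cahn (e.g.\ using the stability of $u$ outside a compact set together with $W''(\pm 1)=2>0$). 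If $k=1$, the classification of two-ended solutions in the plane from \cite{Gui, KLP 1, KLP 2} forces $u$ to be one-dimensional and hence stable (Morse index $0$), contradicting the hypothesis. Therefore $N\ge 4$.

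To rule out $N\ge 6$, I would show that a solution with $N=2k$ ends must have Morse index at least $k-1$. Fix any $v\in S^1$; the Jacobi field $w_v:=v\cdot\nabla u$ satisfies $\mathcal{L}w_v=0$ with $\mathcal{L}:=-\Delta+W''(u)$. Near each end $L_i$, exponential convergence to the profile gives
\[
w_v(x)=(v\cdot\nu_i)\,g'(\nu_i\cdot(x-x_i))+O\bigl(e^{-c\,\mathrm{dist}(x,L_i)}\bigr),
\]
where $\nu_i\perp e_i$ is the transverse unit vector, so $w_v$ has the definite sign $\mathrm{sign}(v\cdot\nu_i)$ in a tube around $L_i$. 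Combining this with the exponential decay of $\nabla u$ in the interior of each $\pm 1$-sector, one identifies the unbounded nodal components of $w_v$ outside a large compact set; for generic $v$, there are at least $k$ of them. A domain-monotonicity argument---on each of $k-1$ chosen nodal domains, $w_v$ restricts to a Dirichlet ground state with eigenvalue $0$, and a slight enlargement yields a strictly negative eigenvalue---combined with truncations whose contributions are controlled by the linear energy growth \eqref{lineargrowth} then produces a $(k-1)$-dimensional subspace of $H^1(\mathbb{R}^2)$ on which $\mathcal{Q}$ is strictly negative. For $k\ge 3$ this contradicts Morse index equal to $1$, forcing $N=4$.

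The hard part is the Morse-index lower bound, because $w_v$ is not in $L^2(\mathbb{R}^2)$---it only decays exponentially transverse to each end---so the classical Courant nodal domain bound does not apply off the shelf. One has to truncate $w_v$ along each end and verify that the resulting test functions, after the small domain enlargement, still produce strictly negative quadratic-form values; this is where both conclusions of Theorem~\ref{main result 1}, namely finitely many ends and linear energy growth, enter essentially, together with the uniform profile estimates that underlie them.
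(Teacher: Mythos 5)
Your reduction to showing that a planar solution with $2k$ ends has Morse index at least $k-1$ is the genuine gap in the argument, and it is not a small one. The paper's own Remark in Section 5 addresses exactly this question of bounding the number of ends by the Morse index and says that the best bound they can prove is ``number of ends $\leq 4N+4$''; for $N=1$ this gives at most $8$ ends, which is not enough to conclude $N\le 4$. If your claimed lower bound (``$2k$ ends $\Rightarrow$ index $\geq k-1$'') were available by the sketch you describe, the authors would surely have stated and used it, so you should treat it as an unproved and likely difficult assertion rather than as a step you can defer. The concrete difficulty is not only the truncation issue you flag: even granting the asymptotics of $w_v=v\cdot\nabla u$ near the ends, you must show that distinct far-field sign regions of $w_v$ cannot merge through the compact core into fewer than $k$ nodal domains, and this is a nontrivial topological fact that does not follow from the end structure alone. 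Proposition \ref{number of nodal domains} in the paper goes in the opposite direction (index $N$ bounds nodal domains of $u_e$ by $2N$), and inverting it sharply is precisely what is missing.

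The paper takes a completely different route to pin $N$ down to $4$: it works with the Modica function $P:=W(u)-\tfrac12|\nabla u|^2$ rather than with Jacobi fields directly. Using Proposition \ref{number of nodal domains} with index $1$, it first deduces that every critical point of $u$ is nondegenerate, then applies a Mountain Pass argument to $P$ (exploiting $P>0$, $P\to 0$ at infinity, and that every critical point of $P$ is a strict extremum) to show $P$, and hence $u$, has a single critical point, which is a nondegenerate saddle. From that it identifies the level set $\{u=u(X)\}$ as two embedded unbounded curves crossing at $X$, and finiteness of ends from Theorem \ref{main result 1} then forces exactly four ends. This bypasses any index-versus-ends estimate. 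Your argument for evenness, for ruling out $N=0$, and for ruling out $N=2$ is fine, but unless you can actually establish the index lower bound for $k\geq 3$ (which the current sketch does not), the case $N\geq 6$ is not excluded.
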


\medskip

\noindent
{\bf Remark:} The linear growth condition (\ref{lineargrowth}) implies that the nodal set $ \{ u=0 \}$ has finite length at $\infty$.  In $ \R^n$ the analogue energy bound
\begin{equation}
\label{lineargrowth1}
\int_{B_R(0)} \left[\frac{1}{2}|\nabla u|^2+W(u)\right] \leq C \ R^{n-1}, \quad \forall R\geq 1
\end{equation}
 is a classical assumption in the setting
of semilinear elliptic equations (see e.g. Hutchinson-Tonegawa \cite{H-T}).  It is satisfied by minimizers or monotone solutions satisfying  \eqref{extra}. This is precisely the use of \eqref{extra} in Savin's proof of De Giorgi's conjecture (\cite{Savin1}). (See also Ambrosio-Cabre \cite{A-C} and Alberti-Ambrosio-Cabre \cite{AAC}.) In dimensions $4$ and $5$, condition (\ref{lineargrowth1}) is also  an essential  estimate in Ghoussoub-Gui \cite{GG2}.
A similar area bound for minimal hypersurfaces seems to be also crucial for the study of {\em Stable Bernstein Conjecture} when the dimension is larger than $3$. (Only three dimension case has been solved in \cite{Carmo-Peng, F-Schoen}. See also  \cite{Cao-S-Z, Li-Wang, Li-Wang 2}.)

\medskip

The main tool to prove Theorem \ref{main result 1} is the following curvature estimate on level sets of $u$ (see Theorem \ref{curvature decay} and Theorem \ref{quadratic curvature decay} below):

\medskip

\noindent
{\bf Key Curvature Estimates (Theorem \ref{curvature decay}):} For any solution of \eqref{equation} in $\R^2$ with finite Morse index and $ b \in (0,1)$, there exist a constant $C$ and $ R=R(b)$ such that
\begin{equation}
|B(u)(x) | \leq \frac{C}{|x|} \ \ \mbox{for} \  x \in \{ | u| \leq 1-b \} \cap (B_{R(b)} (0))^c, \ \ \ \ \mbox{where} \  B(u)(x)= \sqrt{ \frac{|\nabla^2u(x)|^2-|\nabla|\nabla u(x)||^2}{|\nabla u(x)|^2} }.
\end{equation}

\medskip

This curvature decay is similar to Schoen's curvature estimate for stable minimal surfaces \cite{Schoen}, however the proof is quite different. This is mainly due to the lack of a suitable Simons type inequality for semilinear elliptic equations. Hence an indirect method is employed, by introducing a blow up procedure and reducing the curvature decay estimate to a second order estimate on interfaces of solutions to \eqref{equation scaled}, see Theorem \ref{second order estimate} below.

\subsection{Second order estimates on interfaces}

It turns out that our analysis on the uniform   second order regularity of level sets of solutions to the singularly perturbed Allen-Cahn equation \eqref{equation scaled} works in a more general setting and any dimension $n\geq 2$.
In Part II of this paper we give precise analysis in the case of clustering interfaces. More precisely we assume that
 \begin{itemize}
\item [{\bf (H1)}]$u_\varepsilon$ is a sequence of solutions to \eqref{equation scaled} in $\mathcal{C}_2=B_2^{n-1}\times(-1,1)\subset\R^n$;
 \item  [{\bf (H2)}]there exists $Q\in\mathbb{N}$, $b\in(0,1)$ and $t_\varepsilon\in(-1+b,1-b)$ such that    $\{u_\varepsilon=t_\varepsilon\}$
consists of $Q$ connected components
\[\Gamma_{\alpha,\varepsilon}=\{x_n=f_{\alpha,\varepsilon}(x^\prime), \quad x^\prime:=(x_1,\cdots, x_n)\in B_2^{n-1}\}, \ \ \ \alpha=1,\cdots, Q,\]
where $-1/2<f_{1,\varepsilon}<f_{2,\varepsilon}<\cdots< f_{Q,\varepsilon}<1/2$;
\item  [{\bf (H3)}]for each $\alpha$, $f_{\alpha,\varepsilon}$ are uniformly bounded in $Lip(B_2^{n-1})$ and they converge to the same limit $f_\infty$ in $C_{loc}(B_2^{n-1})$.
\end{itemize}

\medskip

Here $Q$ is called the multiplicity of the interfaces. 
Analyzing clustering interfaces is one of main difficulties in the study of singularly perturbed Allen-Cahn equations.  See e.g.  \cite{H-T, Tonegawa, Tonegawa 2, Tonegawa 3}. In particular, it is not known if flatness implies uniform $C^{1,\theta}$ regularity when there are clustering interfaces (i.e. the Lipschitz regularity in the above hypothesis {\bf(H3)}).

\medskip

Under these assumptions, it can be shown that  $f_\infty$ satisfies the minimal surface equation (see \cite{H-T})
\begin{equation}
\mbox{div}\left(\frac{\nabla f_\infty}{\sqrt{1+|\nabla f_\infty|^2}}\right)=0 \ \ \ \ \mbox{in} \ \ \R^{n-1}.
\end{equation}
Because $f_\infty$ is Lipschitz, by standard elliptic estimates on the minimal surface equation \cite[Chapter 16]{GT}, $f_\infty\in C^\infty_{loc}(B_2^{n-1})$.

We want to study whether $f_{\alpha,\varepsilon}$ converges to $f_\infty$ in $C^2_{loc}(B_2^{n-1})$. It turns out this may not be true and the obstruction is related to a Toda system
\begin{equation}\label{Toda entire}
\Delta f_\alpha(x^\prime)=A_1e^{-\sqrt{2}\left(f_\alpha(x^\prime)-f_{\alpha-1}(x^\prime)\right)}
-A_2e^{-\sqrt{2}\left(f_{\alpha+1}(x^\prime)-f_\alpha(x^\prime)\right)}, \quad x^\prime\in\R^{n-1}, \  \ 1\leq \alpha\leq Q^\prime,
\end{equation}
where $Q^\prime\leq Q$, $A_1$ and $A_2$ are positive constants.

\medskip

More precisely, we show
\begin{thm}\label{main result 3}
If  $f_{\alpha,\varepsilon}$ does not converge  to $f_\infty$ in $C^2_{loc}(B_2^{n-1})$, then a suitable rescaling of them converge to a nontrivial entire solution to the Toda system \eqref{Toda entire}.
\end{thm}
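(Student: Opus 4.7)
The plan is a blow-up argument: failure of $C^2_{\mathrm{loc}}$ convergence provides a sequence of points and scales at which the rescaled interfaces capture a nontrivial second-order profile, while the Allen-Cahn structure forces this profile to satisfy the Toda system. Set $d_{\alpha,\varepsilon}(x') := f_{\alpha+1,\varepsilon}(x') - f_{\alpha,\varepsilon}(x') > 0$; by (H3) each $d_{\alpha,\varepsilon}\to 0$ locally uniformly. Introduce
$$M_\varepsilon := \max_{\alpha}\sup_{B_{3/2}^{n-1}} \Bigl\{|D^2 f_{\alpha,\varepsilon} - D^2 f_\infty| + \varepsilon^{-1/2}e^{-(\sqrt 2/2)\,d_{\alpha,\varepsilon}/\varepsilon}\Bigr\},$$
which simultaneously measures the $C^2$-defect and the strength of the Toda interaction. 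Once the PDE derivation of the next paragraph is in hand, failure of $C^2$-convergence forces $M_\varepsilon\to\infty$. Choose centers $x_\varepsilon'$ realizing the supremum up to a fixed factor, and a spatial scale $\lambda_\varepsilon\to 0$ so that after dilation both summands in the definition of $M_\varepsilon$ become of order $1$ at $y'=0$; the Toda balance below will confirm that this requires $\lambda_\varepsilon \sim \varepsilon\, e^{d_{\alpha,\varepsilon}(x_\varepsilon')/(2\sqrt{2}\varepsilon)}$, in particular $\lambda_\varepsilon/\varepsilon\to\infty$, so the rescaled domain exhausts $\R^{n-1}$.

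The core analytic step is the Fredholm reduction to a Toda coupling. Near the cluster build a quasi-solution
$$U_\varepsilon(x', x_n) := \sum_{\alpha=1}^Q (-1)^{Q-\alpha}\, g\!\left(\frac{x_n - f_{\alpha,\varepsilon}(x')}{\varepsilon}\right) + c_\varepsilon,$$
where $g$ is the heteroclinic of \eqref{1d problem}, and write $u_\varepsilon = U_\varepsilon + \phi_\varepsilon$. Substituting into \eqref{equation scaled}, multiplying by $g'\bigl((x_n - f_{\alpha,\varepsilon}(x'))/\varepsilon\bigr)$, and integrating in $x_n$ over a slab around $\Gamma_{\alpha,\varepsilon}$, the solvability condition for the linearised operator $-\partial_t^2 + W''(g(t))$ (whose one-dimensional kernel is $\mathrm{span}(g')$) yields
$$\sigma_0\,\varepsilon\, H[f_{\alpha,\varepsilon}](x') = A_2\, e^{-\sqrt 2\, d_{\alpha,\varepsilon}(x')/\varepsilon} - A_1\, e^{-\sqrt 2\, d_{\alpha-1,\varepsilon}(x')/\varepsilon} + R_{\alpha,\varepsilon}(x'),$$
with $\sigma_0 = \int (g')^2$, $H[f]$ the graph mean curvature, constants $A_1, A_2>0$ arising from the tail $1-g(t) \sim C e^{-\sqrt 2\, t}$, and $R_{\alpha,\varepsilon}$ a remainder controlled by the elliptic theory for $\phi_\varepsilon$.

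For the blow-up, define
$$\tilde f_{\alpha,\varepsilon}(y') := \varepsilon^{-1}\Bigl[f_{\alpha,\varepsilon}(x_\varepsilon' + \lambda_\varepsilon y') - \ell_{\alpha,\varepsilon}(y')\Bigr],$$
where $\ell_{\alpha,\varepsilon}$ subtracts the affine tangent polynomial of $f_\infty$ at $x_\varepsilon'$ together with a common logarithmic shift of size $\sim \varepsilon\log(\lambda_\varepsilon/\varepsilon)$ that equalises the leading separations between consecutive interfaces. A preliminary upgrade of (H3) to uniform $C^{1,\theta}$-regularity on each component (via an Allard--De Giorgi argument, valid on scales at which neighbouring interfaces are exponentially far in $\varepsilon$-units) provides uniform $C^1_{\mathrm{loc}}(\R^{n-1})$-bounds on the $\tilde f_{\alpha,\varepsilon}$. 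The mean curvature operator linearises to $\Delta$ (the tangent plane has been subtracted and $\lambda_\varepsilon\to 0$), the exponentials pass to $e^{-\sqrt 2(\tilde f_{\alpha+1,\varepsilon}-\tilde f_{\alpha,\varepsilon})}$, and the equation of the previous paragraph, multiplied by $\lambda_\varepsilon^2/(\sigma_0\varepsilon^2)$, converges to
$$\Delta \tilde f_\alpha = A_1\, e^{-\sqrt 2(\tilde f_\alpha - \tilde f_{\alpha-1})} - A_2\, e^{-\sqrt 2(\tilde f_{\alpha+1} - \tilde f_\alpha)} \quad \text{in } \R^{n-1}, \ \ 1\leq \alpha\leq Q',$$
which is exactly \eqref{Toda entire}. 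Here $Q'\leq Q$ corresponds to those consecutive indices whose rescaled spacings remain bounded; the rest decouple to $\pm\infty$. Non-triviality at $y'=0$ is hard-wired into the choice of $x_\varepsilon'$.

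The main obstacle is the Fredholm reduction: the remainder $R_{\alpha,\varepsilon}$ must be shown to be $o\!\bigl(e^{-\sqrt 2\, d_{\alpha,\varepsilon}/\varepsilon}\bigr)$ so that it vanishes in the limit rather than swamps the leading interaction. This requires an iterated gluing ansatz (to cancel errors order by order in $\varepsilon$ and in the interaction exponential) together with uniform elliptic estimates for $\phi_\varepsilon$ that respect both the small parameter $\varepsilon$ and the clustering geometry, and is compounded by the mere Lipschitz regularity granted by (H3) — a separate preliminary $C^{1,\theta}$-improvement is indispensable. A secondary subtlety is the decoupling $Q'\leq Q$: one must verify that at least two consecutive indices survive, so that the resulting Toda system is genuinely coupled and its solution nontrivial. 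Measuring $M_\varepsilon$ as a joint maximum over $\alpha$ (rather than one $\alpha$ at a time) is what ensures a coupled limit at the chosen blow-up point.
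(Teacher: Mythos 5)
Your high-level strategy --- a Lyapunov--Schmidt reduction to a discrete Toda system followed by a blow-up at a point of poor second-order regularity --- matches the paper's, but the central analytic difficulty is acknowledged and then left open rather than resolved. Showing that the remainder $R_{\alpha,\varepsilon}$ in the projected equation is genuinely $o\bigl(e^{-\sqrt{2}\,d_{\alpha,\varepsilon}/\varepsilon}+\varepsilon\bigr)$ is precisely where the paper spends most of Part II (Sections~\ref{sec Fermi coordinates}--\ref{sec lower bound O(ep)} and the appendices). The paper achieves this not with your vertical-coordinate ansatz $g\bigl((x_n-f_{\alpha,\varepsilon}(x'))/\varepsilon\bigr)$, but in Fermi coordinates with respect to each $\Gamma_{\alpha,\varepsilon}$, with an additional modulation $h_\alpha$ chosen so that $\phi=u-g_\ast$ is $L^2$-orthogonal to $g'_\alpha$ along each fiber (Proposition~\ref{prop optimal approximation}); an iterated Schauder scheme then controls $\|\phi\|_{C^{2,\theta}}+\|H^\alpha+\Delta_0 h_\alpha\|_{C^\theta}$ by $\varepsilon^2+\sum_\beta\sup e^{-\sqrt{2}D_\beta}$ (estimate~\eqref{Schauder 3}). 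Your vertical ansatz accumulates larger errors when $\nabla f_{\alpha,\varepsilon}$ is only Lipschitz, which is all (H3) grants; the paper also finesses this by first proving (via the curvature-maximization argument of Section~\ref{sec 2}) that the interfaces are uniformly $C^{1,1}$ and then, in the final step, passing to Fermi coordinates relative to $\Gamma_\infty$ so the graphs become $C^1$-small before blowing up. These preliminary reductions are load-bearing and cannot be replaced by a one-line appeal to an Allard--De Giorgi improvement.

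A second, more structural gap concerns the scale selection. The paper first establishes the uniform lower bound $D_\alpha\geq\frac{\sqrt{2}}{2}|\log\varepsilon|-C$ (Section~\ref{sec lower bound O(ep)}, obtained by iterating the Toda identity backwards under the curvature bound (H4)); this fixes the blow-up scale as a universal $\varepsilon^{1/2}$ and simultaneously certifies that the Toda interaction is $O(\varepsilon)$ everywhere, so that a nonvanishing $C^2$-deficit forces the exponential to saturate exactly at the critical separation. Without this lower bound your adaptive scale $\lambda_\varepsilon$ is not well defined, and at points where the spacing is larger you would rescale by something not tending to zero. Your claim that $C^2$-failure forces $M_\varepsilon\to\infty$ also looks wrong: the Toda relation balances $|D^2 f_{\alpha,\varepsilon}|$ against $\varepsilon^{-1}e^{-\sqrt{2}d/\varepsilon}$, so a $C^2$-deficit bounded below by a constant only gives $\liminf M_\varepsilon>0$, not divergence, and the paper's blow-up at the fixed scale $\varepsilon^{1/2}$ (Remark~\ref{rmk 11.1}) is exactly the version adapted to a bounded, nontrivial deficit. (Minor: your scale formula should read $\lambda_\varepsilon\sim\varepsilon\, e^{d/(\sqrt{2}\varepsilon)}$ to balance $\varepsilon^{-1}\lambda_\varepsilon^2\cdot\varepsilon^{-1}e^{-\sqrt{2}d/\varepsilon}\sim 1$; your exponent is off by a factor of two.)
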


\medskip

\noindent
{\bf Remark:} Conversely,  the existence of multiplicity two ($Q=2$) solutions using Toda system  has been carried out in \cite{ADW1, ADW2}.

\medskip

For the multiplicity one case $Q=1$,  we get the following uniform $C^{2,\theta}$ estimate.
\begin{thm}\label{main result 4}
If $\{u_\varepsilon=0\}=\{x_n=f_\varepsilon(x_1,\cdots, x_{n-1})\}$,  then for any $\theta\in(0,1)$, $f_\varepsilon$ are uniformly bounded in $C^{2,\theta}_{loc}(B_2^{n-1})$.
\end{thm}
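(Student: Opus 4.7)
The plan is to argue by contradiction via Theorem \ref{main result 3}, and then rule out the resulting Toda-system obstruction by a Liouville argument that is available precisely because $Q=1$. Suppose $\{f_\varepsilon\}$ is not uniformly bounded in $C^{2,\theta}_{loc}(B_2^{n-1})$. As a first reduction I would prove the weaker statement that $f_\varepsilon\to f_\infty$ in $C^2_{loc}(B_2^{n-1})$. By Theorem \ref{main result 3}, if this convergence fails then a suitable rescaling of $f_\varepsilon$ converges to a nontrivial entire solution of the Toda system \eqref{Toda entire} on $\R^{n-1}$ with some $Q^\prime\le Q=1$. When $Q^\prime=1$, the nearest-neighbour interactions in \eqref{Toda entire} are absent --- both exponential terms drop out --- so the limit $f$ is simply an entire harmonic function on $\R^{n-1}$.

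The Liouville step then proceeds as follows. The blow-up in Theorem \ref{main result 3} is arranged so that $f$ inherits the Lipschitz bound from hypothesis {\bf (H3)}, giving $f$ at most linear growth at infinity; Liouville's theorem therefore forces $f$ to be affine. Because the rescaled sequence is moreover designed to be $C^2$-nontrivial (rather than just $C^0$-nontrivial), the zeroth- and first-order Taylor data of $f_\varepsilon$ at the blow-up centre are subtracted off in the construction, so $f(0)=0$ and $\nabla f(0)=0$. Combined with affineness this gives $f\equiv 0$, contradicting nontriviality. Consequently $f_\varepsilon\to f_\infty$ in $C^2_{loc}(B_2^{n-1})$ and $\{f_\varepsilon\}$ is uniformly bounded in $C^2_{loc}$.

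To upgrade from $C^2$ to $C^{2,\theta}$ I would apply standard quasilinear elliptic regularity to the nodal-set equation: writing $u_\varepsilon(x^\prime,f_\varepsilon(x^\prime))\equiv 0$ and differentiating, $f_\varepsilon$ satisfies a uniformly elliptic quasilinear equation whose coefficients are controlled in $C^\alpha$ by the uniform $C^2$ bound just obtained together with interior estimates for $u_\varepsilon$, and interior Schauder estimates then yield the uniform $C^{2,\theta}_{loc}$ bound. The main obstacle is the Liouville step: one must verify that the blow-up procedure of Theorem \ref{main result 3} really does normalise away the affine degrees of freedom, since otherwise nonconstant affine functions on $\R^{n-1}$ would constitute a genuine harmonic obstruction. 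This information is not visible from the statement of Theorem \ref{main result 3} alone and must be extracted from its proof --- presumably via a quadratic blow-up in which one divides by a scale chosen so that the Hessian remains bounded while the first jet vanishes at the base point.
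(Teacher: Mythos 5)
Your proposed route is genuinely different from the paper's, and it has a gap that cannot be repaired with the tools you invoke. The paper does not argue by contradiction via Theorem~\ref{main result 3} at all. In Section~13 it simply observes that when $Q=1$ there are no neighbouring interfaces, so the exponential interaction $e^{-\sqrt{2}D_\alpha}$ vanishes identically in the Schauder-type estimate~\eqref{Schauder 3}. This gives directly $\|\phi\|_{C^{2,\theta}}+\|H+\Delta_0 h\|_{C^\theta}\lesssim\varepsilon^2$ in stretched variables, hence (after rescaling) $\big\|\operatorname{div}\big(\nabla f_\varepsilon/\sqrt{1+|\nabla f_\varepsilon|^2}\big)\big\|_{C^\theta}\lesssim\varepsilon^{1-\theta}$, and the uniform $C^{2,\theta}$ bound follows at once from interior Schauder theory for the prescribed mean curvature equation (GT, Chapter~16). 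No blow-up is performed and no Liouville argument is needed.

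The Liouville step at the heart of your proposal is where the difficulty lies, on two counts. First, the blow-up underlying Theorem~\ref{main result 3} (Remark~\ref{rmk 11.1}) is anisotropic: one dilates by $\varepsilon^{-1/2}$ horizontally and by $\varepsilon^{-1}$ vertically, followed by a logarithmic shift. This rescaling does \emph{not} transmit the Lipschitz bound of hypothesis {\bf(H3)} to the limit; what survives is only a \emph{Hessian} bound (inherited from the curvature bound {\bf(H4)}), so the limit $f$ has at most quadratic growth, not linear growth. Second, and more seriously, even granting the quadratic growth and the normalization $f(0)=0$, $\nabla f(0)=0$, a harmonic function on $\R^{n-1}$ with bounded Hessian is a polynomial of degree at most two; for $n-1\geq 2$ there are plenty of nontrivial homogeneous quadratic harmonics (e.g.\ $x_1^2-x_2^2$), so Liouville's theorem cannot kill the obstruction. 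Your argument in fact only closes in the case $n=2$, whereas Theorem~\ref{main result 4} is stated for general $n$. The mechanism that actually kills the obstruction for $Q=1$ is not a Liouville theorem for harmonic functions: it is that the blow-up of Remark~\ref{rmk 11.1} can only be triggered when two interfaces sit at the critical distance $\tfrac{\sqrt2}{2}\varepsilon|\log\varepsilon|+O(\varepsilon)$, an event that cannot occur with a single interface -- precisely the structural input that the direct Schauder estimate of Section~13 exploits and your black-box invocation of Theorem~\ref{main result 3} discards.

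Your final step, upgrading a uniform $C^2$ bound to $C^{2,\theta}$ by the quasilinear graph equation, is correct and coincides with the paper's last line; but with the direct argument one lands immediately on that step without the detour through an indirect blow-up.
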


\medskip

These results answer partly a question of Tonegawa \cite{Tonegawa 3} and improves the uniform $C^{1,\theta}$ estimate in Caffarelli-Cordoba \cite{CC} to the second order $C^{2,\theta}$  estimate.

\medskip

The main idea in the proof of these two theorems relies on the determination of the {\bf interaction} between $\Gamma_{\alpha,\varepsilon}$. To this aim, we introduce the Fermi coordinates with respect to $\Gamma_{\alpha,\varepsilon}$
and near each $\Gamma_{\alpha,\varepsilon}$ we find the optimal approximation of $u_\varepsilon$ along the normal direction using the one dimensional profile $g$ and the distance to $\Gamma_{\alpha,\varepsilon}$. More precisely, we use an approximate solution in the form
\[g\left(\frac{\mbox{dist}_{\Gamma_{\alpha,\varepsilon}}-h_{\alpha,\varepsilon}}{\varepsilon}\right).\]
Here $h_{\alpha,\varepsilon}$ is introduced to make sure that this is the optimal approximation along the normal direction with respect to $\Gamma_{\alpha,\varepsilon}$. With this construction, using the nondegeneracy of $g$, we can get a good estimate on the error between $u_\varepsilon$ and these approximate solutions, which in turn shows that the interaction between $\Gamma_{\alpha,\varepsilon}$ is exactly through the Toda system
\[\Delta f_{\alpha,\varepsilon}=\frac{A_1}{\varepsilon}e^{-\sqrt{2}\frac{f_{\alpha,\varepsilon}-f_{\alpha-1,\varepsilon}}{\varepsilon}}
-\frac{A_2}{\varepsilon}e^{-\sqrt{2}\frac{f_{\alpha+1,\varepsilon}-f_{\alpha,\varepsilon}}{\varepsilon}}+\mbox{high order terms}.\]
Using this representation, we show that the uniform second order regularity of $f_{\alpha,\varepsilon}$ does not hold only if the lower bound of intermediate distances between $\Gamma_{\alpha,\varepsilon}$ is of the order
\begin{equation}\label{lower bound}
\frac{\sqrt{2}}{2}\varepsilon|\log\varepsilon|+O(\varepsilon).
\end{equation}
(Here the constant $\sqrt{2}=\sqrt{W^{\prime\prime}(1)}$.) Moreover, if this is the case, the rescalings
\[\widetilde{f}_{\alpha,\varepsilon}(x^\prime):=\frac{1}{\varepsilon}f_{\alpha,\varepsilon}\left(\varepsilon^{\frac{1}{2}}x^\prime\right)-\frac{\sqrt{2}\alpha}{2}|\log\varepsilon|\]
converges to a solution of \eqref{Toda entire}.

\medskip

In other words, if intermediate distances between $\Gamma_{\alpha,\varepsilon}$ are large (compared with \eqref{lower bound}), the interaction between different interfaces is so weak enough that it does not affect the second order regularity of $f_{\alpha,\varepsilon}$. In particular, if there is only one component and hence no interaction between different components, we get Theorem \ref{main result 4}.

\medskip

In Theorem \ref{second order estimate}, the situation is a little different where more and more connected components of $\{u_\varepsilon=0\}$ could appear. However, the above discussion still applies. This is because, by using the stability condition, we can get an explicit lower bound on intermediate distance between different components of $\{u_\varepsilon=0\}$ which is just a little smaller than \eqref{lower bound}. To get a lower bound higher than \eqref{lower bound}, we use the stability of $f_{\alpha,\varepsilon}$ (as a solution to the approximate Toda system)  inherited from $u_\varepsilon$. By this stability and a classical estimate of Choi-Schoen \cite{Choi-Schoen}, we get a decay estimate of $e^{-\sqrt{2}\frac{f_{\alpha,\varepsilon}-f_{\alpha-1,\varepsilon}}{\varepsilon}}$ in the interior. In some sense $e^{-\sqrt{2}\frac{f_{\alpha,\varepsilon}-f_{\alpha-1,\varepsilon}}{\varepsilon}}$ replaces the role of the curvature in minimal surface theory.

\medskip

We also would like to call readers' attention to the resemblance of pictures here (especially when we consider $\R^3$ and not only $\R^2$) with the multi-valued graph construction in seminal Colding-Minicozzi theory \cite{CM1, CM2, CM3, CM4, Colding}. When the number of connected components of $\{u_\varepsilon=0\}$ goes to infinity and we do not assume the stability condition, the blow up procedure as in Theorem \ref{main result 3} produces a solution to the Toda lattice (i.e. in \eqref{Toda entire} the index $\alpha$ runs over integers $\mathbb{Z}$). The difference is that, different sheets of minimal surfaces do not interact (in other words, interact only when they touch) while different sheets of interfaces in the Allen-Cahn equation have an exponential interaction. It is this exponential interaction leading to the Toda system. We notice that in a recent paper \cite{JK}, Jerison and Kambrunov also performed a similar blow-up procedure for the one-phase free boundary problem in $\R^2$. The difference is again that different sheets of nodal sets of free boundary  do not interact.

\medskip

\noindent
{\bf Organization of the paper.} This paper is divided into three parts. Part I is devoted to the analysis of finite Morse index solutions, by assuming the curvature decay estimate. In Part II we study the second order regularity of interfaces and prove Theorem \ref{main result 3} and Theorem \ref{main result 4}. Techniques in Part II are modified in Part III to prove  the curvature decay estimate needed in Part I. Some technical calculations in Part II are collected in the Appendix.

\medskip

\noindent
\textit{Acknowledgement.} The research of J. Wei is partially supported by
NSERC of Canada and the Cheung-Kong Chair Professorship.  K. Wang is supported by NSFC no. 11631011 and \textquotedblleft
the Fundamental Research Funds for the Central Universities". We thank Professor Changfeng Gui for useful discussions. K. Wang is also grateful to Yong Liu for several enlightening discussions.

\part{Finite Morse index solutions}

In this part we study finite Morse index solutions of \eqref{equation} in $\R^2$ and prove Theorem \ref{main result 1} and Theorem \ref{main result 2}, by assuming the curvature decay estimates Theorem \ref{curvature decay} and Theorem \ref{quadratic curvature decay}, which is based on Theorem \ref{second order estimate} whose proof is given in Part III.
Throughout this section we always assume that $n=2$ and that $u$ is a finite Morse index solution to \eqref{equation}.

\section{Curvature decay}
\setcounter{equation}{0}

The following characterization of stable solutions is well known (see for example \cite{A-C, Dancer, GG}).
\begin{thm}\label{thm stable solution}
Let $u$ be a stable solution of \eqref{equation} in $\mathbb{R}^2$, then there exists a unit vector $\xi\in\R^2$ and $t\in\R$ such that
\[u(x)\equiv g(x\cdot\xi-t), \quad \forall x\in\R^2.\]
\end{thm}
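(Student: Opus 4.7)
The plan is to run the standard Sternberg--Zumbrun/Ghoussoub--Gui argument: exploit the stability inequality with a carefully chosen test function built from $|\nabla u|$ and a logarithmic cutoff, and then conclude rigidity of the level sets from the resulting geometric Poincar\'e inequality.

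First I would establish the key geometric Poincar\'e inequality. Differentiating \eqref{equation} gives $L(\partial_i u)=0$ for $i=1,2$, where $L:=-\Delta+W''(u)$. Testing the stability quadratic form with $\varphi=|\nabla u|\eta$ (on $\{|\nabla u|>0\}$, with a standard $\delta$-regularization $\sqrt{|\nabla u|^2+\delta}$ to justify the manipulations), and using the pointwise identity
\[
|\nabla|\nabla u||^2 + |\nabla u|^2 B(u)^2 = |\nabla^2 u|^2 \quad \text{wherever } \nabla u\neq 0,
\]
one derives, after integration by parts, the Sternberg--Zumbrun-type inequality
\[
\int_{\R^2}\bigl(|\nabla u|^2 \kappa^2 + |\nabla_T|\nabla u||^2\bigr)\,\eta^2\;\leq\;\int_{\R^2}|\nabla u|^2|\nabla\eta|^2,
\]
where $\kappa$ is the curvature of the level set of $u$ at the point and $\nabla_T$ is the tangential gradient along this level set. (Points where $\nabla u=0$ contribute nothing and are handled by the $\delta$-regularization and a capacity argument, using that $\{\nabla u=0\}$ is closed with empty interior unless $u$ is identically a zero of $W'$, a case ruled out by $|u|<1$ and the structure of $W$.)

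Second, I would plug in the logarithmic cutoffs
\[
\eta_R(x)=\begin{cases}1 & |x|\leq \sqrt{R},\\ \dfrac{\log(R/|x|)}{\log\sqrt{R}} & \sqrt{R}\leq|x|\leq R,\\ 0 & |x|\geq R,\end{cases}
\]
which satisfy $\int_{\R^2}|\nabla\eta_R|^2\leq C/\log R$. Since $u$ is bounded and solves \eqref{equation}, standard elliptic regularity yields a universal bound $|\nabla u|\leq C$ on all of $\R^2$, so the right-hand side of the inequality above is $O(1/\log R)\to 0$ as $R\to\infty$. Letting $R\to\infty$ forces $\kappa\equiv 0$ and $\nabla_T|\nabla u|\equiv 0$ on the open set $\{\nabla u\neq 0\}$.

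Third, I would deduce the one-dimensional structure. The vanishing of $\kappa$ means every connected component of each level set within $\{\nabla u\neq 0\}$ is a straight line, and $\nabla_T|\nabla u|=0$ means $|\nabla u|$ is constant along such a line; differentiating the equation along the line shows $u$ itself is constant there. Choose a regular value $c$ of $u$ (by Sard) and a connected component $\Gamma$ of $\{u=c\}$: then $\Gamma$ is an entire straight line, and the function $\nu(x):=\nabla u(x)/|\nabla u(x)|$ is constant along $\Gamma$. Moving $c$ slightly and using continuity, one sees that all nearby level sets are parallel straight lines with the same unit normal $\xi$; a connectedness/open-closed argument (using that the set of $c\in(-1,1)$ whose level sets are parallel lines in direction $\xi^\perp$ is open, closed, and nonempty in the range of $u$) extends this to all of $\R^2$. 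Hence $u(x)=v(x\cdot\xi)$ for some $v$, and substituting into \eqref{equation} yields $v''=W'(v)$ with $|v|<1$; the uniqueness assertion for \eqref{1d problem} then identifies $v$ with $g(\cdot-t)$ for some $t\in\R$.

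The main obstacle is the second step of paragraph three — propagating the ``parallel straight line'' structure past the critical set $\{\nabla u=0\}$ and across level values where the level set might have multiple components. Once the geometric inequality is established, everything else is soft, but keeping track of connectedness of level sets and ruling out bifurcations of the foliation requires the combined use of $\kappa\equiv 0$, constancy of $|\nabla u|$ along level sets, and the strict sign of $W''$ near $\pm 1$; this is where the specific hypotheses on $W$ (unique interior critical point, $W''(\pm 1)=2>0$) are essential.
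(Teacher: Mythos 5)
Your route via the Sternberg--Zumbrun geometric Poincar\'e inequality is a genuine and correct alternative to the argument in the references the paper actually cites for this result (Ambrosio--Cabr\'e, Dancer, Ghoussoub--Gui). Those papers do \emph{not} go through the curvature inequality: they instead use the fact that stability of $u$ produces a strictly positive Jacobi field $\phi>0$ with $-\Delta\phi+W''(u)\phi=0$; then for any direction $e$ the ratio $\sigma=\partial_e u/\phi$ satisfies $\operatorname{div}(\phi^2\nabla\sigma)=0$, and multiplying by $\sigma\eta_R^2$ with your logarithmic cutoff and using $\phi^2\sigma^2=|\partial_e u|^2\le C$ gives $\nabla\sigma\equiv 0$. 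This makes $\nabla u$ everywhere parallel to a fixed direction in one clean stroke, completely bypassing the critical set $\{\nabla u=0\}$. Your approach trades that for the SZ inequality, which has the advantage of being more geometric and of extending (in higher dimension) to statements about the second fundamental form of level sets; the cost is exactly the difficulty you flag yourself.

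The place where your sketch actually needs repair is the ``open--closed on $c$'' argument in the third step. That set of level values is not obviously closed: as $c$ approaches a critical value the level sets can degenerate, and an a priori bound keeping $\nabla u$ nonzero along $u^{-1}(c')$ for $c'$ near $c$ is unavailable before you know the structure. The cleaner way to finish is to note that $|B(u)|^2=\kappa^2+|\nabla_T\log|\nabla u||^2=0$ is equivalent to $\nabla\nu\equiv 0$ on $\{\nabla u\neq 0\}$, so $\nu$ is \emph{locally constant} there (not merely constant along level lines). Hence on each connected component $\Omega$ of $\{\nabla u\neq 0\}$ one has $u(x)=v(x\cdot\xi_\Omega)$ with $v''=W'(v)$ and $v'\neq 0$ on an open interval $(a_\Omega,b_\Omega)$. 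If $a_\Omega$ (say) were finite, then $v'\to 0$ there while $v$ stays strictly between the equilibria $\{-1,0,1\}$ (else ODE uniqueness makes $v$ constant), so by the conservation law $\tfrac12 v'^2-W(v)=\text{const}$ the orbit is a closed phase-plane loop and $v$ is periodic. But a periodic $v$ is one-dimensionally unstable (the zero-mode $v'$ changes sign, so by Sturm oscillation the operator $-\partial_s^2+W''(v)$ on $\R$ has negative spectrum), and stability of $u(x)=v(x_2)$ in $\R^2$ is equivalent to that of $v$ in $\R^1$; this contradicts the hypothesis. Therefore $(a_\Omega,b_\Omega)=\R$, $\Omega=\R^2$, $\{\nabla u=0\}=\emptyset$, and the uniqueness statement in \eqref{1d problem} identifies $v$ with $g(\cdot-t)$. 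With this replacement, your argument is complete.
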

Since $u$ has finite Morse index, $u$ is stable outside a compact set. As a consequence we then obtain
\begin{lem}\label{lem 1.2.1}
For any $b\in(0,1)$, there exist $c(b)>0$ and $R(b)>0$ such that,
 for $x\in\{|u|\leq 1-b\}\setminus B_{R(b)}(0)$,
 \[|\nabla u(x)|\geq c(b).\]
\end{lem}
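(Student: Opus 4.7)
The plan is to argue by contradiction using a translation/compactness argument together with Theorem 3.1 (the classification of stable solutions in $\R^2$). Suppose the conclusion fails for some $b \in (0,1)$: then there is a sequence $x_k \in \R^2$ with $|x_k| \to \infty$, $|u(x_k)| \leq 1-b$, and $|\nabla u(x_k)| \to 0$.

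First, I would translate and pass to a limit. Set $u_k(x) := u(x + x_k)$. Since $u$ is bounded ($|u| < 1$) and solves the semilinear equation $\Delta u = W'(u)$, standard interior elliptic estimates give uniform $C^{2,\alpha}_{\mathrm{loc}}$ bounds on $u_k$. Hence, along a subsequence, $u_k \to u_\infty$ in $C^2_{\mathrm{loc}}(\R^2)$, where $u_\infty$ still satisfies \eqref{equation} in $\R^2$ with $|u_\infty| \leq 1$. By the strong maximum principle applied to $1 \pm u_\infty$, either $|u_\infty| < 1$ everywhere or $u_\infty \equiv \pm 1$; the value at the origin satisfies $|u_\infty(0)| = \lim_k |u(x_k)| \leq 1-b < 1$, so the constant alternative is ruled out.

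Next I would upgrade the limit to a stable solution. Since $u$ has finite Morse index, there is a compact set $K$ outside of which the quadratic form $\mathcal{Q}$ is nonnegative on test functions. Fix any $\varphi \in C_0^\infty(\R^2)$; because $|x_k| \to \infty$, the translated support $\operatorname{supp}(\varphi(\cdot - x_k))$ lies in $\R^2 \setminus K$ for all large $k$. Using $\varphi(\cdot - x_k)$ as a test function for the stability of $u$ and passing to the limit $k \to \infty$ in the resulting inequality (which is possible by the $C^2_{\mathrm{loc}}$ convergence and dominated convergence) yields
\[
\int_{\R^2} |\nabla \varphi|^2 + W''(u_\infty)\varphi^2 \,\geq\, 0 \qquad \forall \varphi \in C_0^\infty(\R^2),
\]
so $u_\infty$ is a bounded stable solution of \eqref{equation} on all of $\R^2$.

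Finally I would invoke Theorem 3.1 to conclude $u_\infty(x) = g(x\cdot \xi - t_0)$ for some unit vector $\xi$ and $t_0 \in \R$. Then $\nabla u_\infty(0) = g'(-t_0)\,\xi$, and since $|g(-t_0)| = |u_\infty(0)| \leq 1-b$, the parameter $-t_0$ lies in a bounded interval determined by $b$ (using that $g$ is strictly increasing with $g(\pm\infty) = \pm 1$), and therefore $g'(-t_0) \geq c(b) > 0$ by the strict monotonicity of $g$ on any compact subinterval. This contradicts $|\nabla u_\infty(0)| = \lim_k |\nabla u(x_k)| = 0$, proving the lemma. The only delicate point is justifying that the limit is stable on \emph{all} of $\R^2$; this is routine here because stability outside a fixed compact set is preserved under translations going to infinity, but it is the one step that uses the finite Morse index hypothesis essentially.
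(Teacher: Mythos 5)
Your argument is correct and follows essentially the same route as the paper: translate by $x_k$, extract a $C^2_{\mathrm{loc}}$ limit using interior elliptic estimates, observe that finite Morse index forces the limit to be stable on all of $\R^2$, invoke Theorem~\ref{thm stable solution} to identify it as a one-dimensional profile $g(x\cdot\xi - t)$, and contradict $|\nabla u_\infty(0)| = 0$ since $g' > 0$ everywhere. The paper states these steps more tersely (in particular it simply asserts stability of the limit and that $|\nabla u_\infty|\neq 0$), while you spell out the test-function translation argument and the quantitative lower bound on $g'$, neither of which changes the substance of the proof.
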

\begin{proof}
If the claim were false, there would exist a sequence of $x_i\in\{|u|\leq 1-b\}$, $x_i\to\infty$, but
\begin{equation}\label{3.01}
|\nabla u(x_i)|\to 0.
\end{equation}
Let $u_i(x):=u(x_i+x)$. By standard elliptic estimates and the Arzela-Ascoli theorem, up to a subsequence, $u_i$ converges to a limit $u_\infty$
in $C^2_{loc}(\mathbb{R}^2)$. Because $u$ is stable outside a compact set, $u_\infty$ is stable in $\mathbb{R}^2$. Then by Theorem \ref{thm stable solution}, $u_\infty$ is one dimensional. In particular, $|\nabla u_\infty|\neq 0$ everywhere. However, by passing to the limit in \eqref{3.01}, we get
\[|\nabla u_\infty(0)|=\lim_{i\to+\infty}|\nabla u_i(0)|=\lim_{i\to+\infty}|\nabla u(x_i)|=0.\]
This is a contradiction.
\end{proof}
The proof also shows that $u$ is close to  one dimensional solutions at infinity.

\medskip

The following lemma shows that the nodal set $\{u=0\}$ cannot be contained in any bounded set.
\begin{lem}
For any solution of \eqref{equation} in $\mathbb{R}^2$ with finite Morse index, if $u$ is not constant, then $\{u=0\}$ is unbounded.
\end{lem}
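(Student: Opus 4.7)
I argue by contradiction: suppose $\{u=0\}$ is contained in some ball $B_R$. Since $\R^2\setminus\{u=0\}$ has a unique unbounded connected component on which $u$ has constant sign, I assume for definiteness $u>0$ on $\R^2\setminus B_R$ (the case $u<0$ is symmetric, with the roles of $+1$ and $-1$ interchanged). My plan is to first show $u\to 1$ uniformly at infinity by a blow-up argument modeled on Lemma~\ref{lem 1.2.1}, and then convert uniform convergence into pointwise constancy via exponential decay of $1-u$ and the two-dimensional Pohozaev identity.

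For uniform convergence, suppose otherwise: there exist $b\in(0,1)$ and $x_i\to\infty$ with $u(x_i)\leq 1-b$. The translates $u_i:=u(x_i+\cdot)$ subconverge in $C^2_{loc}(\R^2)$ to a limit $u_\infty$ with $|u_\infty|\leq 1$, which is stable on all of $\R^2$ (since $u$ is stable outside a compact set and $x_i\to\infty$) and satisfies $u_\infty\geq 0$ (since $u>0$ on $\R^2\setminus B_R$). If $u_\infty$ attains the value $1$ at some point, then the strong maximum principle applied to $w:=1-u_\infty\geq 0$ (which satisfies an elliptic equation with nonnegative zeroth-order coefficient thanks to $u_\infty\geq 0$) forces $u_\infty\equiv 1$, contradicting $u_\infty(0)=\lim u(x_i)\leq 1-b$. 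Hence $u_\infty<1$ everywhere, and combined with $u_\infty\geq 0$ this gives $|u_\infty|<1$, so $u_\infty$ is a solution of~\eqref{equation}; by Theorem~\ref{thm stable solution}, $u_\infty(y)\equiv g(y\cdot\xi-t)$ for some unit vector $\xi$ and $t\in\R$, but $g$ attains strictly negative values, again contradicting $u_\infty\geq 0$. Thus $u\to 1$ uniformly at infinity.

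Now set $v:=1-u>0$; Taylor expansion of $W'$ at $1$ yields $-\Delta v=W'(1-v)=-2v+O(v^2)$, so $-\Delta v+v\leq 0$ wherever $v$ is sufficiently small. For $|x|\geq R_1$ large, comparison with the radial supersolution $CK_0(|x|)$ of $(-\Delta+1)\psi=0$ on $\R^2\setminus B_{R_1}$ (where $K_0$ is the modified Bessel function of the second kind, with $K_0(r)\sim\sqrt{\pi/(2r)}\,e^{-r}$ at infinity) yields $v(x)\leq Ce^{-|x|/2}$ in the exterior, and the same bound on $|\nabla v|$ follows from interior elliptic estimates. The two-dimensional Pohozaev identity, obtained by multiplying $\Delta u=W'(u)$ by $x\cdot\nabla u$ and integrating over $B_R$, reads
\[2\int_{B_R}W(u)\,dx \;=\; R\int_{\partial B_R}\!\left[W(u)+\tfrac{1}{2}|\nabla u|^2-(\partial_\nu u)^2\right]d\sigma,\]
and its right-hand side is $O(R^2e^{-R})\to 0$ by the exponential decay. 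Hence $\int_{\R^2}W(u)\,dx=0$, forcing $u$ to take values in the discrete set $\{-1,1\}$ and therefore to be constant by continuity---contradicting the hypothesis that $u$ is not constant.

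The main difficulty is carrying out Step 1: one must separate the boundary case where $u_\infty$ touches $\pm 1$ (handled by the strong maximum principle, using the sign $u_\infty\geq 0$ to ensure the correct sign of the zeroth-order term) from the regular case where Theorem~\ref{thm stable solution} applies, and in each subcase exploit the constraints $u_\infty\geq 0$ and $u_\infty(0)\leq 1-b$ to rule out the limit. Once $u\to 1$ uniformly is established, the exponential decay and Pohozaev steps are routine and specific to dimension two (where the coefficient $(n/2-1)$ in Pohozaev vanishes).
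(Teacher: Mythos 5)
Your proof is correct and follows essentially the same route as the paper: show $u\to 1$ uniformly at infinity by a blow-up / limit-of-translates argument, then get exponential decay of $1-u$ and $|\nabla u|$ by comparison, and conclude via the two-dimensional Pohozaev identity that $\int_{\R^2}W(u)=0$. The only difference is that you separate the boundary case (the translated limit touching $1$, handled by the strong maximum principle) from the interior case (Theorem~\ref{thm stable solution}), whereas the paper dispatches the first step more tersely by invoking that the only positive bounded one-dimensional solution is the constant $1$; both are valid.
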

\begin{proof}
Assume by the contradiction, $u>0$ outside a ball $B_R(0)$. By Lemma \ref{lem 1.2.1} and the fact that the only positive solution to the one dimensional Allen-Cahn equation is the constant function $1$, we see $u(x)\to1$ uniformly as $|x|\to+\infty$. For $u$ near $1$, $W^{\prime}(u)\leq -c(1-u)$ for some positive constant $c>0$. Thus by comparison principle  we get two constants $C$ and $R$ such that, for any $x\in B_{R}^c$,
\begin{equation}\label{exponential decay 1}
u(x)\geq1-Ce^{-\frac{|x|-R}{C}}.
\end{equation}
Then by standard elliptic estimates or Modica's estimates (\cite{Modica}),
\begin{equation}\label{exponential decay 2}
|\nabla u(x)|\leq  \sqrt{ 2 W(u)} \leq Ce^{-\frac{|x|-R}{C}}.
\end{equation}

Recall that the Pohozaev type equality on ball $B_r(0)$ is
\[\int_{B_r}2W(u)=r\int_{\partial B_r}\frac{1}{2}|\nabla u|^2+W(u)-\left(\frac{\partial u}{\partial r}\right)^2.\]
For $r>R$, substituting \eqref{exponential decay 1} and \eqref{exponential decay 2} into the right hand side,
we get
\[\int_{B_r}2W(u)\leq Cre^{-\frac{r-R}{C}}.\]
Letting $r\to+\infty$ leads to
\[\int_{\mathbb{R}^2}W(u)=0.\]
Hence either $u\equiv1$ or $u\equiv-1$.
\end{proof}

If $|\nabla u(x)|\neq 0$, denote
\begin{equation}
\nu(x):=\frac{\nabla u(x)}{|\nabla u(x)|}, \quad \mbox{and} \quad B(u)(x)=\nabla\nu(x).
\end{equation}

If $|\nabla u(x)|\neq 0$, locally $\{u=u(x)\}$ is a $C^2$ curve, thus its curvature $H$ is well defined. Then
\begin{equation}
|B(u)(x)|^2=\frac{|\nabla^2u(x)|^2-|\nabla^2u(x)\cdot\nu(x)|^2}{|\nabla u(x)|^2}=H(x)^2+|\nabla_T\log|\nabla u(x)||^2,
\end{equation}
where $\nabla_T$ is the tangential derivative along the level set of $u$, see \cite{S-Z, Tonegawa}.

\begin{coro}\label{coro 3.2}
For any $b\in(0,1)$, $|B(u)(x)|^2$ is bounded in $(\mathbb{R}^2\setminus B_{R(b)}(0))\cap\{|u|\leq 1-b\}$. Moreover, for $x\in(\mathbb{R}^2\setminus B_{R(b)}(0))\cap\{|u|\leq 1-b\}$, if
$x\to\infty$, $|B(u)(x)|^2\to 0$.
\end{coro}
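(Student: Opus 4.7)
The plan is to handle the two assertions separately: the uniform boundedness follows directly from standard elliptic estimates combined with Lemma 3.2.1, while the decay at infinity will be proved by a blow-up/compactness argument using the one-dimensional classification of stable solutions (Theorem 3.1).

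For the boundedness claim, I would observe that since $|u|\le 1$, the right-hand side $W'(u)$ of \eqref{equation} is uniformly bounded, so interior Schauder estimates applied on balls of unit radius give a uniform bound $|\nabla^2 u(x)|\le C$ for all $x\in\R^2$. The formula
\[
|B(u)(x)|^2=\frac{|\nabla^2 u(x)|^2-|\nabla|\nabla u(x)||^2}{|\nabla u(x)|^2}
\]
then yields $|B(u)(x)|^2\le |\nabla^2 u(x)|^2/|\nabla u(x)|^2$, and on the region $(\R^2\setminus B_{R(b)}(0))\cap\{|u|\le 1-b\}$ Lemma 3.2.1 gives the lower bound $|\nabla u(x)|\ge c(b)>0$. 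Combining these two bounds produces the desired uniform estimate on $|B(u)|^2$ in this region.

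For the decay assertion, I would argue by contradiction. Suppose there existed $\delta>0$ and a sequence $x_i\in(\R^2\setminus B_{R(b)}(0))\cap\{|u|\le 1-b\}$ with $|x_i|\to\infty$ and $|B(u)(x_i)|\ge\delta$. Set $u_i(x):=u(x_i+x)$. Since $u$ has finite Morse index, it is stable outside a compact set, and because $x_i\to\infty$ the translated function $u_i$ is stable on every fixed ball $B_R(0)$ for $i$ large. Standard elliptic estimates together with Arzelà–Ascoli produce a subsequence converging in $C^2_{\mathrm{loc}}(\R^2)$ to a limit $u_\infty$, which is a stable solution of \eqref{equation} in all of $\R^2$. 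By Theorem 3.1, $u_\infty(x)\equiv g(x\cdot\xi-t)$ for some unit $\xi$ and some $t\in\R$, so its level sets are straight lines and $|\nabla u_\infty|$ depends only on one variable; in particular both the curvature and the tangential derivative of $\log|\nabla u_\infty|$ vanish, whence $B(u_\infty)\equiv 0$ wherever $|\nabla u_\infty|\neq0$.

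It remains to reach the contradiction at the origin. Passing to the limit in $|u(x_i)|\le 1-b$ gives $|u_\infty(0)|\le 1-b$, and passing to the limit in $|\nabla u(x_i)|\ge c(b)$ (from Lemma 3.2.1) gives $|\nabla u_\infty(0)|\ge c(b)>0$, so $B(u_\infty)(0)$ is well-defined and equals $0$. Since $|\nabla u_i(0)|$ is bounded away from zero uniformly in $i$, the expression for $|B(u)|^2$ above is continuous in $(\nabla u,\nabla^2 u)$ at the limit, so the $C^2_{\mathrm{loc}}$ convergence yields $|B(u)(x_i)|=|B(u_i)(0)|\to|B(u_\infty)(0)|=0$, contradicting $|B(u)(x_i)|\ge\delta$. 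No step here is a serious obstacle; the only point that requires a moment's care is verifying that $|\nabla u_\infty(0)|$ stays strictly positive in the limit so that the quotient defining $|B|^2$ behaves continuously, and this is precisely what Lemma 3.2.1 supplies.
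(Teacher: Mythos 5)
Your proof is correct and takes essentially the same approach as the paper: the boundedness is obtained from the uniform $C^2$ bound on $u$ together with the gradient lower bound of Lemma \ref{lem 1.2.1}, and the decay is obtained by the same blow-up argument that underlies Lemma \ref{lem 1.2.1}, using Theorem \ref{thm stable solution} to identify the limit as one-dimensional, where $|B|\equiv 0$. Your version simply spells out the contradiction argument that the paper leaves implicit when it says the second claim ``follows from Lemma \ref{lem 1.2.1}.''
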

\begin{proof}
The first claim follows from the fact that $|\nabla^2 u|^2$
is bounded in $\mathbb{R}^2$ and the lower bound on $|\nabla u|$ in Lemma \ref{lem 1.2.1}.

The second claim also follows from Lemma \ref{lem 1.2.1}, by noting that for one dimensional solutions $|B(g)|\equiv 0$.
\end{proof}

Now we give the following key estimate on the  decay rate on $|B(u)(x)|$ at infinity.
\begin{thm}\label{curvature decay}
For any solution of \eqref{equation} in $\R^2$ with finite Morse index and $b\in(0,1)$,
there exists a constant $C$ such that
\[|B(u)(x)|\leq \frac{C}{|x|}, \ \ \ \ \mbox{for}\ x\in \{|u|\leq 1-b\}\cap B_{R(b)}(0)^c.\]
\end{thm}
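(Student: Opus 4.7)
The plan is to argue by contradiction, converting the alleged failure of the estimate into a blow-down sequence of singularly perturbed solutions whose interfaces violate the uniform second-order regularity supplied by Theorem \ref{second order estimate}. Fix $b\in(0,1)$ and suppose the estimate fails: there exists a sequence $x_i\in\{|u|\le 1-b\}$ with $R_i:=|x_i|\to\infty$ and $R_i\,|B(u)(x_i)|\to\infty$. A standard point-picking argument within an annulus $B_{2R_i}(0)\setminus B_{R_i/2}(0)$ lets us replace $x_i$ by a nearby point at which $|B(u)|$ is, up to a multiplicative constant, maximal on that annulus; after this replacement the rescaled sequence below enjoys a uniform curvature comparison on a fixed neighborhood around the image of $x_i$.

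Next, blow down at scale $R_i$ by setting $\varepsilon_i:=1/R_i\to 0$ and $v_i(z):=u(R_i z)$. A direct computation shows that $v_i$ solves the singularly perturbed equation \eqref{equation scaled} with parameter $\varepsilon_i$, and that $|B(v_i)(z)|=R_i\,|B(u)(R_i z)|$. In particular, writing $z_i:=x_i/R_i\in S^1$,
\[
|v_i(z_i)|\le 1-b, \qquad |B(v_i)(z_i)|=R_i\,|B(u)(x_i)|\to\infty .
\]
Since $u$ is stable outside some compact $K$, for every fixed $\rho>0$ the rescaled solution $v_i$ is stable on the ball $B_\rho(z_i)\subset\R^2\setminus\{0\}$ once $i$ is large. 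Moreover, Lemma \ref{lem 1.2.1} gives the uniform lower bound $|\nabla u|\ge c(b)$ on $\{|u|\le 1-b\}\cap B_{R(b)}(0)^c$, hence $|\nabla v_i|\ge c(b)\varepsilon_i$ on $\{|v_i|\le 1-b\}\cap B_\rho(z_i)$, so each level set $\{v_i=t\}$ is locally a smooth curve on which $B(v_i)$ is well defined.

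It remains to verify the clustering hypotheses (H1)--(H3) of Section 2.2 for the sequence $v_i$ in a small cylinder centered at $z_i$ whose axis is aligned with the tangent of the (one-dimensional) limit interface. Using the sharp-interface compactness theory for the Allen--Cahn equation (\cite{H-T, Tonegawa, Tonegawa 2}) together with the Caffarelli--Cordoba Lipschitz regularity \cite{CC}, a subsequence of the level sets $\{v_i=t_i\}$ (for any $t_i\in(-1+b,1-b)$) decomposes into finitely many uniformly Lipschitz graphs $x_2=f_{\alpha,\varepsilon_i}(x_1)$ converging to a common limit, which in $\R^2$ is a line segment. Theorem \ref{second order estimate} then yields uniform $C^{2,\theta}$ bounds on the graphs $f_{\alpha,\varepsilon_i}$, hence a uniform bound on the second fundamental form of each component on a smaller cylinder, so $|B(v_i)(z_i)|\le C$. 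This contradicts the blow-up of $|B(v_i)(z_i)|$, and the theorem follows.

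The main obstacle is verifying the Lipschitz hypothesis (H3) of the clustering setup: uniform $C^{0,1}$ bounds on the interface graphs are not a priori automatic when multiple sheets cluster at the same scale, and this is precisely the regularity question of Caffarelli--Cordoba \cite{CC} and Tonegawa \cite{Tonegawa 3} that must be invoked. Once this is in hand, the reduction to Theorem \ref{second order estimate} is routine: stability of $u$ outside $K$ carries over to stability of each $v_i$ on the relevant cylinder, which is exactly what is needed to rule out the Toda-system obstruction, and the point-picking normalization from the first step ensures the extracted blow-down limit is non-degenerate.
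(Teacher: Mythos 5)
Your overall strategy (contradiction plus a rescaling argument that feeds into Theorem \ref{second order estimate}) matches the paper's, but the rescaling you perform is at the wrong scale, and this creates a fatal circularity. You rescale by the distance $R_i=|x_i|$, i.e.\ $v_i(z):=u(R_i z)$ with $\varepsilon_i=1/R_i$, and you correctly observe that under this rescaling $|B(v_i)(z_i)|=R_i|B(u)(x_i)|\to\infty$. But the only hypothesis of Theorem \ref{second order estimate} (other than stability) is precisely a uniform bound $|B(u_\varepsilon)|\le C$ on $\{|u_\varepsilon|<1-b\}\cap B_1$. By your own computation this hypothesis \emph{fails} for the sequence $v_i$: you cannot invoke the theorem to conclude $|B(v_i)(z_i)|\le C$ when $|B(v_i)(z_i)|\to\infty$ is the very assumption under which you are working. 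The point-picking you sketch (maximizing $|B(u)|$ on the annulus $B_{2R_i}\setminus B_{R_i/2}$) does not fix this, because after rescaling by $R_i$ the maximal curvature on the rescaled annulus is $\sim R_i\sup|B(u)|$, which still blows up.

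What is missing is a rescaling at the \emph{curvature} scale, together with a selection argument that controls the curvature in a large ball at that scale. The paper does this with the Pol\'{a}\v{c}ik--Quittner--Souplet doubling lemma: it produces points $Y_k$ with $|B(Y_k)|\,\mathrm{dist}(Y_k,\Gamma)\to\infty$, $|B(Z)|\le 2|B(Y_k)|$ for $Z\in B_{k|B(Y_k)|^{-1}}(Y_k)$, and then sets $\varepsilon_k:=|B(Y_k)|$ and $u_k(x):=u(Y_k+\varepsilon_k^{-1}x)$. This normalizes $|B(u_k)(0)|=1$, $|B(u_k)|\le 2$ on $B_k\cap\{|u_k|<1-b\}$, and $u_k$ is stable on $B_k$. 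Now the hypothesis of Theorem \ref{second order estimate} is verified, and its conclusion $|B(u_k)(0)|\le C\varepsilon_k^{1/7}\to 0$ contradicts the normalization $|B(u_k)(0)|=1$. Also note that, as a minor consequence of this setup, your concern about verifying the Lipschitz hypothesis (H3) via Caffarelli--Cordoba is not where the difficulty actually lies: once the normalized curvature bound $|B(u_k)|\le 2$ is in hand, the local graph structure of the level sets (with uniform $C^{1,1}$ norms) is immediate, as in Lemma \ref{lem graph construction} of the paper. The hard step is obtaining that curvature bound, which is exactly what the doubling-scale selection and curvature-scale rescaling achieve, and which your distance-scale rescaling cannot.
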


To prove this theorem, we argue by contradiction.
Take $\mathcal{X}$ to be the complete metric space $\{|u|\leq 1-b\}$
with the extrinsic distance and $\Gamma:=\mathcal{X}\cap
\overline{B_{R(b)}(0)}$.
 Assume there exists a sequence of
$X_k\in \mathcal{X}\setminus\Gamma$,
$|B(X_k)|\text{dist}(X_k,\Gamma)\geq 2k$. By the doubling lemma in \cite{Polacik-Q-S}, there exist $Y_k\in \mathcal{X}\setminus\Gamma$
such that
\[|B(Y_k)|\geq|B(X_k)|, \ \ \ \ |B(Y_k)|\text{dist}(Y_k,\Gamma)\geq 2k,\]
\[|B(Z)|\leq 2|B(Y_k)|\ \ \ \ \ \text{for}\ \ Z\in B_{k|B(Y_k)|^{-1}}(Y_k).\]
Let $\varepsilon_k:=|B(Y_k)|$ and define
\[u_k(x):=u(y_k+\varepsilon_k^{-1}x).\]
Note that
\begin{equation}\label{1.2.1}
\text{dist}(Y_k,\Gamma)\geq 2k|B(Y_k)|^{-1}.
\end{equation}
By Corollary \ref{coro 3.2}, $|Y_k|\to+\infty$ and $\varepsilon_k\to 0$.

In $B_k(0)$, $u_k$ is a solution of \eqref{equation scaled}
with the parameter $\varepsilon_k$. By \eqref{1.2.1}, $u_k$ is stable in $B_k(0)$.

 For $X\in B_k(0)\cap\{|u_k|<1-b\}$,
\begin{equation}\label{curvature rescaling bound}
|B(u_k)(X)|\leq 2.
\end{equation}
On the other hand, by the above construction we have
\begin{equation}\label{curvature rescaling normalization}
|B(u_k)(0)|=1.
\end{equation}
The bound on $|B_k|$ implies that, for any
$X\in \{|u_k|<1-b\}\cap B_k(0)$, $\{u_k=u_k(X)\}\cap B_{1/8}(X)$
can be represented by the graph of a function with a uniform
$C^{1,1}$ bound.

After a rotation, assume that
the connected component of $\{u_k=u_k(0)\}\cap B_{1/8}(0)$ passing through
$0$ (denoted by $\Sigma_k$) is represented by the graph
$\{x_2=f_k(x_1)\}$, where $f_k(0)=f_k^\prime(0)=0$. By the curvature
bound \eqref{curvature rescaling bound},
\begin{equation}\label{C1,1 bound}
\sup_{[-1/8,1/8]}|f_k^{\prime\prime}|\leq 32.
\end{equation}
By these bound, after passing to a subsequence, we can assume $f_k$ converges to $f_\infty$ in $C^1([-1/8,1/8])$.

There are two cases.
\begin{itemize}
\item {\bf Case 1.} $\{x_2=f_k(x_1)\}$ is an isolated component of $\{u_k=u_k(0)\}$. In other words, there exists an $h>0$ independent of $k$ such that $\{u_k=u_k(0)\}\cap B_h(0)=\{x_2=f_k(x_1)\}$.
\item {\bf Case 2.} There exists a sequence of points on a component of $\{u_k=u_k(0)\}\cap B_{1/8}(0)$ other than $\{x_2=f_k(x_1)\}$,  converging to a point on $\{x_2=f_\infty(x_1)\}$.
\end{itemize}

The following simple lemma can be proved by combining the curvature bound \eqref{curvature rescaling bound} with the fact that different connected components of $\{u_k=u_k(0)\}$ are disjoint. (This fact   has been used a lot in minimal surface theory, in particular, in Colding-Minicozzi   \cite{CM2}.)
\begin{lem}\label{lem graph construction}
There exist two universal constants $h$ and $C(h)$ such that if a connected component $\Gamma$ of $\{u_k=u_k(0)\}\cap B_{1/8}(0)$ intersects $B_h(0)$, then $\Gamma\cap B_{2h}(0)$ can be represented by the graph $\{x_2=f(x_1)\}$, where $\|f\|_{C^{1,1}([-2h,2h])}\leq C(h)$.
\end{lem}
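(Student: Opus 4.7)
The plan combines the two ingredients flagged by the authors: the uniform curvature bound $|B(u_k)|\leq 2$ from \eqref{curvature rescaling bound}, which gives every connected component of $\{u_k=u_k(0)\}\cap B_{1/8}(0)$ a $C^{1,1}$ structure with extrinsic curvature at most $2$, and the disjointness of distinct components. Recall that $\Sigma_k=\{x_2=f_k(x_1)\}$ is nearly horizontal near the origin: since $f_k(0)=f_k'(0)=0$ and $|f_k''|\leq 32$, we have $|f_k(x_1)|\leq 16 x_1^2$ and $|f_k'(x_1)|\leq 32|x_1|$ on $[-1/8,1/8]$.

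I would parameterize $\Gamma$ by arclength $\gamma(s)$ starting from a point $p\in\Gamma\cap B_h(0)$, with unit tangent $T(s)=(\cos\theta(s),\sin\theta(s))$ and $|\theta'(s)|\leq 2$. The decisive step is to show that, once $h$ is a small enough universal constant, $|\theta(s)|\leq\pi/4$ at every $\gamma(s)\in B_{2h}(0)$. Suppose instead some $\gamma(s_*)\in B_{2h}(0)$ had $|\theta(s_*)|>\pi/4$, and consider the signed vertical separation
\[
g(s):=\gamma(s)_2-f_k(\gamma(s)_1),
\]
which is nowhere zero because $\Gamma\cap\Sigma_k=\emptyset$. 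At $s=s_*$ one has $|g(s_*)|=O(h)$ (from $\gamma(s_*)\in B_{2h}$ and $|f_k|=O(h^2)$), while
\[
g'(s)=\sin\theta(s)-f_k'(\gamma(s)_1)\cos\theta(s)
\]
satisfies $|g'(s_*)|\geq \sin(\pi/4)-O(h)\geq 1/2$ for $h$ small, and $|g''|$ is universally bounded using $|\theta'|\leq 2$ together with the $C^{1,1}$ bound on $f_k$. Hence $g$ is strictly monotone on a fixed-length arclength interval around $s_*$ and must take both signs there, contradicting $g\neq 0$ provided $h$ is chosen small enough that the interval sits inside $B_{1/8}(0)$ and $|g(s_*)|$ is much smaller than the linear increment of $g$ over it.

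With $|\theta|\leq\pi/4$ on $\Gamma\cap B_{2h}(0)$, we have $dx_1/ds=\cos\theta\geq 1/\sqrt{2}$, so the $x_1$-coordinate is strictly monotone along every arc of $\Gamma$ inside $B_{2h}(0)$, and the arc through $p$ is a graph $\{x_2=f(x_1)\}$ whose $x_1$-projection contains $[-2h,2h]$ (the graph exits $B_{2h}(0)$ only through its lateral boundary since $|f'|\leq 1$ and $|p|\leq h$). Possible additional arcs of $\Gamma\cap B_{2h}(0)$ are ruled out by rerunning the sign-change argument with $\Sigma_k$ replaced by the already-constructed arc through $p$. Translating the extrinsic curvature bound into $|f''|/(1+f'^2)^{3/2}\leq 2$ with $|f'|\leq 1$ finally yields $\|f\|_{C^{1,1}([-2h,2h])}\leq C(h)$. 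I expect the main technical obstacle to be the first step: choosing the single smallness parameter $h$ so that $|g(s_*)|$ is dominated by the linear growth of $g$ on a monotonicity interval fitting inside $B_{1/8}(0)$, while all the coefficients controlling $|g''|$ remain under universal bounds.
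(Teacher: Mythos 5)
You have the right key idea---the ``sheets cannot cross'' separation argument against $\Sigma_k$ via $g(s)=\gamma(s)_2-f_k(\gamma(s)_1)$---and this is exactly what the authors gesture at when they say the lemma follows from the curvature bound together with disjointness. Your first step (the sign-change argument giving $|\theta(s)|\le\pi/4$ whenever $\gamma(s)\in B_{2h}(0)$) is sound; optimizing $\rho$ in the monotonicity window it even yields the sharper bound $|\theta(s)|\lesssim\sqrt{h}$.

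The final paragraph, however, contains two genuine gaps. First, the assertion that the arc through $p$ has $x_1$-projection containing all of $[-2h,2h]$ because it ``exits $B_{2h}(0)$ only through its lateral boundary since $|f'|\le1$ and $|p|\le h$'' is wrong: with $p=(0,-h)$ and even the sharper slope $|f'|=O(\sqrt h)$, the arc leaves $B_{2h}(0)$ at $|x_1|\approx\sqrt{3}\,h<2h$. To obtain a graph function on the full interval $[-2h,2h]$ you must continue the arc through $B_{1/8}(0)\setminus B_{2h}(0)$, where your sign-change argument no longer controls $\theta$ since the separation $|g|$ need not be $O(h)$ there. Second, ``rerunning the sign-change argument with $\Sigma_k$ replaced by the already-constructed arc through $p$'' is a non sequitur: the premise $g\neq0$ came from $\Gamma$ being disjoint from $\Sigma_k$, whereas any additional arc $A_2$ of $\Gamma\cap B_{2h}(0)$ is a subset of $\Gamma$, not disjoint from it. At best you re-derive that $A_2$ is near-horizontal, which you already knew; there is no contradiction.

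Both gaps close with one global observation you never invoke, namely that confinement in $B_{1/8}(0)$ with curvature at most $2$ severely limits $\Gamma$. With arclength parametrization, $|\gamma''|\le 2$ on $\{|u_k|<1-b\}$ and $|\gamma|\le1/8$ give
\[
\frac{d^2}{ds^2}\,|\gamma(s)|^2 \;=\; 2+2\,\gamma\cdot\gamma'' \;\ge\; 2-2\cdot\tfrac{1}{8}\cdot 2 \;=\;\tfrac{3}{2},
\]
so $|\gamma|^2$ is uniformly convex along $\Gamma$. This already rules out a closed loop, and for an arc with endpoints on $\partial B_{1/8}(0)$ it gives total length $L\le \tfrac{2}{\sqrt{48}}=\tfrac{1}{\sqrt{12}}<\tfrac{\pi}{8}$. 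Hence the total variation of $\theta$ along $\Gamma$ is at most $2L<\pi/4$, and combined with $|\theta(p)|\le\pi/4$ from your sign-change step this gives $|\theta|<\pi/2$ at \emph{every} point of $\Gamma$. Thus $\Gamma$ is a \emph{single} global graph over the $x_1$-axis---so there is no second arc to rule out---and the graph persists all the way to $\partial B_{1/8}(0)$, which covers $[-2h,2h]$ once $h$ is a small universal constant.
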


Changing the notation slightly,  there exist a sequence of solutions $u_\varepsilon$ in $\mathcal{C}_1$ ($\mathcal{C}_1$ denotes the cylinder $(-1,1)\times(-1,1)$) with its nodal sets given by
\[\cup_\alpha\{x_2=f_{\alpha,\varepsilon}(x_1)\}.\]
Moreover, in $\{|u_\varepsilon|<1-b\}$, $|\nabla u_\varepsilon|>0$ and $|B(u_\varepsilon)|\leq 2$. In particular, $|f_{\alpha,\varepsilon}^{\prime\prime}(x_1)|\leq 64$ for every $\alpha$ and $x_1\in(-1,1)$. After a rigid motion we may assume $f_{0,\varepsilon}(0)=f_{0,\varepsilon}^\prime(0)=0$ and $|f_{0,\varepsilon}^{\prime\prime}(0)|=1$.
Here the cardinality of the index set $\alpha$ could go to infinite.

\medskip

The following theorem leads to a contradiction with \eqref{curvature rescaling normalization} and the proof of Theorem \ref{curvature decay} is thus finished.
\begin{thm}\label{second order estimate}
Suppose $u_\varepsilon$ is a sequence of stable solutions to \eqref{equation scaled} in $B_1(0)$ satisfying for some constant $b\in(0,1)$ and $C>0$ independent of $\varepsilon$,
\[|B(u_\varepsilon)|\leq C, \quad \mbox{in } \{|u_\varepsilon|<1-b\}\cap B_1(0).\]
  Then for all $\varepsilon$ small,
\[\sup_{\{|u_\varepsilon|<1-b\}\cap B_{1/2}(0)}|B(u_{\varepsilon})|\leq C\varepsilon^{1/7}.\]
\end{thm}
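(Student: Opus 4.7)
The strategy is to combine the clustering-interface analysis developed in Part II with a quantitative Choi--Schoen type argument that exploits stability of $u_\varepsilon$; the exponential interaction between nearby interfaces will play the role of $|A|^2$ in the minimal-surface analogy emphasized in the introduction.

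\emph{Step 1: Reduction to clustering interfaces.} Since $|B(u_\varepsilon)|\leq C$ on $\{|u_\varepsilon|<1-b\}\cap B_1(0)$ and $|\nabla u_\varepsilon|$ is bounded below on the same set (by the analogue of Lemma \ref{lem 1.2.1} after rescaling), each connected component of $\{|u_\varepsilon|<1-b\}$ intersecting $B_{3/4}(0)$ contains a level curve $\{u_\varepsilon=t\}$ which is a graph $\{x_n=f_{\alpha,\varepsilon}(x')\}$ over a disk of definite radius with uniform $C^{1,1}$ bound. This places us in the hypothesis \textbf{(H1)}--\textbf{(H3)} setting of Part II, so the Fermi-coordinate expansion and the approximate Toda system derived there apply.

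\emph{Step 2: Approximate Toda system.} Following the program sketched after Theorem \ref{main result 4}, I would work in Fermi coordinates $(x',\zeta)$ around each $\Gamma_{\alpha,\varepsilon}$, use the optimal approximation $g((\zeta-h_{\alpha,\varepsilon}(x'))/\varepsilon)$, and project the equation for $u_\varepsilon-\sum_\alpha g((\zeta-h_{\alpha,\varepsilon})/\varepsilon)$ against $g'$ to obtain, for each $\alpha$,
\begin{equation*}
\Delta f_{\alpha,\varepsilon}
= \frac{A_1}{\varepsilon}\,e^{-\sqrt{2}(f_{\alpha,\varepsilon}-f_{\alpha-1,\varepsilon})/\varepsilon}
- \frac{A_2}{\varepsilon}\,e^{-\sqrt{2}(f_{\alpha+1,\varepsilon}-f_{\alpha,\varepsilon})/\varepsilon}
+ E_{\alpha,\varepsilon},
\end{equation*}
with $E_{\alpha,\varepsilon}$ controlled by higher powers of the interaction terms and $\varepsilon$-corrections involving $\nabla f_{\alpha,\varepsilon}$ and $\nabla^2 f_{\alpha,\varepsilon}$. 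A preliminary use of the stability of $u_\varepsilon$, combined with Modica's inequality, yields the baseline lower bound on intermediate distances between the $\Gamma_{\alpha,\varepsilon}$ that is just a little smaller than $\frac{\sqrt{2}}{2}\varepsilon|\log\varepsilon|$, which in turn ensures $E_{\alpha,\varepsilon}$ is of strictly smaller order than the explicit Toda terms.

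\emph{Step 3: Stability inheritance and Choi--Schoen decay.} Plugging test functions of the form $\varphi(x)=\sum_\alpha\eta_\alpha(x')\,g'((\zeta-h_{\alpha,\varepsilon})/\varepsilon)$ into the stability quadratic form $\mathcal{Q}$, integrating in $\zeta$, and using the identity $-g'''+W''(g)g'=0$, I would reduce the stability of $u_\varepsilon$ to a stability inequality for the system $(f_{\alpha,\varepsilon})$ viewed as a solution of the approximate Toda system in Step 2. Setting $\tau_\alpha:=e^{-\sqrt{2}(f_{\alpha+1,\varepsilon}-f_{\alpha,\varepsilon})/\varepsilon}$, the Choi--Schoen argument --- testing the stability inequality against cutoffs of $\tau_\alpha^{1/2}$ and using the reverse-Poincar\'e-type bound produced by the Toda equation itself --- then yields interior integral decay of $\tau_\alpha$, upgraded to pointwise decay $\tau_\alpha\leq C\varepsilon^\beta$ for some $\beta>0$ on $B_{1/2}$. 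Translating back through the Fermi expansion, $|B(u_\varepsilon)|$ on $\{|u_\varepsilon|<1-b\}\cap B_{1/2}$ is dominated by $\max_\alpha\tau_\alpha^{1/2}$ plus lower-order terms in $\varepsilon$.

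\emph{Main obstacle.} The core technical difficulty is twofold. First, the transfer of stability from $u_\varepsilon$ to the Toda system in Step 3 must absorb the cross-interaction terms between different $\alpha$ and the remainders $E_{\alpha,\varepsilon}$; this is delicate precisely when intermediate distances are close to the critical size $\frac{\sqrt{2}}{2}\varepsilon|\log\varepsilon|$, and requires the preliminary distance lower bound from Step 2 to create the necessary slack. Should this reduction fail, one would instead rescale by $\varepsilon^{1/2}$ as in Theorem \ref{main result 3} and pass to a nontrivial entire stable solution of the Toda system in $\R^{n-1}$, contradicting the classification of such solutions. Second, the specific exponent $1/7$ in the conclusion emerges from balancing the Choi--Schoen integral decay rate against the size of $E_{\alpha,\varepsilon}$ and the dependence of the Schauder/energy constants for the Toda ODE on the (possibly unbounded) number of interfaces; any loss in the interplay degrades the exponent, and it is this quantitative bookkeeping --- rather than any single qualitative step --- that dominates the proof.
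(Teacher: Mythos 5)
Your proposal follows essentially the same route as the paper: reduce to a clustering-interface picture via the a priori curvature bound, derive the approximate Toda system in Fermi coordinates via the optimal approximation, use stability of $u_\varepsilon$ both to obtain the baseline lower bound on intermediate distances (the paper does this through a directional-derivative test function and a surgery rather than through Modica's inequality, but the role of stability is the same) and, after projecting the stability quadratic form onto $g'$-shaped test functions, to produce a stability inequality for the Toda system, from which a Choi--Schoen $\varepsilon$-regularity argument for the subsolution $e^{-\sqrt{2}(f_{\alpha+1,\varepsilon}-f_{\alpha,\varepsilon})/\varepsilon}$ yields the improved interior decay. You correctly identify that the $1/7$ exponent is an artifact of quantitative bookkeeping --- in the paper it emerges from an iterated decay estimate (Lemma \ref{decay estimate}) run at scale $A_0(r)^{-1/2}$ against the assumed lower bound $A_0 \gtrsim \varepsilon^{8/7}$ --- rather than from any single structural step.
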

The proof will be postponed to Part III.

\medskip
In the above we do not use the full power of Theorem \ref{second order estimate}.
In fact, we can improve Theorem \ref{curvature decay} to a higher order decay rate.
\begin{thm}\label{quadratic curvature decay}
There exists a constant $C$  such that
\[|B(u)(x)|\leq \frac{C}{|x|^{8/7}}, \ \ \ \ \mbox{for}\ x\in \{|u|<1-b\}\cap B_{R(b)}(0)^c.\]
\end{thm}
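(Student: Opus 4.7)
The plan is to prove Theorem~\ref{quadratic curvature decay} by rescaling around an arbitrary far-away point and invoking Theorem~\ref{second order estimate}. The key observation is that the linear decay $|B(u)|\leq C/|x|$ from Theorem~\ref{curvature decay} is exactly scale-invariant: after rescaling by a factor proportional to $|x_0|$, it yields a uniform $O(1)$ curvature bound on a unit ball, so Theorem~\ref{second order estimate} applies, and its $\varepsilon^{1/7}$ conclusion translates on unscaling into the extra $|x_0|^{-1/7}$ factor that raises the exponent from $1$ to $8/7$.

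Concretely, let $R_0$ be a radius outside of which $u$ is stable. Fix $x_0\in\{|u|\leq 1-b\}\cap B_{R(b)}(0)^c$ with $|x_0|\geq R_1:=4\max(R_0,R(b))+1$, and set
$$R:=|x_0|/4,\qquad \varepsilon:=1/R=4/|x_0|,\qquad v_\varepsilon(y):=u(x_0+Ry)\ \text{ for } y\in B_1(0).$$
A direct computation gives $\varepsilon^2\Delta v_\varepsilon=W'(v_\varepsilon)$, so $v_\varepsilon$ solves \eqref{equation scaled} in $B_1(0)$. For $y\in B_1(0)$ we have $|x_0+Ry|\in[3|x_0|/4,\,5|x_0|/4]$, which lies outside both $B_{R_0}(0)$ and $B_{R(b)}(0)$; hence pulling any $\varphi\in C_0^\infty(B_1(0))$ back via $\tilde\varphi(x):=\varphi((x-x_0)/R)$ produces a test function supported in the stability region of $u$, and the usual change of variables shows $v_\varepsilon$ is stable on $B_1(0)$. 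From Theorem~\ref{curvature decay} we obtain, on $\{|v_\varepsilon|<1-b\}\cap B_1(0)$,
$$|B(v_\varepsilon)(y)|=R\,|B(u)(x_0+Ry)|\leq R\cdot\frac{C}{|x_0+Ry|}\leq \frac{C}{3}.$$

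Apply Theorem~\ref{second order estimate} to $v_\varepsilon$ (valid once $\varepsilon=4/|x_0|$ is small enough, i.e.\ once $|x_0|$ exceeds some threshold). Since $|v_\varepsilon(0)|=|u(x_0)|\leq 1-b$ and $0\in B_{1/2}(0)$, it yields $|B(v_\varepsilon)(0)|\leq C\varepsilon^{1/7}$, and unscaling,
$$|B(u)(x_0)|=R^{-1}|B(v_\varepsilon)(0)|\leq C R^{-1}\varepsilon^{1/7}=C\varepsilon^{8/7}=C(4/|x_0|)^{8/7}\leq C'|x_0|^{-8/7}.$$
On the bounded annulus $\{R(b)<|x_0|\leq R_1\}\cap\{|u|\leq 1-b\}$ Theorem~\ref{curvature decay} already provides the uniform bound $|B(u)|\leq C/R(b)$, so enlarging $C'$ if necessary gives the estimate on all of $\{|u|\leq 1-b\}\cap B_{R(b)}(0)^c$.

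No substantive obstacle is expected: the argument is a textbook iteration in which an $\varepsilon^\alpha$ interior gain at unit scale converts to an extra $|x|^{-\alpha}$ decay on the original solution. The only points requiring care are that the rescaled $v_\varepsilon$ is stable on \emph{all} of $B_1(0)$—this forces the choice $R=|x_0|/4$, which leaves a safety margin between $B_R(x_0)$ and the compact set outside of which $u$ is stable—and that Theorem~\ref{curvature decay} is exactly strong enough to supply the unit-scale curvature bound required by Theorem~\ref{second order estimate}, which it is, precisely because the decay exponent there is the sharp $1$.
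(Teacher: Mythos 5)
Your proof is correct and follows the same strategy as the paper: rescale by a factor comparable to the distance $|x_0|$ so that the rescaled function solves \eqref{equation scaled} with $\varepsilon\sim|x_0|^{-1}$ and has curvature bounded by a fixed constant on $B_1(0)$ (this is exactly what the linear decay of Theorem~\ref{curvature decay} delivers), then harvest the $\varepsilon^{1/7}$ gain from Theorem~\ref{second order estimate} and unscale. One remark worth making: your choice of rescaling factor $R=|x_0|/4$ (equivalently $\varepsilon=1/R$) is the transparent one. The paper's displayed rescaling uses $u_k(x)=u(X_k+\varepsilon_k^{-1}x)$ with $\varepsilon_k=|B(u)(X_k)|$, which normalizes $|B(u_k)(0)|=1$ and does not cooperate cleanly with the conclusion $|B(u_k)(0)|\leq C\varepsilon_k^{1/7}$ of Theorem~\ref{second order estimate}; the intended scale is really $|X_k|$ (which is comparable to $\varepsilon_k^{-1}$ only when $|B(u)(X_k)|\,|X_k|$ is bounded below, a case one would have to reduce to). Your version sidesteps this by fixing the scale from the distance rather than from the curvature, and in doing so fills in the step the paper glosses over.
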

\begin{proof}
Take an arbitrary sequence $X_k\in\{|u|<1-b\}\to\infty$.
Denote $\varepsilon_k:=|B(u)(X_k)|$, which converges to $0$ as $k\to\infty$. Let
\[u_k(x):=u(X_k+\varepsilon_k^{-1}x),\]
which is a solution of \eqref{equation scaled} with parameter $\varepsilon_k$.

By Theorem \ref{curvature decay}, $\varepsilon_k|X_k|\leq C$ and for any $x\in \{|u|<1-b\}\cap B_{|X_k|/2}(X_k)$,
\[|B(u)(x)|\leq\frac{C}{|x|}\leq\frac{2C}{|X_k|}\leq 2C\varepsilon_k.\]
Thus after a scaling, there exists a constant $\rho\in(0,1/2)$ independent of $k$ such that  in $B_\rho(0)$, $u_k$ satisfies the assumptions of Theorem \ref{second order estimate}.
Note that for all $k$ large, $u$ is stable in $B_{|X_k|/2}(X_k)$. Hence $u_k$ is stable in $B_\rho(0)$.
Applying Theorem \ref{second order estimate} gives  $|B(u_k)(0)|\leq C\varepsilon_k^{1/7}$. Rescaling back we get the desired bound on $|B(u)(X_k)|$.
\end{proof}

\section{Lipschitz regularity of nodal sets at infinity}
\setcounter{equation}{0}

First using Theorem \ref{quadratic curvature decay} and proceeding as in \cite{Wang 2}, we can show that there are at most finitely many connected components of $\{u=0\}$. This is achieved by choosing the smallest ball centered at the origin which contains a bounded connected component of $\{u=0\}$ and comparing their curvatures at the contact point.

\medskip

In the following we take a constant $R_0>R(1/2)$ so that $u$ is stable outside $B_{R_0}(0)$.
We first give a chord-arc bound on $\{u=0\}$.
\begin{lem}
Let $\Sigma$ be a unbounded connected component of $\{u=0\}\setminus B_{R_0}(0)$ and $X(t)$ be an arc length parametrization of $\Sigma$, where $t\in[0,+\infty)$. Then there exists a constant $c$ such that for any $t$ large,
\[|X(t)|\geq c|t|.\]
\end{lem}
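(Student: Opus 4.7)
The plan is to establish the growth $|X(t)|\geq c t$ by differentiating $r(t)^2 := |X(t)|^2$ twice and exploiting the sharp curvature decay of Theorem \ref{quadratic curvature decay}.

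I would first verify that $r(t) \to \infty$ as $t \to \infty$. By Lemma \ref{lem 1.2.1} (applied with $b=1/2$), $|\nabla u|$ is bounded below by a positive constant on $\{u=0\}\cap B_{R_0}^c$, so $\Sigma$ is a $C^2$ embedded one-dimensional submanifold. Since $\Sigma$ is a connected component of the closed set $\{u=0\}\setminus B_{R_0}$, each intersection $\Sigma\cap\overline{B_R}$ is compact. Combined with injectivity of the arc-length parametrization on the embedded $1$-manifold $\Sigma$ and the fact that small neighborhoods of points of $\Sigma$ are arcs, this forces $X:[0,\infty)\to\Sigma\subset\R^2$ to be a proper map, whence $r(t)\to\infty$.

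Next, since $|X'(t)|\equiv 1$, a direct computation gives
\beq
\frac{d^2}{dt^2}|X(t)|^2 = 2|X'(t)|^2 + 2\, X(t)\cdot X''(t) = 2 + 2\, X(t)\cdot X''(t).
\eeq
The magnitude $|X''(t)|$ is precisely the curvature of $\Sigma$ at $X(t)$, which by the identity $|B(u)|^2 = H^2 + |\nabla_T\log|\nabla u||^2$ is dominated by $|B(u)(X(t))|$. Theorem \ref{quadratic curvature decay} then yields
\beq
|X(t)\cdot X''(t)| \leq r(t)\cdot\frac{C}{r(t)^{8/7}} = \frac{C}{r(t)^{1/7}} \longrightarrow 0 \quad\text{as } t\to\infty.
\eeq
Choosing $T_0$ so that this error term is at most $1/2$ for $t\geq T_0$, I obtain $(|X|^2)''(t)\geq 1$ on $[T_0,\infty)$. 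Integrating twice from $T_0$ gives
\beq
|X(t)|^2 \geq |X(T_0)|^2 + 2\bigl(X(T_0)\cdot X'(T_0)\bigr)(t-T_0) + \frac{(t-T_0)^2}{2},
\eeq
and since the constant and linear terms on the right are $O(t)$ while the quadratic term dominates, one concludes $|X(t)|^2\geq t^2/4$ for all $t$ sufficiently large, proving the lemma with $c=1/2$ (the same argument in fact yields $|X(t)|/t\to 1$).

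The main obstacle is the first step, namely establishing $r(t)\to\infty$: it rules out the a priori possibility that the embedded curve $\Sigma$ accumulates in a bounded region even though its intrinsic length is infinite. Once properness is secured, the remainder is a straightforward ODE argument powered by the decisive $r^{-1/7}$ gain provided by Theorem \ref{quadratic curvature decay} (the exponent $8/7>1$ is exactly what makes the error $r\cdot|B(u)|$ tend to zero).
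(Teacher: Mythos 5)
Your proposal is correct and takes essentially the same route as the paper: differentiate $|X(t)|^2$ twice, bound the cross term $X\cdot X''$ by the curvature decay of Theorem \ref{quadratic curvature decay}, and integrate; you merely spell out the properness of $X$ and the final integration in more detail. (Your exponent $1/7$ in $r\cdot|B(u)|\lesssim r^{-1/7}$ is the right one; the paper's displayed $|X(t)|^{1/8}$ at that step is a typo.)
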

\begin{proof}
Because $\Sigma$ is a smooth embedded curve diffeomorphic to $[0,+\infty)$, if $t\to+\infty$, $|X(t)|\to+\infty$.

By direct differentiation and applying Theorem \ref{quadratic curvature decay}, we obtain
\begin{eqnarray*}
\frac{d^2}{dt^2}\big|X(t)\big|^2=2\Big|\frac{dX}{dt}\Big|^2+2X(t)\cdot\frac{d^2 X}{dt^2}(t)
&\geq&2-\frac{C}{|X(t)|^{1/8}}\\
&\geq&1,
\end{eqnarray*}
for all $t$ large.
Integrating this differential inequality we finish the proof.
\end{proof}

Keeping assumptions as in this lemma, we can further show that
\begin{prop}\label{Lip regularity at infinity}
The limit
\[e_\infty:=\lim_{t\to+\infty}X^\prime(t)\]
exists. Moreover, for all $t$ large,
\[|X^\prime(t)-e_\infty|\leq\frac{C}{t^{1/7}}.\]
\end{prop}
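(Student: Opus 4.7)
The plan is to combine the quadratic curvature decay from Theorem \ref{quadratic curvature decay} with the chord-arc bound from the preceding lemma to show that the curvature of $\Sigma$ is integrable along arc length with a quantitative tail estimate.

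First I would note that since $X(t)$ is an arc-length parametrization, $|X'(t)| \equiv 1$ and $X''(t)$ is the curvature vector of $\Sigma$, so $|X''(t)|$ equals the (absolute) geodesic curvature $|H(X(t))|$ of the level set $\{u=0\}$ at $X(t)$. The identity
\[
|B(u)|^2 = H^2 + |\nabla_T\log|\nabla u||^2
\]
recalled just before Corollary \ref{coro 3.2} gives $|H(X(t))| \leq |B(u)(X(t))|$, and since $X(t)\in\{u=0\}\subset\{|u|\le 1-b\}$ for $b=1/2$ and $|X(t)|\to\infty$, Theorem \ref{quadratic curvature decay} applies.

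Combining Theorem \ref{quadratic curvature decay} with the chord-arc bound $|X(t)|\ge c\,t$ from the previous lemma yields, for all $t$ sufficiently large,
\[
|X''(t)| \leq |B(u)(X(t))| \leq \frac{C}{|X(t)|^{8/7}} \leq \frac{C'}{t^{8/7}}.
\]
Since $8/7 > 1$, the function $X''$ is integrable on $[T,+\infty)$ for $T$ large, hence $X'(t)$ is Cauchy as $t\to+\infty$ and converges to some unit vector $e_\infty\in\R^2$.

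For the quantitative estimate, I would simply write $X'(t) - e_\infty = -\int_t^{+\infty} X''(s)\,ds$ and estimate
\[
|X'(t) - e_\infty| \leq \int_t^{+\infty}|X''(s)|\,ds \leq \int_t^{+\infty} \frac{C'}{s^{8/7}}\,ds = 7C'\,t^{-1/7},
\]
giving the claimed $C/t^{1/7}$ bound. The only subtle point is ensuring that $X(t)$ really does stay in the region where Theorem \ref{quadratic curvature decay} applies, but this is immediate because $\Sigma\subset\{u=0\}$ and $|X(t)|\to+\infty$, so there is no real obstacle once the two preceding results are in hand; the proposition is essentially an ODE consequence of the curvature decay rate being summable.
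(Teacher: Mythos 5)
Your proof is correct and follows essentially the same route as the paper: combine the chord-arc lower bound $|X(t)|\geq ct$ with Theorem~\ref{quadratic curvature decay} to obtain $|X''(t)|\lesssim t^{-8/7}$, then integrate the tail. The paper states this in one line; you have usefully made explicit the intermediate observation that $|X''(t)|=|H(X(t))|\leq|B(u)(X(t))|$ via the identity for $|B(u)|^2$, but the substance is identical.
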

\begin{proof}
Combining the previous lemma with  Theorem \ref{quadratic curvature decay} we obtain
\[|X^{\prime\prime}(t)|\leq\frac{C}{t^{8/7}}.\]
Integrating this in $t$ we finish the proof.
\end{proof}

The direction $e_\infty$ obtained in this proposition is called the limit direction of the connected component $\Sigma$.

\begin{rmk}\label{rmk 1.3.3}
If there exists a $\sigma>0$ such that
\[|X^\prime(t)-e_\infty|\leq\frac{C}{t^{1+\sigma}},\]
by using the fact that translations of $u$ at infinity is close to the one dimensional profile (see Lemma \ref{lem 1.2.1}), we can show that
different connected components of $\{u=0\}$ have different limit directions. However, this would require a higher order decay rate in Theorem \ref{quadratic curvature decay} as well as in Theorem \ref{second order estimate}, which we will not pursue in this paper.
\end{rmk}

\section{Energy growth bound: Proof of Theorem \ref{main result 1}}
\setcounter{equation}{0}

First using the stability of $u$  outside $B_{R_0}(0)$, we study the structure of nodal set of direction derivatives of $u$ at infinity. The following method can be compared with those in \cite{Dancer, Savin-V}.
\begin{prop}\label{finiteness of nodal domains II}
For any unit vector $e$, every connected component of $\{u_e:=e\cdot\nabla u\neq0\}$ intersects with $B_{R_0}(0)$.
\end{prop}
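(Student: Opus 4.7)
The plan is a Jacobi field / Hopf lemma perturbation argument in the stability quadratic form. Differentiating \eqref{equation} in the direction $e$ gives the Jacobi equation $-\Delta u_e + W''(u) u_e = 0$, so $u_e$ lies in the kernel of the linearized operator $L := -\Delta + W''(u)$. Arguing by contradiction, suppose a connected component $\Omega$ of $\{u_e \neq 0\}$ is disjoint from $B_{R_0}(0)$; after replacing $e$ by $-e$ we may assume $u_e > 0$ in $\Omega$ and $u_e = 0$ on $\partial \Omega$, and let $\tilde{u}_e$ be the extension of $u_e|_\Omega$ by zero outside $\Omega$. Since $u_e$ is smooth with globally bounded gradient (by Modica's estimate) and vanishes on $\partial \Omega$, $\tilde{u}_e$ is globally Lipschitz on $\R^2$, hence admissible (after truncation) in $\mathcal{Q}$.

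Two integrations by parts on $\Omega$, using $L u_e = 0$ and $u_e = 0$ on $\partial \Omega$, yield the key identity
\[
\mathcal{Q}(\tilde{u}_e \zeta) \;=\; \int_\Omega u_e^2\,|\nabla \zeta|^2
\]
for every $\zeta\in C^\infty_c(\R^2\setminus \overline{B_{R_0}(0)})$. Inserting the standard logarithmic cutoff $\zeta_R$ (equal to $1$ on $B_R \setminus B_{R_0+1}$, $0$ outside $B_{R^2}$, with $|\nabla \zeta_R|\leq C/(|x|\log R)$) and using the $L^\infty$ bound on $u_e$ gives $\mathcal{Q}(\tilde{u}_e \zeta_R) = O(1/\log R) \to 0$ as $R\to\infty$. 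This says $\tilde{u}_e$ nearly saturates stability; to produce a strictly negative test value we perturb.

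By Hartman--Wintner the set of branch points $\{u_e=0\}\cap\{\nabla u_e=0\}$ is locally finite, so we can pick a smooth point $p\in\partial\Omega\setminus\overline{B_{R_0}(0)}$ and a nonnegative bump $\eta\in C^\infty_c(\R^2\setminus\overline{B_{R_0}(0)})$ with $\eta(p)>0$. The Hopf lemma, applied to the positive function $u_e$ on $\Omega$ at $p$, gives $\partial_\nu u_e(p) < 0$ where $\nu$ is the outward unit normal to $\Omega$. For $R$ so large that $\mathrm{supp}\,\eta\subset\{\zeta_R\equiv 1\}$, a single integration by parts identifies the cross term
\[
\int \nabla(\tilde{u}_e\zeta_R)\cdot\nabla \eta + W''(u)\tilde{u}_e\zeta_R\,\eta \;=\; \int_{\partial\Omega}\eta\,\partial_\nu u_e\,d\sigma \;=:\; -c_0 \;<\; 0,
\]
independent of $R$. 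Expanding,
\[
\mathcal{Q}(\tilde{u}_e\zeta_R+\varepsilon\eta) \;=\; \mathcal{Q}(\tilde{u}_e\zeta_R) \;-\; 2\varepsilon c_0 \;+\; \varepsilon^2\,\mathcal{Q}(\eta).
\]
Fixing $\varepsilon>0$ small enough that $\varepsilon\,\mathcal{Q}(\eta) < c_0$, and then $R$ so large that $\mathcal{Q}(\tilde{u}_e\zeta_R)<\varepsilon c_0/2$, makes the right-hand side strictly negative, contradicting stability of $u$ outside $B_{R_0}(0)$. The only substantive difficulty is the possible unboundedness of $\Omega$: for bounded $\Omega$ the conclusion follows immediately from strict monotonicity of the principal Dirichlet eigenvalue of $L$ on $\Omega$ versus a strict enlargement, but in the unbounded case the logarithmic cutoff together with the $L^\infty$ bound on $u_e$ is essential.
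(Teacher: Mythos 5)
Your proof is correct in substance and tracks the paper's overall skeleton very closely: zero-extend $u_e|_\Omega$, insert the logarithmic cutoff to show $\mathcal{Q}(\tilde{u}_e\zeta_R)=O(1/\log R)$, then produce a strictly negative test value via a local modification near a smooth boundary point of $\partial\Omega$. Where you diverge is only in how the last step is executed. The paper replaces $\psi$ inside a small ball $B_h(X)$ by the solution of the linearized Dirichlet problem; because $\psi=|u_e|$ (zero-extended) has a corner along $\partial\Omega$, this replacement lowers the localized quadratic form by a fixed $\delta>0$. You instead compute the first variation $B(\tilde{u}_e\zeta_R,\eta)=\int_{\partial\Omega}\eta\,\partial_\nu u_e\,d\sigma$ for a nonnegative bump $\eta$ and invoke the Hopf lemma (with sign-changing zeroth-order coefficient $W''(u)$, which is fine since $u_e|_\Omega>0$ and $u_e(p)=0$) to see this is strictly negative and $R$-independent, then beat the $O(1/\log R)$ term by taking $\varepsilon$ small. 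Both implementations are standard and equally rigorous; the Hopf route makes the mechanism (a nonzero normal derivative on the nodal set) more visible, while the paper's replacement scheme packages the same information as a local energy drop and is the form reused essentially verbatim in Proposition \ref{number of nodal domains}.

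Two small technical points worth tightening. First, your cutoff is inconsistent as written: a transition from $0$ on $B_{R_0}$ to $1$ on $\partial B_{R_0+1}$ forces $|\nabla\zeta_R|\sim 1$ there, not $\sim 1/(|x|\log R)$; and if $\Omega\subset B_{R_0+1}$ you then cannot place the bump in $\{\zeta_R\equiv 1\}$. The clean fix is the paper's own choice: take $\zeta_R\equiv 1$ on all of $B_R$ and decay logarithmically only on $B_{R^2}\setminus B_R$, noting that $\tilde{u}_e\zeta_R$ is automatically supported in $\R^2\setminus B_{R_0}$ since $\Omega$ is. Second, to invoke Hopf at $p$ you need an interior ball in $\Omega$ at $p$; since $\mathrm{sing}(u_e)$ is discrete (Bers), picking $p\in\mathrm{reg}(u_e)\cap\partial\Omega$ and shrinking $\mathrm{supp}\,\eta$ to stay in the regular part handles this, which you implicitly do but should state.
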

\begin{proof}
Assume by the contrary there exists a unit vector $e$ and a connected component $\Omega$ of $\{u_e\neq0\}$ contained in $B_{R_0}(0)^c$.
Let $\psi$ be the restriction of $|u_e|$ to $\Omega$, with zero
extension outside it. Hence $\psi$ is continuous, and in
$\Omega$ it satisfies the linearized equation
\begin{equation}\label{linearized equation}
\Delta\psi=W^{\prime\prime}(u)\psi.
\end{equation}

 For any $R>R_0$,
let \makeatletter
\let\@@@alph\@alph
\def\@alph#1{\ifcase#1\or \or $'$\or $''$\fi}\makeatother
\begin{equation*}
{\eta_R(x):=}
\begin{cases}
1, &x\in B_R(0), \\
2-\frac{\log |x|}{\log R}, &x\in B_{R^2}(0)\setminus B_R(0),\\
0, &x\in B_{R^2}(0)^c.
\end{cases}
\end{equation*}
\makeatletter\let\@alph\@@@alph\makeatother

Multiplying \eqref{linearized equation} by $\psi\eta_R^2$ and integrating by parts
leads to
\begin{equation}\label{1.4.2}
\int_{\R^2}|\nabla\left(\psi\eta_R\right)|^2+W^{\prime\prime}(u)
\left(\psi\eta_R\right)^2=\int_{\R^2}\psi^2|\nabla\eta_R|^2\leq\frac{C}{\log
R},
\end{equation}
where we have used the fact that $|\psi|\leq|\nabla u|\leq C$.

Take an $X\in\partial\Omega$ such that $\partial\Omega$ is smooth in a neighborhood of $X$. By a suitable compact modification of  $\psi$ in a small ball $B_h(X)$, we get a new function $\tilde{\psi}$ and a constant $\delta>0$ so that
\begin{equation}\label{1.4.3}
\int_{B_{h}(X)}\frac{1}{2}|\nabla\tilde{\psi}|^2+W^{\prime\prime}(u)\tilde{\psi}^2\leq\left[
\int_{B_{h}(X)}\frac{1}{2}|\nabla\psi_i|^2+W^{\prime\prime}(u)\psi^2\right]-\delta.
\end{equation}

Combining \eqref{1.4.2} and \eqref{1.4.3} we get an $R$ such that
\[
\int_{\R^2}|\nabla\left(\tilde{\psi}\eta_R\right)|^2+W^{\prime\prime}(u)
\left(\tilde{\psi}\eta_R\right)^2<0.
\]
This is a contradiction with the stability condition of $u$ outside $B_{R_0}(0)$.
\end{proof}

The following finiteness result on the ends of $u$ can be proved by the same method in \cite{Wang 2}, using Proposition \ref{finiteness of nodal domains II} and Proposition \ref{Lip regularity at infinity}.
\begin{prop}
By taking a large enough $R_1>0$, there are only finitely many connected components of $\{u=0\}\cap B_{R_1}(0)^c$.
\end{prop}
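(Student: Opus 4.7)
My plan is in three steps: (1) reduce the finiteness of components in $B_{R_1}(0)^c$ to finiteness of unbounded components of $\{u=0\}$, (2) set up an indirect argument using the sign of the linearized equation near accumulating ends, and (3) extract a contradiction from Proposition \ref{finiteness of nodal domains II}.

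\emph{Step 1.} By Theorem \ref{quadratic curvature decay} combined with the "smallest enclosing ball" argument already mentioned in this section, $\{u=0\}$ has only finitely many bounded connected components. Pick $R_1>R_0$ large enough that every bounded component of $\{u=0\}$ lies inside $B_{R_1}(0)$, and, by Sard applied to $|x|^2$ restricted to the smooth 1-manifold $\{u=0\}\cap B_{R_0}(0)^c$, so that $\partial B_{R_1}$ meets $\{u=0\}$ transversally; then $\{u=0\}\cap\partial B_{R_1}$ is a finite set of points. By the chord-arc bound of the preceding lemma together with Proposition \ref{Lip regularity at infinity}, for $R_1$ large each unbounded component of $\{u=0\}$ meets $\partial B_{R_1}$ in only finitely many points (its ends are close to straight rays). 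Hence the number of components of $\{u=0\}\cap B_{R_1}(0)^c$ equals $\tfrac{1}{2}|\{u=0\}\cap\partial B_{R_1}|$ plus the number of unbounded components of $\{u=0\}$, which is finite as soon as the latter is finite.

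\emph{Step 2.} Suppose for contradiction there are infinitely many distinct unbounded components $\Sigma_1,\Sigma_2,\ldots$. Each has (at least one) end going to infinity with a limit direction by Proposition \ref{Lip regularity at infinity}; passing to a subsequence, a sequence of these limit directions $e_i$ converges to some $e_*\in S^1$. Let $\nu_i\perp e_i$ denote a unit normal to $\Sigma_i$ at the corresponding end. By Lemma \ref{lem 1.2.1}, $u$ is $C^2$-close to the one-dimensional profile $g(x\cdot\nu_i-t_i)$ on a tubular neighborhood of each such end for large $|x|$. Choose a unit vector $e\in S^1$ perpendicular to $e_*$ and, after a small perturbation, with $\nu_i\cdot e\ne 0$ for all $i$. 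Then in each of these tubes the directional derivative $u_e$ has the definite sign $\mathrm{sgn}(\nu_i\cdot e)$ and magnitude $\approx g'(0)|\nu_i\cdot e|$. Consecutive accumulating ends must carry opposite normals $\nu_i$ (else the transition of $u$ across the intervening bulk region is inconsistent with one-dimensional behavior on both sides), so the signs of $u_e$ on consecutive accumulating tubes alternate. In the intervening bulk regions, $u\to\pm 1$ and $W''(u)\to 2>0$, so $u_e$ solves $\Delta u_e=W''(u)u_e$ with a positive potential and decays exponentially by the maximum principle.

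\emph{Step 3.} The contradiction comes from Proposition \ref{finiteness of nodal domains II}: using the alternating sign pattern and the exponential decay, two adjacent tubes of opposite sign of $u_e$ are separated by a thin bulk strip through which $u_e$ changes sign, and the exponential decay of $u_e$ in this strip allows one to "seal off" a nodal region of $u_e$ at infinity, trapping it entirely in $B_{R_0}(0)^c$—contradicting Proposition \ref{finiteness of nodal domains II}. The main obstacle is precisely this sealing-off construction: it requires a quantitative comparison between the exponential decay rate of $u_e$ in the bulk and the angular separation of the accumulating ends, and it is the technical content of the "same method in \cite{Wang 2}" that the paper invokes.
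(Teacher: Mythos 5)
Your Steps 1--2 essentially reproduce items (i)--(ii) of the paper's sketch: you reduce via the bounded-components argument and Proposition \ref{Lip regularity at infinity}, pick a generic direction $e$ so that $u_e$ carries a definite sign in an $O(1)$ tube about each end, and observe that the sign alternates between consecutive ends with small angular separation. Where you diverge, and where the genuine gap lies, is Step 3.

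You try to ``seal off'' a component of $\{u_e\ne 0\}$ entirely inside $B_{R_0}(0)^c$, so as to contradict Proposition \ref{finiteness of nodal domains II} directly, and you correctly flag this as the obstacle. Indeed it does not close: the nodal lines of $u_e$ that separate consecutive accumulating tubes have no reason to curl up and enclose a bounded region away from $B_{R_0}$; those components of $\{u_e\ne 0\}$ can perfectly well extend as long thin wedges all the way into $B_{R_0}$, and nothing in the exponential decay of $u_e$ in the bulk prevents this. The paper's item (iii) turns the same ingredient around into a \emph{counting argument} rather than a trapping argument. Since every component of $\{u_e\ne 0\}$ must intersect $B_{R_0}(0)$ (Proposition \ref{finiteness of nodal domains II}), every component of $\{u_e\ne 0\}\cap B_{R_1}(0)^c$ must reach $\partial B_{R_1}(0)$; but $u_e$ restricted to the compact circle $\partial B_{R_1}(0)$ changes sign only finitely many times, so the number of components of $\{u_e\ne 0\}\cap B_{R_1}(0)^c$ is \emph{a priori} finite. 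Combined with the alternating-sign picture from your Step~2, which would produce at least one component of $\{u_e\ne 0\}\cap B_{R_1}(0)^c$ per pair of consecutive ends, this bounds the number of ends, hence the number of components of $\{u=0\}\cap B_{R_1}(0)^c$. You should replace the ``sealing off'' step with this boundary-crossing count; no quantitative comparison of decay rates versus angular gaps is needed.
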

The main idea is as follows.
 \begin{itemize}
 \item[(i)] By choosing a generic direction $e$, using Proposition \ref{Lip regularity at infinity} we can show that for each connected component of $\{u=0\}\cap B_{R_1}(0)^c$, $u_e$ has fixed sign in an $O(1)$ neighborhood of it.
     \item[(ii)] If two connected components of  $\{u=0\}\cap B_{R_1}(0)^c$ are neighboring and the angle between their limit directions are small, $u_e$ has different sign near these two connected components.
     \item[(iii)] If there are too many connected  components of $\{u=0\}\cap B_{R_1}(0)^c$, we can construct as many connected components of $\{u_e\neq0\}\cap B_{R_1}(0)^c$ as we want. On the other hand, by Proposition \ref{finiteness of nodal domains II}, the number of connected components of $\{u_e\neq0\}\cap B_{R_1}(0)^c$ is controlled by the number of connected components of $\{u_e\neq0\}\cap \partial B_{R_1}(0)$. This leads to a contradiction.
\end{itemize}

With this proposition in hand, we can proceed as in \cite{Gui, Wang 2} to obtain the linear energy growth bound in Theorem \ref{main result 1}. The main idea is to divide $\R^2\setminus B_{R_0}(0)$ into a number of cones with their angles strictly smaller than $\pi$ and $\{u=0\}$ is strictly contained in the interior of these cones, and then apply the Hamiltonian identity of Gui \cite{Gui} in these cones separately.

\medskip

Once we have this linear energy growth bound, there are many ways to show that the solution has finitely many ends in the sense of \cite{DKP} and the refined asymptotic behavior of $u$ at infinity, see for example \cite{DKP, F0, Gui, Wang 3}.

\medskip

If the claim in Remark \ref{rmk 1.3.3} holds, the finiteness of ends can be obtained directly and then the energy bound follows as above.

\section{Morse index $1$ solutions: Proof of Theorem \ref{main result 2}}
\setcounter{equation}{0}

In this section we study solutions with Morse index $1$ in detail.
We use nodal set information to show that these solutions have only one critical point of saddle type.

\medskip

First we establish a general estimate on the number of nodal domains for direction derivatives of $u$, in terms of the Morse index bound.
\begin{prop}\label{number of nodal domains}
Suppose the Morse index of $u$ equals $N$. For any unit vector $e$, the number of connected components of
$\{u_e\neq 0\}$ is not larger than $2N$.
\end{prop}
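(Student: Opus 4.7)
The plan is to adapt the compact-perturbation argument of Proposition \ref{finiteness of nodal domains II} to produce enough $\mathcal{Q}$-negative directions to saturate the Morse index bound. Let $\Omega_1, \dots, \Omega_k$ be the connected components of $\{u_e \neq 0\}$, and set $\psi_i := u_e \chi_{\Omega_i}$, which lies in $H^1_{\mathrm{loc}}(\R^2)$, vanishes on $\partial\Omega_i$, and solves the linearized equation $\Delta \psi_i = W''(u) \psi_i$ weakly inside $\Omega_i$. Inserting the logarithmic cutoff $\eta_R$ exactly as in the derivation of \eqref{1.4.2} gives
\[
\mathcal{Q}(\psi_i \eta_R) \;=\; \int \psi_i^2 |\nabla \eta_R|^2 \;\leq\; \frac{C}{\log R}.
\]

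Next I would upgrade this smallness to strict negativity via a compactly supported modification. At any regular boundary point $X_i \in \partial \Omega_i$ where $|\nabla u_e| > 0$ (such a point exists because $\partial\Omega_i$ cannot be contained in the stationary set $\{u_e = \nabla u_e = 0\}$), the trick from \eqref{1.4.3}---extend $\psi_i$ slightly across the nodal arc into the neighboring component---yields $\phi_i \in C_0^\infty(B_{h_i}(X_i))$ and $\delta_i > 0$ such that
\[
\mathcal{Q}\bigl((\psi_i + \phi_i)\eta_R\bigr) \;\leq\; -\delta_i/2
\]
for $R$ sufficiently large. By shrinking the radii $h_i$, I can arrange the balls $B_{h_i}(X_i)$ to be pairwise disjoint and each to intersect at most two of the components $\Omega_j$.

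Finally, the $k$ functions $\tilde\psi_i := (\psi_i + \phi_i)\eta_R$ are linearly independent, and the Gram matrix of $\mathcal{Q}$ on their span is approximately block-diagonal: an off-diagonal entry $\mathcal{Q}(\tilde\psi_i,\tilde\psi_j)$ is nonzero only when $\Omega_i$ and $\Omega_j$ share a nodal arc meeting one of the perturbation balls, so each row contains at most one nontrivial off-diagonal entry. The resulting coupling graph has maximum degree one (a matching), and diagonalizing the $2\times 2$ blocks yields at least $\lceil k/2 \rceil$ linearly independent directions on which $\mathcal{Q}$ is strictly negative. By the definition of Morse index this forces $\lceil k/2 \rceil \leq N$, and hence $k \leq 2N$.

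The main obstacle is precisely the final linear-algebra/combinatorial step: controlling the off-diagonal couplings uniformly so that the $\lceil k/2 \rceil$ negative directions persist, and ensuring that the extension of $\phi_i$ across the nodal arc does not destroy the gain obtained inside $\Omega_i$. The factor of $2$ in the final bound comes from this pairing, each compact perturbation unavoidably couples a component to one of its neighbors; a sharper construction might replace $2N$ by $N+1$, but the stated bound is already sufficient for the Morse index $1$ case treated in Theorem \ref{main result 2}.
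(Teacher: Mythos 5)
Your argument diverges from the paper's at the crucial point, and the divergence contains a genuine gap. You take all $k$ components of $\{u_e\neq 0\}$ (both signs), perturb each $\psi_i$ slightly across the nodal arc into a neighboring component, and then assert that the resulting off-diagonal Gram entries $\mathcal{Q}(\tilde\psi_i,\tilde\psi_j)$ form a matching, i.e.\ the coupling graph has maximum degree one. This is false in general. You assign one perturbation ball $B_{h_i}(X_i)$ to each component $\Omega_i$, which singles out one ``target'' neighbor $\Omega_{j(i)}$ into which $\tilde\psi_i$ leaks; but a single component $\Omega_\ell$ can simultaneously be the target $\Omega_{j(i)}$ of many different $i$ (think of a large $\Omega_\ell$ bordered by $\Omega_{i_1},\dots,\Omega_{i_m}$, each of which you choose to perturb across into $\Omega_\ell$). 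Then $\tilde\psi_{i_1},\dots,\tilde\psi_{i_m}$ all have support intersecting $\Omega_\ell$, so row $\ell$ has $m$ nonzero off-diagonal entries, not one. Your combinatorial step collapses, and with it the count $\lceil k/2\rceil\leq N$.

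The paper sidesteps this entirely by a sign restriction you do not use: if there are more than $2N$ components, then by pigeonhole at least $N+1$ of them have the same sign of $u_e$ (say $u_e>0$). Two positive components can share boundary only at singular points of $u_e$, because across a regular nodal arc $u_e$ changes sign. So at a regular boundary point $x_i\in\partial\Omega_i$ one can take a small ball $B_{h_i}(x_i)$ in which $u_e<0$ off $\Omega_i$, hence $B_{h_i}(x_i)$ meets \emph{none} of the other positive components $\Omega_j$. After replacing $\psi_i$ by the solution of the linearized equation in $B_{h_i}(x_i)$, the modified test functions $\tilde\psi_i\eta_R$ have pairwise disjoint supports; the Gram matrix is exactly diagonal, every diagonal entry is strictly negative for $R$ large by \eqref{2.02}--\eqref{2.03}, and one reads off $N+1$ mutually orthogonal negative directions, contradicting the Morse index. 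The factor $2$ in $2N$ thus comes from the pigeonhole over signs, not from any pairing of perturbed test functions; with the sign trick there is no coupling to control at all.
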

\begin{proof}
First recall some basic facts about the nodal set $\{u_e=0\}$ (see \cite{Bers}). Because $ u_e$ satisfies the linearized equation \eqref{linearized equation}, it
can be decomposed into $\mbox{sing}(u_e)\cup \mbox{reg}(u_e)$, where
$\mbox{sing}(u_e)$ consists of isolated points and $\mbox{reg}(u_e)$
is a family of embedded smooth curves with their endpoints in
$\mbox{sing}(u_e)$ or at infinity.

Assume by the contrary, the number of connected components of $\{u_e\neq 0\}$ is
larger than $2N$. Without loss of generality, assume $\{u_e>0\}$ has at least $N+1$ connected components, $\Omega_i,
i=1,\cdots, N+1$. By the strong maximum principle, $u_e>0$ on the other side of regular parts of $\partial\Omega_i$.

Let $\psi_i$ be the restriction of $|u_e|$ to $\Omega_i$, with zero
extension outside $\Omega_i$. Hence $\psi_i$ is continuous, and in
$\{\psi_i>0\}$, it satisfies the linearized equation \eqref{linearized equation}.

 For any $R>R_0$, choose the cut-off function $\eta_R$ as in the previous section.
Multiplying \eqref{linearized equation} by $\psi_i\eta_R^2$ and integrating by parts
on $\Omega_i$ leads to
\begin{equation}\label{2.02}
\int_{\R^2}|\nabla\left(\psi_i\eta_R\right)|^2+W^{\prime\prime}(u)
\left(\psi_i\eta_R\right)^2=\int_{\R^2}\psi_i^2|\nabla\eta_R|^2\leq\frac{C}{\log
R}.
\end{equation}

Take an $x_i$ belonging to the regular part of $\partial\Omega_i$. There exists $h_i>0$ so
that $B_{h_i}(x_i)$ is disjoint from $\Omega_j$, for any $j\neq i$. (For example, $u_e<0$ in $B_{h_i}(x_i)\setminus\Omega_i$.)
Let $\tilde{\psi}_i$ equal $\psi_i$ outside $B_{h_i}(x_i)$, while in
$B_{h_i}(x_i)$ it solves \eqref{linearized equation}.
By this choice, we get a constant $\delta_i>0$ such that
\begin{equation}\label{2.03}
\int_{B_{h_i}(x_i)}\frac{1}{2}|\nabla\tilde{\psi}_i|^2+W^{\prime\prime}(u)\tilde{\psi}_i^2\leq
\left[\int_{B_{h_i}(x_i)}\frac{1}{2}|\nabla\psi_i|^2+W^{\prime\prime}(u)\psi_i^2\right]-\delta_i.
\end{equation}

Combining \eqref{2.02} and \eqref{2.03} we get an $R$ such that
\begin{equation}\label{2.04}
\int_{\R^2}|\nabla\left(\tilde{\psi}_i\eta_R\right)|^2+W^{\prime\prime}(u)
\left(\tilde{\psi}_i\eta_R\right)^2<0, \quad \forall i=1,\cdots, N+1.
\end{equation}

Note that
$\tilde{\psi}_i\eta_R\in H_0^1(B_R)$ are continuous functions
satisfying
\[\tilde{\psi}_i\eta_R \tilde{\psi}_j\eta_R\equiv 0, \quad \forall 1\leq i\neq j\leq N+1.\]
Hence they form an orthogonal basis of an $(N+1)$-dimensional
subspaces of $H_0^1(B_R)$. By \eqref{2.04}, $\mathcal{Q}$ is negative definite
on this subspace. This is a contradiction with the Morse index bound on $u$.
\end{proof}

\begin{rmk}
It seems  more interesting to establish a relation between the number of ends and the Morse index, as in minimal surfaces \cite{Choe, GNY, Ros}. By the method in \cite{Du-Gui-Wang}, we can show that the number of ends is at most $4N+4$.
\end{rmk}

As a corollary we get
\begin{coro}\label{coro 1}
Given a solution $u$ with Morse index $1$, for any direction $e$, the nodal set $\{u_e=0\}$ is a single smooth curve. In particular, $\nabla u_e\neq 0$ on $\{u_e=0\}$.
\end{coro}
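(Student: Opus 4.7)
The plan is to combine Proposition~\ref{number of nodal domains} with a short topological analysis of the nodal set of $u_e$.

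First I would apply Proposition~\ref{number of nodal domains} with $N=1$ to obtain that $\{u_e\neq 0\}$ has at most two connected components. Next I would show that $u_e$ genuinely changes sign. If $u_e\equiv 0$, then $u$ is independent of $e$, hence one-dimensional, hence stable, contradicting the Morse index one hypothesis. If instead $u_e$ has constant sign without vanishing identically, the strong maximum principle applied to the linearized equation $\Delta u_e=W''(u)u_e$ gives $u_e>0$ (or $<0$) everywhere; but then $u_e$ is a positive solution of the linearized equation, which again forces $u$ to be stable. Either case contradicts the hypothesis, so $u_e$ must change sign, and consequently $\{u_e>0\}$ and $\{u_e<0\}$ are each nonempty and each a single connected component.

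The main step is then a topological argument showing that $\{u_e=0\}$ must be a single smooth embedded curve. Recall that $\{u_e=0\}=\mathrm{sing}(u_e)\cup\mathrm{reg}(u_e)$, with $\mathrm{sing}(u_e)$ a set of isolated points at each of which $u_e$ vanishes to some order $k\geq 2$ and locally the nodal set looks like $k$ smooth arcs meeting at equal angles, partitioning a neighborhood into $2k\geq 4$ sign-alternating sectors. I would view $\{u_e=0\}$ as a planar graph embedded in $S^2$, adding $\infty$ as a vertex whenever some nodal arcs are unbounded and a dummy degree-$2$ vertex on each smooth closed loop component that contains no singular point. Every singular vertex has degree $\geq 4$, every dummy vertex has degree $2$, and the single vertex $\infty$ (when present) has degree $d_\infty\geq 1$. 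Writing $V=V_s+V_d+V_\infty$ and combining the handshake lemma $2E=\sum_v\deg(v)\geq 4V_s+2V_d+d_\infty$ with the planar Euler formula $V-E+F=1+C$ and the datum $F=2$ from the first paragraph, one obtains
\[
V_s \;\leq\; V_\infty + 1 - C - \tfrac{d_\infty}{2}.
\]
A short case analysis on whether $V_\infty=0$ or $V_\infty=1$ (in the latter case using $d_\infty\geq 1$) then forces $V_s=0$ and $C=1$, so the nodal set has no singular points and is a single connected smooth embedded curve, either a closed loop when bounded or a properly embedded line when unbounded. The claim $\nabla u_e\neq 0$ on $\{u_e=0\}$ is immediate from the absence of singular points together with the implicit function theorem.

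The main obstacle will be the topological case analysis in the last step. Although the Euler--handshake computation above is quite short, it requires a careful setup of the planar graph (in particular, treating $\infty$ separately because its degree need only be $\geq 1$ rather than $\geq 2$ and choosing a consistent convention for dummy vertices on pure loops) and an initial verification that the nodal set of the real-analytic function $u_e$ really is a locally finite planar graph so that Euler's formula applies.
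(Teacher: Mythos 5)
Your overall strategy is sound and starts the same way the paper does (invoking Proposition~\ref{number of nodal domains} with $N=1$ to get at most two nodal domains for $u_e$), but your main step takes a genuinely different route. The paper argues purely locally with the Jordan curve theorem: a singular point gives $\geq 4$ sign-alternating sectors, and joining two opposite positive sectors by a path in $\{u_e>0\}$ produces a Jordan curve that separates the two negative sectors, so $\{u_e<0\}$ cannot be connected --- hence $\geq 3$ nodal domains, a contradiction; the same argument rules out two nodal curves. You instead compactify to $S^2$ and run a global Euler--handshake count. Both arguments ultimately express the same topological fact, but the difference matters.

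The gap you flagged is real: Euler's formula $V-E+F=1+C$ requires the embedded graph to be finite (finitely many vertices, edges, and faces), not merely locally finite. The singular set of $u_e$ is discrete, but \emph{a priori} it could be infinite in $\R^2$, and so could the number of nodal arcs. Your own case analysis cannot be started until you know $V_s$, $V_d$, $E$, $C$ are finite. The paper's Jordan curve argument sidesteps this completely because it only needs the local picture near one singular point (or the existence of two disjoint nodal curves); nothing global is counted. To repair your argument you would essentially first need a local Jordan-curve step to show there are no singular points and at most one nodal curve --- i.e., you would need to run the paper's proof first, at which point the Euler computation is no longer doing any work. So as currently structured the Euler count is circular unless the finiteness is supplied by an independent argument (which you have not sketched).

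Two smaller points. First, your preliminary paragraph ruling out $u_e\equiv 0$ and the constant-sign case is correct and is a useful clarification that the paper leaves implicit. Second, a minor slip: at a zero of order $k\geq 2$ the leading harmonic part has $k$ nodal \emph{lines} through the point, hence $2k$ emanating \emph{arcs} and $2k$ sectors; you wrote ``$k$ smooth arcs'', which would give only $k$ sectors. The degree of the singular vertex in your graph is $2k\geq 4$, which is what your handshake inequality uses, so the slip does not propagate, but the statement as written is inconsistent.
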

\begin{proof}
First recall that $\mbox{reg}(u_e)$ are smooth embedded curves where $\nabla u_e\neq0$, and  $\mbox{sing}(u_e)=\{u_e=0,\nabla u_e=0\}$. Moreover, for any $X\in\{u_e=0,\nabla u_e=0\}$, in a neighborhood of $X$, $\{u_e=0\}$ consists of at least $4$ smooth curves emanating from $X$. See \cite{Bers}. Hence if there is a singular point on $\{u_e=0\}$, by Jordan curve theorem there exist at least three connected components of $\{u_e\neq 0\}$, a contradiction with Proposition \ref{number of nodal domains}.
Therefore there is no singular point on $\{u_e=0\}$ and they are smooth curves.

If there are two connected components of $\{u_e=0\}$, they are smooth, properly embedded curves. Hence they are either closed or unbounded. By Jordan curve theorem, there are at least three components of $\{u_e\neq0\}$, still a contradiction with Proposition \ref{number of nodal domains}.
\end{proof}

\begin{coro}
Given a solution $u$ with Morse index $1$, any critical point of $u$ is nondegenerate.
\end{coro}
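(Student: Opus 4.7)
The plan is to deduce nondegeneracy of an arbitrary critical point of $u$ directly from the previous corollary, which says that for every direction $e$ the set $\{u_e=0\}$ is a smooth curve on which $\nabla u_e\neq 0$. Let $x_0\in\R^2$ be a critical point of $u$, so $\nabla u(x_0)=0$. Then for \emph{every} unit vector $e\in\R^2$ one has $u_e(x_0)=e\cdot\nabla u(x_0)=0$, that is, $x_0$ lies on the nodal set $\{u_e=0\}$ for every $e$.

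Now I invoke Corollary \ref{coro 1}: since $x_0\in\{u_e=0\}$, it must be a regular point of that nodal set, so $\nabla u_e(x_0)\neq 0$. Writing this out gives
\begin{equation*}
\nabla u_e(x_0)=\nabla(e\cdot\nabla u)(x_0)=\nabla^2 u(x_0)\,e\neq 0,\qquad\text{for every unit vector }e\in\R^2.
\end{equation*}
Thus the symmetric matrix $\nabla^2 u(x_0)$ has trivial kernel, hence is invertible, which is precisely the statement that $x_0$ is a nondegenerate critical point. In two dimensions this also forces $\det\nabla^2 u(x_0)\neq 0$, so $x_0$ is either a local extremum or a saddle.

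There is essentially no obstacle here: the whole content sits in the previous corollary, which in turn rests on Proposition \ref{number of nodal domains} (the Morse index $1$ hypothesis limits the number of nodal components of $u_e$ to at most $2$, so Bers' local description of $\{u_e=0\}$ rules out singular points via the Jordan curve theorem). The present corollary is just the translation of ``no singular point of $\{u_e=0\}$'' into the statement ``no direction $e$ is killed by $\nabla^2 u(x_0)$''.
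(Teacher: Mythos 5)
Your proof is correct and is essentially identical to the paper's argument: both observe that a critical point lies on $\{u_e=0\}$ for every direction $e$, invoke the preceding corollary to conclude $\nabla u_e(x_0)=\nabla^2u(x_0)e\neq0$, and deduce invertibility of the Hessian since $e$ is arbitrary. No gap.
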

\begin{proof}
Suppose $X$ is a critical point of $u$. For any direction $e$, we have $u_e(X)=\nabla u(X)\cdot e=0$, that is, $X\in\{u_e=0\}$. By the previous corollary, $\nabla^2u(X)\cdot e=\nabla u_e(X)\neq 0$. Since $e$ is arbitrary, this means $\nabla^2u(X)$ is invertible.
\end{proof}

Denote
\begin{equation}
P:=W(u)-\frac{1}{2}|\nabla u|^2.
\end{equation}
By the Modica's inequality \cite{Modica}, $P>0$ in $\R^2$. By the proof of Lemma \ref{lem 1.2.1}, we also have
\[\lim_{|X|\to+\infty}P(X)=0.\]

\begin{lem}\label{lem critical point}
$\nabla P=0$ if and only if $\nabla u=0$.
\end{lem}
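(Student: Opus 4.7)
The plan is to split into the two directions and use the corollary on the smoothness of $\{u_e = 0\}$ for Morse index $1$ solutions. The forward direction is immediate: from the Allen-Cahn equation $\Delta u = W'(u)$, a direct differentiation gives
\begin{equation*}
\nabla P = W'(u)\nabla u - \nabla^2 u \cdot \nabla u = \bigl(\Delta u \cdot I - \nabla^2 u\bigr)\nabla u,
\end{equation*}
so $\nabla u(X) = 0$ forces $\nabla P(X) = 0$.

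For the converse, I would argue by contradiction and take $X$ with $\nabla P(X) = 0$ and $\nabla u(X) \neq 0$. Then the identity above says that $\nabla u(X)$ is an eigenvector of $\nabla^2 u(X)$ with eigenvalue $\Delta u(X) = W'(u(X))$. Since we are in $\R^2$, the two eigenvalues of $\nabla^2 u(X)$ sum to $\Delta u(X)$; hence the eigenvalue associated with the direction $\tau$ perpendicular to $\nabla u(X)$ must equal $0$, i.e. $\nabla^2 u(X)\cdot \tau = 0$.

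This is the crucial step, because it produces a unit vector $\tau$ with a degenerate nodal-set behavior. Indeed, taking $e = \tau$ as the direction, one checks
\begin{equation*}
u_\tau(X) = \tau \cdot \nabla u(X) = 0 \qquad\text{and}\qquad \nabla u_\tau(X) = \nabla^2 u(X)\cdot \tau = 0,
\end{equation*}
the first equality by orthogonality and the second by the eigenvalue computation. Thus $X$ would be a singular point of the nodal set $\{u_\tau = 0\}$. This directly contradicts Corollary~\ref{coro 1}, which asserts that for Morse index $1$ solutions the nodal set of every directional derivative is a smooth embedded curve with $\nabla u_e \neq 0$ on it. The only potentially subtle point is to make sure the corollary is applicable, which it is since we work throughout Section~1.5 under the Morse index $1$ hypothesis; once this is observed, the argument closes with no further analytic input.
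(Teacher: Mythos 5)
Your proof is correct, and it does take a somewhat different route from the paper's. Both arguments hinge on Corollary~\ref{coro 1} (that $\nabla u_e\neq 0$ on $\{u_e=0\}$ for every direction $e$), and both ultimately look at the direction perpendicular to $\nabla u(X)$, but the linear algebra is organized differently. You start from the factorization $\nabla P = (\Delta u\, I - \nabla^2 u)\nabla u$, deduce that $\nabla u(X)$ is an eigenvector of the symmetric matrix $\nabla^2 u(X)$ with eigenvalue $\Delta u(X)$, and then use the trace to conclude that the orthogonal eigenvector $\tau$ has eigenvalue~$0$, giving $u_\tau(X)=0$ and $\nabla u_\tau(X)=0$ simultaneously and hence a contradiction. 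The paper instead introduces the orthonormal frame $(e_1,e_2)$ with $e_2\parallel\nabla u(X)$, notices that $u_{e_1}/u_{e_2}$ satisfies a divergence-form equation, and derives the algebraic identity $\nabla P = u_{e_2}^2\,J\,\nabla\!\left(u_{e_1}/u_{e_2}\right)$ with $J$ the quarter turn; Corollary~\ref{coro 1} then gives $\nabla u_{e_1}(X)\neq 0$ and hence $\nabla P(X)\neq 0$ directly. Your version is arguably more transparent and avoids the auxiliary quotient function; the paper's version packages the same information into a closed-form identity for $\nabla P$ that is also useful for the subsequent remark (the degree of $\nabla u/|\nabla u|$ and the map $\nabla\Psi$). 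Either way, the logic is sound and the step producing the contradiction is exactly the degeneracy of the nodal set of $u_\tau$ at $X$, which is ruled out for Morse index~$1$ solutions.
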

\begin{proof}
Since
\[\nabla P=W^\prime(u)\nabla u-\nabla^2u\cdot\nabla u,\]
we see that $\nabla P=0$ if $\nabla u=0$.

On the other hand assume that $\nabla u(X)\neq 0$. Without loss of generality, take two orthonormal basis $\{e_1,e_2\}$ and assume $u_{e_2}(X)=|\nabla u(X)|$, $u_{e_1}(X)=0$.
Note that locally $\{u_{e_1}/u_{e_2}=0\}$ coincides with $\{u_{e_1}=0\}$, which is a smooth curve by Corollary \ref{coro 1}. Since both $u_{e_1}$ and $u_{e_2}$ satisfy the linearized equation
\eqref{linearized equation},
 we infer that
\[\mbox{div}\left(u_{e_2}^2\nabla\frac{u_{e_1}}{u_{e_2}}\right)=0,\]
which implies that $\nabla \frac{u_{e_1}}{u_{e_2}}(X)\neq 0$.

By a direct calculation we get
\[\nabla P=u_{e_2}^2J\nabla\frac{u_{e_1}}{u_{e_2}},\]
where $J$ is the $\pi/2$-rotation in the anti-clockwise direction. Therefore
$\nabla P(X)\neq 0$.
\end{proof}

At a critical point of $P$, since $\nabla u=0$, we have
\[\nabla^2P=W^\prime(u)\nabla^2u-\nabla^2u\cdot\nabla^2u=\Delta u\nabla^2u-\nabla^2u\cdot\nabla^2u,\]
where $\cdot$ denotes matrix multiplication. Since $\nabla^2u$ is invertible at this point, by a direct calculation we see both of the eigenvalues of $\nabla^2P$ equal $\mbox{det}\nabla^2u$. Thus every critical point of $P$ is either a strict  maximal or a strict minimal point.

\begin{prop}\label{prop one critical point}
There is only one critical point of $P$.
\end{prop}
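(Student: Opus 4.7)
My plan is to combine the Hessian identity derived in the paragraph above with a Morse-theoretic argument on super-level sets and a gradient-flow argument. At any critical point $X$, $\nabla^2 P(X)=\det\nabla^2 u(X)\cdot I$ is a nonzero scalar multiple of the identity, so every critical point of $P$ is nondegenerate and is either a strict local maximum or a strict local minimum; crucially, no saddle points occur. Combined with $P>0$ and $P\to 0$ at infinity, this will force uniqueness.

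First I would check that the critical set of $P$ is finite. By Lemma \ref{lem critical point} it coincides with the critical set of $u$, whose points are isolated by the nondegeneracy result preceding the statement. If a sequence $X_k\to\infty$ of critical points of $u$ existed, a subsequential translation limit of $u(X_k+\cdot)$ would be a stable entire solution, hence one-dimensional by Theorem \ref{thm stable solution}; but such a profile has nowhere vanishing gradient, contradicting $\nabla u(X_k)=0$.

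Next I would rule out local minima. Suppose for contradiction that $X_1$ is a strict local minimum of $P$ with $P(X_1)=m_1>0$. Pick a regular value $c\in(0,m_1)$ and let $K$ be the connected component of $\{P\ge c\}$ containing $X_1$. Since $P\to 0$ at infinity, $K$ is compact; as a $2$-manifold with boundary embedded in $\R^2$, it is planar and hence a disc with some number $h\ge 0$ of holes, so $\chi(K)=1-h\le 1$. On the other hand $\partial K\subset\{P=c\}$ is a regular level set, so Morse theory for manifolds with regular boundary, combined with the absence of saddles, gives
\[
\chi(K)=m_0(K)+m_2(K),
\]
where $m_0(K),m_2(K)$ count interior local minima and maxima of $P$ in $K$. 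The point $X_1$ yields $m_0(K)\ge 1$, and since $P\equiv c$ on $\partial K$ while $P(X_1)=m_1>c$, the maximum of $P$ on $K$ is attained in the interior, giving $m_2(K)\ge 1$. Hence $\chi(K)\ge 2$, a contradiction.

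Finally, with all critical points now known to be local maxima, I would use the gradient flow $\dot x(t)=\nabla P(x(t))$. Since $P$ is strictly increasing along nonstationary orbits and $P\to 0$ at infinity, orbits cannot escape to infinity; each therefore converges to a strict local maximum. The basins of attraction of distinct local maxima are then open, disjoint, and cover $\R^2$, so connectedness of $\R^2$ yields exactly one basin and hence a unique local maximum. The main obstacle is the careful setup of the Morse-theoretic Euler-characteristic identity on $K$ in this noncompact setting; the remaining pieces follow directly from the Hessian identity, basic dynamical considerations, and the one-dimensional characterization of stable entire solutions.
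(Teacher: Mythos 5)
Your proof is correct, but it takes a genuinely different route from the paper's. The paper's argument is a direct mountain-pass (min--max / max--min) argument: it picks the global maximum $X_1$ of $P$, and, given a hypothetical second critical point $X_2$, considers either paths joining $X_1$ to $X_2$ (if $X_2$ is a maximum) or paths from $X_2$ to infinity (if $X_2$ is a minimum); in either case the Mountain Pass Theorem produces a further critical point that is neither a strict local maximum nor a strict local minimum, contradicting the Hessian identity $\nabla^2 P = (\det\nabla^2 u)\,I$ established just before the statement. You instead proceed in three structural steps: (i) show the critical set is finite by the blow-up argument (this essentially repeats the proof of Lemma \ref{lem 1.2.1}); (ii) rule out interior local minima by the Morse-theoretic Euler characteristic identity $\chi(K)=m_0+m_2$ on a compact superlevel set $K=\{P\ge c\}$ with regular boundary, using the absence of saddles and the planarity bound $\chi(K)\le 1$; and (iii) deduce uniqueness of the remaining maxima from the gradient flow $\dot x=\nabla P$, whose basins of attraction form an open, disjoint, nonempty cover of the connected plane. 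Each step checks out: the boundedness of forward orbits uses exactly that $P$ increases along the flow and $P\to 0$ at infinity, and the finiteness in (i) guarantees a regular value $c\in(0,m_1)$ exists. One small remark: step (iii) cannot be replaced by running the Euler--characteristic count once more on a large superlevel set, since with no saddles and no local minima the formula only forces each connected component of $\{P\ge c\}$ to be a disc containing exactly one maximum, not that there is a single such component; your basin-of-attraction argument is precisely what closes this gap. Overall your approach trades the paper's variational deformation-lemma machinery for Morse theory on a compact planar surface with boundary plus elementary gradient-flow dynamics; both are complete proofs.
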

\begin{proof}
Since $P>0$ and $P\to0$ at infinity, the maxima of $P$ is attained, which is a critical point of $P$. Denote this point by $X_1$.

Assume there exists a second critical point of $P$, $X_2$. By the previous analysis, $X_2$ is either a strict maximal or minimal point.
\begin{itemize}
\item If $X_2$ is a strict maximal point, take
\[\Upsilon:=\{\gamma\in H^1([0,1],\R^2): \gamma(0)=X_1, \gamma(1)=X_2\}.\]
Define
\[c_\ast:=\max_{\gamma\in\Upsilon}\min_{t\in[0,1]}P(\gamma(t)).\]
Clearly $c_\ast<\min\{u(X_1),u(X_2)\}$.
Since $P\to0$ at infinity, by constructing a competitor curve, we can show that $c_\ast>0$.
By the Mountain Pass Theorem, $c_\ast$ is a critical value of $P$. Moreover, there exists a curve $\gamma_\ast\in\Upsilon$ and $t_\ast\in(0,1)$ such that
\[P(\gamma_\ast(t_\ast))=\min_{t\in[0,1]}P(\gamma_\ast(t))=c_\ast\]
and $\nabla P(\gamma_\ast(t_\ast))=0$.
Therefore $\gamma_\ast(t_\ast)$ cannot be a strict local maxima. If it is a strict local minima, by deforming $\gamma_\ast$ in a small neighborhood of $\gamma_\ast(t_\ast)$, we get a contradiction with the definition of $c_\ast$. This contradiction implies that $X_2$ cannot be a strict maximal point of $P$.

\item If $X_2$ is a strict local minimal point, take
\[\Upsilon:=\{\gamma\in H^1([0,+\infty),\R^2): \gamma(0)=X_2, \lim_{t\to+\infty}\gamma(t)=+\infty\}.\]
Define
\[c_\ast:=\min_{\gamma\in\Upsilon}\max_{t\in[0,+\infty)}P(\gamma(t)).\]
As in the first case we get a critical point of $P$, which is of mountain pass type. This leads to the same contradiction as before.
\end{itemize}
These contradictions show that $X_1$ is the only critical point of $P$.
\end{proof}

By Lemma \ref{lem critical point}, $u$ has only one critical point, too. Denote this point by $X$. Since this point is the maximal point of $P$, $\mbox{det}\nabla^2u(X)<0$. Thus it is a nondegenerate saddle point of $u$.

\begin{rmk}
Let  $\Psi:=g^{-1}\circ u$ be the distance type function. The Modica inequality \cite{Modica} is equivalent to the condition that $|\nabla\Psi|<1$.
The above method can be further developed to show that $\nabla\Psi$ is a diffeomorphism from $\R^2$ to $B_1(0)$. In particular, for any $r>0$,
\[\mbox{deg}\left(\frac{\nabla u}{|\nabla u|},\partial B_r(X)\right)= 1 \quad \mbox{or } -1,\]
Compare this with \cite{CC1}.
\end{rmk}

\begin{lem}
$\{u=u(X)\}$ is composed by two smooth curves diffeomorphic to $\R$, intersecting exactly at $X$.
\end{lem}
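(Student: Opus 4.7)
The plan is to combine the Morse lemma at the nondegenerate saddle point $X$ (the unique critical point of $u$ established by Proposition \ref{prop one critical point}) with a maximum-principle extension argument to trace out $\{u=u(X)\}$ globally. Since $\det\nabla^2 u(X)<0$, the Morse lemma yields smooth coordinates $(y_1,y_2)$ centered at $X$ in which $u-u(X)=y_1^2-y_2^2$; thus near $X$ the level set $\{u=u(X)\}$ consists of two smooth curves crossing transversally at $X$, giving four local branches $\gamma_1,\dots,\gamma_4$ which pair naturally into two local smooth curves through $X$. Away from $X$ we have $\nabla u\ne 0$, so $\{u=u(X)\}\setminus\{X\}$ is a smooth embedded $1$-submanifold of $\R^2\setminus\{X\}$; each $\gamma_i$ therefore extends to a unique maximal smooth embedded curve $\widetilde\gamma_i\subset\R^2\setminus\{X\}$.

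The key step is to show that every $\widetilde\gamma_i$ is unbounded. Otherwise, by maximality and the implicit function theorem at regular points, the only possible accumulation point of $\widetilde\gamma_i$ in $\R^2$ is $X$ itself, so $\widetilde\gamma_i$ returns to $X$; together with one of the other branches $\gamma_j$ this forms a piecewise smooth simple closed curve $\Gamma$ bounding an open region $\Omega$. Since the unique critical point $X$ lies on $\partial\Omega$, the region $\Omega$ contains no critical points of $u$, so $u$ admits no interior extremum on $\overline\Omega$; combined with $u\equiv u(X)$ on $\partial\Omega$ this forces $u\equiv u(X)$ on $\Omega$, contradicting $|\nabla u|>0$ there. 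Concatenating the four global curves $\widetilde\gamma_i$ across $X$ according to the Morse-lemma pairing then yields two smooth curves $C_1,C_2\subset\{u=u(X)\}$, each diffeomorphic to $\R$, which meet at $X$; they meet only at $X$, since any further intersection $Y\ne X$ would force two distinct smooth branches of $\{u=u(X)\}$ to cross at the regular point $Y$, violating the $1$-manifold structure there.

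It remains to rule out additional connected components $L$ of $\{u=u(X)\}$ disjoint from $X$. Any such $L$ is a smooth properly embedded $1$-manifold in $\R^2$, either a closed loop or a properly embedded line. A closed loop $L$ bounds a region $\Omega_L$: if $X\notin\Omega_L$, the same maximum-principle argument forces $u\equiv u(X)$ on $\Omega_L$, a contradiction; if $X\in\Omega_L$, then $C_1,C_2$ are connected, disjoint from $L$, and hence entirely contained in the bounded region $\Omega_L$, contradicting the fact that they reach infinity.

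The unbounded line case is the main obstacle: such an $L$ would separate $\R^2$ into two regions, one containing $X$ together with all of $C_1\cup C_2$, while $L$ itself contributes two further strands of $\{u=u(X)\}$ reaching infinity. Because $u$ is asymptotic at each end to a one-dimensional profile (cf.\ Lemma \ref{lem 1.2.1} and Theorem \ref{thm stable solution}), each end carries exactly one strand of every interior level set, so the presence of $L$ would give $u$ at least six ends in total. To exclude this I would combine Proposition \ref{number of nodal domains} (for Morse index $1$, every $u_e$ has at most two nodal components) with the limit-direction analysis of Proposition \ref{Lip regularity at infinity}, carefully tracking the sign pattern of a generic $u_e$ across the asymptotic sectors of $u$ to derive a contradiction; the Morse index $1$ bound then forces the number of ends to be at most four, eliminating $L$ and establishing $\{u=u(X)\}=C_1\cup C_2$ with $C_1\cap C_2=\{X\}$.
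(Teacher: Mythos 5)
Your argument for the local structure and for ruling out bounded branches and closed-loop components follows essentially the same path as the paper (Morse lemma at $X$, then a maximum-principle argument for any bounded region enclosed by the level set). The crucial case, however, is the one you flag as ``the main obstacle'' — an additional \emph{unbounded} component $L$ — and here your proposal never actually closes the argument: you sketch a plan to count ends, appeal to Proposition~\ref{number of nodal domains} and Proposition~\ref{Lip regularity at infinity}, and ``carefully track the sign pattern of a generic $u_e$,'' but this is not carried out, and it is far from routine. In particular you would need to justify that each asymptotic end of $u$ carries exactly one strand of the level set $\{u=u(X)\}$, and that the alternating sign structure of $u_e$ over six or more asymptotic sectors forces more than two components of $\{u_e\ne 0\}$ — all of which is real work that your proposal merely names. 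Moreover this chain of reasoning comes uncomfortably close to circular, since the four-end conclusion of Theorem~\ref{main result 2} is precisely what the present lemma feeds into.

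The paper handles all additional components in one stroke with a Mountain Pass argument, which you missed. Writing $\Sigma$ for the component through $X$ and $\widetilde\Sigma$ for any other component, both are properly embedded curves, and one considers
\[
\Upsilon:=\{\gamma\in H^1([0,1],\R^2):\gamma(0)\in\Sigma,\ \gamma(1)\in\widetilde\Sigma\},\qquad
c_\ast:=\min_{\gamma\in\Upsilon}\max_{t\in[0,1]}u(\gamma(t)).
\]
A competitor shows $c_\ast<1$, Lemma~\ref{lem 1.2.1} ensures the minimax is attained, separation of $\Sigma$ and $\widetilde\Sigma$ gives $c_\ast>u(X)$, and the Mountain Pass Theorem yields a new critical point — contradicting Proposition~\ref{prop one critical point}. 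This is both simpler and cleaner than the ends-and-nodal-domains route you propose, and it disposes of bounded and unbounded extra components simultaneously. You should replace your final paragraph with this variational argument, or else actually complete the asymptotic sign-tracking argument that you only outline.
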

\begin{proof}
Since $X$ is the only critical point of $u$, $\{u=u(X)\}$ is a smooth embedded curve outside $X$. Because $X$ is nondegenerate and of saddle type, in a small neighborhood of $X$ this level set consists of two smooth curves intersecting transversally at $X$.

Denote this connected component of $\{u=u(X)\}$ by $\Sigma$. $\Sigma$ does not enclose any bounded domain, because otherwise $u$ would have a local maximal or minimal point in this bounded domain, which is a contradiction with Proposition \ref{prop one critical point}. Hence we can write $\Sigma=\Sigma_1\cup\Sigma_2$, where $\Sigma_1$ and $\Sigma_2$ are smooth properly embedded curves diffeomorphic to $\R$. Moreover, $\Sigma_1$ and $\Sigma_2$ intersect at and only at $X$.

If there exists a second connected component of $\{u=u(X)\}$. Denote it by $\widetilde{\Sigma}$. Similar to the above discussion, $\widetilde{\Sigma}$ is a smooth embedded curve diffeomorphic to $\R$.
$\widetilde{\Sigma}$ and $\Sigma$ bound a domain $\Omega$. Without loss of generality, assume $u>u(X)$ in $\Omega$.

Let
\[\Upsilon:=\{\gamma\in H^1([0,1],\R^2): \gamma(0)\in\Sigma, \gamma(1)\in\widetilde{\Sigma}\},\]
and
\[c_\ast:=\min_{\gamma\in\Upsilon}\max_{t\in[0,1]}u(\gamma(t)).\]
By choosing a competitor curve, we see $c_\ast<1$.   Hence by Lemma \ref{lem 1.2.1}, $c_\ast$ is attained by a curve $\gamma_\ast\in\Upsilon$. Because $\Sigma$ and $\widetilde{\Sigma}$ are separated,
\[\max_{t\in[0,1]}u(\gamma_\ast(t))>u(X).\]

By the Mountain Pass Theorem, there exists $t_\ast\in(0,1)$ such that $u(\gamma_\ast(t_\ast))=c_\ast$ and $\gamma_\ast(t_\ast)$ is a critical point of $u$. This is a contradiction with Proposition \ref{prop one critical point}. Therefore $\{u=u(X)\}=\Sigma$.
\end{proof}

Combining this lemma with Theorem \ref{main result 1}, we see there are exactly four ends of $u$. This completes the proof of Theorem \ref{main result 2}.

\part{Second order estimate on interfaces}

In this part we study second order regularity of clustering interfaces and prove Theorem \ref{main result 3} and Theorem \ref{main result 4}. Recall that $u_\varepsilon$ is a sequence of solutions to \eqref{equation scaled}  satisfying  $({\bf H1})-({\bf H3})$ in Section 2.2.

\section{The case of unbounded curvatures}\label{sec 2}
\setcounter{equation}{0}

By standard elliptic regularity theory, $u_\varepsilon\in C^{2,\theta}_{loc}(B_2)$. Concerning the regularity of $f_{\alpha,\varepsilon}$, we first prove that different components are at least $O(\varepsilon)$ apart.

\begin{lem}\label{O(1) scale}
For any $\alpha\in\{1,\cdots,Q\}$ and $x_{\varepsilon}\in\Gamma_{\alpha,\varepsilon}\cap \mathcal{C}_{3/2}$, as $\varepsilon\to0$, $\tilde{u}_\varepsilon(x):=u_\varepsilon(x_{\varepsilon}+\varepsilon x)$ converges to a one dimensional solution in $C^2_{loc}(\R^n)$.
In particular, for any $\alpha\in\{1,\cdots,Q\}$,
\begin{equation}
\frac{f_{\alpha+1,\varepsilon}-f_{\alpha,\varepsilon}}{\varepsilon}\to+\infty \quad\mbox{ uniformly in } B_{3/2}^{n-1}.
\end{equation}
\end{lem}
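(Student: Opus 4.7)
The approach is a blow-up plus compactness argument, followed by a classification of the limiting entire solution.

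\textbf{Setup and compactness.} Set $\tilde u_\varepsilon(x) := u_\varepsilon(x_\varepsilon + \varepsilon x)$. A direct calculation gives $\Delta\tilde u_\varepsilon = W'(\tilde u_\varepsilon)$ and $|\tilde u_\varepsilon|<1$ on balls $B_{1/(2\varepsilon)}(0)$ exhausting $\R^n$. Interior Schauder estimates produce uniform $C^{2,\theta}_{loc}(\R^n)$ bounds, so along a diagonal subsequence $\tilde u_\varepsilon \to \tilde u_\infty$ in $C^2_{loc}(\R^n)$, where $\tilde u_\infty$ is a bounded entire solution of $\Delta u = W'(u)$ with $\tilde u_\infty(0) = t_\infty \in [-1+b, 1-b]$, so $\tilde u_\infty \not\equiv \pm 1$.

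\textbf{Level-set geometry and 1-D classification.} Writing $x_\varepsilon = (x'_\varepsilon, f_{\alpha,\varepsilon}(x'_\varepsilon))$, the rescaled interface through the origin is the graph $x_n = \widetilde f_\varepsilon(x') := \varepsilon^{-1}[f_{\alpha,\varepsilon}(x'_\varepsilon + \varepsilon x') - f_{\alpha,\varepsilon}(x'_\varepsilon)]$, uniformly $L$-Lipschitz on $\R^{n-1}$ with $\widetilde f_\varepsilon(0) = 0$ by $(\mathbf{H3})$. Arzela--Ascoli yields (along a further subsequence) a limit $\widetilde f_\infty$ in $C_{loc}(\R^{n-1})$, globally Lipschitz, with its graph contained in $\{\tilde u_\infty = t_\infty\}$. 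By $(\mathbf{H2})$, $u_\varepsilon - t_\varepsilon$ has a fixed sign on each slab between consecutive $\Gamma_{\beta,\varepsilon}$'s, so $\tilde u_\infty - t_\infty$ has fixed (strict, by the strong maximum principle) signs on the two components of $\R^n \setminus \mathrm{graph}(\widetilde f_\infty)$. Barrier comparison with translates of the one-dimensional heteroclinic $g$ forces $\tilde u_\infty(x', x_n) \to \pm 1$ as $x_n \to \pm\infty$, locally uniformly in $x'$. A sliding / moving-plane argument in the $x_n$-direction---comparing $\tilde u_\infty$ with shifted profiles $g(x_n - c)$ and collapsing the first touching configuration by the strong maximum principle---then yields $\tilde u_\infty(x) \equiv g(x\cdot e - s)$ for some unit vector $e$ and $s\in\R$. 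This classification step is the main obstacle: no stability or Morse-index assumption is available in this section, and the $C_{loc}$ convergence of $f_{\alpha,\varepsilon}$ is too weak to force $\widetilde f_\infty$ to be a hyperplane, so the 1-D profile has to be extracted purely from the sign structure and the $\pm 1$ asymptotics at $x_n = \pm\infty$.

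\textbf{Distance blow-up and uniformity.} Once $\tilde u_\infty$ is one-dimensional, $\{\tilde u_\infty = t_\infty\}$ is a single hyperplane. If $\varepsilon^{-1}[f_{\alpha+1,\varepsilon}(x'_\varepsilon) - f_{\alpha,\varepsilon}(x'_\varepsilon)]$ remained bounded along a subsequence, the rescaled $\tilde\Gamma_{\alpha+1,\varepsilon}$ would contribute a second piece of $\{\tilde u_\infty = t_\infty\}$ at bounded distance above the origin---a contradiction. This yields the claimed blow-up for any chosen sequence of centers $x'_\varepsilon \in B^{n-1}_{3/2}$, and uniformity on $B^{n-1}_{3/2}$ follows by applying the same reasoning to any bad sequence: were uniformity to fail, one could extract $\varepsilon_k\to 0$ and $x'_k\in B^{n-1}_{3/2}$ with $\varepsilon_k^{-1}[f_{\alpha+1,\varepsilon_k}(x'_k) - f_{\alpha,\varepsilon_k}(x'_k)]$ bounded, and the pointwise argument at $x_k = (x'_k, f_{\alpha,\varepsilon_k}(x'_k))$ would reproduce the contradiction.
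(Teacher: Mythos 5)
Your compactness step and the final ``distance blow-up and uniformity'' step match the paper's. The gap is in the classification step. You write: ``By (\textbf{H2}), $u_\varepsilon - t_\varepsilon$ has a fixed sign on each slab between consecutive $\Gamma_{\beta,\varepsilon}$'s, so $\tilde u_\infty - t_\infty$ has fixed (strict) signs on the two components of $\R^n \setminus \mathrm{graph}(\widetilde f_\infty)$.'' That ``so'' does not follow: if some other $\Gamma_{\beta,\varepsilon}$ stays at $O(\varepsilon)$ distance from $\Gamma_{\alpha,\varepsilon}$ --- which is precisely the scenario the lemma must exclude --- the rescaled limit $\{\tilde u_\infty = t_\infty\}$ contains a \emph{second} Lipschitz graph above $\mathrm{graph}(\widetilde f_\infty)$, and the sign of $\tilde u_\infty - t_\infty$ flips across it. The upper component of $\R^n \setminus \mathrm{graph}(\widetilde f_\infty)$ then carries both signs, your barrier argument for $\tilde u_\infty \to \pm 1$ as $x_n \to \pm\infty$ loses its input, and your subsequent sliding against $g(x_n - c)$ cannot run because $\tilde u_\infty$ is not monotone in $x_n$ in the strip between the two graphs. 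You have, in effect, assumed $Q' = 1$ in order to prove it.

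The paper avoids this by being explicit about the intermediate picture: after the blow-up it records that $\{u_\infty = t_\infty\}$ is a union of $Q' \le Q$ ordered Lipschitz graphs $\{x_n = f_{\beta,\infty}(x')\}$ with a uniform gradient bound, sign alternating between consecutive sheets, and then invokes the full strength of the Berestycki--Caffarelli--Nirenberg sliding method to conclude $u_\infty(x) = g(x\cdot e)$; the identity $Q' = 1$ is read off \emph{after} one-dimensionality, not assumed before it. The BCN machinery is precisely what handles the multi-sheet configuration that your argument silently rules out. To repair your proof you would either need to carry all $Q'$ sheets into the sliding step (treating each bounded slab $\{f_{\beta,\infty} < x_n < f_{\beta+1,\infty}\}$, where a naive maximum-principle argument is delicate because it depends on the sign of $t_\infty$ relative to the critical point of $W$), or cite the BCN result as the paper does.
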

\begin{proof}
In $B_{\varepsilon^{-1}/2}$, $\tilde{u}_\varepsilon(x)$ satisfies the Allen-Cahn equation \eqref{equation}.
By standard elliptic regularity theory, $\tilde{u}_\varepsilon(x)$ is uniformly bounded in $C^{2,\theta}_{loc}(\R^n)$. Using Arzela-Ascoli theorem, as $\varepsilon\to0$, it converges to a limit function $u_\infty$ in $C^2_{loc}(\R^n)$. For each $\beta\in\{1,\cdots,Q\}$, either $\left(f_{\beta,\varepsilon}(x_\ast^\prime+\varepsilon x^\prime)-f_{\alpha,\varepsilon}(x_\ast^\prime)\right)/\varepsilon$ converges to a limit function $f_{\beta,\infty}$ in $C_{loc}(\R^{n-1})$ or it converges to $\pm\infty$ uniformly on any compact set of $\R^{n-1}$.

Assume $t_\varepsilon\to t_\infty$. Then $\{u_\infty=t_\infty\}$ consists of $Q^\prime\leq Q$  connected components, $\Gamma_{\alpha,\infty}$, $1\leq \alpha\leq Q^\prime$. Each $\Gamma_{\alpha,\infty}$ is
represented by the graph $\{x_n:=f_{\alpha,\infty}(x^\prime)\}$.
In $\R^{n-1}$, $|\nabla f_{\alpha,\infty}|\leq C$ for a universal constant $C$ and
\[f_{1,\infty}\leq\cdots\leq f_{Q^\prime,\infty}.\]
By applying the sliding method in \cite{BCN}, $u_\infty(x)=g(x\cdot e)$ for some unit vector $e$. In particular, $Q^\prime=1$ and for any $\beta\neq\alpha$, $\left(f_{\beta,\varepsilon}(x_\ast^\prime+\varepsilon x^\prime)-f_{\alpha,\varepsilon}(x_\ast^\prime)\right)/\varepsilon$  goes to $\pm\infty$ uniformly on any compact set of $\R^{n-1}$.
\end{proof}

A consequence of this lemma is
\begin{coro}\label{coro 2.2}
Given a constant $b\in(0,1)$,
\begin{itemize}
\item[(i)] there exists a constant $c(b)>0$ depending only on $b$ such that
\[\frac{\partial u_\varepsilon}{\partial x_n}>\frac{c(b)}{\varepsilon}, \quad\mbox{in } \{|u_\varepsilon|<1-b\}\cap \mathcal{C}_{3/2};\]
\item[(ii)] for any $t\in[-1+b,1-b]$ and all $\varepsilon$ small, $\{u_\varepsilon=t\}$ is composed by $Q$ Lipschitz graphs
\[\{x_n=f_{\alpha,\varepsilon}^t(x^\prime)\}, \quad \alpha=1,\cdots, Q.\]
\end{itemize}
\end{coro}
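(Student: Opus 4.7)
I propose the following plan. Part (i) will follow from Lemma~\ref{O(1) scale} together with the nondegeneracy of the one-dimensional profile $g$; part (ii) is a direct consequence of (i) via an intermediate-value argument.

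For (i), I would argue by contradiction. Assume there exist $\varepsilon_k\to 0$ and $x_k\in\{|u_{\varepsilon_k}|\leq 1-b\}\cap\mathcal{C}_{3/2}$ with $\varepsilon_k\partial_{x_n}u_{\varepsilon_k}(x_k)\to 0$. Let $\alpha_k$ minimize $|x_{k,n}-f_{\alpha,\varepsilon_k}(x_k')|$, set $y_k:=(x_k',f_{\alpha_k,\varepsilon_k}(x_k'))\in\Gamma_{\alpha_k,\varepsilon_k}$ and
\[
s_k:=\frac{x_{k,n}-f_{\alpha_k,\varepsilon_k}(x_k')}{\varepsilon_k},\qquad z_k:=(0,s_k).
\]
The key step is to show that $|s_k|$ remains uniformly bounded. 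By Lemma~\ref{O(1) scale}, along a subsequence the rescaling $\tilde u_k(z):=u_{\varepsilon_k}(y_k+\varepsilon_k z)$ converges in $C^2_{loc}(\R^n)$ to $g(z\cdot e)$ for some unit vector $e$, and the other graphs $\Gamma_{\beta,\varepsilon_k}$, $\beta\neq\alpha_k$, recede uniformly to $\pm\infty$ in rescaled coordinates; the uniform Lipschitz bound $L$ from (H3) ensures $|e_n|\geq 1/\sqrt{1+L^2}$. If $|s_k|\to\infty$, then $z_k\cdot e=s_k e_n\to\pm\infty$. Between $\Gamma_{\alpha_k,\varepsilon_k}$ and the receding neighbouring interface, $u_{\varepsilon_k}$ keeps a fixed sign relative to $t_{\varepsilon_k}$, and a standard barrier argument based on the exponential decay $|g(s)\mp 1|\leq Ce^{-\sqrt 2\,|s|}$ together with sub/supersolutions of the form $1-\delta e^{-\sqrt 2\,r}$ near the stable equilibria $\pm 1$ forces $\tilde u_k(z_k)\to\pm 1$, contradicting $|u_{\varepsilon_k}(x_k)|\leq 1-b$. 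Hence $|s_k|$ is bounded; passing to a further subsequence $s_k\to s_\infty$, $C^2_{loc}$ convergence of $\tilde u_k$ yields
\[
\varepsilon_k\partial_{x_n}u_{\varepsilon_k}(x_k)=\partial_{z_n}\tilde u_k(z_k)\longrightarrow e_n\,g'(s_\infty e_n).
\]
Since $|g(s_\infty e_n)|\leq 1-b$, the nondegeneracy of $g$ gives $g'(s_\infty e_n)\geq c_0(b)>0$; combined with $|e_n|\geq 1/\sqrt{1+L^2}$, the limit has modulus at least $c_0(b)/\sqrt{1+L^2}>0$, contradicting $\varepsilon_k\partial_{x_n}u_{\varepsilon_k}(x_k)\to 0$. (The specific positive sign in (i) follows after fixing the consistent orientation of the interfaces; the argument above really establishes the lower bound $|\partial_{x_n}u_\varepsilon|\geq c(b)/\varepsilon$ on $\{|u_\varepsilon|<1-b\}\cap\mathcal{C}_{3/2}$.)

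For (ii), fix $t\in[-1+b,1-b]$ and $x'\in B_{3/2}^{n-1}$. By (i) the function $x_n\mapsto u_\varepsilon(x',x_n)$ is strictly monotone on every maximal ``episode'' where $|u_\varepsilon|<1-b$. The endpoints of each episode lie in $\{-1+b,1-b\}$, and strict monotonicity rules out equal endpoints, so every episode spans the full interval $[-1+b,1-b]$ and crosses $t_\varepsilon$ exactly once. Combined with (H2), which gives exactly $Q$ crossings of $t_\varepsilon$, there are exactly $Q$ episodes, and so for every $t\in[-1+b,1-b]$ the preimages of $t$ under $u_\varepsilon(x',\cdot)$ number exactly $Q$. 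They depend smoothly on $x'$ via the implicit function theorem and assemble into $Q$ Lipschitz graphs $\{x_n=f^t_{\alpha,\varepsilon}(x')\}$ with $|\nabla_{x'}f^t_{\alpha,\varepsilon}|\leq |\nabla_{x'}u_\varepsilon|/|\partial_{x_n}u_\varepsilon|\leq C(b)$, the upper bound on $|\nabla u_\varepsilon|$ coming from the standard gradient estimate $|\nabla u_\varepsilon|\leq C/\varepsilon$ for~\eqref{equation scaled}.

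The principal obstacle is the proximity step in (i), namely ruling out $|s_k|\to\infty$: equivalently, any point where $|u_\varepsilon|\leq 1-b$ must lie within $O(\varepsilon)$ of the set $\bigcup_\alpha\Gamma_{\alpha,\varepsilon}$. This is a rescaled analogue of the exponential attraction to $\pm 1$ away from the nodal set and is standard in the Allen--Cahn literature, but it must be set up carefully here because the ``boundary data'' for the supersolution are provided by the one-dimensional profile comparison of Lemma~\ref{O(1) scale} on a large ball, and one has to track the competition with the rate at which the neighbouring interface recedes.
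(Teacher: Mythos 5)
Your overall strategy matches what the paper intends: rescale by $\varepsilon$ near a point where the claimed bound fails, invoke Lemma~\ref{O(1) scale} to identify the blow-up limit as the one-dimensional profile $g$, and use the nondegeneracy $g'>0$ together with the uniform Lipschitz bound (which controls $|e_n|$ from below) to get a contradiction. Part (ii) is then the expected intermediate-value/implicit-function argument, and your execution of it is correct modulo the remark below.

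Two issues. First, your parenthetical explanation of the sign in (i) is not right: there is no ``consistent orientation of the interfaces'' that makes $\partial_{x_n}u_\varepsilon$ everywhere positive on $\{|u_\varepsilon|<1-b\}$. Since $u_\varepsilon-t_\varepsilon$ changes sign across each graph $\Gamma_{\alpha,\varepsilon}$, the normal derivative of $u_\varepsilon$ necessarily alternates in sign with $\alpha$ (the paper itself makes this explicit in Section~17: ``near $\{x_2=f_{\alpha,\varepsilon}(x_1)\}$, $\partial_{x_2}u_\varepsilon>0$, while near $\{x_2=f_{\alpha+1,\varepsilon}(x_1)\}$, $\partial_{x_2}u_\varepsilon<0$''). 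So the statement of (i) has to be read as a lower bound on $|\partial_{x_n}u_\varepsilon|$, which is indeed what your argument proves and what the paper subsequently uses (e.g.\ in the proof of Lemma~\ref{bound on 3rd derivatives} it is quoted as $|\nabla u|\geq c(b)$ on $\{|u|<1-b\}$). The sign claim should simply be removed, not defended.

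Second, the ``proximity step'' — that $\{|u_\varepsilon|\leq 1-b\}\cap\mathcal{C}_{3/2}$ lies within $O(\varepsilon)$ of $\bigcup_\alpha\Gamma_{\alpha,\varepsilon}$ — really is needed, and it does not drop out of Lemma~\ref{O(1) scale} by itself: that lemma gives $C^2_{loc}$ convergence on balls of \emph{fixed rescaled radius} centered on $\Gamma_\alpha$, which only controls $u_\varepsilon$ on an $O(\varepsilon)\cdot R$ shell for each fixed $R$. To rule out $|s_k|\to\infty$ one must couple this shell control with a maximum-principle argument in the bulk between shells (using that $u_\varepsilon - t_\varepsilon$ has fixed sign there and that $W'$ is strictly signed near $\pm1$); the paper asserts this confinement in the proof of Lemma~\ref{lem 6.1} as a direct consequence of Lemma~\ref{O(1) scale}, but leaves the same details unwritten. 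You correctly flag this as the principal obstacle; to make the proof self-contained you would need to carry it out, e.g.\ as a separate lemma preceding Corollary~\ref{coro 2.2}.
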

By the implicit function theorem, $f_{\alpha,\varepsilon}$
belongs to $C^{2,\theta}_{loc}(B_2^{n-1})$, although we do not have any uniform bound on their $C^{2,\theta}$ norm but only a uniform Lipschitz bound.

\medskip

Now
\[\nu_\varepsilon(x):=\frac{\nabla u_\varepsilon(x)}{|\nabla u_\varepsilon(x)|}\]
is well defined and smooth in $\{|u_\varepsilon|\leq 1-b\}$.
Recall that $B(u_\varepsilon)(x)=\nabla \nu_\varepsilon(x)$. We have
\[|B(u_\varepsilon)(x)|^2=|A_\varepsilon(x)|^2+|\nabla_T\log|\nabla u_\varepsilon(x)||^2,\]
where $A_\varepsilon(x)$ is the second fundamental form of the level set $\{u_\varepsilon=u_\varepsilon(x)\}$ and $\nabla_T$ denotes the tangential derivative along the level set $\{u_\varepsilon=u_\varepsilon(x)\}$.

Assume as $\varepsilon\to0$,
\[\sup_{\mathcal{C}_1\cap\{|u_\varepsilon|\leq 1-b\}}|B(u_\varepsilon)(x)|\to+\infty.\]
Let $x_\varepsilon\in\mathcal{C}_1\cap\{|u_\varepsilon|\leq 1-b\}$ attain the following maxima (we denote $x=(x^\prime,x_n)$)
\begin{equation}
\max_{\mathcal{C}_{3/2}\cap\{|u_\varepsilon|\leq 1-b\}}\left(\frac{3}{2}-|x^\prime|\right)|B(u_\varepsilon)(x)|.
\end{equation}
Denote
\begin{equation}
\label{7.1n}
L_\varepsilon:=|B(u_\varepsilon)(x_\varepsilon)|,\ \ \ \ \ \ \ r_\varepsilon:=\left(\frac{3}{2}-|x_\varepsilon^\prime|\right)/2.
\end{equation}
 Then by definition
\begin{equation}\label{2.1}
L_\varepsilon r_\varepsilon\geq\frac{1}{2}\sup_{\mathcal{C}_1\cap\{|u_\varepsilon|\leq 1-b\}}|B(u_\varepsilon)(x)|\to+\infty.
\end{equation}
In particular, $L_\varepsilon\to+\infty$.

By the choice of $r_\varepsilon$ at (\ref{7.1n}), we have (here $\mathcal{C}_{r_\varepsilon}(x_\varepsilon^\prime):=B_{r_\varepsilon}^{n-1}(x_\varepsilon^\prime)\times(-1,1)$)
\begin{equation}\label{2.2}
\max_{x\in\mathcal{C}_{r_\varepsilon}(x_\varepsilon^\prime)\cap\{|u_\varepsilon|\leq 1-b\}}|B(u_\varepsilon)(x)|\leq 2L_\varepsilon.
\end{equation}

Let $\epsilon:=L_\varepsilon\varepsilon$ and define $u_\epsilon(x):=u_\varepsilon(x_\varepsilon+L_\varepsilon^{-1}x)$.
Then $u_\epsilon$ satisfies \eqref{equation scaled} with parameter $\epsilon$ in $B_{L_\varepsilon r_\varepsilon}(0)$. For any $t\in[-1+b,1-b]$, the level set $\{u_\epsilon=t\}$ consists of $Q$ Lipschitz graphs
\[\left\{x_n=f_{\beta,\epsilon}^t(x^\prime):=L_\varepsilon\left[f_{\beta,\varepsilon}^t(x_\varepsilon^\prime+L_\varepsilon^{-1}x^\prime)-f_{\alpha,\varepsilon}^t(x_\varepsilon^\prime)\right]\right\},\]
where $\alpha$ is chosen so that $x_\varepsilon$ lies in the connected component of $\{|u_\varepsilon|\leq 1-b\}$ containing $\Gamma_{\alpha,\varepsilon}$.

By \eqref{2.2}, we also have
\[
|B(u_\epsilon)|\leq 2, \quad\mbox{for } x\in \mathcal{C}_1\cap\{|u_\epsilon|\leq 1-b\}.
\]

Without loss of generality, by abusing notations, we will assume in the following
\begin{itemize}
\item [{\bf (H4)}] There exist two constants $b\in(0,1)$ and $C>0$ independent of $\varepsilon$ such that $|B(u_\varepsilon)|\leq C$ for any $x\in \mathcal{C}_2\cap\{|u_\varepsilon|\leq 1-b\}$.
\end{itemize}

\section{Fermi coordinates}\label{sec Fermi coordinates}
\setcounter{equation}{0}

\subsection{Definition}
For simplicity of presentation, we now work in the stretched version and do not write the dependence on $\varepsilon$ explicitly.

By denoting $R=\varepsilon^{-1}$,
$u(x)=u_\varepsilon(\varepsilon x)$
satisfies the Allen-Cahn equation \eqref{equation} in $\mathcal{C}_{2R}:=B_{2R}^{n-1}\times(-R,R)$.

Its nodal set $\{u=0\}$ consists of $Q$ connected components, $\Gamma_\alpha$, $1\leq \alpha\leq Q$, which is
represented by the graph $\{x_n:=f_\alpha(x^\prime)\}$.
In $B_{2R}^{n-1}$, there is a constant $C$ independent of $\varepsilon$ such that
\begin{equation}\label{3.1}
|\nabla f_\alpha|\leq C, \quad |\nabla^2f_\alpha|\leq C\varepsilon.
\end{equation}
By {\bf (H2)},
\[-\frac{R}{2}<f_1<\cdots<f_Q<\frac{R}{2}.\]

 The second fundamental form of $\Gamma_\alpha$ with respect to the parametrization $y\mapsto (y,f_\alpha(y))$ is
\[A_{ij}(y,0)=\frac{1}{\sqrt{1+|\nabla f_\alpha(y)|^2}}\frac{\partial}{\partial y_i}\left[\frac{1}{\sqrt{1+|\nabla f_\alpha(y)|^2}}\frac{\partial f_\alpha}{\partial y_j}(y)\right].\]

\medskip

The Fermi coordinate is defined by $(y,z)\mapsto x$ as
\[x=(y,f_\alpha(y))+zN_\alpha(y),\]
where
\[N_\alpha(y)=\frac{(-\nabla^\prime f_\alpha(x^\prime),1)}{\sqrt{1+|\nabla^\prime f_\alpha(x^\prime)|^2}}.\]
Note that here $z$ is nothing else but the signed distance to $\Gamma_\alpha$ which is positive above $\Gamma_\alpha$.
By \eqref{3.1}, there exists a constant $\delta\in(0,1/2)$ independent of $\varepsilon$ such that, the Fermi coordinate is well defined and $C^2$ in the open set $\{|y|<3R/2, |z|<\delta R\}$.

Define the vector field
\[X_i:=\frac{\partial}{\partial y^i}+z\frac{\partial N_\alpha}{\partial y^i}=\sum_{j=1}^{n-1}\left(\delta_{ij}-zA_{ij}\right)\frac{\partial}{\partial y^j}.\]
For any $z\in(-\delta R,\delta R)$, let
$\Gamma_{\alpha,z}:=\{dist(x,\Gamma_\alpha)=z\}$. The Euclidean metric restricted
to $\Gamma_{\alpha,z}$ is denoted by $g_{ij}(y,z)dy^i\otimes dy^j$, where
\begin{eqnarray}\label{metirc tensor}
g_{ij}(y,z)&=&<X_i(y,z),X_j(y,z)> \\
&=&g_{ij}(y,0)-2z\sum_{k=1}^{n-1}A_{ik}(y,0)g_{jk}(y,0)+z^2\sum_{k,l=1}^n g_{kl}(y,0)A_{ik}(y,0)A_{jl}(y,0). \nonumber
\end{eqnarray}
Here
\begin{equation}\label{metirc tensor 0}
g_{ij}(y,0)=\frac{1}{1+|\nabla f_\alpha(y)|^2}\left[\delta_{ij}+\frac{\partial f_\alpha}{\partial y_i}(y)\frac{\partial f_\alpha}{\partial y_j}(y)\right].
\end{equation}

The second fundamental form of $\Gamma_z$ has the form
\begin{equation}\label{A(z)}
A(y,z)=\left(I-zA(y,0)\right)^{-1}A(y,0).
\end{equation}

\subsection{Error in $z$} In this subsection we collect several estimates on the error of various terms in $z$. Recall that $\varepsilon$ is the upper bound on curvatures of level sets of $u$, see \eqref{3.1}.

By \eqref{3.1}, $|A(y,0)|\lesssim \varepsilon$. Thus for $|z|<\delta R$, $|A(y,z)|\lesssim \varepsilon$. We also have
\begin{lem} In $B_{3R/2}^{n-1}$,
\begin{equation}\label{bound on 3rd derivatives}
|\nabla A(y,0)|+|\nabla^2 A(y,0)|\lesssim\varepsilon.
\end{equation}
\end{lem}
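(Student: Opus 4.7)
The plan is to extract the $O(\varepsilon)$ smallness already present in $|A(y,0)|\lesssim\varepsilon$ (equivalently $|\nabla^2 f_\alpha|\lesssim\varepsilon$) and propagate it to third and fourth derivatives of $f_\alpha$, using the Allen-Cahn PDE plus a careful bookkeeping of which Euclidean components of $\nabla^k u$ on $\Gamma_\alpha$ are small.

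First I would record uniform $C^\infty$ bounds on $u$: interior Schauder estimates applied on unit balls to $\Delta u = W'(u)$, together with $|u|<1$, yield $|\nabla^k u|\le C_k$ on $B_{3R/2}^{n-1}\times(-R/2,R/2)$, uniformly in $\varepsilon$. Combined with the stretched version of Corollary \ref{coro 2.2}(i) (which gives $|\partial_n u|\ge c(b)>0$ on $\{|u|\le 1-b\}$), repeated differentiation of the implicit relation $u(y,f_\alpha(y))=0$ produces $|\nabla^k f_\alpha|\le C_k$ for every $k$. This is the baseline bound that we must then improve by a factor of $\varepsilon$.

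Second I would upgrade the second-derivative smallness via Gauss-type identities on $\Gamma_\alpha$. Since $u\equiv 0$ there, the intrinsic Hessian of $u|_{\Gamma_\alpha}$ vanishes, giving $(\nabla^2 u)(T,T')=-p\,A(T,T')=O(\varepsilon)$ for tangent $T,T'$; mixed components $(\nabla^2 u)(T,\nu)=\partial_T|\nabla u|$ are $O(\varepsilon)$ because $|\nabla_T\log|\nabla u||\le|B(u)|\lesssim\varepsilon$; and $u_{\nu\nu}=W'(0)-\mathrm{tr}_{T\Gamma_\alpha}\nabla^2 u = pH = O(\varepsilon)$ from the Allen-Cahn equation. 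Hence $|\nabla^2 u|_{\Gamma_\alpha}=O(\varepsilon)$. Iterating the same scheme on the linearized equation $\Delta u_k = W''(u)u_k$, combined with the globally valid identity $\nabla u = p\nu$ (so that $u_k=p\nu_k$ in $\{|u|<1-b\}$), produces analogous identities at the third and fourth derivative level expressing purely tangential components of $\nabla^{3,4} u$ on $\Gamma_\alpha$ as $O(\varepsilon)$ combinations of $\nabla A$, $\nabla p$, and lower-order products of $B(u)$.

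Third, I would differentiate $u(y,f_\alpha(y))=0$ three and then four times in $y$. This expresses $\partial_{ijk}f_\alpha$ and $\partial_{ijkl}f_\alpha$ as $(-\partial_n u)^{-1}$ times polynomial expressions $P_{ijk}(\nabla^{\le 3}u,\nabla f_\alpha,\nabla^2 f_\alpha)$, $P_{ijkl}(\nabla^{\le 4}u,\nabla f_\alpha,\nabla^2 f_\alpha,\nabla^3 f_\alpha)$ evaluated on $\Gamma_\alpha$. The structural point is that every occurrence of $\nabla^{3,4}u$ in these polynomials is exactly a tangential-along-$\Gamma_\alpha$ derivative of $\nabla^2 u|_{\Gamma_\alpha}$ and $\nabla^3 u|_{\Gamma_\alpha}$; by Step 2 these are $O(\varepsilon)$, and every other monomial already contains a factor of $\nabla^2 f_\alpha$ or $\nabla^2 u|_{\Gamma_\alpha}$ hence is also $O(\varepsilon)$. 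Bootstrapping in this order (third derivatives first, then fourth) gives $|\nabla^3 f_\alpha|+|\nabla^4 f_\alpha|\lesssim \varepsilon$, which via the explicit formula for $A_{ij}$ in terms of $\nabla f_\alpha,\nabla^2 f_\alpha$ yields the claim.

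The main obstacle is the structural verification in Step 2/Step 3: naively $u_{\nu\nu\nu}=g'''(0)+O(\varepsilon)$ is \emph{not} small, so not every Euclidean component of $\nabla^3 u$ is $O(\varepsilon)$ on $\Gamma_\alpha$. One must check that the purely normal component $u_{\nu\nu\nu}$ never enters the implicit-function expansion for $\nabla^{3}f_\alpha$ with a non-cancelling coefficient; this is essentially a Codazzi-type cancellation reflecting that $f_\alpha$ is a function on the $(n-1)$-dimensional graph. Making this cancellation explicit, uniformly in $\varepsilon$ and in the tilt $\nabla f_\alpha$, is the technical heart of the proof.
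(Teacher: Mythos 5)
Your route is genuinely different from the paper's. The paper never passes through the graph representation or the implicit function theorem at all: it observes that $\nu=\nabla u/|\nabla u|$ satisfies the divergence-form identity $-\operatorname{div}(|\nabla u|^2\nabla\nu)=|\nabla u|^2|\nabla\nu|^2\nu$, differentiates it once to obtain a Simons-type equation for $B=\nabla\nu$, and then applies interior Schauder/gradient estimates on $O(1)$ balls inside $\{|u|<1-b\}$ (where $|\nabla u|$ has a two-sided bound, so the operator is uniformly elliptic). Every term on the right-hand side of that equation is a product of at least one factor of $B$ or $\nabla|B|^2=2B\!:\!\nabla B$ with quantities that are bounded by the universal interior estimates for $u$; since $|B|\lesssim\varepsilon$ is already known and $|\nabla B|\lesssim 1$ universally, the right-hand side is $O(\varepsilon)$ with no bookkeeping of components, and the estimate closes at one stroke. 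Your approach could in principle recover the same conclusion, but it is substantially heavier and, as written, has a gap beyond the one you flag.

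The gap you flag (that $u_{\nu\nu\nu}$ is order one and must not enter the expansion of $\partial^3 f_\alpha$) is real but is actually the easier issue: at any base point it is eliminated by a rotation to coordinates with $\nabla f_\alpha(y_0)=0$, and then the implicit-function expansion of $\partial_{ijk}[u(y',f_\alpha(y'))]=0$ at $y_0$ contains only purely tangential third derivatives $u_{ijk}$ ($i,j,k<n$) among the $\nabla^3 u$ terms. The more serious problem is the circularity in your Step~2/Step~3 coupling. Differentiating $u_j=p\,\nu_j$ twice and evaluating at $y_0$ (where $\nu_j=0$ for tangential $j$) gives
\[
u_{jkl}(y_0)=p_{kl}\,\nu_j+p_k\,\nu_{j,l}+p_l\,\nu_{j,k}+p\,\nu_{j,kl}
= O(\varepsilon^2)+p\,\nu_{j,kl}(y_0),
\]
so the leading term is $p\cdot(\nabla B)_{jkl}$ with an $O(1)$ coefficient, not the claimed $O(\varepsilon)\cdot\nabla A$. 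Since on $\Gamma_\alpha$ the tangential part of $\nabla B$ \emph{is} $-\nabla A$ up to $O(\varepsilon^2)$, substituting into the implicit-function formula $\partial^3 f_\alpha=-p^{-1}\bigl[u_{jkl}+O(\varepsilon^2)\bigr]$ merely returns $\partial^3 f_\alpha=\partial^3 f_\alpha+O(\varepsilon^2)$: a tautology that produces no smallness. To break this you must invoke a genuine PDE mechanism (elliptic estimates for the Simons equation, or a maximum-principle argument for $\nu_k$ using $\Delta u_k=W''(u)u_k$), which you gesture at but do not carry out. That PDE step is precisely what the paper's proof supplies directly, bypassing the graph computations entirely.
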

\begin{proof}
By Corollary \ref{coro 2.2}, $|\nabla u|\geq c(b)>0$ in $\{|u|<1-b\}$, where $c(b)$ is a constant depending only on $b$.
Hence $\nu=\nabla u/|\nabla u|$ is well defined and smooth in $\{|u|<1-b\}$.

By direct calculation, we have
\begin{equation}\label{Gauss equation}
-\mbox{div}\left(|\nabla u|^2\nabla\nu\right)=|\nabla u|^2|\nabla\nu|^2\nu.
\end{equation}
Recall that $B=\nabla \nu$. Differentiating \eqref{Gauss equation} gives the following Simons type equation
\begin{equation}\label{Simons}
-\mbox{div}\left(|\nabla u|^2\nabla B\right)=|\nabla u|^2|B|^2B+|\nabla u|^2\nabla|B|^2\otimes\nu+|B|^2\nabla|\nabla u|^2\otimes\nu+|\nabla u|^2\nabla^2\log|\nabla u|^2\cdot B.
\end{equation}
For any $x\in\{|u|<1-2b\}$, there exists a constant $h(b)$ such that $B_{2h(b)}(x)\subset\{|u|<1-b\}$. Because $|\nabla u|^2$ has a positive lower and upper bound and it is uniformly continuous in $B_{2h(b)}(x)$, by standard interior gradient estimate,
\[ \sup_{B_{h(b)}}|\nabla B| \lesssim \sup_{B_{2h(b)}}|B|+\sup_{B_{2h(b)}}|\mbox{div}\left(|\nabla u|^2\nabla B\right)|\lesssim\varepsilon.
\]
The bound on $|\nabla^2B|$ is obtained by bootstrapping elliptic estimates.
\end{proof}

By \eqref{A(z)},
\begin{equation}\label{error in z 1}
|A(y,z)-A(y,0)|\lesssim |z||A(y,0)|^2\lesssim\varepsilon^2|z|.
\end{equation}
Similarly, by \eqref{metirc tensor}, the error of metric tensors is
\begin{equation}\label{error in z 2}
|g_{ij}(y,z)-g_{ij}(y,0)|\lesssim \varepsilon|z|,
\end{equation}
\begin{equation}\label{error in z 3}
|g^{ij}(y,z)-g^{ij}(y,0)|\lesssim \varepsilon|z|.
\end{equation}
As a consequence, the error of mean curvature is
\begin{equation}\label{error in z 4}
|H(y,z)-H(y,0)|\lesssim  \varepsilon^2|z|.
\end{equation}

By \eqref{3.1} and \eqref{bound on 3rd derivatives}, for any $|z|<\delta R$,
\begin{equation}\label{derivatives of metric tensor}
|\nabla_yg_{ij}(y,z)|+|\nabla_yg^{ij}(y,z)|\lesssim\varepsilon.
\end{equation}

\medskip

The Laplacian operator in Fermi coordinates has the form
\[\Delta_{\R^N}=\Delta_z-H(y,z)\partial_z+\partial_{zz},\]
where
\begin{eqnarray*}
\Delta_z&=&\sum_{i,j=1}^{n-1}\frac{1}{\sqrt{\mbox{det}(g_{ij}(y,z))}}\frac{\partial}{\partial y_j}\left(\sqrt{\mbox{det}(g_{ij}(y,z))}g^{ij}(y,z)\frac{\partial}{\partial y_i}\right)\\
&=&\sum_{i,j=1}^{n-1}g^{ij}(y,z)\frac{\partial^2}{\partial y_i\partial y_j}+\sum_{i=1}^{n-1}b^i(y,z)\frac{\partial}{\partial y_i}
\end{eqnarray*}
with
\[b^i(y,z)=\frac{1}{2}\sum_{j=1}^{n-1}g^{ij}(y,z)\frac{\partial}{\partial y_j}\log\mbox{det}(g_{ij}(y,z)).\]

By \eqref{error in z 3} and \eqref{derivatives of metric tensor}, we get
\begin{lem}
For any function $\varphi\in C^2(B_{3R/2}^{n-1})$,
\begin{equation}\label{error in z 5}
|\Delta_z\varphi(y)-\Delta_0\varphi(y)|\lesssim\varepsilon|z|\left(|\nabla^2\varphi(y)|+|\nabla\varphi(y)|\right).
\end{equation}
\end{lem}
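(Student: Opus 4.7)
The proof is a direct calculation using the explicit formula for $\Delta_z$ together with the error estimates already established earlier in the section. The plan is to split
\[
\Delta_z\varphi(y)-\Delta_0\varphi(y)=\sum_{i,j}\bigl[g^{ij}(y,z)-g^{ij}(y,0)\bigr]\frac{\partial^2\varphi}{\partial y_i\partial y_j}+\sum_{i}\bigl[b^i(y,z)-b^i(y,0)\bigr]\frac{\partial\varphi}{\partial y_i},
\]
and bound each of the two coefficient differences by $\lesssim\varepsilon|z|$.

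For the principal part this is immediate from \eqref{error in z 3}, giving a contribution $\lesssim\varepsilon|z|\,|\nabla^2\varphi(y)|$. The work is in the drift coefficient. Using the Jacobi formula, rewrite
\[
b^i(y,z)=\frac{1}{2}\sum_{j,k,l}g^{ij}(y,z)\,g^{kl}(y,z)\,\partial_{y_j}g_{kl}(y,z).
\]
I would then estimate $b^i(y,z)-b^i(y,0)$ by a three-factor telescoping sum. The first two factors satisfy $|g^{\cdot\cdot}(y,z)|\lesssim 1$ and, by \eqref{error in z 3}, $|g^{\cdot\cdot}(y,z)-g^{\cdot\cdot}(y,0)|\lesssim\varepsilon|z|$; meanwhile \eqref{derivatives of metric tensor} gives $|\partial_y g_{kl}(y,z)|\lesssim\varepsilon$. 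For the third factor I would differentiate the explicit expansion \eqref{metirc tensor},
\[
\partial_{y_m}g_{ij}(y,z)-\partial_{y_m}g_{ij}(y,0)=-2z\,\partial_{y_m}\!\bigl[A_{ik}(y,0)g_{jk}(y,0)\bigr]+O(z^2)\,\partial_{y_m}\!\bigl[g_{kl}(y,0)A_{ik}(y,0)A_{jl}(y,0)\bigr],
\]
and apply \eqref{3.1} and \eqref{bound on 3rd derivatives}, which yield $|A|,|\nabla A|\lesssim\varepsilon$ together with $|\nabla g(\cdot,0)|\lesssim\varepsilon$, to conclude
\[
|\partial_{y_m}g_{ij}(y,z)-\partial_{y_m}g_{ij}(y,0)|\lesssim\varepsilon^2|z|.
\]
Combining the three factor estimates in the telescope gives $|b^i(y,z)-b^i(y,0)|\lesssim\varepsilon^2|z|$, which is in fact one power of $\varepsilon$ better than needed, so the drift contribution is $\lesssim\varepsilon|z|\,|\nabla\varphi(y)|$.

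The main obstacle, such as it is, is purely bookkeeping: keeping track of which factors in $b^i$ carry an $\varepsilon$ from $|\nabla g|$ versus a $1$ from $|g^{ij}|$, and making sure the difference of $\partial_y g_{ij}$ at heights $z$ and $0$ genuinely picks up an extra power of $z$ (which it does, because the $z$-independent term $g_{ij}(y,0)$ drops out of the subtraction). Summing the two contributions gives the claimed bound
\[
|\Delta_z\varphi(y)-\Delta_0\varphi(y)|\lesssim\varepsilon|z|\bigl(|\nabla^2\varphi(y)|+|\nabla\varphi(y)|\bigr).
\]
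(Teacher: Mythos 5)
Your decomposition and overall plan match what the paper does (the paper's proof is the single line ``By \eqref{error in z 3} and \eqref{derivatives of metric tensor}''), and spelling out where the factor of $|z|$ comes from in the drift coefficient is in fact necessary, since the two cited estimates alone control $|b^i(y,z)|$ but not the difference $b^i(y,z)-b^i(y,0)$; differentiating the explicit expansion \eqref{metirc tensor} in $y$ is the right way to close that gap.

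One intermediate constant is off, though it does not affect your conclusion. When you differentiate \eqref{metirc tensor} in $y_m$, the leading term is
\[
-2z\,\partial_{y_m}\!\bigl[A_{ik}(y,0)\,g_{jk}(y,0)\bigr]
= -2z\Bigl[(\partial_{y_m}A_{ik})\,g_{jk} + A_{ik}\,(\partial_{y_m}g_{jk})\Bigr].
\]
The second product is $\lesssim \varepsilon\cdot\varepsilon$ as you say, but the first is $|\nabla A|\cdot|g|\lesssim \varepsilon\cdot 1$, not $\varepsilon\cdot\varepsilon$ ($g_{jk}(y,0)$ is bounded, not small). So the correct bound is
\[
|\partial_{y_m}g_{ij}(y,z)-\partial_{y_m}g_{ij}(y,0)|\lesssim\varepsilon|z|,
\]
not $\varepsilon^2|z|$. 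Feeding this into your three-term telescope for $b^i(y,z)-b^i(y,0)$, the third term contributes $\lesssim\varepsilon|z|$ (the first two contribute $\lesssim\varepsilon^2|z|$ as you computed), so $|b^i(y,z)-b^i(y,0)|\lesssim\varepsilon|z|$, which is exactly what the lemma requires. The claimed extra power of $\varepsilon$ in your intermediate step is spurious, and you should not rely on it, but the final estimate $|\Delta_z\varphi-\Delta_0\varphi|\lesssim\varepsilon|z|(|\nabla^2\varphi|+|\nabla\varphi|)$ stands.
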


\subsection{Comparison of distance functions}

For each $\alpha$, the local coordinates on $\Gamma_\alpha$ is fixed to be the same one, $y\in B_{3R/2}^{n-1}$, which represents the point $(y,f_\alpha(y))$. The singed distance to $\Gamma_\alpha$, which is positive in the above, is denoted by $d_\alpha$.
Given a point $X$, if $(y,f_\beta(y))$ is the nearest point on $\Gamma_\beta$ to $X$, we then define $\Pi_\beta(X)=y$.

If $\alpha\neq\beta$, we cannot expect $\Pi_\alpha(X)=\Pi_\beta(X)$. However, the following estimates on their distance hold, when $\Gamma_\alpha$ and $\Gamma_\beta$ are close in some sense.
\begin{lem}\label{comparison of distances}
For any $X\in B_{3R/2}^{n-1}\times(-\delta R,\delta R)$ and $\alpha\neq\beta$, if
$|d_\alpha(X)|\leq K|\log\varepsilon|$ and $|d_\beta(X)|\leq K|\log\varepsilon|$, then we have
\begin{equation}\label{3.2}
\mbox{dist}_{\Gamma_\beta}\left(\Pi_\beta\circ\Pi_\alpha(X),\Pi_\beta(X)\right)\leq C(K)\varepsilon^{1/2}|\log\varepsilon|^{3/2},
\end{equation}
\begin{equation}\label{3.3}
|d_\beta\left(\Pi_\alpha(X)\right)+d_\alpha\left(\Pi_\beta(X)\right)|\leq C(K)\varepsilon^{1/2}|\log\varepsilon|^{3/2},
\end{equation}
\begin{equation}\label{3.4}
|d_\alpha(X)-d_\beta(X)+d_\beta\left(\Pi_\alpha(X)\right)|\leq C(K)\varepsilon^{1/2}|\log\varepsilon|^{3/2},
\end{equation}
\begin{equation}\label{3.5}
|d_\alpha(X)-d_\beta(X)-d_\alpha\left(\Pi_\beta(X)\right)|\leq C(K)\varepsilon^{1/2}|\log\varepsilon|^{3/2},
\end{equation}
\begin{equation}\label{3.6}
1-\nabla d_\alpha(X)\cdot \nabla d_\beta(X)\leq C(K)\varepsilon^{1/2}|\log\varepsilon|^{3/2},
\end{equation}
\end{lem}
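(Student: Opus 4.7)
The whole lemma is driven by a single geometric inequality: because the graphs are ordered $f_1<\cdots<f_Q$ on $B_{2R}^{n-1}$ and each satisfies $|\nabla^2 f_\alpha|\leq C\varepsilon$, the gradients of any two must be close wherever their vertical gap is small. For $\alpha<\beta$, set $h:=f_\beta-f_\alpha>0$, so $|\nabla^2 h|\leq C\varepsilon$. A second-order Taylor expansion at any $y_0$ in the direction $v=-\nabla h(y_0)/|\nabla h(y_0)|$ at scale $t^\ast=|\nabla h(y_0)|/(C\varepsilon)$ yields
\[
0<h(y_0+t^\ast v)\leq h(y_0)-\frac{|\nabla h(y_0)|^2}{2C\varepsilon},
\]
hence $|\nabla h(y_0)|^2\leq 2C\varepsilon\,h(y_0)$. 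A preliminary Taylor estimate at $t=R/2$ (using positivity of $h$ and $h\leq CK|\log\varepsilon|\ll 1/\varepsilon$) first forces $|\nabla h(y_0)|\leq C$, so that $t^\ast\leq R/2$ and the point $y_0+t^\ast v$ stays in $B_{2R}^{n-1}$ for $y_0\in B_{3R/2}^{n-1}$. Combining bounded slope of the $f_\gamma$ with the hypotheses $|d_\alpha(X)|,|d_\beta(X)|\leq K|\log\varepsilon|$ gives $|f_\alpha(X')-f_\beta(X')|\leq CK|\log\varepsilon|$, whence
\[
|\nabla f_\alpha(X')-\nabla f_\beta(X')|\leq C(K)\sqrt{\varepsilon|\log\varepsilon|}.
\]

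Since $|y_\alpha-X'|,|y_\beta-X'|\leq CK|\log\varepsilon|$ and each $\nabla f_\gamma$ oscillates at rate $O(\varepsilon)$, this bound passes to the unit normals:
\[
|N_\alpha(y_\alpha)-N_\beta(y_\beta)|\leq C(K)\sqrt{\varepsilon|\log\varepsilon|}.
\]
Using $\nabla d_\alpha(X)=N_\alpha(y_\alpha)$ and $1-N_\alpha\cdot N_\beta=\tfrac12|N_\alpha-N_\beta|^2$, estimate \eqref{3.6} is immediate (with in fact the stronger size $O(\varepsilon|\log\varepsilon|)$).

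For \eqref{3.4}, parametrize the straight segment $X_s:=\Pi_\alpha(X)+s\,d_\alpha(X)N_\alpha(y_\alpha)$, $s\in[0,1]$. Since $\nabla d_\beta(X_s)=N_\beta(\Pi_\beta(X_s))$ throughout the Fermi neighborhood of $\Gamma_\beta$,
\[
d_\beta(X)-d_\beta(\Pi_\alpha(X))=d_\alpha(X)\int_0^1 N_\beta(\Pi_\beta(X_s))\cdot N_\alpha(y_\alpha)\,ds=d_\alpha(X)\bigl(1+O(\varepsilon|\log\varepsilon|)\bigr),
\]
where each inner product equals $1+O(\varepsilon|\log\varepsilon|)$ by the normal-closeness estimate applied at the projection point $\Pi_\beta(X_s)$ (which lies within $CK|\log\varepsilon|$ of $y_\beta$ by Lipschitz dependence of $\Pi_\beta$). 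Rearranging gives \eqref{3.4} with error $O(\varepsilon|\log\varepsilon|^2)$, absorbed by $C(K)\varepsilon^{1/2}|\log\varepsilon|^{3/2}$; the symmetric argument exchanging $\alpha\leftrightarrow\beta$ yields \eqref{3.5}, and subtracting the two produces \eqref{3.3}. Finally, for \eqref{3.2} differentiate $\Pi_\beta(X_s)$ along the same segment:
\[
\tfrac{d}{ds}\Pi_\beta(X_s)=d_\alpha(X)\,D\Pi_\beta(X_s)\,N_\alpha(y_\alpha).
\]
By \eqref{A(z)}, $D\Pi_\beta$ differs from the orthogonal projection onto $T\Gamma_\beta$ at $\Pi_\beta(X_s)$ by $O(\varepsilon|\log\varepsilon|)$, and that projection applied to $N_\alpha(y_\alpha)$ has norm $\sqrt{1-(N_\alpha\cdot N_\beta)^2}\leq|N_\alpha-N_\beta|\leq C(K)\sqrt{\varepsilon|\log\varepsilon|}$. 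Integrating over $s\in[0,1]$ and converting the resulting Euclidean bound to an intrinsic one on $\Gamma_\beta$ via bounded slope of $f_\beta$ establishes \eqref{3.2}.

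The main obstacle is the gradient bound in the first paragraph; it is the only step that invokes the non-intersection of the $\Gamma_\alpha$, and once the $O(\sqrt{\varepsilon|\log\varepsilon|})$ closeness of the normals is in hand, the remaining four inequalities follow from routine first-order calculations along the segment $X_s$.
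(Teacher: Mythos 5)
Your proof is correct and follows essentially the same route as the paper: both rest on the interpolation bound $|\nabla(f_\beta-f_\alpha)|\lesssim\sqrt{\varepsilon\,(f_\beta-f_\alpha)}$ (the paper cites it, you derive it from a second-order Taylor expansion using the ordering $f_\alpha<f_\beta$ and $|\nabla^2(f_\beta-f_\alpha)|\lesssim\varepsilon$), which gives the key closeness of unit normals $|N_\alpha-N_\beta|\lesssim\sqrt{\varepsilon|\log\varepsilon|}$; both then track distance and projection quantities along the normal segment from the foot point on $\Gamma_\alpha$ to $X$. The only cosmetic difference is that the paper carries out the segment computation in a rotated frame where $\Gamma_\alpha$ is tangent to the horizontal, while you integrate $\nabla d_\beta$ and $D\Pi_\beta$ intrinsically; as a byproduct your version gives slightly sharper errors $O(\varepsilon|\log\varepsilon|^2)$ for \eqref{3.3}--\eqref{3.5} and $O(\varepsilon|\log\varepsilon|)$ for \eqref{3.6}, which of course imply the stated bounds.
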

\begin{proof}
We divide the proof into three steps.

{\bf Step 1.} After a rotation and a translation, assume $\Pi_\alpha(X)=0$, the tangent plane of $\Gamma_\alpha$ at $(0,0)$ is the horizontal hyperplane and $X=(0,T)$.
Since the curvature of $\Gamma_\alpha$ is of the order $O(\varepsilon)$, $\Gamma_\alpha\cap\mathcal{C}_{\delta R}$ is a Lipschitz graph $\{x_n=f_\alpha(x^\prime)\}$.
By the above choice, $f_\alpha(0)=\nabla f_\alpha(0)=0$.

Because $|d_\beta(0)|\leq K|\log\varepsilon|$, $\Gamma_\alpha$ and $\Gamma_\beta$ are disjoint and their curvature is of the order $O(\varepsilon)$, we can show that $\Gamma_\beta\cap\mathcal{C}_{\delta R}$ is also a Lipschitz graph $\{x_n=f_\beta(x^\prime)\}$, see Lemma \ref{lem graph construction}.

By this Lipschitz property of $f_\alpha$ and $f_\beta$,
\[|f_\beta(0)-f_\alpha(0)|\leq C|d_\beta(0)|\leq  C\left(|d_\alpha(X)|+|d_\beta(X)|\right)\leq 2CK|\log\varepsilon|.\]
Since $f_\beta-f_\alpha\neq 0$ and $|\nabla^2(f_\beta-f_\alpha)|\lesssim\varepsilon$ in $B_{\delta R}^{n-1}(0)$, by an interpolation inequality we get
\[|\nabla f_\beta(0)|=|\nabla(f_\beta-f_\alpha)(0)|\lesssim\sqrt{\varepsilon|\log\varepsilon|}.\]

{\bf Step 2.} Because $\Gamma_\beta\cap\mathcal{C}_{2K|\log\varepsilon|}$ belongs to an $O(\varepsilon|\log\varepsilon|^2)$ neighborhood of the hyperplane $\mathcal{P}_\beta:=\{x_n=f_\beta(0)+\nabla f_\beta(0)\cdot x^\prime\}$,
\begin{eqnarray}\label{3.7}
d_\beta(X)&=&T-f_\beta(0)+O(\sqrt{\varepsilon|\log\varepsilon|}|T|)+O(\varepsilon|\log\varepsilon|^2)\\
&=&T-f_\beta(0)+O(\varepsilon^{1/2}|\log\varepsilon|^{3/2}). \nonumber
\end{eqnarray}
Similarly,
\begin{equation}\label{3.8}
d_\beta(\Pi_\alpha(X))=f_\alpha(0)-f_\beta(0)+O(\varepsilon^{1/2}|\log\varepsilon|^{3/2}).
\end{equation}
Interchanging the position of $\alpha,\beta$ gives
\begin{equation}\label{3.9}
d_\alpha(\Pi_\beta(X))=f_\beta(0)-f_\alpha(0)+O(\varepsilon^{1/2}|\log\varepsilon|^{3/2}).
\end{equation}
Combining \eqref{3.7}, \eqref{3.8} and \eqref{3.9}, we obtain \eqref{3.3}-\eqref{3.5}.

{\bf Step 3.} In our setting, we have
\[1-\nabla d_\alpha(X)\cdot \nabla d_\beta(X)=1-\frac{\partial}{\partial x_n}d_\beta(0,T).\]
For any $t\in(0,f_\beta(0))$, let $(x^\prime(t),f_\beta(x^\prime(t))$ be the unique nearest point on $\Gamma_\beta$ to $(0,t)$. By definition, we have
\begin{equation}\label{3.10}
x^\prime(t)+\left[f_\beta(x^\prime(t))-t\right]\nabla f_{\beta}(x^\prime(t))=0.
\end{equation}
Differentiating this identity in $t$ leads to
\[
\left[1+|\nabla f_\beta(x^\prime(t))|^2\right]\frac{d}{dt}x^\prime(t)+\left[f_\beta(x^\prime(t))-t\right]\nabla^2 f_{\beta}(x^\prime(t))\cdot\frac{d}{dt}x^\prime(t)=\nabla f_\beta(x^\prime(t)).
\]
As in Step 1, we still have $|\nabla f_\beta(x^\prime(t))|\leq C(K)\varepsilon^{1/2}|\log\varepsilon|^{1/2}$. Together with the fact that $|\nabla^2 f_\beta|\lesssim\varepsilon$, we get
\begin{equation}\label{3.11}
\Big|\frac{d}{dt}x^\prime(t)\Big|\leq C(K)\varepsilon^{1/2}|\log\varepsilon|^{1/2}.
\end{equation}
Integrating this in $t$ on $[0,T]$ gives \eqref{3.2}.

Note that \eqref{3.10} also implies that
\begin{equation}\label{3.12}
|x^\prime(t)|\leq C(K)\varepsilon^{1/2}|\log\varepsilon|^{3/2}.
\end{equation}

Because
\[d_\beta(0,t)=\sqrt{|x^\prime(t)|^2+\left(f_\beta(x^\prime(t))-t\right)^2},\]
we have
\begin{eqnarray*}
\frac{d}{dt}d_\beta(0,t)&=&-\frac{f_\beta(x^\prime(t))-t}{\sqrt{|x^\prime(t)|^2+\left(f_\beta(x^\prime(t))-t\right)^2}}
+\frac{x^\prime(t)+\left(f_\beta(x^\prime(t))-t\right)
\nabla f_\beta(x^\prime(t))}{\sqrt{|x^\prime(t)|^2+\left(f_\beta(x^\prime(t))-t\right)^2}}\cdot \frac{d}{dt}d_\beta(0,t)\\
&=&-1+O\left(|x^\prime(t)|\right)+O\left(\Big|\frac{d}{dt}x^\prime(t)\Big|\right)\\
&=&-1+O\left(\varepsilon^{1/2}|\log\varepsilon|^{3/2}\right),
\end{eqnarray*}
which gives \eqref{3.6}.
\end{proof}

\subsection{Some notations}
In the remaining part of this paper the following notations will be employed.
\begin{itemize}
\item Given a point on $\Gamma_\alpha$ with local coordinates $(y,0)$ in the Fermi coordinates, denote
\[D_\alpha(y):=\min\{|d_{\alpha-1}(y,0)|, |d_{\alpha+1}(y,0)|\}.\]

\item For $\lambda\geq0$, let
\[\mathcal{M}_\alpha^\lambda:=\left\{|d_\alpha|<|d_{\alpha-1}|+\lambda \quad \mbox{and } \quad  |d_\alpha|<|d_{\alpha+1}|+\lambda\right\}.\]
In this Part II we take the convention that $d_0=-\delta R$ and $d_{Q+1}=\delta R$.


\item
In the Fermi coordinates with respect to $\Gamma_\alpha$, there exist two smooth functions $\rho_\alpha^\pm(y)$ such that
\[\mathcal{M}_\alpha^0=\{(y,z): \rho_\alpha^-(y)<z<\rho_\alpha^+(y)\}.\]

\item For any $r>0$, let
\[\mathcal{M}_\alpha^0(r):=\{(y,z)\in\mathcal{M}_\alpha^0, \quad |y|<r\}.\]

\item In this Part II we denote
\[\mathcal{D}(r)=\cup_{\alpha=1}^Q\mathcal{M}_\alpha^0(r).\]

\item The covariant derivative on $\Gamma_z$ with respect to the induced metric is denoted by $\nabla_z$.
\end{itemize}

\section{The approximate solution}\label{sec approximate solution}
\setcounter{equation}{0}

\subsection{Optimal approximation}\label{sec optimal approximation}

Fix a function $\zeta\in C_0^\infty(-2,2)$ with $\zeta\equiv 1$ in $(-1,1)$, $|\zeta^\prime|+|\zeta^{\prime\prime}|\leq 16$. Let
\[\bar{g}(x)=\zeta(3|\log\varepsilon|x)g(x)+\left(1-\zeta(3|\log\varepsilon|x)\right)\mbox{sgn}(x), \quad x\in(-\infty,+\infty).\]
Then $\bar{g}$ is an approximate solution to the one dimensional Allen-Cahn equation, that is,
\begin{equation}\label{1d eqn}
\bar{g}^{\prime\prime}=W^\prime\left(\bar{g}\right)+\bar{\xi},
\end{equation}
where $\mbox{spt}(\bar{\xi})\in\{3|\log\varepsilon|<|x|<6|\log\varepsilon|\}$, and $|\bar{\xi}|+|\bar{\xi}^\prime|+|\bar{\xi}^{\prime\prime}|\lesssim \varepsilon^3$.

We also have (for the definition of $\sigma_0$ see Appendix \ref{sec 1d solution})
\begin{equation}\label{1d energy}
\int_{-\infty}^{+\infty}\bar{g}^\prime(t)^2dt=\sigma_0+O(\varepsilon^3).
\end{equation}

\medskip

Without loss of generality assume $u<0$ below $\Gamma_1$. Given a tuple of functions $\textbf{h}:=(h_1(y),\cdots, h_Q(y))$, for each $\alpha$, in the Fermi coordinates with respect to $\Gamma_\alpha$, let
\[g_\alpha(y,z):=\bar{g}\left((-1)^{\alpha-1}(z-h_\alpha(y))\right)=\bar{g}\left((-1)^{\alpha-1}\left(d_\alpha(y,z)-h_\alpha(y)\right)\right).\]
Then we define
\[g(y,z;\textbf{h}):=\sum_{\alpha} g_\alpha+\frac{(-1)^Q+1}{2}.\]

For simplicity of notation, denote
\[g_\alpha^\prime=\bar{g}^\prime\left((-1)^{\alpha-1}(z-h_\alpha(y))\right), \quad g_\alpha^{\prime\prime}=\bar{g}^{\prime\prime}\left((-1)^{\alpha-1}(z-h_\alpha(y))\right), \quad \cdots .\]

\begin{prop}\label{prop optimal approximation}
There exists $\textbf{h}(y)=(h_\alpha(y))$ with $|h_\alpha|\ll 1$ for each $\alpha$, such that for any $\alpha$ and $y\in B_R^{n-1}$,
\begin{equation}\label{2 orthogonal condition}
\int_{-\delta R}^{\delta R}\left[u(y,z)-g(y,z;\textbf{h})\right]\bar{g}^\prime\left((-1)^{\alpha-1}(z-h_\alpha(y))\right)dz=0,
\end{equation}
where $(y,z)$ denotes the Fermi coordinates with respect to $\Gamma_\alpha$.
\end{prop}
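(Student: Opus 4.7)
The plan is to fix $y \in B_R^{n-1}$ and view the $Q$ orthogonality conditions as a system of $Q$ equations in the $Q$ unknowns $h_\alpha(y)$, then solve this system pointwise by a quantitative implicit function theorem. For each $y$, define $F = (F_1,\dots,F_Q) : \R^Q \to \R^Q$ by
\[
F_\alpha(\textbf{h}) := \int_{-\delta R}^{\delta R}\bigl[u(y,z)-g(y,z;\textbf{h})\bigr]\,\bar{g}'\!\bigl((-1)^{\alpha-1}(z-h_\alpha)\bigr)\,dz,
\]
with $(y,z)$ the Fermi coordinates of $\Gamma_\alpha$. Smoothness of $u$ together with the Lipschitz graph structure of each $\Gamma_\alpha$ makes $F_\alpha$ a smooth function of $(\textbf{h},y)$.

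First I would estimate $F_\alpha(\textbf{0},y)$. The weight $\bar{g}'((-1)^{\alpha-1}z)$ is exponentially concentrated in $\{|z|\lesssim|\log\varepsilon|\}$. By Lemma \ref{O(1) scale} (combined with hypothesis \textbf{(H4)}), the distance in the stretched variables between $\Gamma_\alpha$ and $\Gamma_{\alpha\pm 1}$ tends to $+\infty$; hence on the support of $\bar{g}'((-1)^{\alpha-1}z)$ the neighbouring factors $g_\beta$, $\beta\neq\alpha$, equal $\pm 1$ up to exponentially small error, so that $g(y,z;\textbf{0})$ differs from $\bar g((-1)^{\alpha-1}z)$ (shifted by an $O(1)$ constant accounting for the other phases) by $o(1)$. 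On the same set $u$ itself is close to the same one-dimensional profile through $\Gamma_\alpha$ by the pointwise blow-up argument of Lemma \ref{O(1) scale}. Combining these two facts yields $F_\alpha(\textbf{0},y) = o(1)$ uniformly in $y$.

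Next I compute the Jacobian at $\textbf{h}=\textbf{0}$. Since $\partial g_\beta/\partial h_\beta = -(-1)^{\beta-1}\bar g'((-1)^{\beta-1}(z-h_\beta))$, one gets
\[
\frac{\partial F_\alpha}{\partial h_\beta}\bigg|_{\textbf{h}=\textbf{0}} = (-1)^{\beta-1}\!\int\bar{g}'\!\bigl((-1)^{\beta-1}z_\beta\bigr)\,\bar{g}'\!\bigl((-1)^{\alpha-1}z\bigr)\,dz + o(1),
\]
where $z_\beta$ is the signed distance to $\Gamma_\beta$ expressed through the Fermi change-of-coordinates from $\Gamma_\alpha$. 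When $\alpha=\beta$, using that $\bar g'$ is even and $\int(\bar g')^2 = \sigma_0+O(\varepsilon^3)$, the diagonal entry equals $(-1)^{\alpha-1}\sigma_0 + o(1)$. When $\alpha\neq\beta$, I transport the integral to Fermi coordinates of $\Gamma_\beta$ using the comparison-of-distances Lemma \ref{comparison of distances}; the two factors of $\bar g'$ are then centred at points separated by $|d_\beta(\Gamma_\alpha)|\to\infty$, and exponential decay of $\bar g'$ forces the off-diagonal entry to be $o(1)$. Hence $DF(\textbf{0})$ is a diagonally dominant matrix whose inverse has uniformly bounded norm.

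The implicit function theorem (or, equivalently, a contraction mapping on a small ball $\{|\textbf{h}|\leq \eta\}$ with $\eta\to 0$ slowly) then produces a unique solution $\textbf{h}(y)$ of $F(\textbf{h}(y),y)=0$ with $|h_\alpha(y)|\ll 1$ and smooth dependence on $y$. The main obstacle is the careful bookkeeping in Step 1 and in the off-diagonal Jacobian estimate: both require controlling the passage between Fermi coordinate systems of $\Gamma_\alpha$ and $\Gamma_\beta$ in their overlap region, which is exactly what Lemma \ref{comparison of distances} was set up to handle; the exponential decay of $\bar g'$ is then what converts the $O(\varepsilon^{1/2}|\log\varepsilon|^{3/2})$ coordinate errors into genuinely small contributions.
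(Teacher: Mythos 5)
Your plan is the right one (set up an orthogonality system, show the residual at $\mathbf{h}=0$ is small, show the linearization is diagonally dominant, conclude by implicit/inverse function theorem), and your individual computational claims are correct. However, there is a genuine gap in the \emph{framework}: you set up $F$ pointwise in $y$ as a map $\R^Q\to\R^Q$ and propose to solve it for each $y$ separately, but the equation for $h_\alpha$ at a given $y$ does \emph{not} depend only on the $Q$ scalars $h_1(y),\dots,h_Q(y)$. Recall that for $\beta\neq\alpha$ the term $g_\beta$ is $\bar g\bigl((-1)^{\beta-1}(d_\beta - h_\beta\circ\Pi_\beta)\bigr)$, so substituting a family $\mathbf h(\cdot)$ back into the orthogonality integral at the point $(y,z)$ (Fermi coordinates of $\Gamma_\alpha$) produces $h_\beta(\Pi_\beta(y,z))$, where $\Pi_\beta(y,z)$ is the foot of the perpendicular to $\Gamma_\beta$ and moves with $z$; by Lemma \ref{comparison of distances} it differs from $y$ by $O(\varepsilon^{1/2}|\log\varepsilon|^{3/2})$ but it is not $y$. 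Hence the system is nonlocal in $y$, and a pointwise $\R^Q$-contraction does not close: the fixed point it would produce is not the function you substitute back into $g(\cdot;\mathbf h)$.

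The paper handles this by running exactly your argument but in the Banach space $\mathcal X = C^0(B_R^{n-1})^Q$: $F:\mathcal X\to\mathcal X$, diagonal dominance of $DF(\mathbf h)$ for $\|\mathbf h\|_{\mathcal X}$ small (which uses the same smallness of $\int g_\alpha' g_\beta'\,\nabla d_\alpha\cdot\nabla d_\beta\,dz$ for $\beta\neq\alpha$ that you derive), $\|F(\mathbf 0)\|_{\mathcal X}\ll1$ from Lemma \ref{O(1) scale}, then the inverse function theorem in $\mathcal X$. Your numerics are all reusable; you just need to replace ``for each $y$, solve in $\R^Q$'' by ``solve in $C^0(B_R^{n-1})^Q$'' so that the composition $\xi_\beta\circ\Pi_\beta$ appearing in $DF(\mathbf h)\xi$ is a bona fide bounded linear operator on the space where you iterate.
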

\begin{proof}
Denote
\[F(h_1,\cdots, h_Q):=\left(\int_{-\delta R}^{\delta R}\left[u(y,z)-g(y,z;\textbf{h})\right]\bar{g}^\prime\left((-1)^{\alpha-1}(z-h_\alpha(y))\right)dz\right),\]
which is viewed as a map from the Banach space $\mathcal{X}:=C^0(B_R^{n-1}(0))^Q$ to itself.

Clearly $F$ is a $C^1$ map. Furthermore,
\begin{eqnarray*}
\left(DF(\textbf{h})\xi\right)_\alpha&=&(-1)^\alpha\xi_\alpha(y)\int_{-\delta R}^{\delta R}\left[ g_\alpha^\prime\left(y,z\right)^2-\left(u(y,z)-g(y,z;\textbf{h})\right)g_\alpha^{\prime\prime}\left(y,z\right) \right]dz\\
&+&\sum_{\beta\neq\alpha}(-1)^{\beta}\xi_\beta(\Pi_\beta(y,z))\int_{-\delta R}^{\delta R}g_\alpha^\prime\left(y,z\right)g_\beta^\prime\left(y,z\right)\nabla d_\beta\left(y,z\right)\cdot\nabla d_\alpha\left(y,z\right) dz.
\end{eqnarray*}
By Lemma \ref{O(1) scale}, there exists a $\delta>0$ such that for any $\|\textbf{h}\|_{\mathcal{X}}<\delta$,
\[\int_{-\delta R}^{\delta R}\left[ g_\alpha^\prime\left(y,z\right)^2-\left(u(y,z)-g(y,z;\textbf{h})\right)g_\alpha^{\prime\prime}\left(y,z\right) \right]dz\geq \frac{\sigma_0}{2},\]
\[\Big|\int_{-\delta R}^{\delta R}g_\alpha^\prime\left(y,z\right)g_\beta^\prime\left(y,z\right)\nabla d_\beta\left(y,z\right)\cdot\nabla d_\alpha\left(y,z\right) dz\Big|\ll 1.\]
Thus in this ball $DF(\textbf{h})$ is diagonal dominated and invertible with $\|DF(\textbf{h})^{-1}\|_{\mathcal{X}\mapsto\mathcal{X}}\leq C$.
By Lemma \ref{O(1) scale}, for all $\varepsilon$ small enough, $\|F(0)\|_{\mathcal{X}}<<1$. The existence of $\textbf{h}$ then follows from the inverse function theorem.
\end{proof}
\begin{rmk}
The proof shows that $\|\textbf{h}\|_{L^\infty(B_R^{n-1})}=o(1)$. By differentiating \eqref{2 orthogonal condition}, we can show that
$\|\textbf{h}\|_{C^3(B_R^{n-1})}=o(1)$.
\end{rmk}

Denote $g_\ast(y,z):=g(y,z;\textbf{h}(y))$, where $\textbf{h}$ is as in the previous lemma.
Let
\[\phi:=u-g_\ast.\]

In the Fermi coordinates with respect to $\Gamma_\alpha$,
\begin{eqnarray*}
\Delta g_\alpha &=&g_\alpha^{\prime\prime}-(-1)^{\alpha-1}g_\alpha^\prime H^\alpha -(-1)^{\alpha-1}g_\alpha^\prime \Delta_zh_\alpha+g_\alpha^{\prime\prime}|\nabla_zh_\alpha|^2\\
&=&W^\prime(g_\alpha)+\xi_\alpha+(-1)^{\alpha}g_\alpha^\prime \mathcal{R}_{\alpha,1}+g_\alpha^{\prime\prime}\mathcal{R}_{\alpha,2},
\end{eqnarray*}
where
\[\xi_\alpha(y,z)=\bar{\xi}\left((-1)^{\alpha-1}(z-h_\alpha(y))\right),\]
\[
\mathcal{R}_{\alpha,1}(y,z):=H^\alpha(y,z)+\Delta_z h_\alpha(y),
\]
\[\mathcal{R}_{\alpha,2}(y,z):=|\nabla_z h_\alpha(y)|^2.\]

In the Fermi coordinates with respect to $\Gamma_\alpha$, the equation for $\phi$ reads as
\begin{eqnarray}\label{error equation}
&&\Delta_z\phi-H^\alpha(y,z)\partial_z\phi+\partial_{zz}\phi \nonumber\\
&=&W^\prime(g_\ast+\phi)-\sum_{\beta=1}^Q
 W^\prime(g_\beta) -(-1)^{\alpha}g_\alpha^\prime\left[H^\alpha(y,z)+\Delta_z h_\alpha(y)\right]-g_\alpha^{\prime\prime}|\nabla_zh_\alpha|^2\\
&-&\sum_{\beta\neq\alpha} \left[(-1)^{\beta}g_\beta^\prime\mathcal{R}_{\beta,1}\left(\Pi_\beta(y,z),d_\beta(y,z)\right)+
g_\beta^{\prime\prime}\mathcal{R}_{\beta,2}\left(\Pi_\beta(y,z),d_\beta(y,z)\right)\right] -\sum_{\beta}\xi_\beta  \nonumber\\
&=&W^{\prime\prime}(g_\ast)\phi+\mathcal{R}(\phi)+\left[W^\prime(g_\ast)-\sum_{\beta=1}^Q
 W^\prime(g_\beta)\right] -(-1)^{\alpha}g_\alpha^\prime\left[H^\alpha(y,z)+\Delta_z h_\alpha(y)\right]-g_\alpha^{\prime\prime}|\nabla_zh_\alpha|^2 \nonumber\\
&-&\sum_{\beta\neq\alpha} \left[(-1)^{\beta}g_\beta^\prime\mathcal{R}_{\beta,1}\left(\Pi_\beta(y,z),d_\beta(y,z)\right)+
g_\beta^{\prime\prime}\mathcal{R}_{\beta,2}\left(\Pi_\beta(y,z),d_\beta(y,z)\right)\right]-\sum_{\beta}\xi_\beta. \nonumber
\end{eqnarray}
In the above we have denoted
\[\mathcal{R}(\phi):=W^\prime(g_\ast+\phi)-W^\prime(g_\ast)-W^{\prime\prime}(g_\ast)\phi=O(\phi^2).\]

\subsection{Interaction terms}

In this subsection we establish several estimates on the interaction term between different components,
\[W^\prime(g_\ast)-\sum_{\beta=1}^Q W^\prime(g_\beta).\]

\begin{lem}\label{interaction term}
In $\mathcal{M}^4_\alpha$,
\begin{eqnarray}\label{interaction}
W^\prime(g_\ast)-\sum_\beta W^\prime(g_\beta)
&=&\left[W^{\prime\prime}(g_\alpha)-2\right]\left[g_{\alpha-1}-(-1)^\alpha\right]+\left[W^{\prime\prime}(g_\alpha)-2\right]\left[g_{\alpha+1}+(-1)^{\alpha}\right]\\
&+&O\left(e^{-2\sqrt{2}d_{\alpha-1}}+e^{2\sqrt{2}d_{\alpha+1}}\right)+O\left(e^{-\sqrt{2}d_{\alpha-2}-\sqrt{2}|d_\alpha|}+e^{\sqrt{2}d_{\alpha+2}-\sqrt{2}|d_\alpha|}\right). \nonumber
\end{eqnarray}
\end{lem}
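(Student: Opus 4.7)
The strategy is a Taylor expansion exploiting the fact that in $\mathcal{M}_\alpha^4$ only $g_\alpha$ is genuinely transitioning between $\pm1$, while every other $g_\beta$ is exponentially close to one of the two wells. The computation then reduces to collecting the leading interaction terms coming from $\beta=\alpha\pm1$ and absorbing the rest into the claimed error.

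I would first introduce, for each $\beta\ne\alpha$, the asymptotic value $g_\beta^\infty\in\{-1,+1\}$ of $g_\beta$ in $\mathcal{M}_\alpha^4$: namely $g_\beta^\infty=(-1)^{\beta-1}$ for $\beta<\alpha$ (we are on the $+$-side of $\Gamma_\beta$) and $g_\beta^\infty=(-1)^\beta$ for $\beta>\alpha$. Writing $s_\beta:=g_\beta-g_\beta^\infty$, the exponential asymptotics of $\bar g$ at $\pm\infty$ together with $|h_\beta|=o(1)$ give $|s_\beta|\lesssim e^{-\sqrt 2|d_\beta|}$ in both cases. In particular $s_{\alpha-1}=g_{\alpha-1}-(-1)^\alpha$ and $s_{\alpha+1}=g_{\alpha+1}+(-1)^\alpha$; these two quantities are precisely the explicit main terms in the conclusion.

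Next I would check that the constant piece in $g_\ast-g_\alpha$ cancels: by the alternating structure of the $g_\beta^\infty$'s together with the chosen value of $c=((-1)^Q+1)/2$, the sum $\sum_{\beta\ne\alpha}g_\beta^\infty+c$ telescopes to zero, so
\[
g_\ast=g_\alpha+\sum_{\beta\ne\alpha}s_\beta=:g_\alpha+T_1.
\]
Taylor-expanding $W'$ at $g_\alpha$ and at each $g_\beta^\infty=\pm1$, and using $W'(\pm1)=0$, $W''(\pm1)=2$, I obtain
\[
W'(g_\ast)=W'(g_\alpha)+W''(g_\alpha)\,T_1+O(T_1^2),\qquad W'(g_\beta)=2s_\beta+O(s_\beta^2)\quad(\beta\ne\alpha).
\]
Subtracting yields
\[
W'(g_\ast)-\sum_\beta W'(g_\beta)=\bigl[W''(g_\alpha)-2\bigr]T_1+O\Bigl(T_1^2+\sum_{\beta\ne\alpha}s_\beta^2\Bigr).
\]
To finish I would split $T_1=s_{\alpha-1}+s_{\alpha+1}+\sum_{|\beta-\alpha|\ge2}s_\beta$. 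The first two summands supply exactly the two displayed main contributions. For the tail, since $g_\alpha$ is exponentially close to $\pm 1$ away from $\Gamma_\alpha$, the factor $W''(g_\alpha)-2$ is $O(e^{-\sqrt 2|d_\alpha|})$, producing an error of size $e^{-\sqrt 2|d_\alpha|}(e^{-\sqrt 2 d_{\alpha-2}}+e^{\sqrt 2 d_{\alpha+2}})$, matching the second $O$-term. The quadratic remainder is controlled by $s_{\alpha\pm1}^2=O(e^{-2\sqrt 2 d_{\alpha-1}}+e^{2\sqrt 2 d_{\alpha+1}})$, matching the first $O$-term; cross terms and tail squares are strictly smaller.

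The one genuinely delicate point is the telescoping identity $\sum_{\beta\ne\alpha}g_\beta^\infty+c=0$, because each $s_\beta$ is defined via the Fermi coordinates of $\Gamma_\beta$, not of $\Gamma_\alpha$. Passing between these coordinate systems to express everything at a single point perturbs $d_\beta$, $h_\beta$ and $\Pi_\beta$ by amounts controlled by Lemma~\ref{comparison of distances}; since such perturbations only change each $s_\beta$ by a multiplicative constant, they contribute terms of the same exponential order and are safely absorbed into the already-stated error bounds.
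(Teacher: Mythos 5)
Your proof follows essentially the same route as the paper's: you identify the asymptotic values $g_\beta^\infty$ of each $g_\beta$ in $\mathcal M_\alpha^4$, write $g_\ast = g_\alpha + \sum_{\beta\neq\alpha} s_\beta$, and Taylor-expand $W'$ at $g_\alpha$ and at $\pm1$ using $W'(\pm1)=0$, $W''(\pm1)=2$; this is exactly the decomposition
\[
g_\ast=g_\alpha+\sum_{\beta<\alpha}\bigl(g_\beta-(-1)^{\beta-1}\bigr)+\sum_{\beta>\alpha}\bigl(g_\beta+(-1)^{\beta-1}\bigr)
\]
that the paper uses, just packaged with the unified notation $s_\beta$. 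The bookkeeping of the error terms (quadratic remainders from $\beta=\alpha\pm1$; the $W''(g_\alpha)-2=O(e^{-\sqrt 2|d_\alpha|})$ factor against the tail $\beta=\alpha\pm2$) also matches. One remark: the final paragraph mislocates where Lemma~\ref{comparison of distances} is needed. The identity $\sum_{\beta\neq\alpha}g_\beta^\infty + c = 0$ is a purely combinatorial cancellation of signs; it has nothing to do with Fermi coordinates. What does require care — and is where the comparison lemma enters downstream, e.g.\ in Lemma~\ref{upper bound on interaction} — is converting the bounds $|s_\beta|\lesssim e^{-\sqrt 2|d_\beta(y,z)|}$, which are expressed in the distance to $\Gamma_\beta$ at the running point, into bounds in the fixed variables $d_{\alpha\pm1}(y,0)$, etc. Inside Lemma~\ref{interaction term} itself everything is stated pointwise in terms of the $d_\beta$'s, so no coordinate comparison is invoked; your worry is real but belongs to the next lemma, not this one.
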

\begin{proof}
In $\mathcal{M}^4_\alpha$,
\[g_\ast=g_\alpha+\sum_{\beta<\alpha}\left[g_\beta-(-1)^{\beta-1}\right]+\sum_{\beta>\alpha}\left[g_\beta+(-1)^{\beta-1}\right].\]
By Lemma \ref{O(1) scale}, $g_\beta-(-1)^{\beta-1}$ (for $\beta<\alpha$) and $g_\beta+(-1)^{\beta-1}$ (for $\beta>\alpha$) are all small quantities.

By the Taylor expansion,
\begin{eqnarray*}
W^\prime(g_\ast)&=&W^\prime\left(g_\alpha\right)+
W^{\prime\prime}\left(g_\alpha\right)\left[\sum_{\beta<\alpha}\left(g_\beta-(-1)^{\beta-1}\right)+\sum_{\beta>\alpha}\left(g_\beta+(-1)^{\beta-1}\right)
\right]\\
&&+\sum_{\beta<\alpha}O\left(|g_\beta-(-1)^{\beta-1}|^2\right)+\sum_{\beta>\alpha}O\left(|g_\beta+(-1)^{\beta-1}|^2\right).
\end{eqnarray*}

On the other hand, for $\beta<\alpha$,
\[W^\prime(g_\beta)=2\left(g_\beta-(-1)^{\beta-1}\right)
+O\left(|g_\beta-(-1)^{\beta-1}|^2\right),\]
and for $\beta>\alpha$,
\[W^\prime(g_\beta)=2\left(g_\beta+(-1)^{\beta-1}\right)
+O\left(|g_\beta+(-1)^{\beta-1}|^2\right).\]

Combining these expansions we get
\begin{eqnarray*}
W^\prime(g_\ast)-\sum_{\beta=1}^Q W^\prime(g_\beta)
&=&\sum_{\beta<\alpha}\left[W^{\prime\prime}(g_\alpha)-2\right]\left(g_\beta-(-1)^{\beta-1}\right)
+\sum_{\beta<\alpha}O\left(|g_\beta-(-1)^{\beta-1}|^2\right)\\
&+&\sum_{\beta>\alpha}\left[W^{\prime\prime}(g_\alpha)-2\right]\left(g_\beta+(-1)^{\beta-1}\right)
+\sum_{\beta>\alpha}O\left(|g_\beta+(-1)^{\beta-1}|^2\right).
\end{eqnarray*}
Using the fact that
\[|W^{\prime\prime}(g_\alpha)-W^{\prime\prime}(1)|\lesssim 1-g_\alpha^2\lesssim e^{-\sqrt{2}|d_\alpha|}\]
and similar estimates on $g_\beta$, we get the main order terms and estimates on remainder terms in \eqref{interaction} .
\end{proof}

The following upper bound on the interaction term will be used a lot in the below.

\begin{lem}\label{upper bound on interaction}
In $\mathcal{M}^4_\alpha$,
\[\big|W^\prime(g_\ast(y,z))-\sum_{\beta=1}^QW^\prime(g_\beta(y,z))\big|\lesssim e^{-\sqrt{2}D_{\alpha}(y)}+\varepsilon^2.\]
\end{lem}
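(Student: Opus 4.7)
The plan is to derive the bound directly from the expansion already established in Lemma~\ref{interaction term}, and to reduce every term appearing there to an exponential in $D_\alpha(y)$ via the Lipschitz property of the signed distance functions. Starting point: the expansion
\[
W'(g_\ast)-\sum_\beta W'(g_\beta) \;=\; [W''(g_\alpha)-2][g_{\alpha-1}-(-1)^\alpha]+[W''(g_\alpha)-2][g_{\alpha+1}+(-1)^\alpha]+\text{remainders},
\]
valid on $\mathcal{M}_\alpha^4$.

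For the leading pair, I will use the one-dimensional asymptotics of $g$ (and hence of $\bar g$, up to cutoff error) to bound each factor separately:
\[
|W''(g_\alpha(y,z))-2|\;\lesssim\;1-g_\alpha(y,z)^2\;\lesssim\;e^{-\sqrt{2}|d_\alpha(y,z)-h_\alpha(y)|}\;\lesssim\;e^{-\sqrt{2}|d_\alpha(y,z)|},
\]
since $\|h_\alpha\|_\infty=o(1)$; and similarly
\[
|g_{\alpha-1}(y,z)-(-1)^\alpha|\;\lesssim\;e^{-\sqrt{2}\,d_{\alpha-1}(y,z)}
\]
in the relevant portion of $\mathcal{M}_\alpha^4$ where $d_{\alpha-1}(y,z)>0$ (i.e.\ above $\Gamma_{\alpha-1}$, which is the generic situation because we are closest to $\Gamma_\alpha$). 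The key geometric input is that $d_{\alpha-1}$ is $1$-Lipschitz on $\R^n$, so
\[
d_{\alpha-1}(y,z)\;\geq\;d_{\alpha-1}(y,0)-|z|\;=\;|d_{\alpha-1}(y,0)|-|d_\alpha(y,z)|\;\geq\;D_\alpha(y)-|d_\alpha(y,z)|,
\]
which gives $|d_\alpha(y,z)|+d_{\alpha-1}(y,z)\geq D_\alpha(y)$. Multiplying the two factors therefore yields $e^{-\sqrt{2}D_\alpha(y)}$. The same argument handles the $\alpha+1$ term, and these two contributions produce the main term on the right-hand side.

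For the remainders in Lemma~\ref{interaction term}, I would bound $e^{-2\sqrt{2}d_{\alpha-1}}$ by splitting it as $e^{-\sqrt{2}d_{\alpha-1}}\cdot e^{-\sqrt{2}d_{\alpha-1}}$ and using $d_{\alpha-1}\geq |d_\alpha|-4$ on $\mathcal{M}_\alpha^4$ to obtain $e^{-\sqrt{2}(|d_\alpha|+d_{\alpha-1})}\lesssim e^{-\sqrt{2}D_\alpha}$, again by the Lipschitz step above. The terms $e^{-\sqrt{2}d_{\alpha-2}-\sqrt{2}|d_\alpha|}$ are treated similarly after noting the monotonicity $d_{\alpha-2}(y,z)\geq d_{\alpha-1}(y,z)$ coming from the ordering $f_1<\cdots<f_Q$ of the graphs; the $\alpha+1,\alpha+2$ remainders are symmetric.

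Finally, the $\varepsilon^2$ safety term absorbs the errors contributed by the cutoff: wherever $|d_\beta-h_\beta|>3|\log\varepsilon|$ we have replaced $g$ by $\mathrm{sgn}$, introducing $O(\varepsilon^{3\sqrt{2}})$ discrepancies in $g_\beta$ and in $W''(g_\beta)$, which when multiplied against the (bounded) other factor give contributions vastly smaller than $\varepsilon^2$; the same order dominates the boundary-layer terms where one of $d_{\alpha\pm 1}(y,z)$ changes sign inside $\mathcal{M}_\alpha^4$. I expect the main technical obstacle to be verifying the triangle/Lipschitz step cleanly across the different Fermi charts (since $D_\alpha(y)$ is evaluated at $(y,0)\in\Gamma_\alpha$ whereas the factors on the left are evaluated at $(y,z)$); this is however handled by the Euclidean $1$-Lipschitz property of distance functions, and the rest is bookkeeping.
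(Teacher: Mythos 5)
Your proposal is correct, and in its key step it is actually a bit cleaner than the paper's own argument. Both proofs start from the decomposition in Lemma~\ref{interaction term} and reduce each term to $e^{-\sqrt{2}D_\alpha(y)}$; the geometric crux is to pass from the factors evaluated at $(y,z)$ to the quantity $D_\alpha(y)$ evaluated at $(y,0)\in\Gamma_\alpha$. The paper does this with the precise asymptotic identity of Lemma~\ref{comparison of distances}, $d_{\alpha-1}(y,z)=d_{\alpha-1}(y,0)+d_\alpha(y,z)+O(\varepsilon^{1/3})$, which is only available when both distances are $\le K|\log\varepsilon|$, so it must first split into cases and invoke the $\varepsilon^2$ term whenever a distance exceeds $\sqrt{2}|\log\varepsilon|$. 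You instead use the one-sided, unconditionally valid Euclidean $1$-Lipschitz bound $d_{\alpha-1}(y,z)\ge d_{\alpha-1}(y,0)-|z|$, giving $|d_\alpha(y,z)|+d_{\alpha-1}(y,z)\ge d_{\alpha-1}(y,0)\ge D_\alpha(y)$ with no case splitting and no $O(\varepsilon^{1/3})$ loss; the same pattern (peel off $|d_\alpha|-4$ for $e^{-2\sqrt{2}d_{\alpha-1}}$, use the nesting $d_{\alpha-2}\ge d_{\alpha-1}$ for the $\alpha\pm2$ terms — which your segment/intermediate-value argument correctly justifies) disposes of the remainders. This trades a delicate asymptotic comparison, here only used for an inequality, for the triangle inequality: a genuine if modest simplification. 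Two small remarks: your hedge about the ``generic situation'' is unnecessary, since Lemma~\ref{O(1) scale} forces $D_\alpha\to\infty$ in the stretched variables, so the width-$4$ slack in $\mathcal{M}_\alpha^4$ cannot carry you below $\Gamma_{\alpha-1}$ and $d_{\alpha-1}>0$ holds throughout (the paper notes this explicitly); and the $\varepsilon^2$ term in the stated bound is then, exactly as you say, only there to absorb the $O(\varepsilon^3)$ truncation error from replacing $g$ by $\bar g$.
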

\begin{proof}
We need to estimate those terms in the right hand side of \eqref{interaction}. To simplify notations, assume $(-1)^{\alpha-1}=1$.
\begin{itemize}
\item  There exists a constant $C$ depending only on $W$ such that
\[\Big|\left[W^{\prime\prime}(g_\alpha)-2\right]\left(g_{\alpha-1}+1\right)\Big|\lesssim e^{-\sqrt{2}d_{\alpha-1}-\sqrt{2}|d_\alpha|}.\]
Note that $d_{\alpha-1}>0$ in $\mathcal{M}_\alpha^4$. If one of $d_{\alpha-1}$ and $|d_\alpha|$ is larger than $\sqrt{2}|\log\varepsilon|$, we have
\[e^{-\sqrt{2}d_{\alpha-1}-\sqrt{2}|d_\alpha|}\leq\varepsilon^2,\]
and we are done.

If both $d_{\alpha-1}$ and $|d_\alpha|$ are not larger than $\sqrt{2}|\log\varepsilon|$, by Lemma \ref{comparison of distances},
\[d_{\alpha-1}(y,z)=d_{\alpha-1}(y,0)+d_\alpha(y,z)+O(\varepsilon^{1/3}).\]
Therefore
\[e^{-\sqrt{2}d_{\alpha-1}-\sqrt{2}|d_\alpha|}\leq2e^{-\sqrt{2}d_{\alpha-1}(y,0)}.\]

\item In the same way we get
\[\Big|\left[W^{\prime\prime}(g_\alpha)-W^{\prime\prime}(1)\right]\left(g_{\alpha+1}-1\right)\Big|\lesssim  e^{\sqrt{2}d_{\alpha+1}-\sqrt{2}|d_\alpha|}\lesssim \varepsilon^2+e^{\sqrt{2}d_{\alpha+1}(y,0)}.\]

\item  If $|d_{\alpha+1}(y,z)|\geq |\log\varepsilon|$, then $e^{-2\sqrt{2}d_{\alpha+1}}\leq\varepsilon^2$. If $|d_{\alpha+1}(y,z)|\leq |\log\varepsilon|$, we also have $|z|=|d_\alpha(y,z)|\leq |\log\varepsilon|+4$. Hence by Lemma \ref{comparison of distances},
    \[d_{\alpha+1}(y,z)=d_{\alpha+1}(y,0)+d_\alpha(y,z)+O(\varepsilon^{1/3}).\]
    Because $|d_\alpha(y,z)|<|d_{\alpha+1}(y,z)|+4$, we get
\[d_{\alpha+1}(y,z)\leq\frac{1}{2}d_{\alpha+1}(y,0)+4.\]
Therefore
\[e^{2\sqrt{2}d_{\alpha+1}(y,z)}\leq  e^4 e^{\sqrt{2}d_{\alpha+1}(y,0)}.\]

\item Similarly
\[e^{-2\sqrt{2}d_{\alpha-1}(y,z)}\leq\varepsilon^2+e^4 e^{-\sqrt{2}d_{\alpha-1}(y,0)}.\]
\item As in the first two cases,
\[e^{-\sqrt{2}d_{\alpha-2}-\sqrt{2}|d_\alpha|}+e^{\sqrt{2}d_{\alpha+2}-\sqrt{2}|d_\alpha|}\lesssim \varepsilon^2+e^{-\sqrt{2}d_{\alpha-1}(y,0)}+e^{\sqrt{2}d_{\alpha+1}(y,0)}.\]

\end{itemize}
Putting all of these together we finish the proof.
\end{proof}

The H\"{o}lder norm of interaction terms can also be estimated in the following way.
\begin{lem}\label{Holder bound on interaction}
For any $(y,z)\in\mathcal{M}_\alpha^3$,
\[\Big\|W^\prime(g_\ast)-\sum_{\beta=1}^Q(-1)^{\beta-1}W^\prime(g_\beta)\Big\|_{C^{\theta}(B_1(y,z))}\lesssim \sup_{B_1(y)}e^{-\sqrt{2}D_\alpha}+\varepsilon^2.\]
\end{lem}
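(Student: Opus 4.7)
The plan is to deduce the $C^\theta$ bound from a matching $C^1$ estimate via the interpolation $[f]_{C^\theta(B_1)} \lesssim \|f\|_{L^\infty(B_1)}^{1-\theta}\|\nabla f\|_{L^\infty(B_1)}^\theta$. The $L^\infty$ part is provided by applying Lemma \ref{upper bound on interaction} pointwise on $B_1(y,z)$, together with the $1$-Lipschitz control of $D_\alpha$ which gives $e^{-\sqrt{2}D_\alpha(y')} \leq C \sup_{B_1(y)} e^{-\sqrt{2}D_\alpha}$ for every $y' \in B_1(y)$. Thus the task reduces to producing a $C^1$ estimate of the same order.

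For the $C^1$ estimate, I would differentiate the expansion of Lemma \ref{interaction term} term by term. The leading pieces $[W''(g_\alpha)-2][g_{\alpha\pm 1}\mp(-1)^\alpha]$ satisfy the pointwise bounds $|W''(g_\alpha)-2| \lesssim 1-g_\alpha^2 \lesssim e^{-\sqrt{2}|d_\alpha|}$ and $|g_{\alpha\pm 1}\mp(-1)^\alpha| \lesssim e^{-\sqrt{2}|d_{\alpha\pm 1}|}$, and their derivatives involve $W'''(g_\alpha)\bar g'$ and $\bar g'$ respectively. Since $|\bar g'(t)|$ and $|\bar g''(t)|$ both decay like $e^{-\sqrt{2}|t|}$ from the exponential asymptotics of the one-dimensional kink, each differentiation only multiplies the exponential envelope by a universal constant. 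Combined with the Lipschitz bounds $|\nabla d_\beta|\leq 1$ and $\|h_\beta\|_{C^1} = o(1)$ from the remark following Proposition \ref{prop optimal approximation}, one finds that each derivative of the leading term is controlled by $e^{-\sqrt{2}|d_\alpha|-\sqrt{2}|d_{\alpha\pm 1}|}$ up to a universal constant. The higher-order remainders in Lemma \ref{interaction term}, being already of better exponential order (e.g.\ $e^{-\sqrt{2}d_{\alpha-2}-\sqrt{2}|d_\alpha|}$), are handled by the same argument.

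Finally, to convert the pointwise bound obtained on $B_1(y,z)$ into a bound in terms of $\sup_{B_1(y)} e^{-\sqrt{2}D_\alpha}$, I would replay the dichotomy from the proof of Lemma \ref{upper bound on interaction}: where some $|d_\beta(y',z')| \geq \sqrt{2}|\log\varepsilon|$ on $B_1(y,z)$, the exponential contribution is absorbed into $\varepsilon^2$; otherwise Lemma \ref{comparison of distances} yields $d_\beta(y',z') = d_\beta(y',0) + d_\alpha(y',z') + O(\varepsilon^{1/3})$, so $e^{-\sqrt{2}|d_\beta(y',z')|}$ is comparable, up to a fixed multiplicative factor, to $e^{-\sqrt{2}D_\alpha(y')} \leq \sup_{B_1(y)} e^{-\sqrt{2}D_\alpha}$. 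The main obstacle is essentially bookkeeping: verifying that differentiating the various compositions in Lemma \ref{interaction term} reproduces the same exponential rate $\sqrt{2}$ and does not generate uncontrollably large prefactors. This works because every factor is a smooth composition of $\bar g$, $W$, $d_\beta$, and $h_\beta$, each with derivative bounds matching the exponential weight carried by the function itself.
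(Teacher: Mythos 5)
Your proposal is correct and follows essentially the same approach as the paper: the paper's one-line proof amounts to noting that $\|g_\beta^2-1\|_{C^\theta(B_1(y,z))}\lesssim\|g_\beta^2-1\|_{Lip(B_1(y,z))}\lesssim e^{-\sqrt{2}|d_\beta(y,z)|}$ and then re-running the algebra of Lemma \ref{upper bound on interaction}, which is precisely the observation you made (that differentiation preserves the exponential weight, since $\bar g'$, $\bar g''$ decay at the same rate while $\nabla d_\beta$ and $\nabla h_\beta$ are bounded). The only cosmetic difference is that you route the $C^\theta$ estimate through the interpolation $[f]_{C^\theta}\lesssim\|f\|_{L^\infty}^{1-\theta}\|\nabla f\|_{L^\infty}^\theta$ rather than directly through $\|f\|_{C^\theta(B_1)}\lesssim\|f\|_{Lip(B_1)}$.
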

\begin{proof}
We only need to notice that, for any $(y,z)\in\mathcal{M}_\alpha^3$ and any $\beta\in\{1,\cdots, Q\}$,
\begin{equation}\label{2.9.1}
\|g_\beta^2-1\|_{C^{\theta}(B_1(y,z))}\lesssim\|g_\beta^2-1\|_{Lip(B_1(y,z))}\lesssim e^{-\sqrt{2}|d_\beta(y,z)|}.
\end{equation}
Then we can proceed as in the previous lemma to conclude the proof.
\end{proof}

\subsection{Controls on $\textbf{h}$ using $\phi$}

The choice of optimal approximation in Subsection \ref{sec optimal approximation} has the advantage that
 $h$ is controlled by $\phi$. This will allow us to iterate various elliptic estimates in the below.
\begin{lem}\label{control on h_0}
For each $\alpha$,
\[|h_\alpha(y)|\lesssim |\phi(y,0)|+e^{-\sqrt{2}D_\alpha(y)},\]
\[|\nabla h_\alpha(y)|\lesssim|\nabla \phi(y,0)|+o\left(e^{-\sqrt{2}D_\alpha(y)}\right),\]
\[|\nabla^2 h_\alpha(y)|\lesssim|\nabla^2 \phi(y,0)|+o\left(e^{-\sqrt{2}D_\alpha(y)}\right),\]
\[\|\nabla^2h_\alpha\|_{C^{\theta}(B_1(y))}\lesssim\|\nabla^2\phi\|_{C^{\theta}(B_1(y,0))}+\sup_{B_1(y)}e^{-\sqrt{2}D_\alpha}.\]
\end{lem}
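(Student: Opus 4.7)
Since $\Gamma_\alpha\subset\{u=0\}$, in the Fermi coordinates with respect to $\Gamma_\alpha$ one has $u(y,0)\equiv 0$ and hence $\nabla_y^k u(y,0)\equiv 0$ for all $k\geq 0$. This gives the clean identity
\[
\phi(y,0) \;=\; -\,g_\ast(y,0), \qquad \nabla_y^k \phi(y,0) \;=\; -\,\nabla_y^k g_\ast(y,0),
\]
which I will exploit by expanding $g_\ast(y,0)$ explicitly in $h_\alpha$ and in the large variables $d_\beta(y,0)$ for $\beta\neq\alpha$. Note that the orthogonality condition \eqref{2 orthogonal condition} itself plays no direct role in the pointwise inequalities; it enters only via Proposition~\ref{prop optimal approximation} and the remark following it, which guarantee $|h_\alpha|\ll 1$ and $\|\textbf{h}\|_{C^3}=o(1)$ a priori.

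For the pointwise bound, write
\[
g_\ast(y,0) \;=\; \bar{g}\bigl(-(-1)^{\alpha-1}h_\alpha(y)\bigr) + \sum_{\beta\neq\alpha}\Bigl[g_\beta(y,0;h_\beta)-(-1)^{\beta-1}\Bigr] + c_Q,
\]
where $c_Q$ is the constant matching the asymptotic signs of $g_\ast$. Since $\bar{g}(0)=0$ and $\bar{g}'(0)=\sqrt{2W(0)}>0$, the first term Taylor expands as $-(-1)^{\alpha-1}\bar{g}'(0)\,h_\alpha+O(h_\alpha^2)$. For $\beta\neq\alpha$ each bracket is $O(e^{-\sqrt{2}D_\alpha(y)})$, because $|d_\beta(y,0)|\geq D_\alpha(y)$ together with $|h_\beta|\ll 1$ places $\bar{g}\mp 1$ well into its exponential tail. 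Substituting and inverting the leading linear dependence on $h_\alpha$ (using $|h_\alpha|\ll 1$) gives the first inequality.

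For the first derivative, differentiate the identity above tangentially in $y$. The dominant contribution is $-(-1)^{\alpha-1}\bar{g}'(0)\nabla h_\alpha(y)\,(1+O(h_\alpha))$. Each cross term has the form $\bar{g}'((-1)^{\beta-1}(d_\beta-h_\beta\circ\Pi_\beta))(\nabla_y d_\beta-\nabla_y(h_\beta\circ\Pi_\beta))$; its prefactor is $O(e^{-\sqrt{2}D_\alpha(y)})$ by exponential decay of $\bar{g}'$, while the vector it multiplies is $o(1)$ by two independent ingredients: on the one hand $|\nabla h_\beta|=o(1)$ from the remark after Proposition~\ref{prop optimal approximation}, and on the other hand the tangential component of $\nabla d_\beta$ along $\Gamma_\alpha$ is $O(\varepsilon^{1/4}|\log\varepsilon|^{3/4})$ via the near-parallel estimate \eqref{3.6} of Lemma~\ref{comparison of distances}. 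The resulting linear system for $(\nabla h_\alpha)_\alpha$ is diagonally dominant and inverts to give $|\nabla h_\alpha(y)|\lesssim|\nabla\phi(y,0)|+o(e^{-\sqrt{2}D_\alpha(y)})$. The $|\nabla^2 h_\alpha|$ bound follows identically after one more differentiation, invoking $|\nabla^2 d_\beta|\lesssim\varepsilon$ from \eqref{bound on 3rd derivatives} and $|\nabla^2 h_\beta|=o(1)$ to maintain the $o(1)$ accompanying each exponential factor.

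The Hölder estimate for $\nabla^2 h_\alpha$ is obtained by taking $C^\theta$-norms on $B_1(y)$ in the twice-differentiated identity; the cross terms are controlled in $C^\theta$ by $\sup_{B_1(y)} e^{-\sqrt{2}D_\alpha}$ exactly as in Lemma~\ref{Holder bound on interaction}, now without the extra $o(1)$ improvement (matching the stated inequality). The main technical point throughout this plan is verifying that each cross-interaction coefficient really comes multiplied by an extra $o(1)$ factor; this upgrade from the naive $O(e^{-\sqrt{2}D_\alpha})$ to the $o(e^{-\sqrt{2}D_\alpha})$ in the first and second derivative bounds is what rests jointly on the asymptotically parallel geometry of the interfaces and on the a priori smoothness of $\textbf{h}$.
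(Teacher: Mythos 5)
Your proposal is correct and follows essentially the same route as the paper: both start from $u(y,0)\equiv 0$ in Fermi coordinates to get $\phi(y,0)=-g_\ast(y,0)$, expand the $\bar g$-terms, read off the $h_\alpha$-bound from the exponential tails, and then differentiate the identity—using $|\nabla h_\beta|=o(1)$ and the near-parallel estimate \eqref{3.6} of Lemma~\ref{comparison of distances} to gain the extra $o(1)$ factor on the cross terms. The only cosmetic difference is that you phrase the last step as inverting a diagonally dominant system in $\alpha$, whereas the paper simply isolates the leading $\bar g'\nabla_0 h_\alpha$ term and divides; the two are the same computation since the $\alpha$-equations are in fact decoupled at this stage.
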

\begin{proof}
Fix an $\alpha\in\{1,\cdots,Q\}$.  In the Fermi coordinates with respect to $\Gamma_\alpha$, because $u(y,0)=0$,
\begin{eqnarray}\label{representation of h_alpha}
\phi(y,0)=-\bar{g}\left((-1)^\alpha h_\alpha(y)\right)&-&\sum_{\beta<\alpha}\left[\bar{g}\left((-1)^{\beta-1}\left(d_\beta(y,0)-h_\beta(\Pi_\beta(y,0))\right)\right)-(-1)^{\beta-1}\right] \nonumber\\
&  -&\sum_{\beta>\alpha}\left[\bar{g}\left((-1)^{\beta-1}\left(d_\beta(y,0)-h_\beta(\Pi_\beta(y,0))\right)\right)+(-1)^{\beta-1}\right].
\end{eqnarray}
Note that for $\beta\neq\alpha$,  $|h_\beta(\Pi_\beta(y,0))|\ll 1$. Thus
\begin{equation}\label{4.1}
|h_\alpha(y)| \lesssim |\phi(y,0)|+\sum_{\beta\neq\alpha}e^{-\sqrt{2}d_\beta(y,0)} \lesssim  |\phi(y,0)|+e^{-\sqrt{2}D_\alpha(y)}.
\end{equation}

Differentiating \eqref{representation of h_alpha}, we get
\[
\nabla_0\phi(y,0)=(-1)^{\alpha+1} \bar{g}^\prime\left((-1)^\alpha h_\alpha(y)\right)\nabla_0h_\alpha(y)+\sum_{\beta\neq\alpha}(-1)^\beta g_\beta^\prime(y,0)\nabla_0\left[ d_\beta(y,0)-h_\beta(\Pi_\beta(y,0))\right],
\]
and
\begin{eqnarray*}
\nabla_0^2\phi(y,0)
&=&(-1)^{\alpha+1} \bar{g}^\prime\left((-1)^\alpha h_\alpha(y)\right)\nabla_0^2h_\alpha(y)-
\bar{g}^{\prime\prime}\left((-1)^\alpha h_\alpha(y)\right)\nabla_0h_\alpha(y)
\otimes\nabla_0h_\alpha(y)\\
&+&\sum_{\beta\neq\alpha}(-1)^{\beta} g_\beta^\prime(y,0)\nabla_0^2\left[ d_\beta(y,0)-h_\beta(\Pi_\beta(y,0))\right]\\
&-&\sum_{\beta\neq\alpha}
g_\beta^{\prime\prime}(y,0)\nabla_0\left[ d_\beta(y,0)-h_\beta(\Pi_\beta(y,0))\right]
\otimes\nabla_0\left[ d_\beta(y,0)-h_\beta(\Pi_\beta(y,0))\right].
\end{eqnarray*}

Note that $|\nabla h_\beta|=o(1)$ and by Lemma \ref{comparison of distances}, if $\bar{g}^\prime(d_\beta(y,0)-h_\beta(\Pi_\beta(y,0))\neq0$,
\[|\nabla_0d_\beta|=\sqrt{1-\nabla d_\beta\cdot\nabla d_\alpha}=O(\varepsilon^{1/6}).\]
Thus
\begin{eqnarray}\label{4.2}
|\nabla_0 h_\alpha(y)|&\lesssim&|\nabla_0 \phi(y,0)|+O(\varepsilon^{1/6}+|\nabla h_\beta(\Pi_\beta(y,0))|)O\left(e^{\sqrt{2}d_{\alpha+1}(y,0)}+e^{-\sqrt{2}d_{\alpha-1}(y,0)}\right) \nonumber\\
&\lesssim&|\nabla_0 \phi(y,0)|+o\left(e^{-\sqrt{2}D_\alpha(y)}\right).
\end{eqnarray}

Similarly, because
$|\nabla^2 h_\beta|=o(1)$ and recalling that $\nabla^2d_\beta$ is the second fundamental form of $\Gamma_{\beta,z}$,
\[|\nabla_0^2d_\beta|\leq|\nabla^2d_\beta|=O(\varepsilon),\]
we have
\begin{eqnarray}\label{4.3}
|\nabla_0^2 h_\alpha(y)|&\lesssim&|\nabla_0^2 \phi(y,0)|+|\nabla_0 h_\alpha(y)|^2+e^{-\sqrt{2}D_\alpha(y)}\left[\varepsilon^{\frac{1}{3}}+\sum_{\beta}\sup_{B_{\varepsilon^{1/3}}}\left(|\nabla^2h_\beta|+|\nabla h_\beta|\right)\right] \nonumber\\
&\lesssim&|\nabla_0^2 \phi(y,0)|+|\nabla_0 h_\alpha(y)|^2+o\left(e^{-\sqrt{2}D_\alpha(y)}\right).
\end{eqnarray}

Finally, by the above formulation and \eqref{2.9.1},
we get a control on $\|\nabla^2h_\alpha\|_{C^{\theta}(B_1(y))}$ using
$\|\nabla^2\phi\|_{C^{\theta}(B_1(y,0))}+\sup_{B_1(y)}e^{-\sqrt{2}D_\alpha}$.
\end{proof}

\section{A Toda system}\label{sec Toda system}
\setcounter{equation}{0}

In the Fermi coordinates with respect to $\Gamma_\alpha$, multiplying \eqref{error equation} by $g_\alpha^\prime$ and integrating in
$z$ leads to
\begin{eqnarray}\label{H eqn}
&&\int_{-\delta R}^{\delta R}g_\alpha^\prime\Delta_z\phi-H^\alpha(y,z)g_\alpha^\prime\partial_z\phi+g_\alpha^\prime\partial_{zz}\phi \nonumber\\
&=&\int_{-\delta R}^{\delta R} \left[W^\prime(g_\ast+\phi)-\sum_\beta W^\prime(g_\beta)\right] g_\alpha^\prime-(-1)^{\alpha}\int_{-\delta R}^{\delta R}\left[H^\alpha(y,z)+\Delta_zh_\alpha(y)\right]g_\alpha^\prime(z)^2\\
&-&\int_{-\delta R}^{\delta R} g_\alpha^{\prime\prime}g_\alpha^\prime|\nabla_z
h_\alpha|^2 -\sum_{\beta\neq\alpha}(-1)^{\beta}\int_{-\delta R}^{\delta R}g_\alpha^\prime g_\beta^\prime \mathcal{R}_{\beta,1} -\sum_{\beta\neq\alpha} \int_{-\delta R}^{\delta R} g_\beta^{\prime\prime}g_\alpha^\prime\mathcal{R}_{\beta,2}  -\sum_{\beta}\int_{-\delta R}^{\delta R}\xi_\beta g_\alpha^\prime .      \nonumber
\end{eqnarray}

By the calculation in Appendix \ref{sec derivation of Toda}, we obtain
\begin{eqnarray}\label{Toda system}
H^\alpha(y,0)+\Delta_0h_\alpha(y)
&=&\frac{4}{\sigma_0}\left[ A_{(-1)^\alpha}^2e^{-\sqrt{2}d_{\alpha-1}(y,0)}-A_{(-1)^{\alpha-1}}^2e^{\sqrt{2}d_{\alpha+1}(y,0)}\right]+O(\varepsilon^2)\nonumber\\
&+&O\left(|h_\alpha(y)|+|h_{\alpha-1}(\Pi_{\alpha-1}(y,z))|+\varepsilon^{1/3}\right)e^{-\sqrt{2}d_{\alpha-1}(y,0)} \nonumber\\
&+&O\left(|h_\alpha(y)|+|h_{\alpha+1}(\Pi_{\alpha+1}(y,z))|+\varepsilon^{1/3}\right)e^{\sqrt{2}d_{\alpha+1}(y,0)}  \\
&+&O(e^{-\frac{3\sqrt{2}}{2}d_{\alpha-1}(y,0)})+O(e^{\frac{3\sqrt{2}}{2}d_{\alpha+1}(y,0)})+O(e^{-\sqrt{2}d_{\alpha-2}(y,0)})+O(e^{\sqrt{2}d_{\alpha+2}(y,0)})\nonumber\\
&+&\sum_{\beta\neq\alpha}|d_\beta(y,0)|e^{-\sqrt{2}|d_\beta(y,0)|}\left[\sup_{B_{\varepsilon^{1/3}}(y)}|H^\beta+\Delta^\beta_0 h_\beta|+\sup_{B_{\varepsilon^{1/3}}(y)}
|\nabla h_\beta|^2\right]\nonumber\\
&+&\sup_{(-6|\log\varepsilon|,6|\log\varepsilon|)}\left(|\nabla_y^2\phi(y,z)|^2+|\nabla_y\phi(y,z)|^2+|\phi(y,z)|^2\right). \nonumber
\end{eqnarray}

By this equation we get an upper bound on $H^\alpha(y,0)+\Delta_0h_\alpha(y)$.
\begin{lem}\label{lem 11.1}
\begin{equation}\label{5.1}
\sup_{B_r}\big|H^\alpha(y,0)+\Delta_0h_\alpha(y)\big|\lesssim\sup_{B_{r+1}}e^{-\sqrt{2}D_{\alpha}}+\varepsilon^2+\|\phi\|_{C^{2,\theta}(\mathcal{D}_{r+1})}^2
+\sum_{\beta\neq\alpha}\sup_{B_{r+1}}\left[|H^\beta+\Delta^\beta h_\beta|^2+e^{-2\sqrt{2}D_\beta}\right].
\end{equation}
\end{lem}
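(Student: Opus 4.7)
\medskip

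\noindent\textbf{Proof proposal for Lemma 5.1.} The plan is to start directly from the pointwise Toda system identity (the displayed equation labelled ``Toda system'' above, derived by projecting the error equation against $g_\alpha^\prime$), take absolute values, and bound each of the six groups of terms on its right-hand side by one of the four terms on the right-hand side of the desired estimate. Throughout I will work at a point $y\in B_r$, but will need to take suprema over a slightly enlarged ball $B_{r+1}$ because several terms in the Toda identity involve suprema over $B_{\varepsilon^{1/3}}(y)$ of $h_\beta$, and because Lemma~4.2 controls $\|\nabla^{2}h_\alpha\|_{C^\theta(B_1(y))}$ only through $\|\phi\|_{C^{2,\theta}(B_1(y,0))}$ and $\sup_{B_1(y)}e^{-\sqrt{2}D_\alpha}$.

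\medskip

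\noindent First, the two leading exponentials $A_{\pm}^{2}e^{-\sqrt{2}d_{\alpha-1}(y,0)}$ and $A_{\mp}^{2}e^{\sqrt{2}d_{\alpha+1}(y,0)}$ are each at most $e^{-\sqrt{2}D_\alpha(y)}$ by the very definition $D_\alpha(y)=\min\{|d_{\alpha-1}(y,0)|,|d_{\alpha+1}(y,0)|\}$ together with the sign conventions ($d_{\alpha-1}>0$ and $d_{\alpha+1}<0$ on $\Gamma_\alpha$). The explicit $O(\varepsilon^{2})$ term goes directly to the second slot on the right-hand side of (5.1). The higher-order exponentials $e^{-\frac{3\sqrt{2}}{2}d_{\alpha-1}}$, $e^{\frac{3\sqrt{2}}{2}d_{\alpha+1}}$, $e^{-\sqrt{2}d_{\alpha-2}}$, $e^{\sqrt{2}d_{\alpha+2}}$ are all bounded (up to multiplicative constants) by $e^{-\sqrt{2}D_\alpha(y)}$, using the monotonicity $|d_{\alpha\pm2}(y,0)|\ge|d_{\alpha\pm1}(y,0)|$ along the ordered stack of interfaces.

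\medskip

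\noindent Next I turn to the correction terms of the form $O\bigl(|h_\alpha|+|h_{\alpha\pm1}\circ\Pi_{\alpha\pm 1}|+\varepsilon^{1/3}\bigr)e^{\mp\sqrt{2}d_{\alpha\mp 1}}$. Plugging in the $L^{\infty}$ bound of Lemma~4.2, namely $|h_\beta(y)|\lesssim|\phi(y,0)|+e^{-\sqrt{2}D_\beta(y)}$, I get factors of $|\phi|$, $e^{-\sqrt{2}D_\alpha}$ (or $D_{\alpha\pm1}$) and $\varepsilon^{1/3}$ times $e^{-\sqrt{2}D_\alpha(y)}$. The $\varepsilon^{1/3}\cdot e^{-\sqrt{2}D_\alpha}$ piece is absorbed into the first term on the right-hand side of (5.1), the $|\phi|\cdot e^{-\sqrt{2}D_\alpha}$ piece is split by Young's inequality $|\phi|\,e^{-\sqrt{2}D_\alpha}\le\tfrac12\|\phi\|^{2}+\tfrac12 e^{-2\sqrt{2}D_\alpha}$, and the $e^{-\sqrt{2}D_\beta}e^{-\sqrt{2}D_\alpha}$ piece is $\le e^{-2\sqrt{2}D_\beta}+e^{-2\sqrt{2}D_\alpha}$; all of these are either already present on the right-hand side or dominated by the $\sup e^{-\sqrt{2}D_\alpha}$ term.

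\medskip

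\noindent The main obstacle, and the last group of terms, is the cross sum
\[
\sum_{\beta\neq\alpha}|d_\beta(y,0)|\,e^{-\sqrt{2}|d_\beta(y,0)|}\Bigl[\sup_{B_{\varepsilon^{1/3}}(y)}|H^\beta+\Delta_0^\beta h_\beta|+\sup_{B_{\varepsilon^{1/3}}(y)}|\nabla h_\beta|^{2}\Bigr].
\]
For the first factor, I apply Young's inequality $ab\le\tfrac12 a^{2}+\tfrac12 b^{2}$ with $a=|d_\beta|e^{-\sqrt{2}|d_\beta|}$ and $b=|H^\beta+\Delta_0^\beta h_\beta|$; the $a^{2}$ part is $\lesssim e^{-\sqrt{2}|d_\beta(y,0)|}\lesssim e^{-\sqrt{2}D_\beta}$ after absorbing the polynomial prefactor, and the $b^{2}$ part is exactly the fourth term on the right-hand side of (5.1). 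For the second factor, Lemma~4.2 gives $|\nabla h_\beta|^{2}\lesssim|\nabla\phi|^{2}+e^{-2\sqrt{2}D_\beta}$, so multiplying by $|d_\beta|e^{-\sqrt{2}|d_\beta|}$ produces a $\|\phi\|^{2}$ contribution and an exponential contribution, both already on the right. The $|\phi|^{2}+|\nabla\phi|^{2}+|\nabla_{y}^{2}\phi|^{2}$ supremum in the final line of the Toda identity fits directly into $\|\phi\|_{C^{2,\theta}(\mathcal{D}_{r+1})}^{2}$. Taking $\sup_{y\in B_r}$ and using the enlargement to $B_{r+1}$ to accommodate the $B_{\varepsilon^{1/3}}(y)$ suprema completes the proof.
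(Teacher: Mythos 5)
Your proposal takes the same route as the paper's own proof: project the error equation against $g_\alpha'$, read off the pointwise Toda--system identity, and then dominate each term on its right-hand side by the four slots in \eqref{5.1}, using the Cauchy (Young) inequality on the product $|d_\beta|e^{-\sqrt{2}|d_\beta|}\,|H^\beta+\Delta_0^\beta h_\beta|$ and the $L^\infty$ control of Lemma~\ref{control on h_0} on $h_\beta$ and its derivatives. The only noticeable deviation is that the paper splits the fifth-line sum into the two cases $d_\beta>2|\log\varepsilon|$ (negligible, $O(\varepsilon^2)$) versus $d_\beta<2|\log\varepsilon|$ before applying Cauchy, whereas you absorb the polynomial prefactor $|d_\beta|e^{-\sqrt{2}|d_\beta|}\le 1$ directly; since the index set $\{1,\dots,Q\}$ is finite here, both work.

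One bookkeeping slip worth fixing: after Cauchy you estimate the $a^2$-piece by $|d_\beta|^2 e^{-2\sqrt{2}|d_\beta|}\lesssim e^{-\sqrt{2}|d_\beta(y,0)|}$ and then write $\lesssim e^{-\sqrt{2}D_\beta}$. While that inequality holds, the quantity $e^{-\sqrt{2}D_\beta}$ for $\beta\neq\alpha$ does \emph{not} appear on the right-hand side of \eqref{5.1} (only $e^{-2\sqrt{2}D_\beta}$ does, with a doubled exponent), so this term cannot be booked there. The correct target is the \emph{first} slot $\sup_{B_{r+1}}e^{-\sqrt{2}D_\alpha}$: since every $\Gamma_\beta$ with $\beta\neq\alpha$ lies at least as far from $y\in\Gamma_\alpha$ as the nearest neighbor, one has $|d_\beta(y,0)|\ge D_\alpha(y)$ and hence $e^{-\sqrt{2}|d_\beta(y,0)|}\le e^{-\sqrt{2}D_\alpha(y)}$. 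With this reassignment the argument is complete.
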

\begin{proof}
In the right hand side of \eqref{Toda system}, those terms in the first four lines are bounded by $O\left(e^{-\sqrt{2}D_{\alpha}}+\varepsilon^2\right)$.

If $d_\beta(y,0)>2|\log\varepsilon|$, the terms in the fifth line is controlled by $O(\varepsilon^2)$.
If $d_\beta(y,0)<2|\log\varepsilon|$, using the Cauchy inequality, they are controlled by
\begin{eqnarray*}
&&|d_\beta(y,0)|^2e^{-2\sqrt{2}|d_\beta(y,0)|}+\sup_{B_{\varepsilon^{1/3}}(y)}|H^\beta+\Delta^\beta_0 h_\beta|^2+\sup_{B_{\varepsilon^{1/3}}(y)}
|\nabla h_\beta|^4\\
&\lesssim& e^{-\sqrt{2}|d_\beta(y,0)|}+\sup_{B_{\varepsilon^{1/3}}(y)}|H^\beta+\Delta^\beta_0 h_\beta|^2+\sup_{\mathcal{M}_{\beta}^0(r+1)}
|\nabla \phi|^4,
\end{eqnarray*}
where we have used the fact that $|d_\beta(y,0)|\gg 1$, Lemma \ref{control on h_0} and the fact that $B_{\varepsilon^{1/3}}^\beta(y,0)\subset\mathcal{M}_{\beta}^0(r+1)$ (by Lemma \ref{comparison of distances}).

The term in the last line of \eqref{Toda system} is controlled by $\|\phi\|_{C^{2,\theta}(\mathcal{D}_{r+1})}^2$.
\end{proof}

\section{ $C^{1,\theta}$ estimate on $\phi$}\label{sec first order}
\setcounter{equation}{0}

In this section we prove the following $C^{1,\theta}$ estimate on $\phi$.
\begin{prop}
There exist constants $L>0$, $\sigma(L)\ll 1$ and $C(L)$ such that
\begin{eqnarray}\label{First order estimate}
\|\phi\|_{C^{1,\theta}(\mathcal{D}_\alpha(r))}
&\leq&\sigma(L)\|\phi\|_{C^{2,\theta}(\mathcal{D}(r+4L)}+C(L)\varepsilon^2+C(L)\sup_{B_{r+4L}}e^{-\sqrt{2}D_\alpha(y)}  \\
&+&\sigma(L)\sum_{\beta=1}^Q\sup_{B_{r+4L}(y)}\big|H^\beta+\Delta_0^\beta h_\beta\big| +C(L)\sum_{\beta\neq\alpha}\sup_{B_{r+4L}}e^{-4\sqrt{2}D_\beta}.  \nonumber
\end{eqnarray}
\end{prop}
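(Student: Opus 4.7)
The plan is to obtain the bound by combining three ingredients: an orthogonality-enhanced, slice-wise inversion of the one-dimensional linearized operator; interior Schauder estimates applied to the error equation \eqref{error equation}; and a standard interpolation inequality between $C^{1,\theta}$ and $C^{2,\theta}$ with parameter chosen as a function of $L$.

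In the Fermi coordinates with respect to $\Gamma_\alpha$, I would first freeze the transverse variable $y$ and read \eqref{error equation} as a perturbed ODE in $z$ for $\phi(y,\cdot)$. The leading operator $L_\alpha := -\partial_{zz} + W''(g_\alpha(y,\cdot))$ has a non-degenerate kernel spanned by the shifted $g_\alpha'$ with a uniform spectral gap on its $L^2$-orthogonal complement. The orthogonality condition \eqref{2 orthogonal condition} built into Proposition~\ref{prop optimal approximation} places $\phi(y,\cdot)$ essentially in this complement, so that $L_\alpha$ is invertible on the admissible subspace with an $O(1)$ inverse. Decomposing the right-hand side of \eqref{error equation} accordingly, the kernel component of the leading Toda source $(-1)^{\alpha}g_\alpha'(H^\alpha+\Delta_z h_\alpha)$ is annihilated by the orthogonal projection and feeds the Toda system derived in Section~\ref{sec Toda system}, while the transverse residue that actually contributes to $\phi$ comes from lower-order and $z$-odd corrections and will be of size $\sigma(L)\,|H^\alpha+\Delta_z h_\alpha|$. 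The interaction terms are controlled by Lemma~\ref{upper bound on interaction} and Lemma~\ref{Holder bound on interaction}, producing the $C(L)\varepsilon^2$ and $C(L)\sup e^{-\sqrt{2}D_\alpha}$ contributions; cross-contributions from the neighbouring components $\beta\neq\alpha$ produce the $\sigma(L)\sum_\beta|H^\beta+\Delta_0^\beta h_\beta|$ and $C(L)\sum_{\beta\neq\alpha}\sup e^{-4\sqrt{2}D_\beta}$ terms, after Lemma~\ref{control on h_0} is used to trade derivatives of $h_\beta$ for derivatives of $\phi$ plus exponentially small error. The nonlinear remainder $\mathcal{R}(\phi)=O(\phi^2)$ is subleading and will be absorbed after the bootstrap.

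Once the slice-wise $L^\infty$-in-$z$ bound on $\phi$ is in hand, I would integrate in $y$ over cylinders of transverse radius $L$ and invoke standard interior Schauder estimates applied to \eqref{error equation} to upgrade to a $C^{2,\theta}$ bound on $\phi$. The $C^{1,\theta}$ estimate is then produced through the interpolation
\[
\|\phi\|_{C^{1,\theta}(\mathcal{D}(r))} \leq \eta\,\|\phi\|_{C^{2,\theta}(\mathcal{D}(r+4L))} + C(\eta)\|\phi\|_{L^\infty(\mathcal{D}(r+4L))},
\]
valid for any $\eta\in(0,1)$, with the choice $\eta=\sigma(L)$ tuned so that $\sigma(L)\to 0$ as $L\to\infty$. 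Feeding the $L^\infty$ bound from the previous step into this inequality and rearranging the absorbable terms yields the claimed estimate.

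The main obstacle will be the careful bookkeeping of the kernel projection. After the shifts by $h_\alpha$ and after accounting for the interaction with the other components, the true approximate kernel differs slightly from $\bar g'$, and it is precisely the residual projection error that permits the factor $\sigma(L)$ (rather than a fixed $O(1)$ constant) in front of the Toda source $|H^\alpha+\Delta_z h_\alpha|$ and of $\|\phi\|_{C^{2,\theta}}$. Establishing this quantitative smallness --- through a sufficiently rapid decay of the kernel residue at scale $L$ and a clean separation of the Toda projection from its transverse part --- is the delicate step of the argument.
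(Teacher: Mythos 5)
There are two genuine gaps in your plan, one structural and one quantitative.

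\medskip

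\emph{The slice-wise ODE inversion does not close.} You propose to freeze the transverse variable $y$ and invert $-\partial_{zz}+W''(g_\alpha(y,\cdot))$ on the orthogonal complement of $g_\alpha'$. But the equation \eqref{error equation} for $\phi$ contains the tangential Laplace--Beltrami term $\Delta_z\phi$, which is $O(1)$ in the norms being estimated, not a perturbation of the one-dimensional operator: its size is of the same order as $\|\phi\|_{C^{2,\theta}}$ with an $O(1)$ constant, not an $\varepsilon$- or $\sigma(L)$-small constant. Freezing $y$ discards exactly this term, so the resulting ``$L^\infty$-in-$z$ bound'' would implicitly carry an $O(1)\,\|\phi\|_{C^{2,\theta}}$ on the right which cannot be absorbed. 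The paper circumvents this by a different device: it multiplies the equation for the projected error $\widetilde\phi_\alpha$ by $\widetilde\phi_\alpha$ and integrates \emph{only} in $z$, obtaining (via the spectral gap, Theorem \ref{second eigenvalue for 1d}) a differential inequality of the form $\Delta_0 v(y) \geq c\,v(y) - \text{(sources)}$ for $v(y):=\|\widetilde\phi_\alpha(y,\cdot)\|_{L^2_z}^2$; this is then controlled by exponential decay in the transverse direction, producing the $e^{-cL}$ factor. In parallel, the region away from the interface, $\mathcal{N}_\alpha^2(r)$, is handled separately using the coercivity $W''(g_\ast)\approx 2>0$ and interior estimates. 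Your proposal contains neither the $L^2_z$--integration device nor this near/far region split, and without at least one of them I do not see how the $\sigma(L)$ smallness in front of $\|\phi\|_{C^{2,\theta}}$ can be extracted.

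\medskip

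\emph{The interpolation step puts the wrong constant in front of the Toda source.} Supposing your slice-wise $L^\infty$ bound had been established with a genuine $O(1)$ constant, you would have $\|\phi\|_{L^\infty} \leq C(L)\bigl[\varepsilon^2 + \sup e^{-\sqrt2 D_\alpha} + \sum_\beta\big|H^\beta+\Delta_0^\beta h_\beta\big| + \cdots\bigr]$, and interpolation $\|\phi\|_{C^{1,\theta}} \leq \eta\,\|\phi\|_{C^{2,\theta}} + C(\eta)\|\phi\|_{L^\infty}$ with $\eta=\sigma(L)$ would produce a coefficient $C(\sigma(L))\,C(L)$ --- not $\sigma(L)$ --- in front of $\sum_\beta\big|H^\beta+\Delta_0^\beta h_\beta\big|$. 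This coefficient must be small, not merely bounded, for the subsequent iteration leading to \eqref{Schauder 3} and to the lower bound on $D_\alpha$ in Section \ref{sec lower bound O(ep)} to close. In the paper this smallness is obtained directly, not through interpolation: the term $g_\alpha'\,|H^\alpha+\Delta_0 h_\alpha|$ contributes to $\widetilde E_\alpha$ only on $\{|z|>L\}$, where $g_\alpha'\lesssim e^{-\sqrt2 L}$, so the coefficient is literally $e^{-cL}$; while the cross-sheet sources come in either squared or multiplied by $D_\beta e^{-\sqrt2 D_\beta}$, both of which are $o(1)$. Interpolation cannot reproduce this because it is blind to which terms in the $L^\infty$ bound entered with extra smallness to begin with.

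\medskip

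A minor further point: you attribute the $\sigma(L)$ smallness to ``residual projection error'' because the true approximate kernel slightly differs from $\bar g'$. That is not the mechanism used here. The orthogonality \eqref{2 orthogonal condition} is imposed exactly against $\bar g'((-1)^{\alpha-1}(z-h_\alpha))$ over the full $z$-range; the projection $c_\alpha(y)$ becomes nonzero only after multiplying by the cutoff $\eta(z)$ supported in $(-2L,2L)$, so $c_\alpha = \int\phi(\eta-1)g_\alpha'$ is exponentially small in $L$ because $1-\eta$ lives where $g_\alpha'$ has already decayed (Lemma \ref{estimates on c}). The smallness is a consequence of the cutoff, not of a drift of the approximate kernel.
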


To prove this proposition, fix a large constant $L>0$ and define
\[\mathcal{N}_\alpha^1(r):=\{-L<d_\alpha<L\}\cap\mathcal{M}_\alpha^0(r),\quad  \mbox{and} \quad \mathcal{N}_\alpha^2(r):=\{d_\alpha>L/2\}\cap\mathcal{M}_\alpha^0(r).\]
We will estimate the $C^{1,\theta}$ norm of $\phi$ in $\mathcal{N}_\alpha^2(r)$ and $\mathcal{N}_\alpha^2(r)$ separately.

\subsection{$C^{1,\theta}$ estimate in $\mathcal{N}_\alpha^2(r)$}

In $\mathcal{N}_\alpha^2(r)$, the equation for $\phi$ can be written in the following way.
\begin{lem}\label{lem 8.1}
In $\mathcal{N}_\alpha^2(r)$,
\[\Delta_z\phi-H^\alpha(y,z)\partial_z\phi+\partial_{zz}\phi=\left(2+O(e^{-cL})\right)\phi+E_\alpha^2,\]
where
\begin{eqnarray*}
|E_\alpha^2(y,z)|&\lesssim&\varepsilon^2+e^{-\sqrt{2}D_\alpha(y)}+|\nabla_0^2 h_\alpha(y)|^2+|\nabla_0 h_\alpha(y)|^2 +e^{-cL}\big|H^\alpha(y,0)+\Delta_0 h_\alpha(y)\big|\\
&+&\sum_{\beta\neq\alpha}\sup_{B_{\varepsilon^{1/3}}(y)}\left[|H^\beta+\Delta_0^\beta h_\beta|^2+|\nabla h_\beta|^4+|\nabla^2 h_\beta|^4\right].
\end{eqnarray*}
\end{lem}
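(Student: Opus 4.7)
The plan is to start from the error equation derived in Section~\ref{sec approximate solution} (the identity immediately preceding Subsection~7.2) and simply specialize it to the set $\mathcal{N}_\alpha^2(r) = \{d_\alpha > L/2\} \cap \mathcal{M}_\alpha^0(r)$. The key geometric observation is that in this set the ordering $|d_\alpha| \le |d_{\alpha\pm 1}|$ encoded in $\mathcal{M}_\alpha^0$, combined with the monotonicity of the signed distances $d_\beta$ in $\beta$, forces $|d_\beta| \ge L/2$ for \emph{every} $\beta$. Consequently, on the support of any $g_\beta^\prime$ (where $|z - h_\beta| \lesssim |\log\varepsilon|$), one has $|d_\beta - h_\beta| \ge L/4$, and therefore $|g_\beta \mp 1| + |g_\beta^\prime| + |g_\beta^{\prime\prime}| \lesssim e^{-\sqrt{2}L/4}$ pointwise. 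In particular, $g_\ast$ is within $O(e^{-cL})$ of a fixed constant in $\{\pm 1\}$.

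Extracting the main linear coefficient is then immediate: Taylor expansion of $W^{\prime\prime}$ around $\pm 1$ (where $W^{\prime\prime}(\pm 1)=2$) together with the above exponential closeness gives $W^{\prime\prime}(g_\ast) = 2 + O(e^{-cL})$, while the quadratic remainder $\mathcal{R}(\phi) = O(\phi^2)$ is written as $\tilde c(\phi)\phi$ with $|\tilde c(\phi)| \lesssim |\phi|$ and, under the smallness assumption on $\phi$ implicit throughout Part~II, contributes an additional $O(e^{-cL})$ correction to the coefficient of $\phi$. All remaining inhomogeneous terms of the error equation are assigned to $E_\alpha^2$. The clustering interaction $W^\prime(g_\ast) - \sum_\beta W^\prime(g_\beta)$ is handled directly by Lemma~\ref{upper bound on interaction}, giving the $e^{-\sqrt{2}D_\alpha(y)} + \varepsilon^2$ contribution. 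For the self-term $(-1)^\alpha g_\alpha^\prime[H^\alpha(y,z) + \Delta_z h_\alpha(y)]$, I would first replace $H^\alpha(y,z)$ and $\Delta_z$ by $H^\alpha(y,0)$ and $\Delta_0$; by the error-in-$z$ estimates of Section~\ref{sec Fermi coordinates} the induced corrections are of size $\varepsilon^2|z| + \varepsilon|z|(|\nabla^2 h_\alpha|+|\nabla h_\alpha|)$ and, since $|z| \lesssim |\log\varepsilon|$ on the support of $g_\alpha^\prime$, a Young inequality turns them into the $\varepsilon^2 + |\nabla_0^2 h_\alpha|^2 + |\nabla_0 h_\alpha|^2$ terms listed in the bound; the main piece then contributes exactly $e^{-cL}|H^\alpha(y,0) + \Delta_0 h_\alpha(y)|$ after pulling out $|g_\alpha^\prime| \lesssim e^{-cL}$. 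The diagonal term $g_\alpha^{\prime\prime}|\nabla_z h_\alpha|^2$ is absorbed into $|\nabla_0 h_\alpha|^2$ via the same reduction. For the cross-terms with $\beta \ne \alpha$, Cauchy's inequality $2|g_\beta^\prime \mathcal{R}_{\beta,1}| \le |g_\beta^\prime|^2 + |\mathcal{R}_{\beta,1}|^2$ (and analogously for $g_\beta^{\prime\prime}$) separates the exponential squares $|g_\beta^\prime|^2 + |g_\beta^{\prime\prime}|^2 \lesssim e^{-2\sqrt{2}|d_\beta|}$—absorbed into $e^{-\sqrt{2}D_\alpha(y)}$ since $|d_\beta(y,z)| \ge D_\alpha(y) - O(1)$ for neighbors and the decay is stronger otherwise—from the quadratic/quartic pieces $|H^\beta + \Delta_0^\beta h_\beta|^2 + |\nabla h_\beta|^4 + |\nabla^2 h_\beta|^4$ appearing explicitly in the statement, after using Lemma~\ref{comparison of distances} to move $\mathcal{R}_{\beta,i}(\Pi_\beta(y,z), d_\beta(y,z))$ onto a ball $B_{\varepsilon^{1/3}}(y) \subset \Gamma_\beta$. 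Finally $\sum_\beta \xi_\beta = O(\varepsilon^3)$ is trivially absorbed into $\varepsilon^2$.

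The real obstacle is not conceptual but purely bookkeeping: every $z$-dependent geometric object ($\Delta_z$, $H^\alpha(y,z)$, $\nabla_z h_\beta$, the footpoint $\Pi_\beta$) must be replaced by its $z = 0$ counterpart, and each induced error must be shown to fit into one of the categories in the stated bound for $|E_\alpha^2|$. The exponential suppression $|g_\beta^\prime| + |g_\beta^{\prime\prime}| \lesssim e^{-\sqrt{2}L/4}$ coming from $|d_\beta| \ge L/2$ is what makes all the nontrivial cancellations go through; once it is exploited, the lemma is a term-by-term inspection with repeated application of Young's inequality and the estimates of Section~\ref{sec Fermi coordinates} and Lemmas~\ref{upper bound on interaction}--\ref{control on h_0}.
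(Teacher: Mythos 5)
Your proposal follows essentially the same route as the paper's proof in Appendix~\ref{sec proof of Lem 8.1}: start from \eqref{error equation}, use the geometric fact that $d_\alpha>L/2$ in $\mathcal{N}_\alpha^2$ forces every $|d_\beta|\gtrsim L$ so that all $g_\beta'$, $g_\beta''$, and the deviations of $g_\ast$ and $\phi$ from $\pm1$ are $O(e^{-cL})$, extract the coefficient $2+O(e^{-cL})$ from $W''(g_\ast)$ together with the quadratic remainder, then bound each remaining term by the error-in-$z$ estimates of Section~\ref{sec Fermi coordinates}, Lemma~\ref{upper bound on interaction}, Lemma~\ref{comparison of distances} and Young's inequality. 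The only slight imprecision is your claim that $|d_\beta(y,z)|\gtrsim D_\alpha(y)-O(1)$ for the neighbors; what actually holds (and what the paper establishes) is only $|d_\beta(y,z)|\gtrsim \tfrac12 D_\alpha(y)-O(1)$, but since you only need the squared exponential $e^{-2\sqrt2|d_\beta|}\lesssim e^{-\sqrt2 D_\alpha}$ this weaker bound is already enough and the argument goes through unchanged.
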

The proof can be found in Appendix \ref{sec proof of Lem 8.1}.

\medskip

By standard interior elliptic estimate, we deduce that, for any  $r>1$,
\begin{eqnarray*}
&&\|\phi\|_{C^{1,\theta}(\mathcal{N}_\alpha^2(r))}  \\
&\lesssim& e^{-cL}\|\phi\|_{L^\infty(\mathcal{M}_\alpha^L(r+L)\cap\{|y|=r+L\})}+e^{-cL}\|\phi\|_{L^\infty(\{|y|<r+L,z=L/4\})}+e^{-cL}\|\phi\|_{L^\infty(\{|y|<r+L,z=\rho_\alpha^+(y)+L\})}\\
&+&\sup_{B_{r+L}}e^{-\sqrt{2}D_\alpha}+\varepsilon^2 +\sup_{B_{r+L}}\left(|\nabla_0^2h_\alpha(\tilde{y})|^2+|\nabla_0h_\alpha(\tilde{y})|^2\right)\\
&+&e^{-cL}\sup_{B_{r+L}}\big|H^\alpha+\Delta_0 h_\alpha\big|+\sup_{B_{r+L}}\left[|H^\beta+\Delta_0^\beta h_\beta|^2+|\nabla h_\beta|^4+|\nabla^2 h_\beta|^4\right].
\end{eqnarray*}
Substituting \eqref{5.1} into this estimate, after simplification  we obtain
\begin{eqnarray}\label{First order estimate 1}
\|\phi\|_{C^{1,\theta}(\mathcal{N}_\alpha^2(r))}
&\leq& \sigma(L)\|\phi\|_{C^{2,\theta}(\mathcal{D}(r+L))}+C(L)\sup_{B_{r+L}(y)}e^{-\sqrt{2}D_\alpha}+C(L)\varepsilon^2  \\
&+&\sigma(L)\sum_{\beta=1}^Q\sup_{B_{r+L}(y)}\big|H^\beta+\Delta_0^\beta h_\beta\big| +C(L)\sum_{\beta\neq\alpha}\sup_{B_{r+L}}e^{-4\sqrt{2}D_\beta}.  \nonumber
\end{eqnarray}
Here $\sigma(L)\lesssim e^{-cL}+\max_\beta\|H^\beta+\Delta_0^\beta h_\beta\|_{L^\infty}\ll 1$.

\subsection{$C^{1,\theta}$ estimate in $\mathcal{N}_\alpha^1(r)$}

In $\mathcal{N}_\alpha^1(r)$, the equation for $\phi$ can be written in the following way.
\begin{lem}\label{lem 8.2}
In $\mathcal{N}_\alpha^1(r)$,
\[\Delta_z\phi-H^\alpha(y,z)\partial_z\phi+\partial_{zz}\phi=W^{\prime\prime}(g_\alpha)\phi+O(\phi^2)-(-1)^{\alpha}g_\alpha^\prime\left[H^\alpha(y,0)+\Delta_0 h_\alpha(y)\right]+E_\alpha^1,
\]
where
\[
|E^1_\alpha(y,z)|\lesssim e^{-\sqrt{2}D_\alpha(y)}+\varepsilon^2+\left(|\nabla^2h_\alpha(y)|^2+|\nabla h_\alpha(y)|^2\right)e^{-\sqrt{2}|z|}.\]
\end{lem}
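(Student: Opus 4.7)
The plan is to start from the general equation \eqref{error equation} for $\phi$ and systematically replace each term on the right by either a main-order piece (which appears explicitly in the identity claimed) or an error absorbed into $E_\alpha^1$. Three ingredients will do the work: since $|d_\alpha(y,z)|=|z|<L$ in $\mathcal N_\alpha^1(r)$, every $O(\varepsilon^k|z|)$ correction in Fermi coordinates of $\Gamma_\alpha$ produces an $O(\varepsilon^k)$ error; the profile derivatives $g_\alpha^\prime,g_\alpha^{\prime\prime}$ decay like $e^{-\sqrt 2|z|}$; and the other interfaces $\Gamma_\beta$ are controlled via $D_\alpha(y)$ using Lemma \ref{comparison of distances} together with the definition of $\mathcal M_\alpha^0$.

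First I would split $W^{\prime\prime}(g_\ast)\phi=W^{\prime\prime}(g_\alpha)\phi+[W^{\prime\prime}(g_\ast)-W^{\prime\prime}(g_\alpha)]\phi$. Since $g_\ast-g_\alpha=\sum_{\beta\neq\alpha}(g_\beta\mp 1)$ and each summand is $O(e^{-\sqrt 2|d_\beta(y,z)|})$, Lemma \ref{comparison of distances} with $|z|<L$ yields $|g_\ast-g_\alpha|\lesssim e^{-\sqrt 2 D_\alpha(y)}$, the $L$-dependent constant being absorbed into $\lesssim$. The linear remainder is then $O(|\phi|e^{-\sqrt 2 D_\alpha})\lesssim e^{-\sqrt 2 D_\alpha}$ since $\phi$ is uniformly small, and is placed into $E_\alpha^1$; the quadratic piece $\mathcal R(\phi)=O(\phi^2)$ is kept explicitly. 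Lemma \ref{upper bound on interaction} takes care of $|W^\prime(g_\ast)-\sum_\beta W^\prime(g_\beta)|\lesssim e^{-\sqrt 2 D_\alpha}+\varepsilon^2$, and $\sum_\beta\xi_\beta=O(\varepsilon^3)$ by construction of $\bar g$.

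For the curvature piece $-(-1)^\alpha g_\alpha^\prime[H^\alpha(y,z)+\Delta_z h_\alpha(y)]$ I would replace $H^\alpha(y,z)$ by $H^\alpha(y,0)$ with error $O(\varepsilon^2|z|)$ from \eqref{error in z 4}, and $\Delta_z h_\alpha$ by $\Delta_0 h_\alpha$ with error $O(\varepsilon|z|(|\nabla^2 h_\alpha|+|\nabla h_\alpha|))$ from \eqref{error in z 5}. After multiplying by $g_\alpha^\prime\lesssim e^{-\sqrt 2|z|}$, the bound $|z|^k e^{-\sqrt 2|z|}\lesssim 1$ and Young's inequality $\varepsilon|z|(|\nabla^2 h_\alpha|+|\nabla h_\alpha|)\leq\varepsilon^2|z|^2+|\nabla^2 h_\alpha|^2+|\nabla h_\alpha|^2$ land this error inside $\varepsilon^2+(|\nabla^2 h_\alpha|^2+|\nabla h_\alpha|^2)e^{-\sqrt 2|z|}$. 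The term $g_\alpha^{\prime\prime}|\nabla_z h_\alpha|^2$ is directly bounded by $|\nabla h_\alpha|^2 e^{-\sqrt 2|z|}$ after using \eqref{error in z 3} and \eqref{derivatives of metric tensor} to compare $\nabla_z$ with $\nabla_0$.

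The main bookkeeping hurdle will be the cross terms $(-1)^\beta g_\beta^\prime\mathcal R_{\beta,1}+g_\beta^{\prime\prime}\mathcal R_{\beta,2}$ for $\beta\neq\alpha$: here $g_\beta^\prime,g_\beta^{\prime\prime}$ are evaluated in the Fermi coordinates of $\Gamma_\beta$ and decay like $e^{-\sqrt 2|d_\beta(y,z)|}$. Lemma \ref{comparison of distances} combined with $|z|<L$ yields $|d_\beta(y,z)|\geq D_\alpha(y)-L-C\varepsilon^{1/2}|\log\varepsilon|^{3/2}$ for the nearest neighbors of $\Gamma_\alpha$, and larger gaps for the farther ones; since the $\mathcal R_{\beta,i}$ are uniformly bounded, these terms fall under $e^{-\sqrt 2 D_\alpha}$ with constants depending only on $L$. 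No new analytic idea is required beyond what has already been developed in Sections \ref{sec Fermi coordinates} and \ref{sec approximate solution}; the challenge is purely to organize these several exponential-decay estimates so that the final bound has the clean form stated.
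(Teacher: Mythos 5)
Your proposal is correct and takes essentially the same route as the paper's Appendix~\ref{sec proof of Lem 8.2}: both pass from \eqref{error equation} by isolating $W^{\prime\prime}(g_\alpha)\phi$ and $(-1)^\alpha g_\alpha^\prime[H^\alpha(y,0)+\Delta_0 h_\alpha]$ as main terms, then bound the interaction via Lemma~\ref{upper bound on interaction}, replace $H^\alpha(y,z)$ and $\Delta_z h_\alpha$ by their $z=0$ counterparts using \eqref{error in z 4}--\eqref{error in z 5} together with $|z|<L$, apply Young's inequality to absorb the $\varepsilon|z|(|\nabla^2 h_\alpha|+|\nabla h_\alpha|)g_\alpha^\prime$ piece, and dispose of the $\beta\neq\alpha$ cross terms by noting $e^{-\sqrt2|d_\beta(y,z)|}\lesssim e^{-\sqrt2 D_\alpha(y)}+\varepsilon^2$ in $\mathcal N_\alpha^1$ while $\mathcal R_{\beta,1},\mathcal R_{\beta,2}=o(1)$.
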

The proof of this lemma is given in Appendix \ref{sec proof of Lem 8.2}.

\medskip

Take a  function  $\eta\in C_0^\infty(-2L,2L)$ satisfying $\eta\equiv 1$ in $(-L,L)$, $|\eta^\prime|\lesssim L^{-1}$ and $|\eta^{\prime\prime}|\lesssim L^{-2}$. Let $\phi_\alpha(y,z):=\phi(y,z)\eta(z)$ and $\widetilde{\phi}_\alpha(y,z):=\phi_\alpha(y,z)-c_\alpha(y)g_\alpha^\prime(y,z)$, where
\begin{eqnarray}\label{def of c}
c_\alpha(y)&=&\int_{-\delta R}^{\delta R}\phi_\alpha(y,z)g_\alpha^\prime(y,z)dz\\
&=&\int_{-\delta R}^{\delta R}\phi(y,z)\left(\eta(z)-1\right)g_\alpha^\prime(y,z)dz. \quad \quad \mbox{( by \eqref{2 orthogonal condition})} \nonumber
\end{eqnarray}
Therefore we still have the orthogonal condition
\begin{equation}\label{orthogonal condition 6.1}
\int_{-\delta R}^{\delta R}\widetilde{\phi}_\alpha(y,z) g_\alpha^\prime(y,z)dz=0, \quad\forall \ y\in B_R^{n-1}.
\end{equation}

\begin{lem}[Estimates on $c_\alpha$]\label{estimates on c} There exists a constant $\sigma>0$ small such that
\begin{equation}\label{estimates on c 1}
|c_\alpha(y)|\lesssim e^{-\sigma L}\sup_{L<|z|<6|\log\varepsilon|}e^{-(\sqrt{2}-\sigma)|z|}|\phi(y,z)|,
\end{equation}
\begin{equation}\label{estimates on c 2}
|\nabla c_\alpha(y)|\lesssim e^{-\sigma L}\sup_{L<|z|<6|\log\varepsilon|}e^{-(\sqrt{2}-\sigma)|z|}\left(|\phi(y,z)|+|\nabla_y\phi(y,z)|\right),
\end{equation}
\begin{equation}\label{estimates on c 3}
|\nabla^2 c_\alpha(y)|\lesssim e^{-\sigma L}\sup_{L<|z|<6|\log\varepsilon|}e^{-(\sqrt{2}-\sigma)|z|}\left(|\phi(y,z)|+|\nabla_y\phi(y,z)|+|\nabla_y^2\phi(y,z)|\right).
\end{equation}
\end{lem}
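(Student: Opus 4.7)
The plan is to exploit two features of the integrand in the definition of $c_\alpha(y)$. First, the factor $\eta(z)-1$ vanishes identically on $(-L,L)$ by the choice of $\eta$, so the effective range of integration is $L<|z|<6|\log\varepsilon|+o(1)$, the upper bound coming from the compact support of $\bar{g}^\prime$ together with $\|h_\alpha\|_{L^\infty}=o(1)$ recorded in the remark after Proposition~\ref{prop optimal approximation}. Second, on that range $g_\alpha^\prime(y,z)$ and its $z$-derivatives decay like $e^{-\sqrt{2}|z|}$, by the exponential asymptotics of the one-dimensional profile $g$ at $\pm\infty$ (which uses $W^{\prime\prime}(\pm 1)=2$) and the smallness of $h_\alpha$.

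For \eqref{estimates on c 1} I would write $e^{-\sqrt 2|z|}=e^{-(\sqrt 2-\sigma)|z|}e^{-\sigma|z|}$, pull out the weighted sup $\sup_{L<|z|<6|\log\varepsilon|}e^{-(\sqrt 2-\sigma)|z|}|\phi(y,z)|$ outside the integral, and estimate the residual $\int_L^{\infty}e^{-\sigma|z|}\,dz$ by $\sigma^{-1}e^{-\sigma L}$. For \eqref{estimates on c 2} I would differentiate under the integral sign. Since $g_\alpha^\prime$ depends on $y$ only through $h_\alpha(y)$, the $\nabla_y$-derivative of the integrand is
\[
(\eta(z)-1)\Bigl[\nabla_y\phi(y,z)\,g_\alpha^\prime(y,z)+(-1)^{\alpha}\phi(y,z)\,\bar{g}^{\prime\prime}((-1)^{\alpha-1}(z-h_\alpha))\,\nabla h_\alpha(y)\Bigr].
\]
The first term is handled exactly as in \eqref{estimates on c 1} with $\phi$ replaced by $|\nabla_y\phi|$; for the second, I would use $|\bar{g}^{\prime\prime}|\lesssim e^{-\sqrt 2|z|}$ and $\|\nabla h_\alpha\|_{L^\infty}=o(1)$ from the same remark, so that the cross term is dominated by the main one. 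Differentiating once more yields \eqref{estimates on c 3} in identical fashion, the new terms involving $\nabla^2 h_\alpha$ and $|\nabla h_\alpha|^2$ being $o(1)$ by the $C^3$ smallness of $\textbf{h}$.

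The only real difficulty is bookkeeping: checking that every $y$-derivative landing on $g_\alpha^\prime$ either leaves the exponential decay in $z$ intact (so the weighted sup can be pulled out and a residual $e^{-\sigma L}$ produced) or gains a factor $\nabla^k h_\alpha=o(1)$. No orthogonality or cancellation is used beyond the defining property $\eta\equiv 1$ on $(-L,L)$, which gives the spatial separation responsible for the $e^{-\sigma L}$ decay.
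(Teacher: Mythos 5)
Your argument is essentially the paper's: the paper also exploits that $\eta-1$ kills the region $|z|<L$, the exponential decay $|g_\alpha'|,|g_\alpha''|\lesssim e^{-\sqrt2|z|}$ on the remaining range $L<|z|<6|\log\varepsilon|$, the split $e^{-\sqrt2|z|}=e^{-(\sqrt2-\sigma)|z|}e^{-\sigma|z|}$ to pull out the weighted sup and produce $\int_L^\infty e^{-\sigma z}dz\lesssim e^{-\sigma L}$, and differentiation under the integral together with $\|\mathbf h\|_{C^3}=o(1)$ for the higher-order estimates. Correct, same route.
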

\begin{proof}
By \eqref{def of c} and the definition of $\eta$,
\begin{eqnarray*}
|c_\alpha(y)|
&\lesssim& \left(\sup_{L<|z|<6|\log\varepsilon|}e^{-(\sqrt{2}-\sigma)|z|}|\phi(y,z)|\right)\int_L^{+\infty}e^{-\sigma z}dz\\
&\lesssim& e^{-\sigma L}\sup_{L<|z|<6|\log\varepsilon|}e^{-(\sqrt{2}-\sigma)|z|}|\phi(y,z)|.
\end{eqnarray*}

Differentiating \eqref{def of c} gives
\[
\nabla c_\alpha(y)=\int_{-\delta R}^{\delta R}\nabla_y\phi(y,z)\left(\eta(z)-1\right)g_\alpha^\prime(y,z)dz+(-1)^\alpha\nabla h_\alpha(y)\int_{-\delta R}^{\delta R}\phi(y,z)\left(\eta(z)-1\right)g_\alpha^{\prime\prime}(y,z)dz.
\]
\eqref{estimates on c 2} follows as above. The derivation of \eqref{estimates on c 3} is similar.
\end{proof}

In the Fermi coordinates with respect to $\Gamma_\alpha$, the equation satisfied by $\widetilde{\phi}_\alpha$ reads as
\begin{equation}\label{6.2}
\Delta_z\widetilde{\phi}_\alpha-H^\alpha(y,z)\partial_z\widetilde{\phi}_\alpha+\partial_{zz}\widetilde{\phi}_\alpha=W^{\prime\prime}(g_\alpha)\widetilde{\phi}_\alpha+o(\widetilde{\phi}_\alpha)
+\widetilde{c}_\alpha(y)g_\alpha^\prime+\widetilde{E}_\alpha,
\end{equation}
where
\[
\widetilde{c}_\alpha(y)=(-1)^{\alpha-1}\left[H^\alpha(y,0)+\Delta_0h_\alpha(y)\right]-\Delta_0c_\alpha(y),\]
while
\begin{eqnarray*}
|\widetilde{E}_\alpha(y,z)|&\lesssim&\varepsilon^2\eta +e^{-\sqrt{2}D_\alpha(y)}\eta+e^{-\sqrt{2}|z|}\left(|\nabla^2h_\alpha(y)|^2+|\nabla h_\alpha(y)|^2\right)\eta\\
&+&|\phi(y,z)||c_\alpha(y)|g_\alpha^\prime+g_\alpha^\prime\big|1-\eta\big|\big|H^\alpha(y,0)+\Delta_0h_\alpha(y)\big|\\
&+&|\phi|\left[\varepsilon|\eta^\prime|+|\eta^{\prime\prime}|\right]+|\phi_z||\eta^\prime|\\
&+&\varepsilon L\left[|c_\alpha(y)|+|\nabla c_\alpha(y)|+|\nabla^2c_\alpha(y)|\right]e^{-\sqrt{2}|z|}\\
&+&|c_\alpha(y)||\xi_\alpha|+|c_\alpha(y)|\left[|\nabla^2h_\alpha(y)|+|\nabla h_\alpha(y)|\right]e^{-\sqrt{2}|z|}.
\end{eqnarray*}

\medskip

By this expression, Lemma \ref{lem 8.1} and Lemma \ref{estimates on c}, we obtain
\begin{lem}[$L^2$ estimates on $\widetilde{E}_\alpha$]\label{estimates on e}
For any $y$,
\begin{eqnarray*}
\|\widetilde{E}_\alpha(y,\cdot)\|_{L^2(-\delta R,\delta R)}^2&\lesssim&L\varepsilon^4+Le^{-2\sqrt{2}D_\alpha(y)}+ |\nabla^2h_\alpha(y)|^4+|\nabla h_\alpha(y)|^4 \\
&+&e^{-2\sqrt{2}L}\big|H^\alpha(y,0)+\Delta_0h_\alpha(y)\big|^2+\frac{1}{L}\sup_{L<|z|<2L}\left(|\phi(y,z)|^2+|\phi_z(y,z)|^2\right)\\
&+&e^{-2\sigma L}\sup_{L<|z|<6|\log\varepsilon|}e^{-2(\sqrt{2}-\sigma)|z|}\left(|\phi(y,z)|+|\nabla_y\phi(y,z)|+|\nabla_y^2\phi(y,z)|\right)^2.
\end{eqnarray*}
\end{lem}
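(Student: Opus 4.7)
The plan is to bound $\|\widetilde{E}_\alpha(y,\cdot)\|_{L^2(-\delta R,\delta R)}^2$ by estimating each of the summands in the explicit expression for $\widetilde{E}_\alpha$ given just above the lemma statement, and then collecting the contributions. Two structural features make every term tractable: $\mathrm{spt}(\eta),\mathrm{spt}(\eta'),\mathrm{spt}(\eta'')\subset(-2L,2L)$, and $g_\alpha^\prime(y,z)\lesssim e^{-\sqrt{2}|z-h_\alpha(y)|}$ so that $\int(g_\alpha^\prime)^2dz=O(1)$ and $\int_{|z|>L}(g_\alpha^\prime)^2dz=O(e^{-2\sqrt2 L})$. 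In each case I pull out the $y$-dependent coefficient and integrate the $z$-dependent factor explicitly.

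For the three terms supported via $\eta$, namely $\varepsilon^2\eta$, $e^{-\sqrt2 D_\alpha(y)}\eta$, and $e^{-\sqrt2|z|}(|\nabla^2 h_\alpha|^2+|\nabla h_\alpha|^2)\eta$, using $\int\eta^2\,dz\lesssim L$ and $\int e^{-2\sqrt2|z|}\,dz=O(1)$ and applying the elementary inequality $(a+b)^2\le 2(a^2+b^2)$ to the third one yields respectively the contributions $L\varepsilon^4$, $Le^{-2\sqrt2 D_\alpha(y)}$, and $|\nabla^2h_\alpha|^4+|\nabla h_\alpha|^4$. The term $g_\alpha^\prime|1-\eta|\,|H^\alpha+\Delta_0 h_\alpha|$ is supported in $\{|z|>L\}$ where $\int(g_\alpha^\prime)^2\,dz=O(e^{-2\sqrt2 L})$, producing $e^{-2\sqrt2 L}|H^\alpha(y,0)+\Delta_0 h_\alpha(y)|^2$.

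The cutoff-derivative terms $|\phi|[\varepsilon|\eta'|+|\eta''|]$ and $|\phi_z|\,|\eta'|$ live in the annulus $\{L<|z|<2L\}$ where $|\eta'|\lesssim L^{-1}$, $|\eta''|\lesssim L^{-2}$. Factoring out the $L^\infty$-norm of $\phi$, resp.\ $\phi_z$, on this annulus and bounding $\int_{L<|z|<2L}(\varepsilon|\eta'|+|\eta''|)^2\,dz\lesssim L^{-3}+\varepsilon^2 L^{-1}$ and $\int|\eta'|^2\,dz\lesssim L^{-1}$ gives the $\frac{1}{L}\sup_{L<|z|<2L}(|\phi|^2+|\phi_z|^2)$ term (the $L^{-3}$ piece being absorbed into $L^{-1}$).

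For the four summands involving $c_\alpha$ and its derivatives, I apply Lemma~\ref{estimates on c} to replace $|c_\alpha|,|\nabla c_\alpha|,|\nabla^2 c_\alpha|$ by $e^{-\sigma L}\sup_{L<|z|<6|\log\varepsilon|} e^{-(\sqrt2-\sigma)|z|}\bigl(|\phi|+|\nabla_y\phi|+|\nabla_y^2\phi|\bigr)$. Then: $|\phi||c_\alpha|g_\alpha^\prime$ gives $|c_\alpha|^2\int(g_\alpha^\prime)^2\,dz$ times a supremum of $|\phi|^2$, which after squaring the Lemma bound falls into the last line of the claimed estimate; $\varepsilon L[|c_\alpha|+|\nabla c_\alpha|+|\nabla^2 c_\alpha|]e^{-\sqrt2|z|}$ contributes $\varepsilon^2 L^2$ times the squared Lemma bound, again absorbed (since $\varepsilon L \ll e^{-\sigma L}$ for the ranges of $L$ we will later consider); $|c_\alpha||\xi_\alpha|$ contributes $|c_\alpha|^2\int|\xi_\alpha|^2\,dz\lesssim \varepsilon^6 |\log\varepsilon|$ times the squared Lemma bound, negligible; and $|c_\alpha|[|\nabla^2h_\alpha|+|\nabla h_\alpha|]e^{-\sqrt2|z|}$ is controlled by a product of the last-line bound and $|\nabla^k h_\alpha|^2$, which by Young's inequality is absorbed into $|\nabla^2h_\alpha|^4+|\nabla h_\alpha|^4$ and the last-line term. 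The only step that requires any care is keeping track of which small factor ($\varepsilon$, $\varepsilon L$, $e^{-\sigma L}$) dominates in each $c_\alpha$-contribution so that everything fits into the six displayed buckets; this bookkeeping is the main (though routine) obstacle. Summing the ten pieces gives the stated $L^2$ bound.
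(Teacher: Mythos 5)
Your term-by-term $L^2$ estimate is correct and coincides with the paper's intended argument: the paper simply points to the displayed pointwise bound on $\widetilde E_\alpha$ together with Lemma \ref{estimates on c}, and your calculation is exactly the bookkeeping that turns that bound into the stated $L^2$ inequality. (One small remark: for term 8 you do not need $\varepsilon L\ll e^{-\sigma L}$; the weaker and cleaner observation $\varepsilon^2L^2\lesssim 1$, valid once $\varepsilon\lesssim L^{-1}$, already lets you absorb it into the last-line term.)
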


Next we prove an $L^2$ estimate on $\widetilde{\phi}$.
\begin{lem}
For any $r>0$,
\begin{eqnarray}\label{L2 decay}
\sup_{\tilde{y}\in B_r}\|\widetilde{\phi}_\alpha(\tilde{y},\cdot)\|_{L^2(-\delta R,\delta R)}^2
&\lesssim& e^{-cL} \sup_{\tilde{y}\in B_{r+L}}\|\widetilde{\phi}_\alpha(\tilde{y},\cdot)\|_{L^2(-\delta R,\delta R)}^2+ L\varepsilon^4+L\sup_{\tilde{y}\in B_{r+L}}e^{-2\sqrt{2}D_\alpha} \nonumber\\
&+& e^{-2\sqrt{2}L}\sup_{\tilde{y}\in B_{r+L}}\big|H^\alpha+\Delta_0h_\alpha\big|^2\\
&+&\frac{1}{L}\sup_{B_{r+L}\times\{L<|z|<2L\}}\left(|\phi|^2+|\nabla\phi|^2\right)
+L^5\sup_{B_{r+L}\times\{|z|<2L\}}\left(|\nabla_y^2\phi|^4+|\nabla_y
\phi|^4\right)\nonumber\\
&+&e^{-2\sigma L}\sup_{B_{r+L}\times\{L<|z|<6|\log\varepsilon|\}}e^{-2(\sqrt{2}-\sigma)|z|}\left(|\phi|+|\nabla_y\phi|+|\nabla_y^2\phi|\right)^2. \nonumber
\end{eqnarray}
\end{lem}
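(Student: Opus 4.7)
The plan is to turn the statement into a pointwise (in $y$) estimate for the scalar function
\[
G(y):=\int_{-\delta R}^{\delta R}\widetilde{\phi}_\alpha(y,z)^2\,dz,
\]
by deriving a differential inequality of the form $-\Delta_0 G+\lambda_1 G\lesssim \|\widetilde{E}_\alpha(y,\cdot)\|_{L^2_z}^2+(\text{small})$, and then converting it to a sup-norm estimate on $B_r$ by a standard comparison/supersolution argument. The crucial algebraic fact which makes the strategy work is that the orthogonality \eqref{orthogonal condition 6.1} annihilates the Lagrange-multiplier term $\widetilde{c}_\alpha(y)g_\alpha^\prime$ upon testing against $\widetilde{\phi}_\alpha$.

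Concretely, I would first multiply \eqref{6.2} by $\widetilde{\phi}_\alpha$ and integrate in $z\in(-\delta R,\delta R)$. Writing $\int\widetilde{\phi}_\alpha\Delta_0\widetilde{\phi}_\alpha\,dz=\tfrac12\Delta_0 G-\int|\nabla_0\widetilde{\phi}_\alpha|^2\,dz$ and integrating by parts in $z$, this yields
\[
\tfrac12\Delta_0 G(y)=\int_{-\delta R}^{\delta R}\!\!\left[(\partial_z\widetilde{\phi}_\alpha)^2+|\nabla_0\widetilde{\phi}_\alpha|^2+W''(g_\alpha)\widetilde{\phi}_\alpha^2\right]dz+\int\widetilde{E}_\alpha\widetilde{\phi}_\alpha\,dz+\mathcal{R}(y),
\]
where \eqref{orthogonal condition 6.1} has killed the $\widetilde{c}_\alpha g_\alpha^\prime$ term, and $\mathcal{R}(y)$ collects three kinds of small errors: (i) the discrepancy between $\Delta_z$ and $\Delta_0$ controlled by \eqref{error in z 5}, (ii) the drift term $H^\alpha\partial_z\widetilde{\phi}_\alpha$ integrated by parts using $\partial_z H^\alpha=O(\varepsilon^2)$ from \eqref{error in z 4}, and (iii) boundary contributions at $z=\pm\delta R$ where $\widetilde{\phi}_\alpha=-c_\alpha(y)g_\alpha^\prime$ is exponentially small (of order $e^{-\sqrt{2}\delta/\varepsilon}$).

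Next I would invoke the spectral gap for the one-dimensional linearized operator $-\partial_{zz}+W''(g_\alpha)$. Because $g_\alpha^\prime$ is (up to the harmless truncation in $\bar g$) its ground state with eigenvalue zero, and because $\widetilde{\phi}_\alpha\perp g_\alpha^\prime$ in $L^2_z$ by construction, there is a $y$- and $\varepsilon$-uniform constant $\lambda_1>0$ with
\[
\int_{-\delta R}^{\delta R}\!\!\left[(\partial_z\widetilde{\phi}_\alpha)^2+W''(g_\alpha)\widetilde{\phi}_\alpha^2\right]dz\ \geq\ \lambda_1 G(y).
\]
Dropping the nonnegative $|\nabla_0\widetilde{\phi}_\alpha|^2$ integral and using Cauchy--Schwarz on the $\widetilde{E}_\alpha\widetilde{\phi}_\alpha$ term to absorb a fraction $\tfrac{\lambda_1}{4}G(y)$, I obtain
\[
-\Delta_0 G(y)+\tfrac{\lambda_1}{2}\,G(y)\ \leq\ C\,\|\widetilde{E}_\alpha(y,\cdot)\|_{L^2_z}^2.
\]

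From this I convert to a sup-norm bound by a comparison argument: for each $y_0\in B_r$ I would compare $G$ on $B_L(y_0)\subset B_{r+L}$ with a radial supersolution of the form $V(y)=M\cosh(\mu|y-y_0|)/\cosh(\mu L)+\Lambda/\mu^2$ (with $\mu$ slightly smaller than $\sqrt{\lambda_1/2}$ to absorb the dimension-dependent $\frac{n-2}{r}$ term), where $M:=\sup_{B_{r+L}}G$ and $\Lambda:=\sup_{B_{r+L}}\|\widetilde{E}_\alpha\|_{L^2_z}^2$. The maximum principle gives $G(y_0)\lesssim e^{-cL}M+\Lambda$, and supping over $y_0\in B_r$ yields exactly the first two structural pieces of \eqref{L2 decay}. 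The remaining terms on the right-hand side are then produced by plugging Lemma \ref{estimates on e} into $\Lambda$ and substituting Lemma \ref{control on h_0} to trade the $|\nabla h_\alpha|^4+|\nabla^2 h_\alpha|^4$ contribution for $|\nabla_y\phi|^4+|\nabla^2_y\phi|^4$ plus exponentially decaying interaction terms.

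The main obstacle is bookkeeping rather than conceptual. The spectral gap is robust, but I must verify that the $o(\widetilde{\phi}_\alpha)$ in \eqref{6.2} and all $z$-boundary contributions are genuinely small enough (for $\varepsilon$ small and $L$ large) to be absorbed into $\lambda_1/2$; this requires using $\|\textbf{h}\|_{C^3}=o(1)$ together with the exponential smallness of $g_\alpha^\prime$ at $z=\pm\delta R$. The second delicate point is reconciling the power $L^5$ appearing in front of $\sup(|\nabla_y^2\phi|^4+|\nabla_y\phi|^4)$: it arises from pairing Lemma \ref{control on h_0}'s pointwise bound on $\nabla^2 h_\alpha,\nabla h_\alpha$ with the factor $L$ hidden in the $z$-integration against the cutoff $\eta(z)$ in the definition of $\widetilde{E}_\alpha$, and keeping track of all these $L$-factors through the Cauchy--Schwarz step requires the same careful accounting that already appears in the statement of Lemma \ref{estimates on e}.
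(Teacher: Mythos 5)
Your proposal is correct and follows essentially the same route as the paper: test \eqref{6.2} against $\widetilde{\phi}_\alpha$, use the orthogonality \eqref{orthogonal condition 6.1} to kill the $\widetilde{c}_\alpha g_\alpha^\prime$ term and invoke the one-dimensional spectral gap (Theorem~\ref{second eigenvalue for 1d}) to get a differential inequality $-\Delta_0 G+cG\lesssim\|\widetilde{E}_\alpha\|_{L^2_z}^2+\varepsilon^2\int z^2(|\nabla_y^2\widetilde{\phi}_\alpha|^2+|\nabla_y\widetilde{\phi}_\alpha|^2)$ for $G(y):=\|\widetilde{\phi}_\alpha(y,\cdot)\|_{L^2_z}^2$, and then convert to a sup-norm bound over $B_r$ versus $B_{r+L}$ before inserting Lemma~\ref{estimates on e} and Lemma~\ref{control on h_0}. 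You spell out the final comparison/supersolution step more explicitly than the paper (which simply states the conclusion), but that is the intended argument.
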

\begin{proof}
Multiplying \eqref{6.2} by $\widetilde{\phi}_\alpha$ and integrating in
$z$, we obtain
\[\int_{-\infty}^{+\infty}\widetilde{\phi}_\alpha\Delta_z\widetilde{\phi}_\alpha+H^\alpha(y,z)\partial_z\widetilde{\phi}_\alpha\widetilde{\phi}_\alpha+\partial_{zz}\widetilde{\phi}_\alpha\widetilde{\phi}_\alpha
=\int_{-\infty}^{+\infty}W^{\prime\prime}(g_\alpha)\widetilde{\phi}_\alpha^2+o(\widetilde{\phi}_\alpha^2)+\widetilde{E}_\alpha\widetilde{\phi}_\alpha.
\]
Integrating by parts and applying Theorem \ref{second eigenvalue for 1d} leads to
\begin{eqnarray*}
\int_{-\infty}^{+\infty}\widetilde{\phi}_\alpha\Delta_z\widetilde{\phi}_\alpha
&=&\int_{-\infty}^{+\infty}|\partial_z\widetilde{\phi}_\alpha|^2+W^{\prime\prime}(g_\alpha)\widetilde{\phi}_\alpha^2+o(\widetilde{\phi}_\alpha^2)+\widetilde{E}_\alpha\widetilde{\phi}_\alpha
+\frac{1}{2}\frac{\partial H^\alpha}{\partial z}\widetilde{\phi}_\alpha^2\\
&\geq&\frac{3\mu}{4}\int_{-\infty}^{+\infty}\widetilde{\phi}_\alpha^2-C\int_{-\infty}^{+\infty}\widetilde{E}_\alpha^2.
\end{eqnarray*}
On the other hand, by direct differentiation we also have
\begin{eqnarray*}
\frac{1}{2}\Delta_0\int_{-\infty}^{+\infty}\widetilde{\phi}_\alpha^2&=&\int_{-\infty}^{+\infty}
\widetilde{\phi}_\alpha(y,z)\Delta_0\widetilde{\phi}_\alpha(y,z)+|\nabla_0\widetilde{\phi}_\alpha(y,z)|^2dz\\
&\geq&\int_{-\infty}^{+\infty}\left(\Delta_0\widetilde{\phi}_\alpha-\Delta_z\widetilde{\phi}_\alpha\right)\widetilde{\phi}_\alpha+\frac{3\mu}{4}\int_{-\infty}^{+\infty}\widetilde{\phi}_\alpha^2-C\int_{-\infty}^{+\infty}\widetilde{e}_\alpha^2\\
&\geq&\frac{\mu}{2}\int_{-\infty}^{+\infty}\widetilde{\phi}_\alpha^2-C\int_{-\infty}^{+\infty}\widetilde{E}_\alpha^2-C\varepsilon^2\int_{-\infty}^{+\infty}z^2\left(|\nabla_y^2\widetilde{\phi}_\alpha(y,z)|^2+|
\nabla_y\widetilde{\phi}_\alpha(y,z)|^2\right)dz.
\end{eqnarray*}
This inequality implies that
\begin{eqnarray}\label{L2 decay 1}
\sup_{\tilde{y}\in B_r}\|\widetilde{\phi}_\alpha(\tilde{y},\cdot)\|_{L^2(-\delta R,\delta R)}^2&\lesssim& e^{-cL} \sup_{\tilde{y}\in B_{r+L}}\|\widetilde{\phi}_\alpha(\tilde{y},\cdot)\|_{L^2(-\delta R,\delta R)}^2+\sup_{\tilde{y}\in B_{r+L}}\|\widetilde{E}_\alpha(\tilde{y},\cdot)\|_{L^2(-\delta R,\delta R)}^2\nonumber\\
&+&
\varepsilon^2\sup_{B_{r+L}(y)}\int_{-\infty}^{+\infty}z^2\left(|\nabla^2\widetilde{\phi}_\alpha|^2+|
\nabla\widetilde{\phi}_\alpha|^2\right)dz.
\end{eqnarray}
Note that
\begin{eqnarray*}
|\nabla_y^2\widetilde{\phi}_\alpha(y,z)|+|
\nabla_y\widetilde{\phi}_\alpha(y,z)|
&\lesssim&|\nabla_y^2\phi(y,z)|\eta(z)+|\nabla_y
\phi(y,z)|\eta(z)+\left(|c_\alpha(y)|+|\nabla c_\alpha(y)|+|\nabla^2c_\alpha(y)|\right)e^{-\sqrt{2}|z|} \nonumber \\
&\lesssim& |\nabla_y^2\phi(y,z)|\eta(z)+|\nabla_y
\phi(y,z)|\eta(z)\\
&+&e^{-\sigma L-\sqrt{2}|z|}\sup_{L<|z|<6|\log\varepsilon|}e^{-(\sqrt{2}-\sigma)|z|}\left(|\phi(y,z)|+|\nabla_y\phi(y,z)|+|\nabla_y^2\phi(y,z)|\right).
\end{eqnarray*}
Therefore
\begin{eqnarray*}
&&\int_{-\infty}^{+\infty}z^2\left(|\nabla_y^2\widetilde{\phi}_\alpha(y,z)|^2+|
\nabla_y\widetilde{\phi}_\alpha(y,z)|^2\right)dz\\
&\lesssim& L^3\sup_{|z|<2L}\left(|\nabla_y^2\phi(y,z)|^2+|\nabla_y
\phi(y,z)|^2\right)\\
&+&e^{-2\sigma L}\sup_{L<|z|<6|\log\varepsilon|}e^{-2(\sqrt{2}-\sigma)|z|}\left(|\phi(y,z)|^2+|\nabla_y\phi(y,z)|^2+|\nabla_y^2\phi(y,z)|^2\right).
\end{eqnarray*}
Substituting this and Lemma \ref{estimates on e} into \eqref{L2 decay 1} gives
\begin{eqnarray}\label{L2 decay 2}
\sup_{\tilde{y}\in B_r}\|\widetilde{\phi}_\alpha(\tilde{y},\cdot)\|_{L^2(-\delta R,\delta R)}^2
&\lesssim& e^{-cL} \sup_{\tilde{y}\in B_{r+L}}\|\widetilde{\phi}_\alpha(\tilde{y},\cdot)\|_{L^2(-\delta R,\delta R)}^2+ L\varepsilon^4+L\sup_{  B_{r+L}}e^{-2\sqrt{2}D_\alpha} \nonumber\\
&+&
\sup_{ B_{r+L}}\left[|\nabla^2h_\alpha|^4+|\nabla h_\alpha|^4\right]+ e^{-2\sqrt{2}L}\sup_{  B_{r+L}}\big|H^\alpha+\Delta_0h_\alpha\big|^2\\
&+&\frac{1}{L}\sup_{B_{r+L}\times\{L<|z|<2L\}}\left(|\phi|^2+|\phi_z|^2\right)+L^3\varepsilon^2\sup_{B_{r+L}\times\{|z|<2L\}}\left(|\nabla_y^2\phi|^2+|\nabla_y
\phi|^2\right)\nonumber\\
&+&e^{-2\sigma L}\sup_{B_{r+L}\times\{L<|z|<6|\log\varepsilon|\}}e^{-2(\sqrt{2}-\sigma)|z|}\left(|\phi|+|\nabla_y\phi|+|\nabla_y^2\phi|\right)^2. \nonumber
\end{eqnarray}
The terms involving $h_\alpha$ can be estimated by using Lemma \ref{control on h_0}, while by the Cauchy inequality we have
\[L^3\varepsilon^2\sup_{|z|<2L}\left(|\nabla_y^2\phi(y,z)|^2+|\nabla_y
\phi(y,z)|^2\right)\\
\lesssim L\varepsilon^4+L^5\sup_{|z|<2L}\left(|\nabla_y^2\phi(y,z)|^4+|\nabla_y
\phi(y,z)|^4\right).
\]
Substituting these into \eqref{L2 decay 2} we get \eqref{L2 decay}.
\end{proof}

By standard elliptic estimates we deduce that
\begin{eqnarray*}
&&\|\widetilde{\phi}_\alpha\|_{C^{1,\theta}(B_1(y)\times(-3L/4,3L/4))}\\
&\lesssim&  \|\widetilde{\phi}_\alpha\|_{L^2(B_L(y)\times(-L,L))}+\|\Delta\widetilde{\phi}_\alpha\|_{L^\infty(B_L(y)\times(-L,L))}  \\
&\lesssim&L^{\frac{n-1}{2}}e^{-cL} \sup_{\tilde{y}\in B_{2L}(y)}\|\widetilde{\phi}_\alpha(\tilde{y},\cdot)\|_{L^2(-\delta R,\delta R)}
+L^{\frac{n+2}{2}}\varepsilon^2+L^{\frac{n+2}{2}}\sup_{B_{2L}(y)}e^{-\sqrt{2}D_\alpha}  \\
&+&L^{\frac{n-1}{2}}\sup_{B_{2L}(y)}\big|H^\alpha+\Delta_0h_\alpha\big|+L^{\frac{n-2}{2}}\sup_{B_{2L}(y)\times\{L<|z|<2L\}}\left(|\phi|+|\nabla\phi|\right)\\
&+&L^{\frac{n+6}{2}}\sup_{B_{r+L}\times\{|z|<2L\}}\left(|\nabla_y^2\phi|^2+|\nabla_y
\phi|^2\right)\\
&+&L^{\frac{n}{2}}e^{-\sigma L}\sup_{B_{2L}(y)\times\{L<|z|<6|\log\varepsilon|\}}e^{-(\sqrt{2}-\sigma)|z|}\left(|\phi(y,z)|+|\nabla_y\phi(y,z)|+|\nabla_y^2\phi(y,z)|\right).
\end{eqnarray*}
By using \eqref{5.1} we get a bound on $\sup_{B_{2L}(y)}\big|H^\alpha+\Delta_0h_\alpha\big|$. Hence we have
\begin{eqnarray*}
&&\|\widetilde{\phi}_\alpha\|_{C^{1,\theta}(B_1(y)\times(-3L/4,3L/4))}\\
&\lesssim&L^{\frac{n+1}{2}}e^{-cL} \sup_{B_{3L}(y)\times(-2L,2L)}|\phi|+L^{\frac{n+2}{2}}\varepsilon^2+L^{\frac{n+2}{2}}\sup_{B_{3L}(y)}e^{-\sqrt{2}D_\alpha}\\
&+&L^{\frac{n-1}{2}}\sup_{B_{3L}(y)}\sum_{\beta\neq\alpha}D_\alpha e^{-\sqrt{2}D_\alpha}\left[|H^\beta+\Delta_0^\beta h_\beta|+|\nabla h_\beta|^2\right]\\
&+&L^{\frac{n-2}{2}}\sup_{B_{3L}(y)\times\{L<|z|<2L\}}\left(|\phi|+|\nabla\phi|\right)+L^{\frac{n+6}{2}}\sup_{B_{3L}(y)\times(-2L,2L)}\left(|\nabla^2\phi|^2+|
\nabla\phi|^2\right)\\
&+&L^{\frac{n}{2}}e^{-\sigma L}\sup_{B_{3L}(y)\times\{L<|z|<6|\log\varepsilon|\}}e^{-(\sqrt{2}-\sigma)|z|}\left(|\phi|+|\nabla_y\phi|+|\nabla_y^2\phi|\right).
\end{eqnarray*}

\medskip

Since this estimate holds for any $y$, it implies that
\begin{eqnarray*}
\|\phi\|_{C^{1,\theta}(\mathcal{N}_\alpha^1(r))}
&\leq&Ce^{-cL} \sup_{B_{r+3L}\times(-2L,2L)}|\phi| +CL^{\frac{n+2}{2}}\varepsilon^2+L^{\frac{n+2}{2}}\sup_{B_{3L}(y)}e^{-\sqrt{2}D_\alpha(y)}  \\
&+&CL^{\frac{n-1}{2}}\sup_{B_{r+3L}}\sum_{\beta\neq\alpha}D_\alpha e^{-\sqrt{2}D_\alpha}\left[|H^\beta+\Delta_0^\beta h_\beta|+|\nabla h_\beta|^2\right] \\
&+&CL^{\frac{n-2}{2}}\sup_{B_{r+3L}\times\{L<|z|<2L\}}\left(|\phi(y,z)|+|\nabla\phi_z(y,z)|\right) \\
&+&C
L^{\frac{n+6}{2}}\sup_{B_{r+3L}\times(-2L,2L)}\left(|\nabla^2\phi|^2+|
\nabla\phi|^2\right) \\
&+&CL^{\frac{n}{2}}e^{-\sigma L}\sup_{B_{r+3L}\times\{L<|z|<6|\log\varepsilon|\}}e^{-(\sqrt{2}-\sigma)|z|}\left(|\phi|+|\nabla_y\phi|+|\nabla_y^2\phi|\right).
\end{eqnarray*}
As before, this can be written as
\begin{eqnarray*}
\|\phi\|_{C^{1,\theta}(\mathcal{N}_\alpha^1(r))}
&\leq&\sigma(L)\|\phi\|_{C^{2,\theta}(\mathcal{D}(r+4L)}+C(L)\|\phi\|_{C^{2,\theta}(\mathcal{D}(r+4L)} ^2  \\
&+&C(L)\varepsilon^2+C(L)\sup_{B_{r+4L}}e^{-\sqrt{2}D_\alpha(y)}  \\
&+&CL^{\frac{n-1}{2}}\sup_{B_{3L}(y)}\sum_{\beta\neq\alpha}D_\alpha e^{-\sqrt{2}D_\alpha}\left[|H^\beta+\Delta_0^\beta h_\beta|+|\nabla h_\beta|^2\right] \\
&+&CL^{\frac{n-2}{2}}\sup_{L<|z|<2L}\left(|\phi(y,z)|+|\nabla\phi_z(y,z)|\right) .
\end{eqnarray*}
The last term can be estimated by \eqref{First order estimate 1}. After simplification this estimate is rewritten as
\begin{eqnarray}\label{First order estimate 2}
\|\phi\|_{C^{1,\theta}(\mathcal{N}_\alpha^1(r))}
&\leq&\sigma(L)\|\phi\|_{C^{2,\theta}(\mathcal{D}(r+4L)}+C(L)\varepsilon^2+C(L)\sup_{B_{r+4L}}e^{-\sqrt{2}D_\alpha(y)}  \\
&+&\sigma(L)\sum_{\beta=1}^Q\sup_{B_{r+4L}(y)}\big|H^\beta+\Delta_0^\beta h_\beta\big| +C(L)\sum_{\beta\neq\alpha}\sup_{B_{r+4L}}e^{-4\sqrt{2}D_\beta}.  \nonumber
\end{eqnarray}

Combining \eqref{First order estimate 1} with \eqref{First order estimate 2} we obtain \eqref{First order estimate}.

\section{ $C^{2,\theta}$ estimate on $\phi$}
\setcounter{equation}{0}

In the equation of $\phi$, \eqref{error equation}, the coefficients before $\phi$ have a universal Lipschitz bound. Concerning the H\"{o}lder bounds on the right hand side of \eqref{error equation}, we have the following estimates.
\begin{lem}\label{Holder for RHS}
For any $(y,z)\in\mathcal{M}_\alpha$,
\begin{eqnarray*}
\|\Delta\phi-W^{\prime\prime}(g_\ast)\phi\|_{C^{\theta}(B_{2/3}(y,z))}
&\lesssim&\varepsilon^2+\sup_{B_1(y)}e^{-\sqrt{2}D_\alpha}+\|\phi\|_{C^{2,\theta}(B_1(y,z))}^2\\
&+&e^{-\sqrt{2}|z|}\|H^\alpha+\Delta_0h_\alpha\|_{C^{\theta}(B_1(y,0))}\\
&+&e^{-\sqrt{2}|d_\beta(y,z)|}\left(\|\phi\|_{C^{2,\theta}(B_2^\beta(y,0))}^2+\sup_{B_2(y)}e^{-2\sqrt{2}D_\beta}\right)\\
&+&e^{-\sqrt{2}|d_\beta(y,z)|}\|H^\beta+\Delta_0^\beta h_\beta\|_{C^\theta(B_2^\beta(y,0))}.
\end{eqnarray*}
\end{lem}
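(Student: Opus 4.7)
The plan is to bound the $C^\theta(B_{2/3}(y,z))$ norm of each term on the right-hand side of the error equation (4.3), which can be written schematically as
\begin{eqnarray*}
\Delta\phi-W''(g_*)\phi
&=&\mathcal{R}(\phi)+\Bigl[W'(g_*)-\sum_{\beta}W'(g_\beta)\Bigr]-(-1)^\alpha g_\alpha'\bigl[H^\alpha(y,z)+\Delta_z h_\alpha(y)\bigr]\\
&&-g_\alpha''|\nabla_z h_\alpha|^2-\sum_{\beta\neq\alpha}\bigl[(-1)^\beta g_\beta'\mathcal{R}_{\beta,1}+g_\beta''\mathcal{R}_{\beta,2}\bigr]-\sum_{\beta}\xi_\beta,
\end{eqnarray*}
and match each contribution to one of the five pieces on the right-hand side of the stated bound.

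Three of the terms are immediate. Because $W\in C^3$ and $\|g_*\|_\infty,\|\phi\|_\infty\lesssim 1$, the quadratic remainder satisfies $\|\mathcal{R}(\phi)\|_{C^\theta}\lesssim\|\phi\|_{C^\theta}^{2}\lesssim\|\phi\|_{C^{2,\theta}(B_1(y,z))}^{2}$. The interaction term is treated by Lemma~4.3, which gives exactly $\sup_{B_1(y)}e^{-\sqrt{2}D_\alpha}+\varepsilon^2$. The cutoff error $\sum_\beta\xi_\beta$ is $O(\varepsilon^3)$ in every $C^k$-norm by construction of $\bar g$, contributing at most $\varepsilon^2$.

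For the two $\alpha$-centered terms, I would first apply (3.4)–(3.6) to replace $H^\alpha(y,z)$ by $H^\alpha(y,0)$ and $\Delta_z h_\alpha$ by $\Delta_0 h_\alpha$, at a cost of $\varepsilon|z|(|\nabla^2 h_\alpha|+|\nabla h_\alpha|)$; these errors are absorbed after using Lemma~4.4 to bound derivatives of $h_\alpha$ by corresponding derivatives of $\phi$ plus $e^{-\sqrt{2}D_\alpha}$. Since $|g_\alpha'|,|g_\alpha''|$ and their $C^\theta$-seminorms are all $\lesssim e^{-\sqrt{2}|z|}$ (the Hölder bound follows from exponential decay of $\bar g'$ together with $|\nabla d_\alpha|=1$ and $|\nabla^2 d_\alpha|\lesssim\varepsilon$), the product rule then yields the term $e^{-\sqrt{2}|z|}\|H^\alpha+\Delta_0 h_\alpha\|_{C^\theta(B_1(y,0))}$ for $g_\alpha'[H^\alpha+\Delta_z h_\alpha]$, while $g_\alpha''|\nabla_z h_\alpha|^2$ is swallowed by $\|\phi\|_{C^{2,\theta}(B_1)}^2+\sup_{B_1(y)}e^{-\sqrt{2}D_\alpha}$ through Lemma~4.4. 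For each $\beta\neq\alpha$, the cross term $(-1)^\beta g_\beta'\mathcal{R}_{\beta,1}+g_\beta''\mathcal{R}_{\beta,2}$ is evaluated in the $\beta$-Fermi coordinates, where $g_\beta',g_\beta''$ decay like $e^{-\sqrt{2}|d_\beta(y,z)|}$ in both sup-norm and $C^\theta$-seminorm. Using $\mathcal{R}_{\beta,1}=H^\beta+\Delta_z^\beta h_\beta$ (again replacing $\Delta_z^\beta$ by $\Delta_0^\beta$ as above) produces the factor $\|H^\beta+\Delta_0^\beta h_\beta\|_{C^\theta(B_2^\beta(y,0))}$; and $\mathcal{R}_{\beta,2}=|\nabla h_\beta|^2$ is estimated by Lemma~4.4 applied in $\beta$-coordinates, giving $\|\phi\|_{C^{2,\theta}(B_2^\beta(y,0))}^2+\sup_{B_2(y)}e^{-2\sqrt{2}D_\beta}$.

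The main bookkeeping obstacle is the comparison of Hölder seminorms across different sheets: a $C^\theta$ seminorm on $B_{2/3}(y,z)$ in $\alpha$-Fermi coordinates transforms, after composition with $\Pi_\beta$, into a $C^\theta$ seminorm on a slightly larger ball in $\beta$-coordinates, which is precisely why the inflated ball $B_2^\beta(y,0)$ appears on the right-hand side. Lemma~3.1, together with the uniform Lipschitz control of $\Pi_\beta$ coming from $|A_\beta|\lesssim\varepsilon$, ensures that this coordinate change inflates Hölder constants only by a bounded universal factor, so the proof reduces to the term-by-term bookkeeping indicated above.
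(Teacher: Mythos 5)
Your proof follows essentially the same route as the paper's own argument in Appendix G: a term-by-term $C^\theta$ estimate of the right-hand side of \eqref{error equation}, using Lemma \ref{Holder bound on interaction} for the interaction term, Lemma \ref{control on h_0} to trade derivatives of $h_\alpha$ for derivatives of $\phi$ plus $e^{-\sqrt2 D_\alpha}$, the $z$-error estimates \eqref{error in z 4}--\eqref{error in z 5} to replace $H^\alpha(y,z)+\Delta_z h_\alpha$ by $H^\alpha(y,0)+\Delta_0 h_\alpha$, and Lemma \ref{comparison of distances} to control the coordinate change $\Pi_\beta$ when transferring to $\beta$-Fermi coordinates. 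The bookkeeping you flag concerning Hölder seminorms across sheets is precisely what items (5) and (6) of the paper's proof handle via $\operatorname{dist}_\beta(\Pi_\beta(y,z),\Pi_\beta(y))\lesssim\varepsilon^{1/3}$, so there is no material difference between the two arguments.
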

The proof of this lemma is given in Appendix \ref{sec proof of Lem 9.1}.

By Schauder estimates, for any $(y,z)\in\mathcal{M}_\alpha^0(r)$,
\begin{eqnarray*}
\|\phi\|_{C^{2,\theta}(B_{1/2}(y,z))}&\lesssim&\|\phi\|_{C^{\theta}(B_{2/3}(y,z))}+\|\Delta\phi-W^{\prime\prime}(g_\ast)\phi\|_{C^{\theta}(B_{2/3}(y,z))}\\
&\lesssim&\varepsilon^2+\sup_{B_1(y)}e^{-\sqrt{2}D_\alpha}+\|\phi\|_{C^{2,\theta}(B_1(y,z))}^2+\|\phi\|_{C^{2,\theta}(B_1(y,0))}^2\\
&+&e^{-\sqrt{2}|z|}\|H^\alpha+\Delta_0h_\alpha\|_{C^{\theta}(B_1(y,0))}\\
&+&\sum_{\beta\neq\alpha}e^{-\sqrt{2}|d_\beta(y,z)|}\left(\|\phi\|_{C^{2,\theta}(B_2^\beta(y,0))}^2+\sup_{B_2(y)}e^{-2\sqrt{2}D_\beta}\right)\\
&+&\sum_{\beta\neq\alpha}e^{-\sqrt{2}|d_\beta(y,z)|}\|H^\beta+\Delta_0^\beta h_\beta\|_{C^\theta(B_2^\beta(y,0))}.
\end{eqnarray*}
Because in $\mathcal{M}_\alpha^0$, either $e^{-\sqrt{2}|d_\beta(y,z)|}\lesssim\varepsilon$ or $e^{-\sqrt{2}|d_\beta(y,z)|}\lesssim e^{-\frac{\sqrt{2}}{2}D_\alpha(y)}$, from this we deduce that
\begin{eqnarray}\label{Schauder 1}
\|\phi\|_{C^{2,\theta}(\mathcal{M}^0_\alpha(r))}&\lesssim& \varepsilon^2+ \sup_{B_{r+L}}e^{-\sqrt{2}D_\alpha}+ \|\phi\|_{C^{2,\theta}(\mathcal{M}^0_\alpha(r))}^2+ \|H^\alpha+\Delta_0h_\alpha\|_{C^{\theta}(B_{r+L})}   \\
&+& \sum_{\beta\neq\alpha}\left[\|H^\beta+\Delta_0^\beta h_\beta\|^2_{C^{\theta}(B_{r+L})}+\sup_{B_{r+L}}e^{-4\sqrt{2}D_\beta}+\|\phi\|_{C^{2,\theta}(\mathcal{M}_\beta^0(r+L))}^4\right].\nonumber
\end{eqnarray}
Adding in $\alpha$ leads to
\begin{eqnarray}\label{Schauder}
\|\phi\|_{C^{2,\theta}(\mathcal{D}(r))}&\lesssim& \varepsilon^2+ \sum_{\alpha=1}^Q\sup_{B_{r+L}}e^{-\sqrt{2}D_\alpha}+\sigma\|\phi\|_{C^{2,\theta}(\mathcal{D}_{r+L})}\\
&+& \|H^\alpha+\Delta_0h_\alpha\|_{C^{\theta}(B_{r+L})}+\sum_{\beta\neq\alpha}\|H^\beta+\Delta^\beta_0h_\beta\|_{C^{\theta}(B_{r+L})}^2. \nonumber
\end{eqnarray}

We claim that
\begin{equation}\label{Schauder 2}
\|H^\alpha+\Delta_0h_\alpha\|_{C^{\theta}(B_r)}\leq C\varepsilon^2+C\sum_{\beta}\sup_{B_{r+L}}e^{-\sqrt{2}D_\beta}+\sigma\|\phi\|_{C^{2,\theta}(\mathcal{D}_{r+L})}  +\sigma\sum_{\beta=1}^Q\|H_\beta+\Delta_0^\beta h_\beta\|_{C^{\theta}(B_{r+L})}.
\end{equation}
The proof is given in Appendix \ref{sec proof of 9.3}.

Combining \eqref{Schauder} with \eqref{Schauder 2} we obtain
\[
\|\phi\|_{C^{2,\theta}(\mathcal{D}_r)}+\|H^\alpha+\Delta_0h_\alpha\|_{C^{\theta}(B_r)}\leq C\varepsilon^2+C\sum_{\beta}\sup_{B_{r+L}}e^{-\sqrt{2}D_\beta}+\sigma\left(\|\phi\|_{C^{2,\theta}(\mathcal{D}_{r+L})}+\sum_\beta\|H^\beta+\Delta_0^\beta h_\beta\|_{C^{\theta}(B_{r+L})}\right) .
\]
An iteration of this inequality from $r+K|\log\varepsilon|$ to $r$ (with $K$ large but depending only on $L$ and $\sigma$) gives
\begin{equation}\label{Schauder 3}
\|\phi\|_{C^{2,\theta}(\mathcal{D}_r)}+\sum_\beta\|H^\beta+\Delta_0^\beta h_\beta\|_{C^{\theta}(B_r)}\leq C\varepsilon^2+C\sum_{\beta}\sup_{B_{r+K|\log\varepsilon|}}e^{-\sqrt{2}D_\beta}.
\end{equation}

\section{Improved estimates on horizontal derivatives}
\setcounter{equation}{0}

In this section we prove an improvement on the $C^{1,\theta}$ estimates of horizontal derivatives of $\phi$, $\phi_i:=\partial\phi/\partial y_i$. $1\leq i\leq n-1$.

\medskip

Differentiating \eqref{error equation} in $y_i$, we obtain an equation for $\phi_i:=\phi_{y_i}$,
\begin{equation}\label{horizontal error equation}
\Delta_z\phi_i+\partial_{zz}\phi_i=W^{\prime\prime}(g_\alpha)\phi_i-(-1)^\alpha g_\alpha^\prime \left[H_i(y,0)+\Delta_0 h_{\alpha,i}(y)\right]+E_i,
\end{equation}
where $h_{\alpha,i}(y):=\frac{\partial h_\alpha}{\partial y_i}$ and the remainder term
\begin{eqnarray*}
E_i&=&\left(\Delta_z\phi_i-\partial_{y_i}\Delta_z\phi\right)+\frac{\partial H}{\partial y_i}(y,z)\phi_z+H^\alpha(y,z)\partial_z\phi_{y_i}+\left[W^{\prime\prime}(g_\ast+\phi)-W^{\prime\prime}(g_\alpha)\right]\phi_i\\
&+&(-1)^\alpha\left[W^{\prime\prime}(g_\ast+\phi)-W^{\prime\prime}(g_\alpha)\right]g_\alpha^\prime h_{\alpha,i}(y)\\
&+&\sum_{\beta\neq\alpha}(-1)^\beta\left[W^{\prime\prime}(g_\ast+\phi)-W^{\prime\prime}(g_\beta)\right] g_\beta^\prime\left[\frac{\partial d_\beta}{\partial y_i}-\sum_{j=1}^n h_{\beta,j}\left(\Pi_\beta(y,z)\right)\frac{\partial\Pi_\beta^j}{\partial y_i}(y,z)\right]
\\
 &-& g_\alpha^{\prime\prime}h_{\alpha,i}(y)\left[H^\alpha(y,z)+\Delta_z h_\alpha(y)\right]-(-1)^\alpha g_\alpha^\prime \left[\frac{\partial H}{\partial y_i}(y,z)-\frac{\partial H}{\partial y_i}(y,0)+\frac{\partial}{\partial y_i}\left(\Delta_z h_\alpha(y)\right)-\Delta_0h_{\alpha,i}(y)\right]\\
 &-&(-1)^\alpha g_\alpha^{\prime\prime\prime}|\nabla_zh_\alpha|^2h_{\alpha,i}(y)- g_\alpha^{\prime\prime}\frac{\partial}{\partial y_i}|\nabla_zh_\alpha|^2 \\
&-&\sum_{\beta\neq\alpha} \frac{\partial}{\partial y_i} \left[(-1)^{\beta}g_\beta^\prime\mathcal{R}_{\beta,1}\left(\Pi_\beta(y,z),d_\beta(y,z)\right)+
g_\beta^{\prime\prime}\mathcal{R}_{\beta,2}\left(\Pi_\beta(y,z),d_\beta(y,z)\right)\right]\\
&-&\sum_\beta \xi_\beta^\prime \sum_{j=1}^{n-1}h_{\beta,j}\left(\Pi_\beta(y,z)\right)\frac{\partial\Pi_\beta^j}{\partial y_i}(y,z).
\end{eqnarray*}

 Compared with the orthogonal part in the equation of $\phi$, the order of $E_i$ is increased by one due to the appearance of one more term involving horizontal derivatives of $\phi$. More precisely, the following bound on $E_i$ will be established in Appendix \ref{sec proof of Lem 10.1}.
\begin{lem}\label{bound on remainder term}
In $\mathcal{M}_\alpha^2(r)$,
\begin{eqnarray*}
|E_i|&\lesssim&\varepsilon^2+\|\phi\|_{C^{2,\theta}(\mathcal{D}(r+1))}^2+\sup_{B_{r+1}}e^{-2\sqrt{2}D_\alpha}+\sum_{\beta=1}^Q\sup_{B_{r+1}^\beta}\big|H^\beta+\Delta_0^\beta h_\beta\big|^2+\varepsilon^{1/5}\sup_{B_{r+1}}e^{-\sqrt{2}D_\alpha}\\
&+&\left(\sup_{B_{r+1}}e^{-\frac{\sqrt{2}}{2}D_\alpha}\right)\left[\sum_{\beta\neq\alpha}\sup_{B^\beta_{r+1}}\Big|\nabla H_\beta+\Delta_0^\beta \nabla h_\beta\Big| +\sum_{\beta\neq\alpha}\sup_{B^\beta_{r+1}}e^{-2\sqrt{2}D_\beta}  +\|\phi\|_{C^{2,\theta}(\mathcal{D}(r+1))}\right] .
\end{eqnarray*}
\end{lem}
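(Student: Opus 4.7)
\medskip

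\noindent\textbf{Proof plan for Lemma \ref{bound on remainder term}.}
The strategy is a term-by-term bookkeeping on the nine groups of terms defining $E_i$, using
the ingredients already in hand: the Fermi-coordinate error estimates \eqref{error in z 1}--\eqref{error in z 5}, the bound \eqref{bound on 3rd derivatives} on $\nabla A$ and $\nabla^2 A$, the $h$-controls of Lemma \ref{control on h_0}, the interaction estimates of Lemma \ref{interaction term} and Lemma \ref{upper bound on interaction}, and the Schauder-type bootstrap \eqref{Schauder 3} which rewrites $\|\phi\|_{C^{2,\theta}}$ and $\|H^\beta+\Delta_0^\beta h_\beta\|_{C^\theta}$ in terms of $\varepsilon^2+\sup e^{-\sqrt 2 D_\beta}$. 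Each term in $E_i$ is an $y_i$-derivative of an expression that already appeared in the original equation \eqref{error equation} for $\phi$, so the underlying principle is: differentiating once either produces an extra factor of the metric derivative (hence a gain of $\varepsilon$, by \eqref{bound on 3rd derivatives} and \eqref{derivatives of metric tensor}), or redistributes $h_{\alpha,i}$ in a way that is controlled by Lemma \ref{control on h_0} via $\|\phi\|_{C^{2,\theta}}$.

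Concretely, I would group the nine terms in $E_i$ as follows. First, the \emph{commutator} $\Delta_z\phi_i-\partial_{y_i}\Delta_z\phi$ is an order-$\varepsilon$ operator acting on $\nabla^2\phi,\nabla\phi$ by \eqref{derivatives of metric tensor}, and $\frac{\partial H}{\partial y_i}\phi_z+H^\alpha\partial_z\phi_{y_i}$ is $O(\varepsilon\|\phi\|_{C^{2}})$ by \eqref{bound on 3rd derivatives}; by Cauchy these two groups contribute $\|\phi\|_{C^{2,\theta}(\mathcal D(r+1))}^2+\varepsilon^{1/5}\sup e^{-\sqrt 2D_\alpha}+\varepsilon^2$ after invoking \eqref{Schauder 3}. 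Second, the two \emph{potential difference} terms $[W''(g_*+\phi)-W''(g_\alpha)](\phi_i$ or $g_\alpha' h_{\alpha,i})$: the bracket is $O(|\phi|+e^{-\sqrt 2 D_\alpha})$ in $\mathcal{M}_\alpha^2$ by the expansion in the proof of Lemma \ref{interaction term}, and $|h_{\alpha,i}|\lesssim|\nabla_0\phi|+o(e^{-\sqrt 2 D_\alpha})$ by Lemma \ref{control on h_0}; together these feed into $\|\phi\|_{C^{2,\theta}}^2+\sup e^{-2\sqrt 2 D_\alpha}$. Third, the $\beta\neq\alpha$ potential-difference terms gain a factor $g_\beta'\lesssim e^{-\sqrt 2|d_\beta|}\lesssim e^{-\frac{\sqrt 2}{2}D_\alpha}$ pointwise in $\mathcal{M}_\alpha^2$, which is exactly where the mixed prefactor $\sup e^{-\frac{\sqrt 2}{2}D_\alpha}$ in the stated bound comes from. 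Fourth, the two $g_\alpha'$-terms carrying $\partial_{y_i}H+\partial_{y_i}\Delta_z h_\alpha-\Delta_0 h_{\alpha,i}$ shed one more $z$-derivative of the metric by \eqref{error in z 4}--\eqref{error in z 5}, giving $O(\varepsilon^2|z|)(|\nabla h_\alpha|+|\nabla^2 h_\alpha|+\cdots)$; integrated against $g_\alpha'$ these are absorbed into $\varepsilon^2+\|\phi\|_{C^{2,\theta}}^2$. Fifth, the $\mathcal R_{\beta,1},\mathcal R_{\beta,2}$ derivatives and the $\xi_\beta'$ remainder use the exponential weight of $g_\beta',g_\beta''$ in $\mathcal{M}_\alpha^2$ to bring in the $e^{-\frac{\sqrt 2}{2}D_\alpha}\cdot[\|\nabla H^\beta+\Delta_0^\beta\nabla h_\beta\|+e^{-2\sqrt 2 D_\beta}]$ contribution.

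The main technical obstacle is the last one: the differentiated Toda-type remainders generate expressions like $\partial_{y_i}\mathcal R_{\beta,1}=\partial_{y_i}H^\beta+\partial_{y_i}\Delta_0 h_\beta+\cdots$, which a priori are not bounded by $\|H^\beta+\Delta_0^\beta h_\beta\|_{C^\theta}$ (we need one more horizontal derivative). Here I would exploit two facts: (i) $g_\beta',g_\beta''$ are supported effectively in $\{|d_\beta|\lesssim|\log\varepsilon|\}$, and in $\mathcal{M}_\alpha^2$ this forces $|d_\beta|\geq D_\alpha/2$, producing the weight $e^{-\frac{\sqrt 2}{2}D_\alpha}$; (ii) the $y_i$-derivative of $\mathcal R_\beta$ is exactly the quantity $\nabla H^\beta+\Delta_0^\beta\nabla h_\beta$ up to lower order, whose $L^\infty$ norm appears explicitly on the right-hand side of the claimed bound. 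Combining (i) and (ii) with Lemma \ref{comparison of distances} (to compare $\Pi_\beta(y,z)$ with $y$, so that the $B_{r+1}^\beta$ in the statement catches all relevant footprints) yields precisely the stated bound, and finally assembling all five groups, using Lemma \ref{control on h_0} and \eqref{Schauder 3} to clean up any residual $h$-norms, completes the proof.
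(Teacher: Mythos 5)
Your proposal follows essentially the same term-by-term bookkeeping as the paper's Appendix G: commutator estimates via Lemma \ref{commutator 1} and \eqref{derivatives of metric tensor}, Cauchy to absorb the $\varepsilon$-gain into $\|\phi\|_{C^{2,\theta}}^2 + \varepsilon^2$, Lemma \ref{control on h_0} to convert $h$-norms, and the support of $g_\beta', g_\beta''$ in $\mathcal{M}_\alpha^2$ to extract the prefactor $e^{-\frac{\sqrt2}{2}D_\alpha}$ in front of the $\nabla H^\beta + \Delta_0^\beta\nabla h_\beta$ and $e^{-2\sqrt2 D_\beta}$ contributions. One small misattribution: the $\varepsilon^{1/5}\sup e^{-\sqrt2 D_\alpha}$ term arises from case (6) (the $\beta\neq\alpha$ potential-difference term, via the bound $|\partial d_\beta/\partial y_i|\lesssim\varepsilon^{1/5}$ from Lemma \ref{comparison of distances}), not from the commutator block --- and \eqref{Schauder 3} is not needed inside this lemma; the stated a priori bound only requires Lemma \ref{control on h_0} and Cauchy.
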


By differentiating \eqref{2 orthogonal condition} we obtain
\begin{equation}\label{orthogonal condition 2}
\int_{-\delta R}^{\delta R} \phi_i g_\alpha^\prime dz=h_{\alpha,i}(y)\int_{-\delta R}^{\delta R} \phi g_\alpha^{\prime\prime}dz=O\left(\|\phi\|_{C^1(\mathcal{D}(r))}^2+\sup_{B_r}e^{-2\sqrt{2}D_\alpha}\right), \quad\forall y\in B_R^{n-1}.
\end{equation}

Take a large constant $p$ so that $W^{2,p}$ embeds into $C^{1,\theta}$. By noting that
\begin{eqnarray*}
\Big|\int_{-\delta R}^{\delta R}\left(\Delta_z\phi_i-\Delta_0\phi_i\right)g_\alpha^\prime\Big|
&\lesssim&\varepsilon\int_{-\delta R}^{\delta R}\left(|\nabla^2\phi_i(y,z)|+|\nabla \phi_i(y,z)|\right)|z|g_\alpha^\prime\\
&\lesssim&\varepsilon \left[\int_{-\delta R}^{\delta R}\left(|\nabla^2\phi_i(y,z)|+|\nabla \phi_i(y,z)|\right)^2 e^{-\sqrt{2}|z|}dz\right]^{\frac{1}{2}},
\end{eqnarray*}
proceeding as in Section \ref{sec Toda system} we obtain for any $y\in B_r$,
\begin{eqnarray}\label{10.2}
\|H_i^\alpha+\Delta_0 h_{\alpha,i}\|_{L^p(B_1(y))}&\lesssim & \|E_i\|_{L^\infty(\mathcal{M}_\alpha^2(r+2))} \nonumber\\
&+&\varepsilon \left[\int_{B_2(y)}\int_{-\delta R}^{\delta R}\left(|\nabla^2\phi_i(y,z)|+|\nabla \phi_i(y,z)|\right)^p e^{-\sigma|z|}dz\right]^{\frac{1}{p}}\\
&+&\left[\int_{B_2(y)}\int_{-\delta R}^{\delta R}\left(|\nabla^2\phi_i(y,z)|+|\nabla \phi_i(y,z)|\right)^p e^{-\sigma|z|}dz\right]^{\frac{2}{p}} \nonumber.
\end{eqnarray}

 On the other hand, for any $(y,z)\in\mathcal{M}_\alpha^1(r)$, by standard elliptic estimates, we have
\begin{eqnarray*}
\|\phi_i\|_{W^{2,p}(B_2(y,z))}&\lesssim&\|\phi_i\|_{L^p(B_{5/2}(y,z))}+\|\Delta_z\phi_i+\partial_{zz}\phi_i\|_{L^p(B_{5/2}(y,z))}\\
&\lesssim&\|\phi_i\|_{L^\infty(B_3(y,z))}+e^{-\sqrt{2}|z|}\|H^\alpha_i+\Delta_0^\alpha h_{\alpha,i}\|_{L^p(B_3(y))}+\|E_i\|_{L^\infty(\mathcal{M}_\alpha^2(r+3))}.
\end{eqnarray*}
Substituting this into \eqref{10.2} leads to, for any $y\in B_r$,
\begin{eqnarray}\label{10.3}
\sum_\beta\|H^\beta_i+\Delta_0^\beta h_{\beta,i}\|_{L^p(B_1(y))}&\leq& \sigma \sup_{\tilde{y}\in B_2(y)}\sum_\beta\|H^\beta_i+\Delta_0^\beta h_{\beta,i}\|_{L^p(B_1(\tilde{y}))}+C\varepsilon^2\\
&+&C\sum_{\beta=1}^Q\sup_{B_4(y)}e^{-2\sqrt{2} D_\beta}+C\sum_{\beta=1}^Q\sup_{B_4^\beta(y)}\big|H^\beta+\Delta_0^\beta h_\beta\big|^2 \nonumber\\
&+&C\varepsilon^{1/5}\sum_{\beta=1}^Q\sup_{B_4(y)}e^{-\sqrt{2}D_\beta}+C\|\phi\|_{C^{2,\theta}(\mathcal{D}(r+4))}\sup_{B_{r+4}}e^{-\frac{\sqrt{2}}{2}D_\alpha} \nonumber.
\end{eqnarray}

\medskip

An iteration of this estimate gives
\begin{eqnarray*}
\sup_{y\in B_r}\sum_\beta\|H^\beta_i+\Delta_0^\beta h_{\beta,i}\|_{L^p(B_1(y))}&\lesssim&  \varepsilon^2+\sum_{\beta=1}^Q\sup_{B_{r+K|\log\varepsilon|}}e^{-2\sqrt{2} D_\beta}+\sum_{\beta=1}^Q\sup_{B_{r+K|\log\varepsilon|}}\big|H^\beta+\Delta_0^\beta h_\beta\big|^2 \\
&+&\varepsilon^{1/5}\sum_{\beta=1}^Q\sup_{B_{r+K|\log\varepsilon|}}e^{-\sqrt{2}D_\beta}+\|\phi\|_{C^{2,\theta}(\mathcal{D}(r+K|\log\varepsilon|))}\sup_{B_{r+K|\log\varepsilon|}}e^{-\frac{\sqrt{2}}{2}D_\alpha} .
\end{eqnarray*}
Substituting \eqref{Schauder 3} into this we obtain
\begin{equation}\label{10.4}
\sup_{y\in B_r}\sum_\beta\|H^\beta_i+\Delta_0^\beta h_{\beta,i}\|_{L^p(B_1(y))}\lesssim  \varepsilon^2+\sum_{\beta=1}^Q\sup_{B_{r+2K|\log\varepsilon|}}e^{-\frac{3\sqrt{2}}{2} D_\beta}+\varepsilon^{1/5}\sum_{\beta=1}^Q\sup_{B_{r+2K|\log\varepsilon|}}e^{-\sqrt{2}D_\beta}.
\end{equation}

Then  using Lemma \ref{bound on remainder term} and \eqref{orthogonal condition 2} and proceeding as in Section \ref{sec first order}, we obtain
\begin{equation}\label{10.5}
\|\phi_i\|_{C^{1,\theta}(\mathcal{M}_\alpha^0(r))}\lesssim \varepsilon^2+\sum_{\beta}\sup_{B_{r+2K|\log\varepsilon|}}e^{-\frac{3\sqrt{2}}{2}D_\beta}+\varepsilon^{1/5}\sum_{\beta}\sup_{B_{r+2K|\log\varepsilon|}}e^{-\sqrt{2}D_\beta}.
\end{equation}

\section{A lower bound on $D_\alpha$}\label{sec lower bound O(ep)}
\setcounter{equation}{0}

Define
\[A_\alpha(r):=\sup_{B_r}e^{-\sqrt{2}D_\alpha}  \quad \mbox{and} \quad
A(r)=\sum_{\alpha=1}^Q A_\alpha(r).\]

By \eqref{4.3} and \eqref{10.5},
\begin{eqnarray*}
\sup_{B_r}|\Delta_0h^\alpha(y)|\lesssim\varepsilon^2+A(r+K|\log\varepsilon|)^{\frac{3}{2}}+\varepsilon^{1/5}A(r+K|\log\varepsilon|).
\end{eqnarray*}

By \eqref{Toda system}, in $B_r$
\[H^\alpha(y,0)=\frac{4}{\sigma_0}\left[ A_{(-1)^\alpha}^2e^{-\sqrt{2}d_{\alpha-1}(y,0)}-A_{(-1)^{\alpha-1}}^2e^{\sqrt{2}d_{\alpha+1}(y,0)}\right]+o\left(A_\alpha(r+K|\log\varepsilon|)\right)+O(\varepsilon^{4/3}).\]
Because $H^\alpha=O(\varepsilon)$,  an induction on $\alpha$ from $1$ to $Q$ gives
\[A(r)\leq C\varepsilon+\frac{1}{2}A(r+K|\log\varepsilon|).\]
An iteration of this estimate from $r=R$ to $r=5R/6$ gives
\[A(5R/6)\lesssim \varepsilon.\]
In particular, for any $y\in B_{5R/6}$ and $\alpha=1,\cdots, Q$,
\[D_\alpha(y)\geq\frac{\sqrt{2}}{2}|\log\varepsilon|-C.\]

With this lower bound at hand, \eqref{Schauder 3} can now be written as
\[\|\phi\|_{C^{2,\theta}(\mathcal{D}(R/2))}+\sum_{\alpha=1}^Q\|H^\alpha+\Delta_0h_\alpha\|_{C^{\theta}(B_{R/2})}\lesssim\varepsilon,\]
and \eqref{10.5} reads as
\[\|\phi_i\|_{C^{1,\theta}(\mathcal{D}(R/2))}\lesssim \varepsilon^{\frac{6}{5}}, \quad \forall i=1,\cdots, n-1.\]
Therefore by Lemma \ref{control on h_0} we get
\[\sum_{\alpha=1}^Q\|\Delta_0h_\alpha\|_{C^{\theta}(B_{R/2})}\lesssim\varepsilon^{\frac{6}{5}}.\]
Now \eqref{Toda system} reads as
\begin{equation}\label{Toda 2}
\mbox{div}\left(\frac{\nabla f_\alpha(y)}{\sqrt{1+|\nabla f_\alpha(y)|^2}}\right)=\frac{4}{\sigma_0}\left[A_{(-1)^{\alpha-1}}^2e^{-\sqrt{2}d_{\alpha-1}(y)}-A_{(-1)^{\alpha}}^2e^{-\sqrt{2}d_{\alpha+1}(y)}\right]+O\left(\varepsilon^{\frac{6}{5}}\right).
\end{equation}

A remark is in order concerning whether we can improve this lower bound.
\begin{rmk}\label{rmk 11.1}
If there exists a constant $M$, $\alpha\in\{1,\cdots, Q\}$ and $y_\varepsilon\in B_{R/2}$ such that
\[d_{\alpha+1}(y_\varepsilon,0)\leq\frac{\sqrt{2}}{2}|\log\varepsilon|+M.\]
After a rotation and translation, we may assume $y_\varepsilon=0$ and $f_\alpha(0)=0$, $\nabla f_\alpha(0)=0$.

Define
\[\tilde{f}_\beta(y):=f_\beta(\varepsilon^{-1/2}y)-\frac{\sqrt{2}}{2}\left(\beta-\alpha\right)|\log\varepsilon|, \quad \forall \beta\in\{1,\cdots,Q\}.\]
By the curvature bound on $\Gamma_\alpha$ and Lemma \ref{comparison of distances}, for any $\beta\in\{1,\cdots, Q\}$, if $\tilde{f}_\beta(0)$ does not go to $\pm\infty$, then in $B_{R^{2/3}}^{n-1}$,
\[|\nabla f_\beta|\lesssim\varepsilon^{\frac{1}{6}}.\]
Subsisting this into \eqref{Toda 2} and performing a rescaling  we obtain
\[\Delta\tilde{f}_\beta(y)=\frac{4}{\sigma_0}\left[A_{(-1)^{\alpha-1}}^2e^{-\sqrt{2}\left(\tilde{f}_\beta(y)-\tilde{f}_{\beta-1}(y)\right)}-A_{(-1)^{\alpha}}^2e^{-\sqrt{2}\left(\tilde{f}_{\beta+1}(y)-\tilde{f}_\beta(y)\right)}\right]
+O(\varepsilon^{1/6}), \quad  \mbox{in }  B_{R^{1/2}}.\]
Moreover, as $\varepsilon\to0$, $\tilde{f}_\beta$ converges in $C^2_{loc}(\mathbb{R}^{n-1})$ to $\bar{f}_\beta$, which is a nontrivial entire solution to \eqref{Toda entire}.
This blow up procedure will be employed in Section \ref{sec completion of proof}.
\end{rmk}

\section{Multiplicity one case}
\setcounter{equation}{0}

If there is only one connected component of $\{u=0\}$, the estimates can be simplified a lot. For example, now \eqref{Schauder 3} reads as
\[\|\phi\|_{C^{2,\theta}(\mathcal{D}(3R/4))}+\|H+\Delta_0h\|_{C^{\theta}(B_{3R/4}^{n-1})}\lesssim\varepsilon^2.\]
On the other hand, by  Lemma \ref{control on h_0} we get
\[\|h\|_{C^{2,\theta}(B^{n-1}_{3R/4})}\lesssim\varepsilon^2.\]
Hence
\[\|H\|_{C^{\theta}(B^{n-1}_{3R/4})}\lesssim\varepsilon^2.\]
After a scaling, this implies that
\[\Big\|\mbox{div}\left(\frac{\nabla f_\varepsilon}{\sqrt{1+|\nabla f_\varepsilon|^2}}\right)\Big\|_{C^{\theta}(B^{n-1}_{3/4})}\lesssim\varepsilon^{1-\theta}.\]
Because $\sup_{B_{3/4}}|\nabla f_\varepsilon|\leq C$, by standard elliptic estimates on mean curvature equations \cite[Chapter 16]{GT}, we get
\[\|f_\varepsilon\|_{C^{2,\theta}(B_{2/3}^{n-1})}\leq C,\]
where the constant $C$ is independent of $\varepsilon$. This completes the proof of Theorem \ref{main result 4}.

\section{Arbitrary Riemannian metric}
\setcounter{equation}{0}

In the previous analysis the background metric is an Euclidean one. Now we consider an arbitrary Riemannian metric. Since we are concerned with local problems, we will work in the following setting. Assume $B_1(0)\subset\R^n$ is equipped with a $C^3$ Riemannian metric
\[g=g_{ij}(x)dx^i\otimes dx^j.\]
We assume the exponential map is a globally defined diffeomorphism.

Assume
\begin{itemize}
\item $u_\varepsilon\in C^3(B_1)$ is a sequence of solutions to the Allen-Cahn equation
\[\varepsilon\Delta_g u_\varepsilon=\frac{1}{\varepsilon}W^\prime(u_\varepsilon).\]
\item The nodal set $\{u_\varepsilon=0\}$ consists of $Q$ components,
\[\Gamma_{\alpha,\varepsilon}=\{(x^\prime, f_{\alpha,\varepsilon}(x^\prime))\}, \quad \alpha=1,\cdots, Q,\]
where $f_{1,\varepsilon}<f_{2,\varepsilon}<\cdots<f_{Q,\varepsilon}$.

\item For each $\alpha$, the curvature of $\Gamma_{\alpha,\varepsilon}$ is uniformly bounded as $\varepsilon\to0$.

\end{itemize}

By this curvature bound, the Fermi coordinates with respect to $\Gamma_{\alpha,\varepsilon}$ is well defined and $C^2$ in a $\delta$-neighborhood of $\Gamma_{\alpha,\varepsilon}$.  In the Fermi coordinates, the Laplace-Beltrami operator $\Delta_g$ has the same expansion as in Section \ref{sec Fermi coordinates},  for more details see \cite[Section 2]{DKWY}. By denoting $H_{\alpha,\varepsilon}$ the mean curvature of $\Gamma_{\alpha,\varepsilon}$, we get the following Toda system
\begin{equation}\label{Toda scaled}
H_{\alpha,\varepsilon}=\frac{4}{\sigma_0\varepsilon}\left[A_{(-1)^{\alpha-1}}^2e^{-\frac{\sqrt{2}}{\varepsilon}d_{\alpha-1,\varepsilon}}-A_{(-1)^{\alpha}}^2e^{-\frac{\sqrt{2}}{\varepsilon}d_{\alpha+1,\varepsilon}}\right]
+O\left(\varepsilon^{\frac{1}{6}}\right).
\end{equation}
If $\Gamma_{\alpha,\varepsilon}$ collapse to a same minimal hypersurface $\Gamma_\infty$, this system can be written as a Jacobi-Toda system on $\Gamma_\infty$ as in \cite{DKWY}.

\medskip

Now we come to the proof of Theorem \ref{main result 3}.
\begin{proof}[Proof of Theorem \ref{main result 3}]
First by results in Section \ref{sec 2}, we can assume $f_{\alpha,\varepsilon}$ are uniformly bounded in $C^{1,1}(B_2^{n-1})$. Hence we can assume they converge to $f_\infty$ in $C^{1,\theta}(B_2^{n-1})$ for any $\theta\in(0,1)$. Assume there exists $\alpha\in\{1,\cdots,Q\}$ such that $f_{\alpha,\varepsilon}$ do not converge to $f_\infty$ in $C^2(B_1^{n-1})$.

Using the Fermi coordinates $(y,z)$ with respect to $\Gamma_\infty:=\{x_n=f_\infty(x^\prime)\}$, $\Gamma_{\alpha,\varepsilon}$ is represented by the graph $\{z=f_{\alpha,\varepsilon}(y)\}$, where $f_{\alpha,\varepsilon}$ converges to $0$ in $C^1$ but not in $C^2$. Assume $|\nabla^2f_{\alpha,\varepsilon}(y_\varepsilon)|$ does not converge to $0$, then we can preform the same blow up analysis as in Remark \ref{rmk 11.1}, with the base point at $y_\varepsilon$. This procedure results in a nontrivial solution of \eqref{Toda entire}.
\end{proof}

\part{Second order estimate for stable solutions: Proof of Theorem \ref{second order estimate}}

This part is devoted to the proof of Theorem \ref{second order estimate} which in turn implies Theorem  \ref{curvature decay} and Theorem \ref{quadratic curvature decay}. Throughout this part we are in dimension two and $u_\varepsilon$ denotes a solution satisfying the hypothesis of Theorem \ref{second order estimate}. We shall use the stability condition of $u_\varepsilon$ to prove the uniform second order estimates.

\section{A lower bound on the intermediate distance}\label{sec intermediate distance}
\setcounter{equation}{0}

In this section we use the stability condition to prove
\begin{prop}\label{first lower bound}
For any $\sigma>0$, there exists a universal constant $C(\sigma)$ such that for any $\alpha$, $x_1\in(-5/6,5/6)$ and $f_{\alpha,\varepsilon}(x_1)\in(-5/6,5/6)$,
\[\mbox{dist}\left((x_1,f_{\alpha,\varepsilon}(x_1)), \Gamma_{\alpha+1,\varepsilon}\right)\geq\frac{\sqrt{2}-\sigma}{2}\varepsilon|\log\varepsilon|-C(\sigma)\varepsilon.\]
\end{prop}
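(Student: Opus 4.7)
The plan is to argue by contradiction and to adapt the iteration scheme of Section~\ref{sec lower bound O(ep)} (Part~II) to the present setting, where the number of clustering interfaces is not a priori bounded but the stability of $u_\varepsilon$ is available. Suppose the conclusion fails: there exist $\sigma>0$, a sequence $\varepsilon_k\to 0$, indices $\alpha_k$, and points $x_{1,k}\in(-5/6,5/6)$ such that, setting $p_k := (x_{1,k}, f_{\alpha_k,\varepsilon_k}(x_{1,k}))$, one has $d_k := \mbox{dist}(p_k,\Gamma_{\alpha_k+1,\varepsilon_k}) < \tfrac{\sqrt{2}-\sigma}{2}\varepsilon_k|\log\varepsilon_k| - k\varepsilon_k$. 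Rescaling by $\varepsilon_k$, I would work with $v_k(x) := u_{\varepsilon_k}(p_k + \varepsilon_k x)$, which is a stable solution of the unscaled Allen--Cahn equation on a stretched ball of radius $\sim 1/\varepsilon_k$, whose level sets have curvature $\lesssim \varepsilon_k$ on $\{|v_k|<1-b\}$, and in which two adjacent components $\Gamma_{\alpha_k}$ and $\Gamma_{\alpha_k+1}$ are disjoint Lipschitz graphs with intermediate distance strictly less than $\tfrac{\sqrt{2}-\sigma}{2}|\log\varepsilon_k|-k$ at the origin.

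With the hypotheses of Part~II now verified locally, I would set up the Fermi coordinates of Section~\ref{sec Fermi coordinates}, the optimal approximation of Proposition~\ref{prop optimal approximation}, and the approximate Toda identity \eqref{Toda system}. Combining the bound $H^\alpha = O(\varepsilon_k)$ with the leading-order form $H^\alpha \sim e^{-\sqrt{2}d_{\alpha-1}} - e^{\sqrt{2}d_{\alpha+1}}$ yields an individual contraction inequality $A_\alpha(r) \leq C\varepsilon_k + \tfrac{1}{2}\bigl(A_{\alpha-1}(r') + A_{\alpha+1}(r')\bigr)$ with $r' = r + K|\log\varepsilon_k|$, where $A_\alpha(r) = \sup_{B_r} e^{-\sqrt{2}D_\alpha}$. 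The essential difficulty compared to Section~\ref{sec lower bound O(ep)} is that summing over $\alpha$, or propagating this inequality across a long chain of components, accumulates a factor proportional to the number of nearby components, which in the Part~III setting may depend on $\varepsilon_k$.

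This is where stability of $v_k$ is brought in. The Sternberg--Zumbrun inequality, applied with cut-off test functions of logarithmic type and combined with the lower bound $|\nabla v_k| \geq c(b)$ on $\{|v_k|<1-b\}$ from Lemma~\ref{lem 1.2.1}, gives a universal linear bound on the density of interfaces at the stretched scale: for any $R\geq 1$ lying inside the stability ball, the number of components of $\{|v_k|<1-b\}$ meeting $B_R$ is at most $CR$. Choosing $R = K(\sigma)|\log\varepsilon_k|$ for a suitable large $K(\sigma)$, I would iterate the Toda contraction $M(\sigma)$ times within $B_R$ and balance the accumulated error $C(\sigma)\varepsilon_k$ against the contraction factor $2^{-M(\sigma)}$, tuning $M(\sigma)$ so that the latter is of order $\varepsilon_k^{\,1-\sigma/\sqrt{2}}$. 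This gives $A_{\alpha_k}(0) \leq C(\sigma)\varepsilon_k^{\,1-\sigma/\sqrt{2}}$, which upon translating back through $D_{\alpha_k}(0) = -\tfrac{1}{\sqrt{2}}\log A_{\alpha_k}(0)$ yields $d_k \geq \tfrac{\sqrt{2}-\sigma}{2}\varepsilon_k|\log\varepsilon_k| - C(\sigma)\varepsilon_k$, contradicting the contradiction hypothesis for $k$ large.

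The main obstacle is the quantitative bookkeeping underlying this balance. The stability-derived interface count at scale $K|\log\varepsilon_k|$ grows linearly in $K|\log\varepsilon_k|$, so the halving in the Toda contraction must be repeated enough times to dominate it; keeping $K(\sigma)$ finite (independent of $\varepsilon_k$) is precisely what forces the loss of $\sigma$ in the exponent. A secondary technical point is that the higher-order remainders in \eqref{Toda system} (coming from $\phi$, horizontal derivatives of $h_\alpha$, and next-nearest-neighbour interactions) have to be absorbed uniformly in the number of nearby components, which requires that the iteration be set up self-consistently on each scale $K|\log\varepsilon_k|$. Achieving the sharp exponent $\tfrac{\sqrt{2}}{2}$ in place of $\tfrac{\sqrt{2}-\sigma}{2}$ lies beyond this first step and is the subject of the Choi--Schoen type interior decay estimate alluded to in the introduction.
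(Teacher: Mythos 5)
Your proposal goes in a fundamentally different direction from the paper's proof, and as written it contains a circularity that would have to be resolved before the argument could work. In Part~III the Toda system \eqref{Toda system III} (and, already, the optimal approximation underlying it) is only set up \emph{after} Proposition~\ref{first lower bound}: the paper notes explicitly that the sum defining $g(y,z;h_\alpha)$ ``involves only finitely many terms (at most $25$ terms)'' \emph{because} of Proposition~\ref{first lower bound}, and the first bound \eqref{estimate first bound} on $\phi$ and $H^\alpha+\Delta_0^\alpha h_\alpha$ that feeds the Part~III Toda machinery is derived by substituting Proposition~\ref{first lower bound} into \eqref{estimate 2}. So iterating the Toda contraction $A(r)\leq C\varepsilon+\tfrac12 A(r+K|\log\varepsilon|)$ is not available at this stage of the argument; you are proposing to use as a lemma essentially the thing you are trying to prove. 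Even granting the idea of first using stability to bound the number of interacting interfaces on the stretched scale, your key step --- that a Sternberg--Zumbrun argument with logarithmic cut-offs yields a universal linear bound $\lesssim R$ on the number of components of $\{|v_k|<1-b\}$ meeting $B_R$ --- is asserted without proof, and is not established anywhere in the paper.

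The paper's actual proof is more direct and makes no use of the Toda apparatus at all. It takes the directional derivative $\partial u_\varepsilon/\partial x_2$, restricts it to the nodal domain $\mathcal{D}_{\alpha,\varepsilon}$ containing $\Gamma_{\alpha,\varepsilon}$, and multiplies by a fixed unit-scale cut-off to obtain a test function $\varphi_\varepsilon$ with $\mathcal{Q}(\varphi_\varepsilon\eta_1)\leq C$. It then performs a surgical modification in the thin rectangle $\Omega_{\alpha,\varepsilon}$ between $\Gamma_{\alpha,\varepsilon}$ and $\Gamma_{\alpha+1,\varepsilon}$, replacing $\varphi_\varepsilon$ by the harmonic replacement $\widetilde{\varphi}_\varepsilon$ of the linearized equation, and shows via an explicit exponential subsolution that $\widetilde{\varphi}_\varepsilon\gtrsim \varepsilon^{-1}e^{-\sqrt{2+\delta}\,x_2/\varepsilon}$ there, whereas $\varphi_\varepsilon$ is identically zero in the upper half of $\Omega_{\alpha,\varepsilon}$ (this vanishing is proved by a vanishing-viscosity argument for the distance-type function $\Psi_\epsilon$). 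The energy drop $\frac{c}{\varepsilon}\int_{\Omega_{\alpha,\varepsilon}}(\varphi_\varepsilon-\widetilde{\varphi}_\varepsilon)^2\gtrsim \frac{c(L)}{\varepsilon}e^{-\frac{(1+\delta)}{2}\sqrt{2+\delta(b)+C/L^2}\,\rho_\varepsilon/\varepsilon}$ would then make $\mathcal{Q}$ negative, contradicting stability; equating this with the upper bound $C$ and optimizing the constants produces exactly the stated $\tfrac{\sqrt{2}-\sigma}{2}\varepsilon|\log\varepsilon|-C(\sigma)\varepsilon$. Your proposal touches none of these ingredients --- the choice of test function, the modification in the thin slab, the subsolution lower bound for $\widetilde{\varphi}_\varepsilon$, and the vanishing of $\varphi_\varepsilon$ across the neighboring interface --- and would require substantial new work (at minimum, your putative interface-count bound and a self-consistent bootstrap for the Toda derivation without Proposition~\ref{first lower bound}) before it could be considered a proof.
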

The idea of proof is to choose a direction derivative of $u_\varepsilon$ to construct a subsolution to the linearized equation and perform a surgery  as in Lemma \ref{finiteness of nodal domains II}.

\subsection{An upper bound on $\mathcal{Q}(\varphi_\varepsilon)$}
Without loss of generality, assume 
$u_\varepsilon>0$ in $\{f_{\alpha,\varepsilon}(x_1)<x_2<f_{\alpha+1,\varepsilon}(x_1)\}\cap\mathcal{C}_{6/7}$. Recall that Lemma \ref{O(1) scale} still holds. Hence near $\{x_2=f_{\alpha,\varepsilon}(x_1)\}$, $\frac{\partial u_\varepsilon}{\partial x_2}>0$, while near $\{x_2=f_{\alpha+1,\varepsilon}(x_1)\}$, $\frac{\partial u_\varepsilon}{\partial x_2}<0$.

Let $\mathcal{D}_{\alpha,\varepsilon}$ be the connected component of $\{\frac{\partial u_\varepsilon}{\partial x_2}>0\}\cap\mathcal{C}_{6/7}$ containing $\{x_2=f_{\alpha,\varepsilon}(x_1)\}$. Let
$\varphi_\varepsilon$ be the restriction of $\frac{\partial u_\varepsilon}{\partial x_2}$ to this domain, extended to be $0$ outside. After such an extension, $\varphi_\varepsilon$ is a nonnegative continuous function and in $\{\varphi_\varepsilon>0\}$ it satisfies the linearized equation
\begin{equation}\label{subsol to linearized}
\varepsilon\Delta\varphi_\varepsilon=\frac{1}{\varepsilon}W^{\prime\prime}(u_\varepsilon)\varphi_\varepsilon.
\end{equation}

Concerning $\mathcal{D}_{\alpha,\varepsilon}$ we have
\begin{lem}\label{lem 6.1}
$\mathcal{D}_{\alpha,\varepsilon}\cap\{|u_\varepsilon|<1-b\}\cap\mathcal{C}_{6/7}$ belongs to an $O(\varepsilon)$ neighborhood of $\{x_2=f_{\alpha,\varepsilon}(x_1)\}$.
\end{lem}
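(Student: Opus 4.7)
The plan is to combine the $O(\varepsilon)$-scale one-dimensional convergence at each interface, recorded in Lemma \ref{O(1) scale}, with a simple topological separator argument based on the Lipschitz graphs $\Gamma_{\beta,\varepsilon}$ themselves.

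The first step is to extract two uniform consequences of Lemma \ref{O(1) scale}. For every $\beta\in\{1,\ldots,Q\}$ and every $p_\varepsilon\in\Gamma_{\beta,\varepsilon}\cap\mathcal{C}_{6/7}$, the rescaling $\tilde u_\varepsilon(x)=u_\varepsilon(p_\varepsilon+\varepsilon x)$ converges in $C^2_{\mathrm{loc}}(\R^2)$ to the profile $g(x\cdot e_{p_\varepsilon})$, where $e_{p_\varepsilon}$ is the unit normal to $\Gamma_{\beta,\varepsilon}$ pointing toward the positive side of $u_\varepsilon$. Because $\Gamma_{\beta,\varepsilon}$ has uniformly bounded slope (hypothesis \textbf{(H3)}), $|e_{p_\varepsilon}\cdot e_2|\ge c_0>0$ uniformly, and so: \textbf{(i)} $\{|u_\varepsilon|<1-b\}\cap\mathcal{C}_{6/7}$ is contained in the union of $O(\varepsilon)$-tubes around the graphs $\Gamma_{\beta,\varepsilon}$; \textbf{(ii)} on each such tube the sign of $\partial_{x_2}u_\varepsilon$ is the sign of $e_{p_\varepsilon,2}$, which is positive when $\beta-\alpha$ is even (the positive side of $u_\varepsilon$ lies above $\Gamma_{\beta,\varepsilon}$) and negative when $\beta-\alpha$ is odd; in the latter case $\partial_{x_2}u_\varepsilon\le -c_1<0$ uniformly throughout the tube.

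Using (i) and (ii) I would exclude every strip except the one around $\Gamma_{\alpha,\varepsilon}$. The tubes with $\beta-\alpha$ odd are disjoint from $\mathcal{D}_{\alpha,\varepsilon}$ by (ii) directly. For a tube with $\beta-\alpha$ even and nonzero, say $\beta=\alpha+2$, I would argue by contradiction: a continuous path $\gamma\subset\mathcal{D}_{\alpha,\varepsilon}\subset\{\partial_{x_2}u_\varepsilon>0\}$ joining $\Gamma_{\alpha,\varepsilon}$ to a point in that tube must cross $\Gamma_{\alpha+1,\varepsilon}$, because the continuous function $t\mapsto\gamma(t)_2-f_{\alpha+1,\varepsilon}(\gamma(t)_1)$ is negative at $t=0$ (from $f_{\alpha,\varepsilon}<f_{\alpha+1,\varepsilon}$ and the starting point being near $\Gamma_{\alpha,\varepsilon}$) and positive near $t=1$ (from $f_{\alpha+1,\varepsilon}<f_{\alpha+2,\varepsilon}$), while $\gamma(t)_1\in B_{6/7}^1$ keeps $f_{\alpha+1,\varepsilon}(\gamma(t)_1)$ well defined. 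At the crossing point (ii) forces $\partial_{x_2}u_\varepsilon<0$, contradicting $\gamma\subset\{\partial_{x_2}u_\varepsilon>0\}$. The symmetric argument with $\Gamma_{\alpha-1,\varepsilon}$ as separator rules out $\Gamma_{\alpha-2,\varepsilon}$, and iterating handles every $\Gamma_{\alpha\pm 2k,\varepsilon}$ with $k\ge 1$.

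Putting everything together, $\mathcal{D}_{\alpha,\varepsilon}\cap\{|u_\varepsilon|<1-b\}\cap\mathcal{C}_{6/7}$ is confined to the single $O(\varepsilon)$-tube around $\Gamma_{\alpha,\varepsilon}$ provided by (i), which is precisely the claim. Every ingredient — the rescaled 1D convergence, the Lipschitz graph structure, the uniform slope bound — is already available, so I do not anticipate a real obstacle; the only genuinely new content is the elementary separator step in the previous paragraph.
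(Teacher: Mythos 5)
Your proposal is correct and uses the same two ingredients as the paper: the inclusion of $\{|u_\varepsilon|<1-b\}\cap\mathcal{C}_{6/7}$ in a union of $O(\varepsilon)$-tubes around the $\Gamma_{\beta,\varepsilon}$ (from Lemma \ref{O(1) scale}), and the fact that $\partial_{x_2}u_\varepsilon<0$ along $\Gamma_{\alpha\pm1,\varepsilon}$, so the connected set $\mathcal{D}_{\alpha,\varepsilon}\subset\{\partial_{x_2}u_\varepsilon>0\}$ cannot cross these graph barriers. The paper states this more tersely by confining $\mathcal{D}_{\alpha,\varepsilon}\cap\mathcal{C}_{6/7}$ to the strip $\{f_{\alpha-1,\varepsilon}<x_2<f_{\alpha+1,\varepsilon}\}$ in one stroke rather than excluding each tube of index $\beta$ separately, but your path-crossing argument is the same separator idea made explicit.
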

\begin{proof}
By Lemma \ref{O(1) scale}, $\{|u_\varepsilon|<1-b\}\cap\mathcal{C}_{6/7}$ belongs to an $O(\varepsilon)$ neighborhood of $\{x_2=f_{\alpha,\varepsilon}(x_1)\}$, where $\frac{\partial u_\varepsilon}{\partial x_2}>0$.  On the other hand, since $\frac{\partial u_\varepsilon}{\partial x_2}<0$ in a neighborhood of $\{x_2=f_{\alpha\pm1,\varepsilon}(x_1)\}$, $\mathcal{D}_{\alpha,\varepsilon}\cap\mathcal{C}_{6/7}$ belongs to the set $\{f_{\alpha-1,\varepsilon}<x_2<f_{\alpha+1,\varepsilon}\}$.
\end{proof}

Choose an arbitrary point $x_\varepsilon\in\{x_2=f_{\alpha,\varepsilon}(x_1), |x_1|<5/6, |x_2|<5/6\}$.
Take an $\eta_1\in C_0^\infty(B_{1/100}(x_\varepsilon))$, satisfying $\eta_1\equiv 1$ in $B_{1/200}(x_\varepsilon)$ and $|\nabla\eta_1|\leq 1000$.
Multiplying \eqref{subsol to linearized} by $\varphi_\varepsilon\eta_1^2$ and integrating by parts leads to
\begin{eqnarray}\label{2}
\int_{B_{1/100}(x_\varepsilon)}\varepsilon|\nabla\left(\varphi_\varepsilon\eta_1\right)|^2+\frac{1}{\varepsilon}W^{\prime\prime}(u_\varepsilon)\varphi_\varepsilon^2\eta_1^2
=\int_{B_{1/100}(x_\varepsilon)}\varepsilon\varphi_\varepsilon^2|\nabla\eta_1|^2
&\leq&C\int_{B_{1/100}(x_\varepsilon)}\varepsilon\varphi_\varepsilon^2\\
&\leq&C.\nonumber
\end{eqnarray}
In the above we have used the following fact.
\begin{lem} There exists a universal constant $C$ such that
\[\int_{B_{1/100}(x_\varepsilon)}\varepsilon\varphi_\varepsilon^2\leq C.\]
\end{lem}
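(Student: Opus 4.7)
Since $\varphi_\varepsilon\le|\nabla u_\varepsilon|$ and $\varphi_\varepsilon$ is supported in $\mathcal{D}_{\alpha,\varepsilon}$, it suffices to bound $\int_{B_{1/100}(x_\varepsilon)\cap\mathcal{D}_{\alpha,\varepsilon}}\varepsilon|\nabla u_\varepsilon|^2$ by a universal constant. I would split the integration domain along the level set $\{|u_\varepsilon|=1-b\}$ and handle the two pieces by very different means.

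On the inner piece $\mathcal{D}_{\alpha,\varepsilon}\cap\{|u_\varepsilon|<1-b\}\cap B_{1/100}(x_\varepsilon)$, Lemma \ref{lem 6.1} places it inside an $O(\varepsilon)$-neighborhood of $\Gamma_{\alpha,\varepsilon}$. Since $\Gamma_{\alpha,\varepsilon}=\{x_2=f_{\alpha,\varepsilon}(x_1)\}$ is a Lipschitz graph with uniform slope bound, its length inside $B_{1/100}(x_\varepsilon)$ is $O(1)$, so the tube has two-dimensional Lebesgue measure $O(\varepsilon)$. Combined with the pointwise bound $|\nabla u_\varepsilon|\le C/\varepsilon$---obtained by rescaling $v(y):=u_\varepsilon(\varepsilon y)$ and applying standard interior elliptic estimates to $\Delta v=W'(v)$---the contribution is at most $\varepsilon\cdot(C/\varepsilon)^2\cdot O(\varepsilon)=O(1)$.

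On the outer piece $\{|u_\varepsilon|\ge 1-b\}\cap B_{1/100}(x_\varepsilon)$, the gradient decays exponentially away from the transition region. Because the wells of $W$ are nondegenerate ($W''(\pm 1)=2$), a maximum-principle argument using supersolutions of the linearized equation $\varepsilon^2\Delta w\le (2-o(1))w$ yields $1-|u_\varepsilon(x)|\le C\exp(-c\,\mathrm{dist}(x,\{|u_\varepsilon|=1-b\})/\varepsilon)$ with $\varepsilon$-independent constants. Coupled with $\varepsilon|\nabla u_\varepsilon|\le C(1-|u_\varepsilon|)$ (which follows from the same linearization near $\pm 1$, or alternatively from Modica's inequality for a smooth entire extension), integration in coordinates normal to $\Gamma_{\alpha,\varepsilon}$ yields a second $O(1)$ contribution via $C\varepsilon^{-1}\int_0^\infty e^{-2cs/\varepsilon}\,ds=O(1)$.

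The main delicate point is ensuring the constants in the exponential decay are uniform in $\varepsilon$. This relies on two facts: first, the level set $\{|u_\varepsilon|=1-b\}$ stays at a fixed $\varepsilon$-independent distance from $\partial B_{1/100}(x_\varepsilon)$ in the normal direction to $\Gamma_{\alpha,\varepsilon}$ (a consequence of Lemma \ref{O(1) scale}, which shows the transition layer has thickness $O(\varepsilon)$); second, the linearized operator near $\pm 1$ has characteristic length $\varepsilon$ precisely because $W''(\pm 1)=2$. Summing the two contributions yields the universal bound $C$.
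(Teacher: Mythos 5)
Your Step 1 (the inner piece $\{|u_\varepsilon|<1-b\}$) is essentially identical to the paper's: both use Lemma~\ref{lem 6.1} to confine $\mathcal{D}_{\alpha,\varepsilon}\cap\{|u_\varepsilon|<1-b\}$ to an $O(\varepsilon)$-measure tube and pair it with the pointwise bound $|\nabla u_\varepsilon|\lesssim\varepsilon^{-1}$. Your Step 2, however, takes a genuinely different route. The paper avoids all pointwise decay estimates: it multiplies the linearized equation $\varepsilon\Delta\varphi_\varepsilon=\varepsilon^{-1}W''(u_\varepsilon)\varphi_\varepsilon$ by $\varphi_\varepsilon\eta_2^2\zeta(u_\varepsilon)^2$, where $\zeta(u_\varepsilon)$ is a cut-off in the $u$-variable vanishing in $\{|u_\varepsilon|<1-2b\}$, and exploits only the sign and lower bound $W''(u_\varepsilon)\ge c(b)>0$ in the support of $\zeta$: the resulting $\varepsilon^{-1}\int W''\varphi_\varepsilon^2\zeta^2\eta_2^2$ absorbs the left side and the errors reduce to Step~1 and to $\varepsilon^3\int|\nabla u_\varepsilon|^2$, all without Modica or maximum-principle barriers. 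Your approach instead builds two pointwise estimates --- exponential decay of $1-|u_\varepsilon|$ from $\{|u_\varepsilon|=1-b\}$ and the Modica-type gradient bound $\varepsilon|\nabla u_\varepsilon|\lesssim 1-|u_\varepsilon|$ --- and integrates in normal coordinates. This can be made rigorous (the Harnack-plus-interior-gradient alternative you mention is the right move in the local setting of Theorem~\ref{second order estimate}, since Modica's inequality proper requires an entire solution, and "smooth entire extension" is not available), but it requires barrier comparisons on an enlarged ball to handle the fact that $\partial B_{1/100}(x_\varepsilon)$ is part of the boundary of $\{|u_\varepsilon|\ge1-b\}\cap B_{1/100}(x_\varepsilon)$, plus uniformity of the decay constants in $\varepsilon$. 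Both arguments give the universal bound; the paper's Caccioppoli variant is shorter and more robust precisely because it never needs any pointwise control in the far field, only the coercivity of $W''$ near $\pm1$.
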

\begin{proof}
We divide the estimate into two parts: $\{|u_\varepsilon|<1-b\}$ and $\{|u_\varepsilon|>1-b\}$.

{\bf Step 1.}  There exists a universal constant $C$ such that
\begin{equation}\label{6.3}
\int_{\mathcal{D}_{\alpha,\varepsilon}\cap\{|u_\varepsilon|<1-b\}\cap B_{1/50}(x_\varepsilon)}\varepsilon\big|\frac{\partial u_\varepsilon}{\partial x_2}\big|^2\leq C.
\end{equation}

Because $|\nabla u_\varepsilon|\lesssim \varepsilon^{-1}$, this estimate follows from the fact that
\[\Big|\mathcal{D}_{\alpha,\varepsilon}\cap\{|u_\varepsilon|<1-b\}\cap\mathcal{C}_{6/7}\Big|\leq C\varepsilon,\]
which in turn is a consequence of Lemma \ref{lem 6.1}, the co-area formula and the following two facts: (i) for any $t\in[-1+b,1-b]$, $\{u_\varepsilon=t\}$ is a smooth curve with uniformly bounded curvature and hence its length is uniformly bounded; (ii) by Lemma \ref{O(1) scale}, $\frac{\partial u_\varepsilon}{\partial x_2}\geq c\varepsilon^{-1}$ in $\{|u_\varepsilon|<1-b\}$.

{\bf Step 2.}  There exists a universal constant $C$ such that
\begin{equation}\label{6.4}
\int_{\{|u_\varepsilon|>1-b\}\cap B_{1/100}(x_\varepsilon)}\varepsilon\varphi_\varepsilon^2\leq C(b).
\end{equation}

In order to prove this estimate, take a cut-off function $\eta_2\in C_0^\infty(B_{1/50}(x_\varepsilon))$ with $\eta_2\equiv 1$ in $B_{1/100}(x_\varepsilon)$ and $|\nabla\eta_2|\leq 1000$, and $\zeta\in C^\infty(-1,1)$ with $\zeta\equiv 1$ in $(-1,-1+b)\cup(1-b,1)$, $\zeta\equiv 0$ in $(-1+2b,1-2b)$ and $|\zeta^\prime|\leq 2b^{-1}$. Multiplying \eqref{subsol to linearized} by $\varphi_\varepsilon\eta_2^2\zeta(u_\varepsilon)^2$ and integrating by parts leads to
\begin{eqnarray*}
&&\int_{B_{1/50}(x_\varepsilon)}\varepsilon|\nabla\left(\varphi_\varepsilon\eta_2\zeta(u_\varepsilon)\right)|^2+\frac{1}{\varepsilon}W^{\prime\prime}(u_\varepsilon)\varphi_\varepsilon^2\eta_2^2\zeta(u_\varepsilon)^2\\
&=&\int_{B_{1/50}(x_\varepsilon)}\varepsilon\varphi_\varepsilon^2|\nabla\left(\eta_2\zeta(u_\varepsilon)\right)|^2\\
&\lesssim&\int_{B_{1/50}(x_\varepsilon)}\varepsilon\varphi_\varepsilon^2\left[|\nabla\eta_2|^2\zeta(u_\varepsilon)^2+2\eta_2\zeta(u_\varepsilon)|\zeta^\prime(u_\varepsilon)||\nabla\eta_2||\nabla u_\varepsilon|+\eta_2^2\zeta^\prime(u_\varepsilon)^2|\nabla u_\varepsilon|^2\right]\\
&\lesssim&\varepsilon\int_{B_{1/50}(x_\varepsilon)}\varphi_\varepsilon^2+\frac{1}{\varepsilon}\int_{\{1-2b<|u_\varepsilon|<1-b\}\cap B_{1/50}(x_\varepsilon)}\varphi_\varepsilon^2,
\end{eqnarray*}
where we have substituted the estimate $|\nabla u_\varepsilon|\lesssim \varepsilon^{-1}$ in the last line.

Since $W^{\prime\prime}(u_\varepsilon)\geq c(b)>0$ in $\{|u_\varepsilon|>1-b\}$, we obtain
\begin{eqnarray*}
\int_{\{|u_\varepsilon|>1-b\}\cap B_{1/100}(x_\varepsilon)}\varepsilon\varphi_\varepsilon^2&\leq&C\varepsilon\int_{B_{1/50}(x_\varepsilon)}W^{\prime\prime}(u_\varepsilon)\varphi_\varepsilon^2\eta_2^2\zeta(u_\varepsilon)^2\\
&\leq&C\varepsilon^3\int_{B_{1/50}(x_\varepsilon)}|\nabla u_\varepsilon|^2+C\varepsilon\int_{\{1-2b<|u_\varepsilon|<1-b\}\cap B_{1/50}(x_\varepsilon)}\varphi_\varepsilon^2\\
&\leq&C.
\end{eqnarray*}

Combining Step 1 and Step 2 we finish the proof.
\end{proof}

\subsection{A surgery on $\varphi_\varepsilon$}
Next we use the smoothing modification in the proof of Proposition \ref{number of nodal domains} to decrease the left hand side of \eqref{2}.

Without loss of generality, assume $f_{\alpha,\varepsilon}(0)=0$, $f_{\alpha+1,\varepsilon}(0)=\rho_\varepsilon$ and $\rho_\varepsilon\leq \varepsilon|\log\varepsilon|$. By Lemma \ref{O(1) scale}, $\rho_\varepsilon\gg\varepsilon$. For any fixed constant $L>0$, $u_\varepsilon>1-b$ in $\Omega_{\alpha,\varepsilon}:=\{|x_1|<L\varepsilon, L\varepsilon<x_2<\rho_\varepsilon-L\varepsilon\}$. Let $\widetilde{\varphi}_\varepsilon$ be the solution of
\begin{equation*} \label{eq:1}
\left\{ \begin{aligned}
         \varepsilon\Delta\widetilde{\varphi}_\varepsilon &= \frac{1}{\varepsilon}W^{\prime\prime}(u_\varepsilon)\varphi_\varepsilon, \quad\mbox{in } \Omega_{\alpha,\varepsilon}, \\
                  \widetilde{\varphi}_\varepsilon&=\varphi_\varepsilon, \quad \mbox{on }\partial\Omega_{\alpha,\varepsilon}.
\end{aligned} \right.
\end{equation*}
By the stability of $u_\varepsilon$, such an $\widetilde{\varphi}_\varepsilon$ exists uniquely.

A direct integration by parts gives
\begin{eqnarray}\label{6.5}
&&\left[\int_{\Omega_{\alpha,\varepsilon}}\varepsilon|\nabla\varphi_\varepsilon|^2+\frac{1}{\varepsilon}W^{\prime\prime}(u_\varepsilon)\varphi_\varepsilon^2\right]
-\left[\int_{\Omega_{\alpha,\varepsilon}}\varepsilon|\nabla\widetilde{\varphi}_\varepsilon|^2+\frac{1}{\varepsilon}W^{\prime\prime}(u_\varepsilon)\widetilde{\varphi}_\varepsilon^2\right] \nonumber\\
&=&-\int_{\Omega_{\alpha,\varepsilon}}\varepsilon|\nabla\left(\varphi_\varepsilon-\widetilde{\varphi}_\varepsilon\right)|^2+\frac{1}{\varepsilon}W^{\prime\prime}(u_\varepsilon)
\left(\varphi_\varepsilon-\widetilde{\varphi}_\varepsilon\right)^2\\
&\leq&-\frac{c}{\varepsilon}\int_{\Omega_{\alpha,\varepsilon}}
\left(\varphi_\varepsilon-\widetilde{\varphi}_\varepsilon\right)^2. \nonumber
\end{eqnarray}

Because $u_\varepsilon>1-b$ in $\Omega_{\alpha,\varepsilon}$, $2-\delta(b)<W^{\prime\prime}(u_\varepsilon)<2+\delta(b)$ in $\Omega_{\alpha,\varepsilon}$, where $\delta(b)$ is constant satisfying $\lim_{b\to0}\delta(b)=0$. Therefore,
\[\Delta\widetilde{\varphi}_\varepsilon\leq\frac{2+\delta(b)}{\varepsilon^2}\widetilde{\varphi}_\varepsilon, \quad\mbox{in }\Omega_{\alpha,\varepsilon}.\]
On $\partial\Omega_{\alpha,\varepsilon}\cap\{x_2=L\varepsilon\}$, by Lemma \ref{O(1) scale},
\[\widetilde{\varphi}_\varepsilon=\varphi_\varepsilon=\frac{\partial u_\varepsilon}{\partial x_2}\geq\frac{c}{\varepsilon}.\]
By constructing an explicit subsolution, we obtain
\begin{equation}\label{6.6}
\widetilde{\varphi}_\varepsilon(x_1,x_2)\geq\frac{c}{\varepsilon}e^{-\sqrt{2+\delta(b)+\frac{C}{L^2}}\frac{x_2-L\varepsilon}{\varepsilon}}, \quad \mbox{in } \left\{|x_1|<\frac{L\varepsilon}{2}, \frac{\rho_\varepsilon}{4}<x_2<\frac{3\rho_\varepsilon}{4}\right\}.
\end{equation}

\begin{lem}
For any $\delta$ fixed, if $\varepsilon$ is small enough,
$\varphi_\varepsilon=0$ in $\{|x_1|<\frac{L\varepsilon}{2}, \frac{(1+\delta)\rho_\varepsilon}{2}<x_2<\frac{3\rho_\varepsilon}{4}\}$.
\end{lem}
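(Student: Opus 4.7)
The plan is to show $\partial_{x_2} u_\varepsilon < 0$ throughout $U := \{|x_1|<L\varepsilon/2,\, (1+\delta)\rho_\varepsilon/2 < x_2 < 3\rho_\varepsilon/4\}$. Since $\mathcal{D}_{\alpha,\varepsilon}$ is the connected component of $\{\partial_{x_2}u_\varepsilon>0\}$ containing points just above $\Gamma_{\alpha,\varepsilon}$, such a sign immediately forces $U\cap\mathcal{D}_{\alpha,\varepsilon}=\emptyset$ and hence $\varphi_\varepsilon=0$ on $U$.

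The strategy is to locate the zero set of $\partial_{x_2} u_\varepsilon$ via a one-dimensional approximation. Using the curvature bound (H4) and Lemma \ref{O(1) scale}, the Fermi coordinates around both $\Gamma_{\alpha,\varepsilon}$ and $\Gamma_{\alpha+1,\varepsilon}$ are well defined on $\Omega_{\alpha,\varepsilon}$, and the optimal-approximation construction of Section \ref{sec approximate solution} (Proposition \ref{prop optimal approximation}) yields
\[
u_\varepsilon(x)=g\!\left(\tfrac{d_\alpha(x)}{\varepsilon}\right)-g\!\left(\tfrac{d_{\alpha+1}(x)}{\varepsilon}\right)-1+\phi_\varepsilon(x),
\]
with $\phi_\varepsilon$ controlled in $C^1$ by the first-order estimate of Section \ref{sec first order}. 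Differentiating and replacing $d_\alpha,d_{\alpha+1}$ by $x_2-f_{\alpha,\varepsilon}(x_1)$ and $f_{\alpha+1,\varepsilon}(x_1)-x_2$ modulo curvature corrections of size $\varepsilon$ gives
\[
\partial_{x_2}u_\varepsilon(x_1,x_2)=\frac{1}{\varepsilon}\Bigl[g'\!\bigl(\tfrac{x_2-f_{\alpha,\varepsilon}(x_1)}{\varepsilon}\bigr)-g'\!\bigl(\tfrac{f_{\alpha+1,\varepsilon}(x_1)-x_2}{\varepsilon}\bigr)\Bigr]+\partial_{x_2}\phi_\varepsilon+O(\varepsilon).
\]

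Since $g'$ is even and strictly decreasing on $[0,\infty)$ with $g'(t)\sim\sqrt{2}\,e^{-\sqrt{2}|t|}$ as $|t|\to\infty$, the leading bracket vanishes exactly on the midline $\{x_2=(f_{\alpha,\varepsilon}+f_{\alpha+1,\varepsilon})/2\}$, which lies within $O(L\varepsilon)$ of $\{x_2=\rho_\varepsilon/2\}$. On $U$, $x_2-\rho_\varepsilon/2\geq\delta\rho_\varepsilon/2\gg\varepsilon$, so the second exponential overpowers the first by a factor of order $e^{\sqrt{2}\delta\rho_\varepsilon/\varepsilon}$; this makes the leading bracket strictly negative and of magnitude much larger than $|\partial_{x_2}\phi_\varepsilon|$ plus the $O(\varepsilon)$ correction. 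Therefore $\partial_{x_2}u_\varepsilon<0$ throughout $U$ once $\varepsilon$ is small enough, yielding the claim.

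The main obstacle is establishing the $C^1$ control on the error $\phi_\varepsilon$ in the present 2D stable setting so that it is genuinely smaller than the leading negative term. This requires running the Fermi-coordinate, orthogonality and Toda-system machinery of Sections \ref{sec Fermi coordinates}--\ref{sec first order} of Part II in the current framework, using the nondegeneracy of $g$ and the curvature hypothesis (H4) to produce shift parameters $h_{\alpha,\varepsilon},h_{\alpha+1,\varepsilon}$ of size $o(1)$ that make the remainder acceptably small on $\Omega_{\alpha,\varepsilon}$.
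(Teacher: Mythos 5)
Your overall strategy is correct: it suffices to show $\partial_{x_2}u_\varepsilon<0$ throughout $U$, since $\varphi_\varepsilon$ is supported in $\{\partial_{x_2}u_\varepsilon>0\}$. But the route you take to that sign is genuinely different from the paper's, and much heavier. You propose to deploy the full optimal-approximation machinery of Part II (Fermi coordinates, orthogonality conditions, the $C^{1,\theta}/C^{2,\theta}$ estimates, ultimately something like \eqref{Schauder 3}) to control $\phi_\varepsilon$ below the leading two-profile term. This is in principle consistent: the Part II estimates need only (H1)--(H4), not the intermediate-distance lower bound being proved here, and under the standing assumption $\rho_\varepsilon\le\varepsilon|\log\varepsilon|$ the leading exponential $\varepsilon^{-1}e^{-\sqrt2\rho_\varepsilon/(4\varepsilon)}\gtrsim\varepsilon^{-1+\sqrt2/4}$ does dominate the bound $\varepsilon^{-1}(\varepsilon^2+e^{-\sqrt2 D_\alpha})$ on $|\partial_{x_2}\phi_\varepsilon|$. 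So your estimate closes. But you are importing almost all of Sections \ref{sec Fermi coordinates}--\ref{sec improved estimates on horizontal derivatives} into the middle of the proof of Proposition \ref{first lower bound}, and you explicitly leave this importation as ``the main obstacle'' — so as written the proposal is a sketch, not a proof.

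The paper's argument avoids all of this with a soft, self-contained observation. It rescales by $\rho_\varepsilon$, putting the two interfaces at height $0$ and $1$ with curvatures $O(\rho_\varepsilon)\to 0$, and introduces the distance-type function $\Psi_\epsilon=\epsilon\,g^{-1}(u_\epsilon)$ with $\epsilon=\varepsilon/\rho_\varepsilon$. By the vanishing viscosity characterization, $\Psi_\epsilon\to\Psi_\infty$, a tent function with a ridge on $\{x_2=1/2\}$, and away from the ridge (in particular on $\{(1+\delta)/2<x_2<3/4\}$) the convergence upgrades to $C^1$. Since $\partial_{x_2}\Psi_\infty=-1<0$ there, $\partial_{x_2}u_\epsilon<0$ for $\epsilon$ small, and rescaling back gives the claim on $U$. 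No orthogonality decomposition, no Schauder iteration. You should recognize that the distance-function/viscosity route is precisely what lets the paper state this lemma before developing the Toda machinery; your proposal instead reorders the entire logical structure of Part III. If you want a self-contained proof, the rescaling-plus-viscosity argument is the one to carry out.
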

\begin{proof}
Let $\epsilon:=\varepsilon/\rho_\varepsilon\ll 1$ and $u_\epsilon(x):=u_\varepsilon(\rho_\varepsilon^{-1}x)$,
which is a solution of \eqref{equation scaled}  with parameter $\epsilon$.

The nodal set of $u_\epsilon$ has the form $\cup_\beta\{x_2=\tilde{f}_{\beta,\epsilon}(x_1)\}$,
where $\tilde{f}_{\beta,\epsilon}(x_1)=f_{\beta,\varepsilon}(\rho_\varepsilon x_1)/\rho_\varepsilon$. Thus for any $x_1\in(-\rho_\varepsilon^{-1},\rho_\varepsilon^{-1})$,
\[|\tilde{f}_{\alpha,\epsilon}^{\prime\prime}(x_1)|\leq 4\rho_\varepsilon, \quad |\tilde{f}_{\alpha+1,\epsilon}^{\prime\prime}(x_1)|\leq 4\rho_\varepsilon.\]
and $\tilde{f}_{\alpha,\epsilon}(0)=\tilde{f}_{\alpha,\epsilon}^\prime(0)=0$,  $\tilde{f}_{\alpha+1,\epsilon}(0)=1$.

Since $\rho_\varepsilon\to0$, $\tilde{f}_{\alpha,\epsilon}\to0$ uniformly on any compact set of $\R$. Because different components of $\{u_\epsilon=0\}$ do not intersect, $\tilde{f}_{\alpha+1,\epsilon}\to 1$ uniformly on any compact set of $\R$.

Consider the distance type function $\Psi_\epsilon$, which is defined by the relation
\[u_\epsilon=g\left(\frac{\Psi_\epsilon}{\epsilon}\right).\]
By the vanishing viscosity method, in any compact set of $\{|x_2|<1\}$, $\Psi_\epsilon$ converges uniformly to
\makeatletter
\let\@@@alph\@alph
\def\@alph#1{\ifcase#1\or \or $'$\or $''$\fi}\makeatother
\begin{equation*}
{\Psi_\infty(x_1,x_2):=}
\begin{cases}
1-|x_2|, &x_2\geq 1/2, \\
x_2, &-1/2\leq x_2\leq 1/2,\\
-1-|x_2|, &x_2\leq-1/2.
\end{cases}
\end{equation*}
\makeatletter\let\@alph\@@@alph\makeatother
Moreover, because $\Psi_\infty$ is $C^1$ in $\{|x_1|<1, (1+\delta)/2<x_2<3/4\}$, $\Psi_\epsilon$ converges in $C^1(\{|x_1|<1, (1+\delta)/2<x_2<3/4\})$. In particular, for all $\epsilon$ small,
\[\frac{\partial u_\epsilon}{\partial x_2}=\frac{1}{\epsilon}g^\prime\left(\frac{\Psi_\epsilon}{\epsilon}\right)\frac{\partial\Psi_\epsilon}{\partial x_2}>0, \quad\mbox{in }\left\{|x_1|<1, \frac{1+\delta}{2}<x_2<\frac{3}{4}\right\}.\]
Rescaling back we finish the proof.
\end{proof}
\begin{rmk}\label{rmk 17.5}
The above proof also shows that
\[\mbox{dist}\left((x_1,f_{\alpha,\varepsilon}(x_1)), \Gamma_{\alpha+1,\varepsilon}\right)=\left(1+o(1)\right)\left(f_{\alpha+1,\varepsilon}(x_1)-f_{\alpha,\varepsilon}(x_1)\right).\]
\end{rmk}

By this lemma and \eqref{6.6}, we obtain
\begin{equation}\label{6.7}
\int_{\Omega_{\alpha,\varepsilon}}
\left(\varphi_\varepsilon-\widetilde{\varphi}_\varepsilon\right)^2 \geq \int_{\{|x_1|<\frac{L\varepsilon}{2}, \frac{(1+\delta)\rho_\varepsilon}{2}<x_2<\frac{3\rho_\varepsilon}{4}\}}\widetilde{\varphi}_\varepsilon^2 \geq\frac{c(L)}{\varepsilon}e^{-\frac{(1+\delta)}{2}\sqrt{2+\delta(b)+\frac{C}{L^2}}\frac{\rho_\varepsilon}{\varepsilon}}.
\end{equation}

As in the proof of Proposition \ref{number of nodal domains}, by combining \eqref{2}, \eqref{6.7} and the stability of $u_\varepsilon$, we obtain
\[\frac{c(L)}{\varepsilon}e^{-\frac{(1+\delta)}{2}\sqrt{2+\delta(b)+\frac{C}{L^2}}\frac{\rho_\varepsilon}{\varepsilon}}\leq C.\]
By choosing $\delta$, $\delta(b)$ sufficiently small, $L$ sufficiently large (depending only on $\sigma$), this implies that
\[\rho_\varepsilon\geq \frac{\sqrt{2}-\sigma}{2}\varepsilon|\log\varepsilon|-C(\sigma)\varepsilon,\]
which in view of Remark \ref{rmk 17.5} finishes the proof of Proposition \ref{first lower bound}.

\section{Toda system}
\setcounter{equation}{0}

\subsection{Optimal approximation}

As in Section \ref{sec Fermi coordinates},  we still work in the stretched version, i.e. after the rescaling $x\mapsto \varepsilon^{-1}x$. The analysis in Section \ref{sec Fermi coordinates} still holds, although now $\{u=0\}=\cup_\alpha\Gamma_\alpha$,
where the cardinality of the index set $\alpha$ could go to infinity.

\medskip

Given a sequence of functions $h_\alpha\in C^2(-R,R)$, let (note here a sign difference with Section \ref{sec approximate solution})
\[g_\alpha(y,z):=\bar{g}\left((-1)^\alpha\left(z-h_\alpha(y)\right)\right),\]
where $(y,z)$ is the Fermi coordinates with respect to $\Gamma_\alpha$.
Define the function $g(y,z;h_\alpha)$ in the following way:
\[g(y,z;h_\alpha):= g_\alpha+\sum_{\beta<\alpha}\left(g_\beta+(-1)^\beta\right)+\sum_{\beta>\alpha}\left(g_\beta-(-1)^\beta\right) \quad \mbox{in } \mathcal{M}_\alpha.\]
By the definition of $\bar{g}$ and Proposition \ref{first lower bound}, the above sum involves only finitely many terms (at most $25$ terms).

Similar to Proposition \ref{prop optimal approximation}, we have
\begin{prop}
There exists $(h_\alpha)$ such that for any $|\alpha|>100$, $h_\alpha\equiv 0$, while for any $|\alpha|\leq 100$, in the Fermi coordinates with respect to $\Gamma_{\alpha}$,
\begin{equation}\label{orthogonal condition III}
\int_{-\delta R}^{\delta R} \left[u(y,z)-g(y,z;h_\alpha)\right] \bar{g}^\prime\left((-1)^\alpha\left(z-h_\alpha(y)\right)\right)dz=0, \quad\forall y\in(-5R/6,5R/6).
\end{equation}
\end{prop}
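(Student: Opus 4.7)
The plan is to mimic the proof of Proposition \ref{prop optimal approximation} via the inverse function theorem, adapted to the present setting where the number of interfaces may be unbounded. The two crucial ingredients are Lemma \ref{O(1) scale} (the blow-up at scale $\varepsilon$ is close to a one-dimensional profile) and Proposition \ref{first lower bound}, which in the stretched variables reads $D_\alpha \geq \frac{\sqrt{2}-\sigma}{2}|\log\varepsilon| - C$ for any $\sigma > 0$. Because $h_\beta$ is to be frozen at $0$ for $|\beta| > 100$, the construction is effectively finite-dimensional: only the $201$ functions with $|\alpha| \leq 100$ need to be solved for, and in each $\mathcal{M}_\alpha$ the locally finite sum defining $g(y,z;h_\alpha)$ has at most $25$ nonzero terms as noted above.

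I would formulate the problem as finding a zero of the $C^1$ map $F : \mathcal{X} \to \mathcal{X}$, where $\mathcal{X}$ is the finite-dimensional Banach space of tuples $(h_\alpha)_{|\alpha|\leq 100}$ with norm $\max_{|\alpha|\leq 100}\|h_\alpha\|_{C^0([-5R/6,5R/6])}$, whose $\alpha$-th component is the integral appearing on the left-hand side of \eqref{orthogonal condition III}, evaluated in the Fermi coordinates around $\Gamma_\alpha$ and with the frozen outer $h_\beta$ plugged in.

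Next, I would compute $DF$ at a point with $\|(h_\alpha)\|_{\mathcal{X}}$ small. The diagonal entry of the $\alpha$-th equation, differentiated in $h_\alpha$, reduces up to $o(1)$ to $\pm\int_{-\delta R}^{\delta R}\bar{g}'((-1)^\alpha(z-h_\alpha(y)))^2\,dz$, which by \eqref{1d energy} has absolute value at least $\sigma_0/2$; the smallness of the remainder is controlled using Lemma \ref{O(1) scale} to guarantee $u - g = o(1)$ in $C^2$ on $\mathcal{M}_\alpha$. Each off-diagonal entry is an integral of products $g_\alpha' g_\beta'$ for $\beta$ in a bounded neighborhood of $\alpha$, and since $\bar{g}'(t)\lesssim e^{-\sqrt{2}|t|}$ and Proposition \ref{first lower bound} gives $\mathrm{dist}(\Gamma_\alpha,\Gamma_\beta) \geq \frac{\sqrt{2}-\sigma}{2}|\log\varepsilon| - C$, each such entry is bounded by $C\varepsilon^{1-\sigma} = o(1)$. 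Therefore $DF$ is diagonally dominated and invertible, with $\|DF^{-1}\|_{\mathcal{X}\to\mathcal{X}} \leq 4/\sigma_0$ uniformly in $\varepsilon$.

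Finally, at the initial guess $(h_\alpha) = 0$, Lemma \ref{O(1) scale} gives that $u$ restricted to $\mathcal{M}_\alpha$ is $o(1)$-close to $g(\cdot,\cdot;0)$ in $C^2$, so $\|F(0)\|_{\mathcal{X}} = o(1)$ as $\varepsilon \to 0$. The quantitative inverse function theorem then produces a unique small zero $(h_\alpha)$ of $F$ with $\|(h_\alpha)\|_{\mathcal{X}} = o(1)$, yielding \eqref{orthogonal condition III}. The main technical point is the off-diagonal estimate on $DF$, which must remain small uniformly even in the multiplicity-cluster regime; this is precisely where the logarithmic separation of Proposition \ref{first lower bound} is indispensable, as without it the interaction integrals could be of order one and the diagonal dominance of $DF$ would break down.
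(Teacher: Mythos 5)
Your proposal is correct and follows essentially the same route the paper intends: the paper merely asserts the result is ``Similar to Proposition \ref{prop optimal approximation}'', which is precisely the inverse function theorem argument you spell out (diagonal dominance of $DF$ via the $\bar{g}'$ energy $\sigma_0$, off-diagonal smallness via separation of the interfaces, and $\|F(0)\|$ small via Lemma \ref{O(1) scale}). One minor overstatement: the qualitative divergence $D_\alpha\to\infty$ already furnished by Lemma \ref{O(1) scale} suffices to make the off-diagonal interaction integrals $o(1)$, so Proposition \ref{first lower bound} is not strictly indispensable at this particular step, though it certainly works and is what the paper invokes earlier to guarantee the sum defining $g(y,z;h_\alpha)$ has at most finitely many nonzero terms.
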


Denote $g_\ast(y,z):=g(y,z;\textbf{h}(y))$, where $\textbf{h}$ is as in the previous lemma.
Let
\[\phi:=u-g_\ast.\]
As in Subsection \ref{sec optimal approximation}, denote
\[g_\alpha^\prime(y,z)=\bar{g}^\prime\left((-1)^\alpha\left(z-h_\alpha(y)\right)\right), \quad g_\alpha^{\prime\prime}(y,z)=\bar{g}^{\prime\prime}\left((-1)^\alpha\left(z-h_\alpha(y)\right)\right), \quad \cdots .\]

In the Fermi coordinates with respect to $\Gamma_\alpha$,
$\phi$ satisfies the following equation
\begin{eqnarray}\label{error equation III}
&&\Delta_z\phi-H^\alpha(y,z)\partial_z\phi+\partial_{zz}\phi \\
&=&W^\prime(g_\ast+\phi)-\sum_{\beta}
 W^\prime(g_\beta) +(-1)^{\alpha}g_\alpha^\prime\left[H^\alpha(y,z)+\Delta_z h_\alpha(y)\right]-g_\alpha^{\prime\prime}|\nabla_zh_\alpha|^2\nonumber\\
&+&\sum_{\beta\neq\alpha} \left[(-1)^{\beta}g_\beta^\prime\mathcal{R}_{\beta,1}\left(\Pi_\beta(y,z),d_\beta(y,z)\right)-
g_\beta^{\prime\prime}\mathcal{R}_{\beta,2}\left(\Pi_\beta(y,z),d_\beta(y,z)\right)\right] -\sum_{\beta}\xi_\beta,  \nonumber
\end{eqnarray}
where $\mathcal{R}_{\beta,1}$, $\mathcal{R}_{\beta,2}$ and $\xi_\beta$ are defined as in Subsection \ref{sec optimal approximation}.

\subsection{Estimates on $\phi$}

By \eqref{Schauder 3} and Proposition \ref{first lower bound}, for any $\sigma>0$ and $|\alpha|\leq 90$,
\begin{equation}\label{estimate 2}
\|\phi\|_{C^{2,1/2}(\mathcal{M}_\alpha(r))}+\|H^\alpha+\Delta_0^\alpha h_\alpha\|_{C^{1/2}(-r,r)}
\lesssim \varepsilon^{2-2\sigma}+\sup_{(-r-K|\log\varepsilon|,r+K|\log\varepsilon|)}e^{-\sqrt{2}D_\alpha}.
\end{equation}

Substituting Proposition \ref{first lower bound} into \eqref{estimate 2} gives a first (non-optimal)  bound
\begin{equation}\label{estimate first bound}
\|\phi\|_{C^{2,1/2}(\mathcal{M}_\alpha(6R/7))}+\|H^\alpha+\Delta_0^\alpha h_\alpha\|_{C^{1/2}(B_{6R/7})}
\lesssim \varepsilon^{1-\sigma}, \quad \forall |\alpha|\leq 90.
\end{equation}


By \eqref{10.5}, we can improve the estimates on $\phi_y:=\partial\phi/\partial y$ to
\begin{equation}\label{estimate on derivatives first bound}
\|\phi_y\|_{C^{1,1/2}(\mathcal{M}_\alpha(6R/7))}
\lesssim \varepsilon^{7/6}.
\end{equation}

\subsection{A Toda system}

Denote
\[A_\alpha(r):=\sup_{(-r,r)}e^{-\sqrt{2}D_\alpha}.\]
By Proposition \ref{first lower bound}, for any $r<5R/6$, $A_\alpha(r)\lesssim\varepsilon^{1-\sigma}$.

In $(-6R/7,6R/7)$, \eqref{Toda system} reads as
\begin{equation}\label{Toda system III}
\frac{f_{\alpha}^{\prime\prime}(x_1)}{\left[1+|f_{\alpha}^\prime(x_1)|^2\right]^{3/2}}
=\frac{4}{\sigma_0}\left[A_{(-1)^{\alpha-1}}^2e^{-\sqrt{2}d_{\alpha-1}(x_1,f_\alpha(x_1))}
-A_{(-1)^\alpha}^2e^{-\sqrt{2}d_{\alpha+1}(x_1,f_\alpha(x_1))}\right]+O\left(\varepsilon^{7/6}\right).
\end{equation}

\section{Reduction of the stability condition}\label{sec reduction of stability}
\setcounter{equation}{0}

In this section we show that the blow up procedure in Remark \ref{rmk 11.1} preserves the stability condition. More precisely, if $u$ is stable, $(f_\alpha)$ satisfies a kind of stability condition.

Take a large constant $L$ and a smooth function $\chi$ satisfying $\chi\equiv 1$ in $\mathcal{M}_\alpha$, $\chi\equiv 0$ outside $\{\rho_\alpha^-(y)-L<z<\rho_\alpha^+(y)+L\}$ and
in $\{\rho_\alpha^+(y)<z<\rho_\alpha^+(y)+L\}$,
\[\chi(y,z)=\eta_3\left(\frac{z-\rho_\alpha^+(y)}{L}\right)\]
where $\eta_3$ is a smooth function defined on $\R$ satisfying $\eta_3\equiv 1$ in $(-\infty,0)$, $\eta_3\equiv 1$ in $(1,+\infty)$ and $|\eta_3^\prime|\leq 2$.
Clearly we have $|\nabla\chi|\lesssim L^{-1}$, $|\nabla^2\chi|\lesssim L^{-2}$.

For any $\eta\in C_0^\infty(-5R/6,5R/6)$, let
\[\varphi(y,z):=\eta(y)g_\alpha^\prime(y,z)\chi(y,z).\]
The stability condition for $u$ implies that
\[\int_{\mathcal{C}_{5R/6}}|\nabla\varphi|^2+W^{\prime\prime}(u)\varphi^2\geq0.\]
The purpose of this section is to rewrite this inequality as a stability condition for the Toda system \eqref{Toda system III}.

In the Fermi coordinates with respect to $\Gamma_\alpha$, we have (Recall that now in \eqref{metirc tensor}, the metric tensor $g_{ij}$ has only one component, which is denoted by $\lambda(y,z)$ here)
\[|\nabla\varphi(y,z)|^2=\Big|\frac{\partial\varphi}{\partial z}(y,z)\Big|^2+\lambda(y,z)\Big|\frac{\partial\varphi}{\partial y}(y,z)\Big|^2.\]

\subsection{The horizontal part}\label{subsec 9.1}
A direct differentiation leads to
\[\frac{\partial\varphi}{\partial y}=\eta^\prime(y)g_\alpha^\prime\chi-\eta(y)g_\alpha^{\prime\prime}\chi h_\alpha^\prime(y)+\eta(y) g_\alpha^\prime\chi_y.\]
Since $c\leq\lambda(y,z)\leq C$,
\begin{eqnarray*}
\int_{\mathcal{C}_{5R/6}}\Big|\frac{\partial\varphi}{\partial y}(y,z)\Big|^2\lambda(y,z)dzdy
&\lesssim&\int_{-5R/6}^{5R/6}\int_{-\delta R}^{\delta R}\eta^\prime(y)^2|g_\alpha^\prime|^2\chi^2+\eta(y)^2|g_\alpha^{\prime\prime}|^2\chi^2h_\alpha^\prime(y)^2+\eta^2|g_\alpha^\prime|^2\chi_y^2\\
&\lesssim&\int_{-5R/6}^{5R/6}\eta^\prime(y)^2dy+\varepsilon^2\int_{-5R/6}^{5R/6}\eta(y)^2dy\\
&&+\frac{1}{L}\int_{-5R/6}^{5R/6}\eta(y)^2\left[e^{-2\sqrt{2}\rho_\alpha^+(y)}+e^{2\sqrt{2}\rho_\alpha^-(y)}\right]dy.
\end{eqnarray*}

Here the last term follows from the following two facts:
\begin{itemize}
\item in $\{\chi_y\neq0\}$, which is exactly $\{\rho_\alpha^+(y)<z<\rho_\alpha^+(y)+L\}\cup\{\rho_\alpha^-(y)-L<z<\rho_\alpha^-(y)\}$, $|\chi_y|\lesssim L^{-1}$;

\item in $\{\rho_\alpha^+(y)<z<\rho_\alpha^+(y)+L\}$ (respectively, $\{\rho_\alpha^-(y)-L<z<\rho_\alpha^-(y)\}$), $g_\alpha^\prime\lesssim e^{-\sqrt{2}\rho_\alpha^+(y)}$ (respectively, $g_\alpha^\prime\lesssim e^{\sqrt{2}\rho_\alpha^-(y)}$);

    \item By \eqref{estimate first bound} and Lemma \ref{control on h_0}, for $y\in(-6R/7,6R/7)$,
\[h_\alpha^\prime(y)^2\lesssim \varepsilon^{2-2\sigma}.\]
\end{itemize}

\subsection{The vertical part}
As before we have
\[\varphi_z=\eta g_\alpha^{\prime\prime}\chi+\eta g_\alpha^\prime \chi_z.\]
Thus by a direct expansion and integrating by parts, we have
\begin{eqnarray*}
\int_{\mathcal{C}_{5R/6}}\varphi_z^2\lambda(y,z)dzdy&=&\int_{-5R/6}^{5R/6}\eta(y)^2\left[\int_{-\delta R}^{\delta R}|g_\alpha^{\prime\prime}|^2\chi^2\lambda+2g_\alpha^\prime g_\alpha^{\prime\prime}\chi\chi_z\lambda+
|g_\alpha^\prime|^2\chi_z^2\lambda dz\right]dy\\
&=&-\int_{-5R/6}^{5R/6}\eta(y)^2\left[\int_{-\delta R}^{\delta R}W^{\prime\prime}(g_\alpha)|g_\alpha^\prime|^2\chi^2\lambda+g_\alpha^\prime\xi_\alpha^\prime\chi^2\lambda dz\right]dy\\
&&-\int_{-5R/6}^{5R/6}\eta(y)^2\left[\int_{-\delta R}^{\delta R}g_\alpha^\prime g_\alpha^{\prime\prime}\chi^2\lambda_z-|g_\alpha^\prime|^2\chi_z^2\lambda dz\right]dy.
\end{eqnarray*}

In the right hand side, except the first term, other terms can be estimated in the following way.
\begin{itemize}
\item Concerning the second term, because $\xi_\alpha^\prime=O(\varepsilon^3)$,
\[\int_{-5R/6}^{5R/6}\eta(y)^2\left[\int_{-\delta R}^{\delta R}g_\alpha^\prime(y,z)\xi_\alpha^\prime(y,z)\chi(y,z)^2\lambda(y,z)dz\right]dy=O(\varepsilon^2)\int_{-5R/6}^{5R/6}\eta(y)^2dy.\]

\item Concerning the third term, an integration by parts in $z$ leads to
\begin{eqnarray*}
&&-\int_{-5R/6}^{5R/6}\eta(y)^2\left[\int_{-\delta R}^{\delta R}g_\alpha^\prime g_\alpha^{\prime\prime}\chi^2\lambda_z dz\right]dy\\
&=&2\int_{-5R/6}^{5R/6}\eta(y)^2\left[\int_{-\delta R}^{\delta R}|g_\alpha^\prime|^2\chi \chi_z\lambda_z dz\right]dy
+\int_{-5R/6}^{5R/6}\eta(y)^2\left[\int_{-\delta R}^{\delta R}|g_\alpha^\prime|^2\chi^2\lambda_{zz}dz\right]dy\\
&\lesssim&\varepsilon\int_{-5R/6}^{5R/6}\eta(y)^2\left[e^{-2\sqrt{2}\rho_\alpha^+(y)}+e^{2\sqrt{2}\rho_\alpha^-(y)}\right]dy+\varepsilon^2\int_{-5R/6}^{5R/6}\eta(y)^2dy,
\end{eqnarray*}
where in the last line for the first term we have used the same facts as in Subsection \ref{subsec 9.1}  and the estimate
\[\lambda_z=-2\lambda(y,0)H^\alpha(y,0)\left(1-zH^\alpha(y,0)\right)=O(\varepsilon).\]
For the second term we have used the fact that
\[\lambda_{zz}=2H^\alpha(y,0)^2\lambda(y,0)=O(\varepsilon^2).\]

\item By the same reasoning as in Subsection \ref{subsec 9.1},
\[\int_{-5R/6}^{5R/6}\eta(y)^2\left[\int_{-\delta R}^{\delta R}|g_\alpha^\prime|^2\chi_z^2\lambda dz\right]dy\lesssim\frac{1}{L}\int\eta(y)^2\left[e^{-2\sqrt{2}\rho_\alpha^+(y)}+e^{2\sqrt{2}\rho_\alpha^-(y)}\right]dy.\]

\end{itemize}

In conclusion, we get
\begin{eqnarray*}
\int\varphi_z^2\lambda(y,z)dzdy&=&-\int_{-5R/6}^{5R/6}\eta(y)^2\left[\int_{-\delta R}^{\delta R}W^{\prime\prime}(g_\alpha)|g_\alpha^\prime|^2\chi^2\lambda dz\right]dy\\
&+&O(\varepsilon^2)\int\eta(y)^2dy+O\left(\frac{1}{L}+\varepsilon\right)\int\eta(y)^2\left[e^{-2\sqrt{2}\rho_\alpha^+(y)}+e^{2\sqrt{2}\rho_\alpha^-(y)}\right]dy.
\end{eqnarray*}

\medskip

Now the stability condition for $u$ is transformed into
\begin{eqnarray}\label{9.3.1}
0&\leq&C\int_{-5R/6}^{5R/6}\eta^\prime(y)^2dy+C\varepsilon^{2-2\sigma}\int_{-5R/6}^{5R/6}\eta(y)^2dy  +C\left(\frac{1}{L}+\varepsilon\right)\int_{-5R/6}^{5R/6}\eta(y)^2\left[e^{-2\sqrt{2}\rho_\alpha^+(y)}+e^{2\sqrt{2}\rho_\alpha^-(y)}\right]dy\nonumber\\
&+&\int_{-5R/6}^{5R/6}\eta(y)^2\left[\int_{-\delta R}^{\delta R}\left(W^{\prime\prime}(u)-W^{\prime\prime}(g_\alpha)\right)|g_\alpha^\prime|^2\chi^2\lambda dz\right]dy.
\end{eqnarray}
It remains to rewrite the last integral.

\subsection{The interaction part}
Differentiating \eqref{error equation III} in $z$ leads to
\begin{eqnarray}\label{error equation 3}
&&\frac{\partial}{\partial z}\Delta_z^\alpha\phi-\frac{\partial}{\partial z}\left(H^\alpha(y,z)\partial_z^\alpha\phi\right)+\partial^\alpha_{zzz}\phi \nonumber\\
&=&W^{\prime\prime}(u)\left[(-1)^\alpha g_\alpha^\prime+\phi_z+\sum_{\beta\neq\alpha}(-1)^\beta g_\beta^\prime\frac{\partial d_\beta}{\partial z}\right]
-(-1)^\alpha W^{\prime\prime}(g_\alpha)g_\alpha^\prime
-\sum_{\beta\neq\alpha}(-1)^{\beta}
 W^{\prime\prime}(g_\beta)g_\beta^\prime\frac{\partial d_\beta}{\partial z}  \nonumber\\
 &-&(-1)^\alpha\frac{\partial}{\partial z}\left[ g_\alpha^\prime\left(H^\alpha(y,z)+\Delta_z h_\alpha(y)\right)\right]- \frac{\partial}{\partial z}\left( g_\alpha^{\prime\prime}|\nabla_zh_\alpha|^2\right) \\
&-&\sum_{\beta\neq\alpha}\frac{\partial}{\partial z} \left[(-1)^{\beta}g_\beta^\prime\mathcal{R}_{\beta,1}\left(\Pi_\beta(y,z),d_\beta(y,z)\right)+
g_\beta^{\prime\prime}\mathcal{R}_{\beta,2}\left(\Pi_\beta(y,z),d_\beta(y,z)\right)\right]+\sum_{\beta}\frac{\partial\xi_\beta}{\partial z}\nonumber.
\end{eqnarray}

Multiplying this equation by $\eta^2g_\alpha^\prime\chi^2\lambda$ and then integrating in $y$ and $z$ gives
\begin{eqnarray*}
&&\int_{-5R/6}^{5R/6}\eta(y)^2\int_{-\delta R}^{\delta R}\left[\frac{\partial}{\partial z}\Delta_z^\alpha\phi-\frac{\partial}{\partial z}\left(H^\alpha(y,z)\partial_z^\alpha\phi\right)\right]g_\alpha^\prime\chi^2\lambda dzdy+\int_{-5R/6}^{5R/6}\eta(y)^2\left[\int_{-\delta R}^{\delta R}\partial^\alpha_{zzz}\phi g_\alpha^\prime\eta^2\chi^2\lambda dz\right]dy\\
&=&(-1)^\alpha\int_{-5R/6}^{5R/6}\eta(y)^2\int_{-\delta R}^{\delta R}\left[W^{\prime\prime}(u)-W^{\prime\prime}(g_\alpha)\right]|g_\alpha^\prime|^2\chi^2\lambda dzdy+\int_{-5R/6}^{5R/6}\eta(y)^2\left[\int_{-\delta R}^{\delta R} W^{\prime\prime}(u)\phi_zg_\alpha^\prime \chi^2\lambda dz\right]dy\\
&+&\sum_{\beta\neq\alpha}(-1)^\beta \int_{-5R/6}^{5R/6}\eta(y)^2\int_{-\delta R}^{\delta R} \left[W^{\prime\prime}(u)-W^{\prime\prime}(g_\beta)\right]g_\alpha^\prime g_\beta^\prime\frac{\partial d_\beta}{\partial z} \chi^2\lambda dzdy\\
 &-&(-1)^\alpha \int_{-5R/6}^{5R/6}\eta(y)^2\int_{-\delta R}^{\delta R} \frac{\partial}{\partial z}\left[ g_\alpha^\prime\left(H^\alpha(y,z)-\Delta_z h_\alpha(y)\right)\right]g_\alpha^\prime \chi^2\lambda dzdy\\
 &-&  \int_{-5R/6}^{5R/6}\eta(y)^2\left[\int_{-\delta R}^{\delta R} \frac{\partial}{\partial z}\left( g_\alpha^{\prime\prime}|\nabla_zh_\alpha|^2\right) g_\alpha^\prime  \chi^2\lambda dz\right]dy\\
 &-&\sum_{\beta\neq\alpha}(-1)^{\beta}\int_{-5R/6}^{5R/6}\eta(y)^2\int_{-\delta R}^{\delta R} g_\alpha^\prime \chi^2\lambda\frac{\partial}{\partial z} \left[g_\beta^\prime\mathcal{R}_{\beta,1}\left(\Pi_\beta(y,z),d_\beta(y,z)\right)\right]dzdy\\
&-&\sum_{\beta\neq\alpha} \int_{-5R/6}^{5R/6}\eta(y)^2\int_{-\delta R}^{\delta R} g_\alpha^\prime \chi^2\lambda\frac{\partial}{\partial z} \left[
g_\beta^{\prime\prime}\mathcal{R}_{\beta,2}\left(\Pi_\beta(y,z),d_\beta(y,z)\right)\right]dzdy+\sum_{\beta} \int_{-5R/6}^{5R/6}\eta(y)^2\int_{-\delta R}^{\delta R} g_\alpha^\prime \chi^2\lambda\frac{\partial\xi_\beta}{\partial z} dzdy.
\end{eqnarray*}

We need to estimate each term.
\begin{enumerate}
\item Integrating by parts in $z$ leads to
\[\int_{-\delta R}^{\delta R}\frac{\partial}{\partial z}\Delta_z\phi g_\alpha^\prime\chi^2\lambda(y,z)dz\\
=-\int_{-\delta R}^{\delta R}\Delta_z\phi \frac{\partial}{\partial z}\left( g_\alpha^\prime\chi^2\lambda(y,z)\right)dz.
\]
Using \eqref{estimate on derivatives first bound} and the exponential decay of $g_\alpha^\prime$ and $g_\alpha^{\prime\prime}$, we get
\[\Big|\int_{-\delta R}^{\delta R}\frac{\partial}{\partial z}\Delta_z\phi g_\alpha^\prime\chi^2\lambda(y,z)dz\Big|\lesssim \varepsilon^{\frac{7}{6}}.\]

\item Integrating by parts and using the exponential decay of $g_\alpha^\prime$ and $g_\alpha^{\prime\prime}$, we get
\begin{eqnarray*}
\Big|\int_{-\delta R}^{\delta R}\frac{\partial}{\partial z}\left(H^\alpha(y,z)\phi_z\right)g_\alpha^\prime\chi^2\lambda\Big|
=\Big|\int_{-\delta R}^{\delta R}H^\alpha(y,z)\phi_z\frac{\partial}{\partial z}\left(g_\alpha^\prime\chi^2\lambda\right)\Big|
&\lesssim&\varepsilon\sup_{\rho_\alpha^-(y)-L<z<\rho_\alpha^+(y)+L}|\phi_z(y,z)|\\
&\lesssim&\varepsilon^{2-\sigma}.
\end{eqnarray*}

\item
The second term in the left hand side and the second one in the right hand side can be canceled with a remainder term of higher order. More precisely,
\begin{eqnarray*}
&&\int_{-5R/6}^{5R/6}\eta(y)^2\left[\int_{-\delta R}^{\delta R}\phi_{zzz} g_\alpha^\prime\chi^2\lambda dz\right]dy\\
&=&\int_{-5R/6}^{5R/6}\eta(y)^2\left[\int_{-\delta R}^{\delta R}\phi_z \left(g_\alpha^{\prime\prime\prime}\chi^2\lambda +2g_\alpha^\prime \left(\chi_z^2+\chi\chi_{zz}\right)\lambda+g_\alpha^\prime\chi^2\lambda_{zz} \right) dz\right]dy\\
&+&\int_{-5R/6}^{5R/6}\eta(y)^2\left[\int_{-\delta R}^{\delta R}\phi_z\left(4g_\alpha^{\prime\prime}\chi\chi_z\lambda +2g_\alpha^{\prime\prime} \chi^2\lambda_z+4g_\alpha^\prime\chi\chi_z\lambda_z \right)dz\right]dy\\
&=&\int_{-5R/6}^{5R/6}\eta(y)^2\left[\int_{-\delta R}^{\delta R}\phi_z g_\alpha^{\prime\prime\prime}\chi^2\lambda dz+h.o.t.\right]dy\\
&=&\int_{-5R/6}^{5R/6}\eta(y)^2\left[\int_{-\delta R}^{\delta R}W^{\prime\prime}(g_\alpha)\phi_zg_\alpha^\prime\chi^2\lambda dz+h.o.t\right]dy.
\end{eqnarray*}
In the above those higher order terms can be bounded by $O(\varepsilon^{\frac{3}{2}-2\sigma})$. We only show how to prove
\begin{equation}\label{8.2}
\int_{-\delta R}^{\delta R}\phi_zg_\alpha^{\prime\prime}\chi\chi_z\lambda=O(\varepsilon^{\frac{3}{2}-2\sigma}).
\end{equation}
In $\mbox{spt}(\chi_z)$, $|\chi_z|\lesssim L^{-1}$,
\[|g_\alpha^{\prime\prime}|\lesssim e^{-\sqrt{2}|z|}\lesssim e^{-\frac{\sqrt{2}}{2}D_\alpha(y)}+\varepsilon\lesssim \varepsilon^{\frac{1-\sigma}{2}},\]
\[|\phi_z|\lesssim \|\phi\|_{C^{2,1/2}(\mathcal{M}_\alpha(r))}\lesssim\varepsilon^{1-\sigma}.\]
Combining these three estimates we get \eqref{8.2}.
Similarly, other terms are bounded by $O(\varepsilon^2)+O\left(|\nabla \phi(y,z)|_{L^\infty(\mathcal{M}_\alpha)}^2\right)=O(\varepsilon^{2-2\sigma})$.

Next we show that
\[\int_{-\delta R}^{\delta R}\left[W^{\prime\prime}(u)-W^{\prime\prime}(g_\alpha)\right]\phi_zg_\alpha^\prime\chi^2\lambda dz=O\left(\varepsilon^{2-3\sigma}\right).\]
This is because in $\{\chi\neq0\}$,
\[u=g_\alpha+\phi+\sum_{\beta<\alpha} \left(g_\beta-(-1)^\beta\right)+\sum_{\beta>\alpha}\left(g_\beta+(-1)^\beta \right),\]
hence this integral is bounded by
\begin{eqnarray*}
\int_{-\delta R}^{\delta R}\left(|\phi||\phi_z|g_\alpha^\prime\chi^2\lambda+\sum_{\beta\neq\alpha}|\phi_z|g_\beta^\prime g_\alpha^\prime \chi^2\lambda\right) dz
&\lesssim&\|\phi\|_{C^{1,1/2}(\mathcal{M}_\alpha(6R/7))}^2+\sup_{(-r,r)}D_\alpha^2 e^{-2\sqrt{2}D_\alpha}\\
&\lesssim&\varepsilon^{2-3\sigma}.
\end{eqnarray*}

\item In $\{\chi\neq0\}$,
\[W^{\prime\prime}(u)=W^{\prime\prime}(g_\alpha)+O(|\phi|)+\sum_{\beta\neq\alpha}O\left(g_\beta^\prime\right),\]
and for $\beta\neq\alpha$,
\[W^{\prime\prime}(g_\beta)=W^{\prime\prime}(1)+O\left(g_\beta^\prime\right).\]
Hence
\[\int_{-\delta R}^{\delta R} \left[W^{\prime\prime}(u)-W^{\prime\prime}(g_\beta)\right]g_\alpha^\prime g_\beta^\prime\frac{\partial d_\beta}{\partial z} \chi^2\lambda dz
=\int_{-\delta R}^{\delta R} \left[W^{\prime\prime}(g_\alpha)-W^{\prime\prime}(1)\right]g_\alpha^\prime g_\beta^\prime\lambda(y,0) dz+h.o.t.,
\]
where higher order terms are controlled by
\begin{eqnarray*}
&&\int_{-\delta R}^{\delta R}|\phi|g_\alpha^\prime g_\beta^\prime\chi^2+\sum_{\beta\neq\alpha}\int_{-\delta R}^{\delta R}g_\alpha^\prime \big|g_\beta^\prime\big|^2\chi^2+\sum_{\beta\neq\alpha}\int_{-\delta R}^{\delta R} \Big|g_\alpha^\prime\Big|^2 g_\beta^\prime\left(1-\chi^2\right)\\
&+&\Big\|\frac{\partial d_\beta}{\partial z}-1\Big\|_{L^\infty(\mathcal{M}_\alpha(r))}\int_{-\delta R}^{\delta R}\big|g_\alpha^\prime\big|^2g_\beta^\prime\chi^2+\varepsilon\int_{-\delta R}^{\delta R}|z|\big|g_\alpha^\prime\big|^2 g_\beta^\prime\chi^2dz\\
&\lesssim&\varepsilon^{1-\sigma} D_\alpha(y)e^{-\sqrt{2}D_\alpha(y)}+e^{-\frac{3\sqrt{2}}{2}D_\alpha(y)}+\varepsilon e^{-\sqrt{2}D_\alpha(y)}\\
&\lesssim&\varepsilon^{\frac{3}{2}-3\sigma}.
\end{eqnarray*}

\item
Integrating by parts gives
\begin{eqnarray*}
&&\int_{-\delta R}^{\delta R} \frac{\partial}{\partial z}\left[ g_\alpha^\prime\left(H^\alpha(y,z)+\Delta_z h_\alpha(y)\right)\right]g_\alpha^\prime \chi^2\lambda dz\\
&=&-\int_{-\delta R}^{\delta R}  g_\alpha^\prime\left[H^\alpha(y,z)+\Delta_z h_\alpha(y)\right]\left[g_\alpha^{\prime\prime} \chi^2\lambda+2g_\alpha^\prime\chi\chi_z\lambda+g_\alpha^\prime\chi^2\lambda_z\right] dz\\
&=& \frac{1}{2}\int_{-\delta R}^{\delta R} \big|g_\alpha^\prime\big|^2\frac{\partial}{\partial z}\left[\left(H^\alpha(y,z)+\Delta_z h_\alpha(y)\right)\chi^2\lambda\right]\\
&-&2\int_{-\delta R}^{\delta R}\big|g_\alpha^\prime\big|^2\left[H^\alpha(y,z)+\Delta_z h_\alpha(y)\right]\chi\chi_z\lambda-\int_{-\delta R}^{\delta R} \big|g_\alpha^\prime\big|^2\left[H^\alpha(y,z)+\Delta_z h_\alpha(y)\right]\chi^2\lambda_zdz\\
&=& \frac{1}{2}\int_{-\delta R}^{\delta R} \big|g_\alpha^\prime\big|^2\chi^2\lambda\frac{\partial}{\partial z}\left[H^\alpha(y,z)+\Delta_z h_\alpha(y)\right]\\
&-&\int_{-\delta R}^{\delta R}\big|g_\alpha^\prime\big|^2\left[H^\alpha(y,z)+\Delta_z h_\alpha(y)\right]\chi\chi_z\lambda-\frac{1}{2}\int_{-\delta R}^{\delta R} \big|g_\alpha^\prime\big|^2\left[H^\alpha(y,z)+\Delta_z h_\alpha(y)\right]\chi^2\lambda_zdz.
\end{eqnarray*}

Because
\[\frac{\partial}{\partial z}H^\alpha(y,z)=\frac{H^\alpha(y,0)^2}{1-zH^\alpha(y,0)}=O\left(\varepsilon^2\right),\]
\[
\big|\frac{\partial}{\partial z}\Delta_zh_\alpha(y)\big|\lesssim\varepsilon \left(\big|h_\alpha^{\prime\prime}(y)\big|+\big|h_\alpha^\prime(y)\big|\right)\lesssim\varepsilon^{2-\sigma} ,\]
the first integral is bounded by $O\left(\varepsilon^{2-2\sigma}\right)$.

In $\{\chi_z\neq0\}$,
\[\big|g_\alpha^\prime\big|^2\lesssim\varepsilon^2+e^{-\sqrt{2}D_\alpha}\lesssim \varepsilon^{1-\sigma}.\]
Because
\[H^\alpha(y,z)+\Delta_z h_\alpha(y)=O(\varepsilon^{1-\sigma}),\]
the second integral is bounded by $O(\varepsilon^{2-2\sigma})$.

Because $\lambda_z=O(\varepsilon)$, the third integral is bounded by $O(\varepsilon^{2-\sigma})$.

\item Integrating by parts in $z$ and using \eqref{estimate first bound} leads to
\begin{eqnarray*}
\int_{-\delta R}^{\delta R} \frac{\partial}{\partial z}\left( g_\alpha^{\prime\prime}|\nabla_zh_\alpha|^2\right) g_\alpha^\prime  \chi^2\lambda dz=-\int_{-\delta R}^{\delta R}  g_\alpha^{\prime\prime}|\nabla_zh_\alpha|^2 \frac{\partial}{\partial z}\left(g_\alpha^\prime  \chi^2\lambda \right)dz
&\lesssim&|h_\alpha^\prime(y)|^2\\
&\lesssim&\varepsilon^{2-2\sigma}.
\end{eqnarray*}

\item
For $\beta\neq\alpha$,
\[\int_{-\delta R}^{\delta R} g_\alpha^\prime \chi^2\lambda\frac{\partial}{\partial z} \left[g_\beta^\prime\mathcal{R}_{\beta,1}\left(\Pi_\beta(y,z),d_\beta(y,z)\right)\right]dz=-\int_{-\delta R}^{\delta R} \frac{\partial}{\partial z} \left[g_\alpha^\prime \chi^2\lambda\right]g_\beta^\prime\mathcal{R}_{\beta,1}\left(\Pi_\beta(y,z),d_\beta(y,z)\right)dz.
\]
Because
\begin{eqnarray*}
\Big|\mathcal{R}_{\beta,1}\left(\Pi_\beta(y,z),d_\beta(y,z)\right)\Big|\lesssim\varepsilon^{1-\sigma},
\end{eqnarray*}
the above integral can be controlled by
\[
\varepsilon^{1-\sigma}\int_{-\delta R}^{\delta R}\chi e^{-\sqrt{2}\left(|d_\alpha(y,z)|+|d_\beta(y,z)|\right)}dz
\lesssim \varepsilon^{1-\sigma}D_\alpha(y)e^{-\sqrt{2}D_\alpha(y)}
\lesssim \varepsilon^{2-3\sigma}.
\]

\item Similarly, because $|\mathcal{R}_{\beta,2}|\lesssim\varepsilon^{2-2\sigma}$,
we have
\begin{eqnarray*}
\int_{-\delta R}^{\delta R} g_\alpha^\prime \chi^2\lambda\frac{\partial}{\partial z} \left[g_\beta^{\prime\prime}\mathcal{R}_{\beta,2}\left(\Pi_\beta(y,z),d_\beta(y,z)\right)\right]dz
&=&-\int_{-\delta R}^{\delta R} \frac{\partial}{\partial z} \left[g_\alpha^\prime \chi^2\lambda\right]g_\beta^{\prime\prime}\mathcal{R}_{\beta,2}\left(\Pi_\beta(y,z),d_\beta(y,z)\right)dz\\
&=&O(\varepsilon^{2-2\sigma}).
\end{eqnarray*}

\item Finally, by the defintion of $\xi_\beta$,
\[ \int_{-5R/6}^{5R/6}\eta(y)^2\int_{-\delta R}^{\delta R} g_\alpha^\prime \chi^2\lambda\frac{\partial\xi_\beta}{\partial z} dzdy=O(\varepsilon^2) \int_{-5R/6}^{5R/6}\eta(y)^2dy.\]

\end{enumerate}

\medskip

Combining all of these estimates together, we obtain
\begin{eqnarray*}
&&\int_{-5R/6}^{5R/6}\eta(y)^2\left[\int_{-\delta R}^{\delta R}\left[W^{\prime\prime}(u)-W^{\prime\prime}(g_\alpha)\right]|g_\alpha^\prime|^2\chi^2\lambda dz\right]dy\\
&=&\sum_{\beta\neq\alpha}\int_{-5R/6}^{5R/6}\eta(y)^2\left[\int_{-\delta R}^{\delta R} \left[W^{\prime\prime}(g_\alpha)-W^{\prime\prime}(1)\right]g_\alpha^\prime g_\beta^\prime dz\right]\lambda(y,0) dy
+O(\varepsilon^{\frac{4}{3}})\int_{-5R/6}^{5R/6}\eta(y)^2dy.
\end{eqnarray*}
The last integral can be computed by applying Lemma \ref{lem form of interaction}, which leads to
\begin{eqnarray}\label{interaction part}
&&\int_{-5R/6}^{5R/6}\eta(y)^2\left[\int_{-\delta R}^{\delta R}\left[W^{\prime\prime}(u)-W^{\prime\prime}(g_\alpha)\right]|g_\alpha^\prime|^2\chi^2\lambda dz\right]dy  \\
&=&-4\int_{-5R/6}^{5R/6}\eta(y)^2\left[A_{(-1)^{\alpha-1}}^2e^{-\sqrt{2}d_{\alpha-1}(y,0)}+A_{(-1)^{\alpha}}^2e^{\sqrt{2}d_{\alpha+1}(y,0)}\right]\lambda(y,0) dy+O(\varepsilon^{\frac{4}{3}})\int_{-5R/6}^{5R/6}\eta(y)^2dy. \nonumber
\end{eqnarray}

\subsection{A stability condition for the Toda system}
Substituting \eqref{interaction part} into \eqref{9.3.1} we get
\begin{eqnarray}\label{9.3.2}
0&\leq&C\int_{-5R/6}^{5R/6}\eta^\prime(y)^2dy+C\varepsilon^{\frac{4}{3}}\int_{-5R/6}^{5R/6}\eta(y)^2dy   +C\left(\frac{1}{L}+\varepsilon\right)\int_{-5R/6}^{5R/6}\eta(y)^2\left[e^{-2\sqrt{2}\rho_\alpha^+(y)}+e^{2\sqrt{2}\rho_\alpha^-(y)}\right]dy\nonumber\\
&-&c\int_{-5R/6}^{5R/6}\eta(y)^2\left[A_{(-1)^{\alpha-1}}^2e^{-\sqrt{2}d_{\alpha-1}(y,0)}+A_{(-1)^{\alpha}}^2e^{\sqrt{2}d_{\alpha+1}(y,0)}\right]\lambda(y,0) dy.
\end{eqnarray}
First we have the following estimates.
Because $d_\alpha(y,\rho_\alpha^+(y))=d_{\alpha+1}(y,\rho_\alpha^+(y))$, if they are smaller than $\sqrt{2}|\log\varepsilon|$, by Lemma \ref{comparison of distances},
\[d_\alpha(y,\rho_\alpha^+(y))=d_{\alpha+1}(y,\rho_\alpha^+(y))=-\frac{1}{2}d_{\alpha+1}(y,0)+o(1).\]
Hence
\[e^{-2\sqrt{2}\rho_\alpha^+(y)}\lesssim \varepsilon^2+e^{\sqrt{2}d_{\alpha+1}(y,0)}.\]
A similar estimate holds for $e^{2\sqrt{2}\rho_\alpha^-(y)}$. From these we deduce that
\[\int_{-5R/6}^{5R/6}\eta(y)^2\left[e^{-2\sqrt{2}\rho_\alpha^+(y)}+e^{2\sqrt{2}\rho_\alpha^-(y)}\right]dy\leq C\varepsilon^2\int_{-5R/6}^{5R/6}\eta(y)^2dy+C\int_{-5R/6}^{5R/6}\eta(y)^2e^{-\sqrt{2}D_\alpha(y)}dy.
\]

Substituting these estimates into \eqref{9.3.2} leads to
\begin{eqnarray}\label{9.3.3}
&&\left(c-\frac{C}{L} \right)\int_{-5R/6}^{5R/6}\eta(y)^2\left[A_{(-1)^{\alpha-1}}^2e^{-\sqrt{2}d_{\alpha-1}(y,0)}+A_{(-1)^{\alpha}}^2e^{\sqrt{2}d_{\alpha+1}(y,0)}\right]\lambda(y,0) dy\\
&\leq&C\int_{-5R/6}^{5R/6}\eta^\prime(y)^2dy+C\varepsilon^{\frac{4}{3}}\int_{-5R/6}^{5R/6}\eta(y)^2dy   \nonumber.
\end{eqnarray}

By choosing $L$ large enough, we get
\begin{equation}\label{reduction of stability}
\int_{-5R/6}^{5R/6}\eta(y)^2\left[e^{-\sqrt{2}d_{\alpha-1}(y,0)}+e^{\sqrt{2}d_{\alpha+1}(y,0)}\right] dy\leq C\int_{-5R/6}^{5R/6}\eta^\prime(y)^2dy+C\varepsilon^{\frac{4}{3}}\int_{-5R/6}^{5R/6}\eta(y)^2dy.
\end{equation}

\begin{rmk}
With a little more work and passing to the blow up limit as in Remark \ref{rmk 11.1}, we get exactly the stability condition for the Toda system \eqref{Toda entire}.
\end{rmk}

\section{Proof of Theorem \ref{second order estimate}}\label{sec completion of proof}
\setcounter{equation}{0}

In this section we prove
\begin{prop}\label{improved distance}
For any $\alpha$ and $y\in(-R/2,R/2)$, if $|f_\alpha(y)|<2\delta R$, then
\[D_{\alpha}(y)\geq\frac{4\sqrt{2}}{7}|\log\varepsilon|.\]
\end{prop}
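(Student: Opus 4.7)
I argue by contradiction. Suppose there exist $\alpha_0$ and $y_0 \in (-R/2, R/2)$ with $|f_{\alpha_0}(y_0)| < 2\delta R$ and $D_{\alpha_0}(y_0) < \tfrac{4\sqrt{2}}{7}|\log\varepsilon|$, and introduce the interaction quantity
\[
W(y) := e^{-\sqrt{2}\, d_{\alpha_0-1}(y, f_{\alpha_0}(y))} + e^{-\sqrt{2}\, d_{\alpha_0+1}(y, f_{\alpha_0}(y))}.
\]
The contradiction hypothesis reads $W(y_0) > \varepsilon^{8/7}$. The plan is to derive a Choi--Schoen type pointwise upper bound for $W$ by combining the reduced stability inequality \eqref{reduction of stability} with a Simons-type differential inequality for $W$ coming from the Toda system \eqref{Toda system III}; this will force $W(y_0) \lesssim \varepsilon^{8/7}$ and yield the contradiction.

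The first step is to establish the differential inequality
\[
-W''(y) \leq C\, W(y)^2 + C\varepsilon^{7/6}\, W(y)^{1/2}
\qquad \text{on } \left(-\tfrac{5R}{6}, \tfrac{5R}{6}\right).
\]
Setting $\psi^\pm(y) := d_{\alpha_0 \pm 1}(y, f_{\alpha_0}(y))$, direct differentiation gives $(e^{-\sqrt{2}\psi^\pm})'' = -\sqrt{2}(\psi^\pm)'' e^{-\sqrt{2}\psi^\pm} + 2((\psi^\pm)')^2 e^{-\sqrt{2}\psi^\pm}$. The quantity $(\psi^\pm)''$ is controlled by $f_{\alpha_0 \pm 1}'' - f_{\alpha_0}''$ after translating between Fermi charts via Lemma \ref{comparison of distances}, and both $f_{\alpha_0 \pm 1}''$ and $f_{\alpha_0}''$ are expressed by \eqref{Toda system III} as linear combinations of exponential interaction terms (dominated by $W$) plus $O(\varepsilon^{7/6})$. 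The gradient term $((\psi^\pm)')^2 e^{-\sqrt{2}\psi^\pm}$ is a nonnegative contribution and can be dropped from the lower bound.

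Second, the reduced stability \eqref{reduction of stability}, summed over the indices adjacent to $\alpha_0$, yields
\[
\int \eta(y)^2\, W(y)\, dy \leq C \int \eta'(y)^2\, dy + C\varepsilon^{4/3} \int \eta(y)^2\, dy
\]
for every $\eta \in C_0^\infty(-5R/6, 5R/6)$. This is the Choi--Schoen setup: a nonnegative function obeying a quadratic subsolution inequality together with an integral stability bound. Standard Moser iteration (particularly simple in dimension $n-1 = 1$) then delivers the pointwise mean value estimate
\[
W(y_0) \leq \frac{C}{r^2} + C\varepsilon^{4/3}
\]
for every $r$ with $[y_0 - r, y_0 + r] \subset (-5R/6, 5R/6)$. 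Choosing $r = \varepsilon^{-4/7}$ (admissible since $R = \varepsilon^{-1}$ and $4/7 < 1$), and using $\varepsilon^{4/3} \leq \varepsilon^{8/7}$ (because $4/3 > 8/7$), one obtains $W(y_0) \leq C\varepsilon^{8/7}$, contradicting $W(y_0) > \varepsilon^{8/7}$ for all $\varepsilon$ sufficiently small.

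The principal obstacle is the rigorous execution of the first step. The quantity $W$ couples three consecutive interfaces $\Gamma_{\alpha_0 - 1}, \Gamma_{\alpha_0}, \Gamma_{\alpha_0 + 1}$, so differentiating it requires careful coordinate conversions between their Fermi charts, together with tracking how the various higher-order error terms---the $O(\varepsilon^{7/6})$ remainder in \eqref{Toda system III}, the quadratic-in-$\phi$ contributions, and corrections involving $h_\alpha'$ and $h_\alpha''$ controlled by Lemma \ref{control on h_0}---combine with the exponential factors. These corrections must be absorbed into genuinely subdominant terms so as not to spoil either the Moser iteration or the precise numerology of the $\varepsilon^{8/7}$ threshold. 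A secondary technical point is verifying that the constants appearing in \eqref{reduction of stability} are compatible with the absorption needed in the Moser iteration; this should be arranged by first aggregating the stability inequalities for the indices adjacent to $\alpha_0$, then choosing the iteration exponent small enough.
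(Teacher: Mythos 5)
Your strategy is essentially the one the paper uses: derive a Simons-type subsolution inequality for the interaction term from the Toda system \eqref{Toda system III}, combine it with the reduced stability inequality \eqref{reduction of stability}, and close by a Choi--Schoen small-energy estimate. The difference in packaging is minor. You propose a one-shot mean-value bound at the scale $r=\varepsilon^{-4/7}$, whereas the paper isolates one Choi--Schoen application as a decay lemma (Lemma~\ref{decay estimate}) at the scale $\epsilon^{-1/2}$ with $\epsilon=A_0(r)$ and then iterates it across $(R/2,4R/5)$; under the contradiction hypothesis $A_0\geq\varepsilon^{8/7}$ these two scales coincide. The iteration is the paper's way of avoiding constant-chasing, but the single application you propose does suffice once the aspect-ratio parameter is introduced (see below).

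There is, however, a genuine gap in the core differential inequality, precisely at the step you flag as the principal obstacle. You assert that $f_{\alpha_0\pm1}''$ is expressed by \eqref{Toda system III} as a combination of exponential interaction terms dominated by $W$. This is not correct: $f_{\alpha_0+1}''$ also contains the interaction with $\Gamma_{\alpha_0+2}$, a term proportional to $e^{-\sqrt{2}(f_{\alpha_0+2}-f_{\alpha_0+1})}$, and this quantity is in general \emph{not} controlled by $W=e^{-\sqrt{2}\psi^+}+e^{-\sqrt{2}\psi^-}$ (the outer interfaces $\Gamma_{\alpha_0\pm1}$ may be far closer to $\Gamma_{\alpha_0\pm2}$ than to $\Gamma_{\alpha_0}$, so that interaction is uncontrolled from above). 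What rescues the argument is the \emph{sign} of this term: the repulsion from $\Gamma_{\alpha_0+2}$ pushes $f_{\alpha_0+1}''$ downward, so only the one-sided bounds $f_{\alpha_0+1}''\lesssim e^{-\sqrt{2}\psi^+}+\varepsilon^{7/6}$ and $f_{\alpha_0-1}''\gtrsim-e^{-\sqrt{2}\psi^-}-\varepsilon^{7/6}$ hold; these are exactly \eqref{assumption on second derivative 3} and \eqref{assumption on second derivative 2} in the paper. Since you want an upper bound on $\sqrt{2}(\psi^\pm)''e^{-\sqrt{2}\psi^\pm}$ and have already dropped the nonpositive gradient terms, the one-sided inequalities are precisely what is needed, but they must be stated and used as such; a two-sided ``dominated by $W$'' bound fails. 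Two smaller points: the error term in your target inequality should be $C\varepsilon^{7/6}W$, not $C\varepsilon^{7/6}W^{1/2}$ (it comes from multiplying the $O(\varepsilon^{7/6})$ remainder of \eqref{Toda system III} by $e^{-\sqrt{2}\psi^\pm}\leq W$); and the constant-compatibility worry you raise at the end is real and is resolved by choosing the test function in \eqref{reduction of stability} with a large aspect ratio, supported on $[-Kr,Kr]$ with $|\eta'|\lesssim(Kr)^{-1}$, which supplies the extra factor $1/K$ that pushes the rescaled energy below the Choi--Schoen threshold $\eta_\ast$ before one lets $\varepsilon\to0$.
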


\medskip

First let us use this proposition to prove Theorem \ref{second order estimate}.
\begin{proof}[Proof of Theorem \ref{second order estimate}]
Substituting Proposition \ref{improved distance}
 into \eqref{estimate 2}, we get
\begin{equation}\label{estimate 3}
\|\phi\|_{C^{2,1/2}(\mathcal{M}_\alpha(R/2))}+\|H^\alpha+\Delta_0^\alpha h_\alpha\|_{C^{1/2}(B_{R/2})}
\lesssim\varepsilon^{8/7}.
\end{equation}
By Lemma \ref{control on h_0},
\[\|H^\alpha\|_{L^\infty(-R/2,R/2)}\lesssim\|\phi\|_{C^{2,1/2}(\mathcal{M}_\alpha(R/2))}+\|H^\alpha+\Delta_0^\alpha h_\alpha\|_{C^{1/2}(B_{R/2})}\lesssim\varepsilon^{8/7}.\]
Then for any $|y|<R/2$ and $|z|<\delta R$,
\[|H^\alpha(y,z)|\lesssim |H^\alpha(y,0)|\lesssim\varepsilon^{8/7}.\]

In $\mathcal{M}_\alpha(R/2)$,
\[
\nabla u=(-1)^\alpha g_\alpha^\prime\left(\frac{\partial}{\partial z}-h_\alpha^\prime(y)\frac{\partial}{\partial y}\right)+\nabla\phi
+\sum_{\beta\neq\alpha}(-1)^\beta g_\beta^\prime\left(\nabla d_\beta-h_\beta^\prime(\Pi_\beta(y,z))\nabla\Pi_\beta(y,z)\right),
\]
\begin{eqnarray*}
\nabla^2 u&=&-(-1)^\alpha g_\alpha^\prime  h_\alpha^{\prime\prime}(y)\frac{\partial}{\partial y}\otimes \frac{\partial}{\partial y}-(-1)^\alpha g_\alpha^\prime  h_\alpha^\prime(y)\nabla\frac{\partial}{\partial y}+(-1)^\alpha g_\alpha^\prime\nabla\frac{\partial}{\partial z}\\
&+&  g_\alpha^{\prime\prime}\left(\frac{\partial}{\partial z}-h_\alpha^\prime(y)\frac{\partial}{\partial y}\right)\otimes\left(\frac{\partial}{\partial z}-h_\alpha^\prime(y)\frac{\partial}{\partial y}\right)\\
&+&\sum_{\beta\neq\alpha}(-1)^\beta g_\beta^\prime(y,z)(\Pi_\beta(y,z))\mathcal{R}_{\beta,3}+\sum_{\beta\neq\alpha}  g_\beta^{\prime\prime}(y,z)\mathcal{R}_{\beta,4}+\nabla^2\phi,
\end{eqnarray*}
where in the Fermi coordinates with respect to $\Gamma_\beta$,
\[\mathcal{R}_{\beta,3}=-h_\beta^{\prime\prime}\frac{\partial}{\partial y}\otimes \frac{\partial}{\partial y}-h_\beta^\prime\nabla\frac{\partial}{\partial y}+\nabla\frac{\partial}{\partial z},\]
\[\mathcal{R}_{\beta,4}=\left(\frac{\partial}{\partial z}-h_\alpha^\prime(y)\frac{\partial}{\partial y}\right)\otimes\left(\frac{\partial}{\partial z}-h_\alpha^\prime(y)\frac{\partial}{\partial y}\right).\]

By the estimates on $\phi$ and $h$ we obtain
\[\nabla u=(-1)^\alpha g_\alpha^\prime\frac{\partial}{\partial z}+O\left(\varepsilon^{8/7}\right),\]
\[\nabla^2 u=(-1)^\alpha g_\alpha^{\prime\prime}\frac{\partial}{\partial z}\otimes \frac{\partial}{\partial z}+O\left(\varepsilon^{8/7}\right).\]

Using these forms we obtain, for any $L>0$, in $\mathcal{M}_0(R/2)\cap\{|z|<L\}$
\[\frac{|\nabla^2u|^2-|\nabla|\nabla u||^2}{|\nabla u|^2}\leq C(L)\varepsilon^{16/7}.\]
For any $b\in(0,1)$, there exists an $L(b)$ such that $\mathcal{M}_0(R/2)\cap\{|u|<1-b\}\subset\mathcal{M}_0(R/2)\cap\{|z|<L(b)\}$. Therefore this estimates holds for this domain, too.

After a rescaling we obtain $|B(u_\varepsilon)|\leq C\varepsilon^{1/7}$ in $\{|u_\varepsilon|<1-b\}\cap\{|x_1|<1/2, |x_2|<1/2\}$, and Theorem \ref{second order estimate} is proven.
\end{proof}

Recalling the definition of $A_\alpha(r)$ and $D_\alpha(y)$ in Section \ref{sec Fermi coordinates} and Section \ref{sec lower bound O(ep)}.
Note that $A_\alpha(r)$ is non-decreasing in $r$ while $D_\alpha(r)$ is non-increasing in $r$.

To prove Proposition \ref{improved distance}, we assume $\alpha=0$ and by the contrary
\begin{equation}\label{absurd assumtion}
A_0(R/2)\geq\varepsilon^{8/7}.
\end{equation}
This implies that for any $r\in[R/2,4R/5]$, $A_0(r)\geq\varepsilon^{8/7}$.

We will establish the following decay estimate.
\begin{lem}\label{decay estimate}
There exists a constant $K$ such that for any $r\in [R/2,4R/5]$, we have
\[A_0\left(r-KR^{\frac{4}{7}}\right)\leq\frac{1}{2}A_0(r).\]
\end{lem}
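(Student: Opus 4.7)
The decay estimate is designed to be iterated: applying it $\sim \varepsilon^{-3/7}/K$ times starting from $r=R/2$ produces the contradiction $A_0(R/2)\leq 2^{-c\varepsilon^{-3/7}}\ll \varepsilon^{8/7}$ against the absurd hypothesis \eqref{absurd assumtion}, so it suffices to prove the single-step halving. The plan is a one-dimensional Choi--Schoen-type argument combining two earlier ingredients: (a) the reduced stability inequality \eqref{reduction of stability}
\[
\int \eta^2 u\, dy \;\leq\; C\int (\eta')^2\, dy + C\varepsilon^{4/3}\int \eta^2\, dy, \qquad u(y):= e^{-\sqrt{2}d_{-1}(y,0)}+e^{\sqrt{2}d_1(y,0)},
\]
obtained in Section \ref{sec reduction of stability} by inserting $\eta(y)g_\alpha'(y,z)\chi(y,z)$ into the Jacobi form of $u_\varepsilon$; and (b) a Simons-type pointwise subsolution inequality $u''\geq -CA_0(r)\,u$ coming from the approximate Toda system \eqref{Toda system III}.

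I would establish (b) as follows. By the chain rule $u''\geq -\sqrt{2}(|d_{-1}''|+|d_1''|)u$, since the quadratic terms $2(d_{\pm 1}')^2 u$ are nonnegative. Lemma \ref{comparison of distances} gives $d_{-1}(y,0)=f_0(y)-f_{-1}(y)+O(\varepsilon^{1/2}|\log\varepsilon|^{3/2})$, so $d_{-1}''\approx f_0''-f_{-1}''$ (and similarly for $d_1''$). Applying \eqref{Toda system III} to $\alpha=-1,0,1$ controls each $f_\alpha''$ by $C(v_{\alpha-1}+w_{\alpha+1})+O(\varepsilon^{7/6})\leq CA_0(r)+O(\varepsilon^{7/6})$. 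Under the absurd hypothesis $A_0(r)\geq A_0(R/2)\geq\varepsilon^{8/7}\gg\varepsilon^{7/6}$ (using $8/7<7/6$), the $O(\varepsilon^{7/6})$ error is absorbed into the main term, yielding (b).

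With (a) and (b) in hand, fix $y_0$ with $|y_0|\leq r-KR^{4/7}$, set $\Delta:=KR^{4/7}$, and let $y^*\in[y_0-\Delta/2, y_0+\Delta/2]$ attain $M^*:=\max u$ there, so $M^*\geq u(y_0)$ and $u'(y^*)=0$ in the interior case (the boundary case is handled by a one-sided hat). The subsolution inequality with $u\leq A_0(r)$ and $u'(y^*)=0$ gives by Taylor
\[
u(y^*+t)\geq M^*/2 \quad \text{for } |t|\leq \ell:=1/\sqrt{CA_0(r)},
\]
and $\ell\lesssim 1\ll \Delta$. Plugging the standard hat function $\eta$ of radius $\Delta/2$ centered at $y^*$ (whose support is contained in $[-r,r]$) into (a) gives
\[
\frac{c\,M^*}{\sqrt{A_0(r)}} \;\leq\; \int\eta^2 u \;\leq\; \frac{C}{\Delta}+C\varepsilon^{4/3}\Delta = \frac{C\varepsilon^{4/7}}{K}+CK\varepsilon^{16/21}.
\]
For $K$ universal and $\varepsilon$ small, the first term dominates (since $4/7=12/21<16/21$), so $M^*\leq (C'/K)\varepsilon^{4/7}\sqrt{A_0(r)}$. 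Taking $K$ larger than a universal constant and using $A_0(r)\geq \varepsilon^{8/7}$ yields $u(y_0)\leq M^*\leq A_0(r)/2$; taking the supremum in $y_0$ gives $A_0(r-KR^{4/7})\leq A_0(r)/2$, as claimed. (The choice $\rho\sim \varepsilon^{-4/7}$ is precisely the one balancing $\varepsilon^{4/7}\sqrt{A_0(r)}\sim A_0(r)$ at the threshold $A_0(r)\sim \varepsilon^{8/7}$, which is why this exponent appears in both Lemma \ref{decay estimate} and Proposition \ref{improved distance}.)

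The main obstacle is the rigorous derivation of the subsolution inequality with the correct coefficient $CA_0(r)$ (rather than a weaker $C\varepsilon$ or $C\sqrt{A_0(r)}$), \emph{uniformly} across the interval. This requires carefully tracking errors from the approximation in Lemma \ref{comparison of distances}, from the $O(\varepsilon^{7/6})$ remainders of \eqref{Toda system III}, and from the finite but nontrivial interaction with the non-neighbor components $\Gamma_{\pm 2}$, then verifying each error is dominated by the main term under $A_0(r)\geq\varepsilon^{8/7}$. A secondary technical point is the boundary case when $y^*$ sits at the endpoint of $[y_0-\Delta/2, y_0+\Delta/2]$, handled by replacing the symmetric hat with a one-sided tent function.
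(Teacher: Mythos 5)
Your overall strategy coincides with the paper's: combine the reduced stability inequality \eqref{reduction of stability} with a Simons-type differential inequality for the interaction exponentials, and run a one-dimensional Choi--Schoen argument (interior maximum, cosine-type plateau, hat-function test) to pass from $L^1$ to pointwise smallness. The iteration count and the origin of the exponent $4/7$ are also identified correctly. The critical gap is the subsolution inequality (b). You differentiate $u(y)=e^{-\sqrt 2 d_{-1}(y,0)}+e^{\sqrt 2 d_1(y,0)}$ and need the one-sided bounds $d_{-1}''(\cdot,0)\leq CA_0(r)$ and $d_1''(\cdot,0)\geq -CA_0(r)$; but the step ``Lemma \ref{comparison of distances} gives $d_{-1}(y,0)=f_0(y)-f_{-1}(y)+O(\varepsilon^{1/2}|\log\varepsilon|^{3/2})$, so $d_{-1}''\approx f_0''-f_{-1}''$'' is not a valid inference: a $C^0$-error on a function says nothing about its second derivative. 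This is exactly why the paper never twice differentiates a distance function. Instead it rescales $\tilde f_\alpha(y'):=f_\alpha(\epsilon^{-1/2}y')-\alpha\tfrac{\sqrt 2}{2}|\log\epsilon|$ with $\epsilon=A_0(r)$ and writes the Simons inequality for $\phi:=e^{-\sqrt 2(\tilde f_1-\tilde f_0)}$, whose second derivative is controlled \emph{directly} by the Toda system \eqref{Toda system III} via \eqref{exponential equation}; the $C^0$-level Lemma \ref{comparison of distance 2} is used only to carry the integral bound from \eqref{reduction of stability} over to $\phi$. If you insist on differentiating $u$, you must compute $d_{-1}''$ through the chain rule and the Hessian of the signed distance, $d_{-1}'' = P'^T(\nabla^2 d_{-1})P' + \nabla d_{-1}\cdot P''$, $P=(y,f_0(y))$, where $\nabla^2 d_{-1}$ carries the second fundamental form of the parallel level set $\kappa_{-1}/(1-d_{-1}\kappa_{-1})$; the remainders come out $O(\varepsilon^2|\log\varepsilon|)\ll A_0(r)$, so (b) is ultimately true, but this bookkeeping is the whole content of the step and your sketch does not supply it.

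Two smaller misstatements should be flagged. The two-sided bound ``$|f_\alpha''|\leq CA_0(r)+O(\varepsilon^{7/6})$ for $\alpha=-1,0,1$'' is false for $\alpha=\pm 1$: the right-hand side of \eqref{Toda system III} for $f_{-1}$ also involves the interaction with $\Gamma_{-2}$, measured by $A_{-1}(r)$, which can be much larger than $A_0(r)$. Only the one-sided bounds $f_1''\leq CA_0(r)$ and $f_{-1}''\geq -CA_0(r)$ hold, and this is precisely what the paper's \eqref{assumption on second derivative 2}--\eqref{assumption on second derivative 3} record; fortunately your argument only uses the one-sided bounds, so this is repairable, but as written it is wrong. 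Also: $\ell=1/\sqrt{CA_0(r)}\sim A_0(r)^{-1/2}\gtrsim \varepsilon^{-1/2}$ is \emph{large}, not $\lesssim 1$ as asserted; what you actually need is $\ell\leq\Delta/4$, which holds for $K$ large exactly because $A_0(r)\geq\varepsilon^{8/7}$, and this is the real place where $4/7$ enters. Finally, a one-sided tent does not resolve the boundary-maximum case, since the cosine comparison in (b) uses $u'(y^*)=0$; the standard fix is to maximize a weighted quantity such as $(\Delta/2-|y-y_0|)^2u(y)$, which vanishes at the endpoints and forces an interior critical point.
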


An iteration of this decay estimate from $r=4R/5$ to $R/2$ leads to
\[A_0(R/2)\leq 2^{-CK^{-1}R^{-4/7}}A_0(4R/5)\leq Ce^{-c\varepsilon^{-4/7}}\ll\varepsilon^2.\]
This is a contradiction with the assumption \eqref{absurd assumtion}. Thus we finish the proof of Proposition \ref{improved distance}, provided that Lemma \ref{decay estimate} holds true.

\medskip

Now let us prove Lemma \ref{decay estimate}. Fix an $r\in[R/2,4R/5]$ and denote $\epsilon:=A_0(r)$.
We will prove
\begin{equation}\label{decay estimate 2}
A_0\left(r-K\epsilon^{-1/2}\right)\leq\frac{\epsilon}{2}.
\end{equation}
By \eqref{absurd assumtion}, $\epsilon\geq \varepsilon^{8/7}$. Thus
\[K\epsilon^{-1/2}\leq K\varepsilon^{-\frac{4}{7}}=KR^{\frac{4}{7}},\]
and
\[A_0\left(r-KR^{\frac{4}{7}}\right)\leq A_0\left(r-K\epsilon^{-1/2}\right)\leq\frac{\epsilon}{2},\]
which is Lemma \ref{decay estimate}.

\medskip

To prove \eqref{decay estimate 2}, we need to prove that for any $x_\ast\in [-r+K\epsilon^{1/2},r-K\epsilon^{1/2}]$,
\begin{equation}\label{decay estimate 3}
e^{-\sqrt{2}D_0(x_\ast)}\leq\frac{\epsilon}{2}.
\end{equation}

After a rotation and  a translation, we may assume $x_\ast=0$ and
\begin{equation}\label{10.1}
f_0(0)=f_0^\prime(0)=0.
\end{equation}

By the Toda system \eqref{Toda system}, for any $y\in[-K\epsilon^{-1/2},K\epsilon^{-1/2}]$,
\begin{equation}\label{assumption on second derivative 1}
|f_0^{\prime\prime}(y)|\lesssim e^{-\sqrt{2}D_0(y)}+\varepsilon^{7/6}\lesssim\epsilon.
\end{equation}
We also have a semi-bound on $f_{\pm}$.
\begin{lem}
\begin{equation}\label{assumption on second derivative 2}
f_{-1}^{\prime\prime}(y)\gtrsim - e^{-\sqrt{2}d_{-1}(y)}-\varepsilon^{7/6}\gtrsim -\epsilon,
\end{equation}
\begin{equation}\label{assumption on second derivative 3}
f_1^{\prime\prime}(y)\lesssim  e^{\sqrt{2}d_1(y)}+\varepsilon^{7/6}\lesssim \epsilon.
\end{equation}
\end{lem}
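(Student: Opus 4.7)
The plan is to apply the Toda system \eqref{Toda system III} to the two neighbouring sheets $\Gamma_{\pm1}$, exploit that one of the two exponential terms on the right-hand side has a definite sign, and then transfer the resulting distance from the base interface to the reference interface $\Gamma_0$ via Lemma \ref{comparison of distances}. The arguments for $f_{-1}''$ and $f_1''$ are symmetric, so I sketch only the first.

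Specializing \eqref{Toda system III} to $\alpha=-1$ at the point $(y,f_{-1}(y))$ produces an identity for $f_{-1}''(y)$ whose right-hand side is a weighted difference of two exponentials, one involving the distance from $\Gamma_{-1}$ to $\Gamma_{-2}$ and the other the distance from $\Gamma_{-1}$ to $\Gamma_0$, modulo an $O(\varepsilon^{7/6})$ remainder. The first is manifestly nonnegative (and, in the degenerate case when no $\Gamma_{-2}$ is present, is swallowed by the $O(\varepsilon^{7/6})$ error via the convention $d_{-2}\equiv-\delta R$). Dropping it and using $[1+|f_{-1}'|^2]^{3/2}\lesssim 1$, I obtain
\[
f_{-1}''(y)\;\gtrsim\;-\,e^{-\sqrt{2}\,|d_0(y,f_{-1}(y))|}\;-\;\varepsilon^{7/6}.
\]

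The next step is to identify the distance $|d_0(y,f_{-1}(y))|$, which lives on $\Gamma_{-1}$, with the quantity $d_{-1}(y)$ of the lemma statement, measured from $\Gamma_0$. This is what Lemma \ref{comparison of distances} is designed for: taking $X=(y,f_0(y))\in\Gamma_0$ with $(\alpha,\beta)=(0,-1)$, \eqref{3.3} relates $d_{-1}(X)$ to $-d_0(\Pi_{-1}(X))$ modulo $O(\varepsilon^{1/2}|\log\varepsilon|^{3/2})$, while \eqref{3.2} together with the uniform Lipschitz/curvature control on $\Gamma_0$ lets me replace $\Pi_{-1}(X)$ by $(y,f_{-1}(y))$ at the same additive cost. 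Since this additive error sits inside an exponential and is $o(1)$, it produces only a bounded multiplicative factor, and the first assertion of the lemma follows.

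For the second inequality I note that $D_0(y)\le d_{-1}(y)$ by definition, hence $e^{-\sqrt{2}d_{-1}(y)}\le e^{-\sqrt{2}D_0(y)}\le A_0(r)=\epsilon$; the standing hypothesis \eqref{absurd assumtion} forces $\epsilon\ge\varepsilon^{8/7}\gg\varepsilon^{7/6}$, so the $\varepsilon^{7/6}$ remainder is absorbed into $\epsilon$. The companion bound on $f_1''(y)$ runs identically from \eqref{Toda system III} with $\alpha=1$ and \eqref{3.3} with $(\alpha,\beta)=(0,1)$; the sign of the exponent in $e^{\sqrt{2}d_1(y)}$ appears precisely because, under the signed-distance convention, $(y,f_0(y))$ sits below $\Gamma_1$ so $d_1(y)<0$. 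The only real difficulty is the bookkeeping of signs and base points, and this is handled uniformly by Lemma \ref{comparison of distances}.
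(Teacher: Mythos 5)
Your proposal is correct and follows the same route the paper takes: specialize the Toda system \eqref{Toda system III} at $\alpha=\pm 1$, discard the exponential that has the favorable sign, convert the distance measured from $\Gamma_{\pm 1}$ to the one measured from $\Gamma_0$ via Lemma \ref{comparison of distances} (with the usual dichotomy between distances $\geq\sqrt{2}|\log\varepsilon|$, absorbed into the $O(\varepsilon^{7/6})$ error, and the regime where the lemma applies), and then close the estimate with $D_0(y)\leq |d_{\pm 1}(y)|$, $e^{-\sqrt{2}D_0(y)}\leq A_0(r)=\epsilon$, and $\varepsilon^{7/6}\leq\varepsilon^{8/7}\leq\epsilon$. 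The only inessential inaccuracy is the parenthetical about the missing-$\Gamma_{-2}$ case being "swallowed by the error": nothing needs to be swallowed, since the term $e^{-\sqrt{2}d_{-2}}$ is simply nonnegative (or absent) and is dropped, not estimated.
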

\begin{proof}
By \eqref{Toda system III},
\begin{eqnarray*}
\frac{f_1^{\prime\prime}}{\left(1+|f_1^\prime|^2\right)^{3/2}}&=&\frac{4}{\sigma_0}\left[A_1^2e^{-\sqrt{2}|d_0^1|}-A_{-1}^2e^{-\sqrt{2}|d_2^1|}\right]+O(\varepsilon^{7/6})\\
&\leq&\frac{4A_1^2}{\sigma_0}e^{-\sqrt{2}|d_0^1|}+O(\varepsilon^{7/6}).
\end{eqnarray*}

By Lemma \ref{comparison of distances}, either $|d_0^1(y)|\leq\sqrt{2}|\log\varepsilon|$ or
\[d_0^1(y)=d_1(y)+O(\varepsilon^{1/3}).\]
The bound \eqref{assumption on second derivative 3} then follows from \eqref{assumption on second derivative 1}. In the same way we get \eqref{assumption on second derivative 2}.
\end{proof}

By \eqref{10.1} and \eqref{assumption on second derivative 1},  for any $y\in[-K\epsilon^{-1/2},K\epsilon^{-1/2}]$,
\begin{equation}\label{assumption on first derivative 1}
|f_0^\prime(y)|\leq C\epsilon|y|\leq CK\epsilon^{1/2}.
\end{equation}

Substituting these into the Toda system \eqref{Toda system III} we obtain in $(-K\epsilon^{-1/2},K\epsilon^{-1/2})$
\begin{eqnarray}\label{Toda system 2 III}
f_0^{\prime\prime}&=&\frac{f_0^{\prime\prime}}{\left(1+|f_0^\prime|^2\right)^{3/2}}+O(\epsilon^2) \nonumber\\
&=&\frac{4}{\sigma_0}\left(A_{-1}^2e^{-\sqrt{2}d_{-1}}-A_1^2e^{\sqrt{2}d_1}\right)+O(\epsilon^2)+O(\varepsilon^{7/6})\\
&=&\frac{4}{\sigma_0}\left(A_{-1}^2e^{-\sqrt{2}d_{-1}}-A_1^2e^{\sqrt{2}d_1}\right)+O(\epsilon^{49/48}). \nonumber
\end{eqnarray}

\begin{lem}\label{comparison of distance 2}
For $y\in[-2\epsilon^{-1/2},2\epsilon^{-1/2}]$, if $|d_{-1}(y)|\leq\sqrt{2}|\log\varepsilon|$, then we have
\[e^{-\sqrt{2}|d_{-1}(y)|}=e^{-\sqrt{2}\left(f_0(y)-f_{-1}(y)\right)}+O(\epsilon^{49/48});\]
if $|d_1(y)|\leq\sqrt{2}|\log\varepsilon|$, then we have
\[e^{-\sqrt{2}|d_1(y)|}=e^{-\sqrt{2}\left(f_1(y)-f_0(y)\right)}+O(\epsilon^{49/48}).\]
\end{lem}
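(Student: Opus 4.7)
The plan is to compute $d_{-1}(y,f_0(y))$ explicitly via the orthogonal projection of $X:=(y,f_0(y))$ onto $\Gamma_{-1}$, compare this distance against the vertical gap $f_0(y)-f_{-1}(y)$, and control the discrepancy through the smallness of the slope of $f_{-1}$ at the footpoint. I treat only the first assertion; the second is symmetric. Let $(y^*,f_{-1}(y^*))$ be the unique nearest point on $\Gamma_{-1}$ to $X$ (uniqueness from the curvature bound on $\Gamma_{-1}$ together with $|d_{-1}|\leq\sqrt{2}|\log\varepsilon|$). Writing $a:=f_0(y)-f_{-1}(y)>0$ and $b:=f_0(y)-f_{-1}(y^*)>0$, the orthogonality of the projection gives $y^*-y=b\,f_{-1}'(y^*)$ and $|d_{-1}(y,f_0(y))|=b\sqrt{1+f_{-1}'(y^*)^2}$, while the $O(1)$ slope bound yields $|y^*-y|, |b|\lesssim |\log\varepsilon|$.

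The next step is the slope control for $f_{-1}$ at $y^*$. Applying \eqref{3.6} of Lemma \ref{comparison of distances} at the point $X$, and using the explicit graph formulas $\nabla d_0(X)=(-f_0'(y),1)/\sqrt{1+f_0'(y)^2}$, $\nabla d_{-1}(X)=(-f_{-1}'(y^*),1)/\sqrt{1+f_{-1}'(y^*)^2}$, the alignment inequality $1-\nabla d_0\cdot\nabla d_{-1}\lesssim \varepsilon^{1/2}|\log\varepsilon|^{3/2}$ translates into $|f_{-1}'(y^*)-f_0'(y)|\lesssim \varepsilon^{1/4}|\log\varepsilon|^{3/4}$. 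Since $y\in[-2\epsilon^{-1/2},2\epsilon^{-1/2}]$ lies inside the interval where \eqref{assumption on first derivative 1} is valid (after fixing $K$ large), we have $|f_0'(y)|\lesssim \epsilon^{1/2}$, and hence $f_{-1}'(y^*)^2\lesssim \epsilon+\varepsilon^{1/2}|\log\varepsilon|^{3/2}$.

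To compare distances, Taylor expand $f_{-1}(y^*)=f_{-1}(y)+f_{-1}'(y)(y^*-y)+O(\varepsilon(y^*-y)^2)$ and substitute $y^*-y=b\,f_{-1}'(y^*)$ to obtain $b\bigl(1+f_{-1}'(y)f_{-1}'(y^*)\bigr)=a+O(\varepsilon|\log\varepsilon|^2)$. Because the curvature bound on $\Gamma_{-1}$ gives $|f_{-1}'(y)-f_{-1}'(y^*)|\lesssim \varepsilon|\log\varepsilon|$, one may replace $f_{-1}'(y)$ by $f_{-1}'(y^*)$ at a negligible cost, and plugging back into $|d_{-1}|=b\sqrt{1+f_{-1}'(y^*)^2}$ yields
\[|d_{-1}(y,f_0(y))|=\frac{a}{\sqrt{1+f_{-1}'(y^*)^2}}+O(\varepsilon|\log\varepsilon|^2).\]
Combined with the slope bound of Step 2 and $|a|\lesssim|\log\varepsilon|$, this gives
\[\bigl||d_{-1}(y,f_0(y))|-(f_0(y)-f_{-1}(y))\bigr|\lesssim |a|\,f_{-1}'(y^*)^2+\varepsilon|\log\varepsilon|^2 \lesssim \varepsilon^{1/2}|\log\varepsilon|^{5/2}+\epsilon|\log\varepsilon|.\]

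Finally, from $|e^{-\sqrt{2}s}-e^{-\sqrt{2}t}|\leq \sqrt{2}\,e^{-\sqrt{2}\min(s,t)}|s-t|$ and $e^{-\sqrt{2}|d_{-1}|}\leq A_0(r)=\epsilon$, the exponential difference is at most $C\epsilon\bigl(\varepsilon^{1/2}|\log\varepsilon|^{5/2}+\epsilon|\log\varepsilon|\bigr)$. The standing hypothesis $\epsilon\geq\varepsilon^{8/7}$ forces $\epsilon^{1/48}\geq \varepsilon^{1/42}$, which dominates $\varepsilon^{1/2}|\log\varepsilon|^{5/2}$ and absorbs the first term into $O(\epsilon^{49/48})$; the second term contributes only $\epsilon^{2}|\log\varepsilon|\leq \epsilon^{49/48}$, completing the proof. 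The main technical point is the slope control in Step 2, which crucially blends the geometric normal-alignment estimate \eqref{3.6} with the $\epsilon$-dependent improvement on $|f_0'|$ extracted from the localized Toda-system analysis of Section \ref{sec intermediate distance} and Section 18; the remainder of the argument is a careful but routine Taylor expansion of the distance function.
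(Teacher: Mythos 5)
Your proof is correct and follows essentially the same strategy as the paper's: control the slope of $f_{\pm 1}$ near the footpoint using the estimates from Lemma~\ref{comparison of distances} together with $|f_0'|\lesssim\epsilon^{1/2}$, then convert that slope bound into a comparison between the normal distance $|d_{\pm 1}|$ and the vertical gap, and finish with the mean-value inequality on exponentials together with $e^{-\sqrt{2}|d_{\pm1}|}\leq A_0(r)=\epsilon$ and the relation $\epsilon\geq\varepsilon^{8/7}$. The only stylistic difference is that you extract the slope alignment directly from the normal-alignment bound \eqref{3.6}, obtaining $|f_0'(y)-f_{-1}'(y^*)|\lesssim\varepsilon^{1/4}|\log\varepsilon|^{3/4}$, and then carry out the footpoint Taylor expansion explicitly, whereas the paper quotes the interpolation argument from Step~1 of Lemma~\ref{comparison of distances} (which gives the sharper $|f_1'-f_0'|\lesssim\varepsilon^{1/2}|\log\varepsilon|^2$) and then appeals to that lemma's proof for the distance comparison. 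Both intermediate slope bounds are more than sufficient to close the estimate.
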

\begin{proof}
We only prove the second identity. The first one can be proved in the same way.

 As in Lemma \ref{comparison of distances}, if $|d_1(y)|\leq\sqrt{2}|\log\varepsilon|$,
we have
\[\sup_{(y-1,y+1)}\big|f_1^\prime-f_0^\prime\big|\lesssim\varepsilon^{1/2}|\log\varepsilon|^2\lesssim\epsilon^{1/4}.\]
Because $\big|f_0^\prime\big|\lesssim\epsilon^{1/2}$ in $[-K\epsilon^{-1/2},K\epsilon^{-1/2}]$, this implies that
\[\sup_{(y-1,y+1)}\big|f_1^\prime\big|\lesssim\epsilon^{1/4}.\]
As in Lemma \ref{comparison of distances}, from this we deduce that
\[\bar{d}_1(y)=f_0(y)-f_1(y)+O(\epsilon^{1/16}).\]
Then
\[e^{\sqrt{2}\bar{d}_1(y)}=e^{-\sqrt{2}\left(f_1(y)-f_0(y)\right)}+O(\epsilon^{17/16}).\]
This  finishes the proof.
\end{proof}

By this lemma and the fact that $e^{-\sqrt{2}D_0(y)}\leq\epsilon$, in $(-2\epsilon^{-1/2},2\epsilon^{-1/2})$, \eqref{Toda system 2 III} can be rewritten as
\begin{eqnarray}\label{Toda system 3 III}
f_0^{\prime\prime}(y)=\frac{4}{\sigma_0}\left(A_{-1}^2e^{-\sqrt{2}\left[f_0(y)-f_{-1}(y)\right]}-A_1^2e^{\sqrt{2}\left[f_1(y)-f_0(y)\right]}\right)+O(\epsilon^{1/48}).
\end{eqnarray}

 Now define the functions in $[-K,K]$,
\[\tilde{f}_{\alpha}(y):=f_{\alpha}\left(\epsilon^{-1/2} y\right)-\alpha\frac{\sqrt{2}}{2}|\log\epsilon|, \quad \alpha=-1,0,1.\]
They satisfy
\begin{itemize}
\item $\tilde{f}_0(0)=\tilde{f}_0^\prime(0)=0$.

\item In $(-K,K)$, $|\tilde{f}_0^{\prime\prime}|\leq C$, $\tilde{f}_1^{\prime\prime}\leq C$ and $\tilde{f}_{-1}^{\prime\prime}\geq -C$.


\item In $(-2,2)$,
\begin{equation}\label{Toda system 3}
\tilde{f}_0^{\prime\prime}=\frac{4}{\sigma_0}\left[A_{-1}^2e^{-\sqrt{2}\left(\tilde{f}_0-\tilde{f}_{-1}\right)}-A_1^2e^{-\sqrt{2}\left(\tilde{f}_1-\tilde{f}_0\right)}\right]+O(\epsilon^{1/48}).
\end{equation}
\end{itemize}

\begin{lem}
\begin{equation}\label{intergral small}
\int_{-2}^2\left[e^{-\sqrt{2}\left(\tilde{f}_0-\tilde{f}_{-1}\right)}+e^{-\sqrt{2}\left(\tilde{f}_1-\tilde{f}_0\right)}\right]\leq\frac{C}{K}+CK\epsilon^{1/16}.
\end{equation}
\end{lem}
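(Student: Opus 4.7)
The plan is to deduce the integral bound from the reduced stability inequality \eqref{reduction of stability} applied with $\alpha=0$, after plugging in a test function that is adapted to the $\epsilon^{-1/2}$-scale on which the blown-up profiles $\tilde f_\alpha$ live. Concretely, I will pick $\eta\in C_0^\infty(-K\epsilon^{-1/2},K\epsilon^{-1/2})$ with $\eta\equiv 1$ on $(-2\epsilon^{-1/2},2\epsilon^{-1/2})$ and $|\eta'|\lesssim \epsilon^{1/2}/K$ (using the translation $x_*=0$); its support is contained in $(-5R/6,5R/6)$ since $|x_*|+K\epsilon^{-1/2}\leq r\leq 4R/5$. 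Then
\[
\int(\eta')^2\lesssim \epsilon^{1/2}/K,\qquad \int\eta^2\lesssim K\epsilon^{-1/2},
\]
and \eqref{reduction of stability} gives
\[
\int_{-2\epsilon^{-1/2}}^{2\epsilon^{-1/2}}\bigl[e^{-\sqrt{2}|d_{-1}(y,0)|}+e^{-\sqrt{2}|d_1(y,0)|}\bigr]\,dy\;\lesssim\;\frac{\epsilon^{1/2}}{K}+K\epsilon^{-1/2}\varepsilon^{4/3}.
\]

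Next I will convert distance functions into vertical separations. On the integration interval, Lemma \ref{comparison of distance 2} yields $e^{-\sqrt{2}|d_{\pm 1}(y,0)|}=e^{-\sqrt{2}(f_0(y)-f_{\mp 1}(y))}+O(\epsilon^{17/16})$ pointwise whenever $|d_{\pm 1}|\leq \sqrt{2}|\log\varepsilon|$; on the complementary set, both quantities are bounded by $\varepsilon^2$ since $f_0-f_{\mp 1}\geq|d_{\pm 1}|>\sqrt{2}|\log\varepsilon|$. Integrating the pointwise difference across an interval of length $O(\epsilon^{-1/2})$ produces an error of order $\epsilon^{9/16}+\varepsilon^2\epsilon^{-1/2}$. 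The rescaling $\tilde y=\epsilon^{1/2}y$ then transforms the integrand via
\[
e^{-\sqrt{2}(\tilde f_0-\tilde f_{\mp 1})(\tilde y)}=\epsilon^{-1}\,e^{-\sqrt{2}(f_0-f_{\mp 1})(\epsilon^{-1/2}\tilde y)},\qquad d\tilde y=\epsilon^{1/2}\,dy,
\]
so that
\[
\int_{-2}^{2}e^{-\sqrt{2}(\tilde f_0-\tilde f_{\mp 1})}\,d\tilde y\;=\;\epsilon^{-1/2}\int_{-2\epsilon^{-1/2}}^{2\epsilon^{-1/2}}e^{-\sqrt{2}(f_0-f_{\mp 1})}\,dy.
\]
Multiplying the preceding bound through by $\epsilon^{-1/2}$ yields
\[
\int_{-2}^{2}\bigl[e^{-\sqrt{2}(\tilde f_0-\tilde f_{-1})}+e^{-\sqrt{2}(\tilde f_1-\tilde f_0)}\bigr]\,d\tilde y\;\lesssim\;\frac{1}{K}+K\varepsilon^{4/3}\epsilon^{-1}+\epsilon^{1/16}+\varepsilon^{2}\epsilon^{-1}.
\]
Finally, invoking the standing contrapositive hypothesis $\epsilon\geq \varepsilon^{8/7}$ (so that $\varepsilon\leq \epsilon^{7/8}$), both $\varepsilon^{4/3}\epsilon^{-1}\leq \varepsilon^{4/21}\leq \epsilon^{1/6}$ and $\varepsilon^{2}\epsilon^{-1}\leq \varepsilon^{6/7}\leq \epsilon^{3/4}$ are dominated by $\epsilon^{1/16}$, giving the desired bound $C/K+CK\epsilon^{1/16}$.

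The main delicate point is to align the three error scales present—the stability residual $\varepsilon^{4/3}$, the comparison-lemma residual $\epsilon^{17/16}$, and the Jacobian factor $\epsilon^{-1/2}$ from the change of variables—so that the resulting small quantity is $\epsilon^{1/16}$ and no worse. The exponent $1/16$ is not arbitrary but tracks the $O(\epsilon^{1/16})$ accuracy of the approximation $\bar d_{\pm 1}\approx \mp(f_{\pm 1}-f_0)$ built in the proof of Lemma \ref{comparison of distance 2}, which after integration over length $\epsilon^{-1/2}$ and division by $\epsilon^{1/2}$ is preserved. Secondary care is needed to verify that in the case $|d_{\pm 1}|>\sqrt{2}|\log\varepsilon|$ the comparison still holds pointwise (trivially, as both exponentials are then $O(\varepsilon^2)$), so that no region of the integration interval has to be excluded.
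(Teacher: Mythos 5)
Your proof is correct and follows essentially the same route as the paper: insert the test function $\bar\eta(\epsilon^{-1/2}\,\cdot\,)$ with $\bar\eta\equiv 1$ on $(-2,2)$, supported in $(-K,K)$ with $|\bar\eta'|\lesssim K^{-1}$ (equivalently your $\eta$ on the $\epsilon^{-1/2}$-scale), into the reduced stability inequality \eqref{reduction of stability}, replace the signed distances by vertical separations via Lemma \ref{comparison of distance 2}, and unwind the substitution $\tilde y=\epsilon^{1/2}y$ noting that $e^{-\sqrt{2}(f_0-f_{\mp 1})}=\epsilon\,e^{-\sqrt{2}(\tilde f_0-\tilde f_{\mp 1})}$. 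One small remark: you quote the pointwise error in Lemma \ref{comparison of distance 2} as $O(\epsilon^{17/16})$, which is what the last line of that lemma's proof actually produces and is exactly what is needed to obtain the $\epsilon^{1/16}$ in the conclusion, whereas the lemma's \emph{statement} reads $O(\epsilon^{49/48})$; your reading is the one consistent with the present lemma.
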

\begin{proof}
Take a function $\bar{\eta}\in C_0^\infty(-K,K)$ satisfying $\bar{\eta}\equiv 1$ in $(-2,2)$ and $|\bar{\eta}^\prime|\lesssim K^{-1}$.
Taking the test function $\eta$ in \eqref{reduction of stability} to be $\bar{\eta}(\epsilon^{-1/2}y)$, we obtain
\begin{eqnarray*}
\int_{-2}^2\left(e^{-\sqrt{2}d_{-1}(\epsilon^{-1/2}y)}+e^{\sqrt{2}d_1(\epsilon^{-1/2}y)}\right)dy&\leq&\int_{-K}^K\bar{\eta}(y)^2\left(e^{-\sqrt{2}d_{-1}(\epsilon^{-1/2}y)}+e^{\sqrt{2}d_1(\epsilon^{-1/2}y)}\right)dy\\
&\leq&C\epsilon\int_{-K}^K\bar{\eta}^\prime(y)^2+C\epsilon^{16/15}\int_{-K}^K\bar{\eta}(y)^2\\
&\leq&\frac{C}{K}\epsilon+CK\epsilon^{16/15}.
\end{eqnarray*}
After using Lemma \ref{comparison of distance 2} and a rescaling, the left hand side can be transformed into the required form.
\end{proof}

The following lemma establishes \eqref{decay estimate 2}, thus completes the proof of Lemma \ref{decay estimate}.
\begin{lem}
If $K$ is large enough (but independent of $\epsilon$), then\[\max_{[-1,1]}\left(e^{-\sqrt{2}\left(\tilde{f}_0-\tilde{f}_{-1}\right)}+e^{-\sqrt{2}\left(\tilde{f}_1-\tilde{f}_0\right)}\right)
\leq \frac{1}{2}.\]
\end{lem}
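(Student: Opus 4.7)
The plan is to combine the smallness of the integral in \eqref{intergral small} with a semi-convexity property of $\log\phi_\pm$, where
\[\phi_-(y):= e^{-\sqrt{2}(\tilde f_0(y) - \tilde f_{-1}(y))},\qquad \phi_+(y):= e^{-\sqrt{2}(\tilde f_1(y) - \tilde f_0(y))},\]
so as to upgrade the $L^1(-2,2)$ bound into a pointwise bound on $[-1,1]$.

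First, I will use the one-sided second-derivative bounds already established, namely $|\tilde f_0''|\le C$, $\tilde f_1''\le C$, and $\tilde f_{-1}''\ge -C$ on $(-K,K)$, to deduce $\psi_\pm''\le 2C$, where $\psi_-:=\tilde f_0-\tilde f_{-1}$ and $\psi_+:=\tilde f_1-\tilde f_0$. Since $\log\phi_\pm = -\sqrt{2}\psi_\pm$, this is the semi-convexity $(\log\phi_\pm)''\ge -2\sqrt{2}C$ on $(-2,2)$.

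The main step is to prove the uniform pointwise bound
\[\phi_\pm(y_0)\le C_\ast \int_{-2}^2 \phi_\pm(y)\,dy \qquad\text{for every }y_0\in[-1,1],\]
with a universal constant $C_\ast$. Applying Taylor's theorem to $\log\phi_\pm$ at $y_0$ together with the semi-convexity, I obtain
\[\phi_\pm(y)\ge \phi_\pm(y_0)\exp\!\bigl(p(y-y_0) - \sqrt{2}C(y-y_0)^2\bigr),\qquad p:=(\log\phi_\pm)'(y_0),\]
for every $y\in(-2,2)$. Integrating on $(-2,2)$ reduces the claim to showing that
\[I(p,y_0):=\int_{-2}^{2}\exp\!\bigl(p(y-y_0)-\sqrt{2}C(y-y_0)^2\bigr)\,dy\]
is bounded below by a positive universal constant $c_0$, uniformly in $p\in\R$ and $y_0\in[-1,1]$. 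This is the main technical obstacle; I plan to handle it by a short case analysis on the location of the Gaussian vertex $y^\ast:=y_0+p/(2\sqrt 2 C)$. When $y^\ast\in[-2,2]$, the integral captures a fixed fraction of the Gaussian's mass near its peak, whose value $e^{p^2/(4\sqrt{2}C)}\ge 1$; when $y^\ast$ lies outside $[-2,2]$, the integrand is monotone on an interval of length $\ge 1$ inside $[-2,2]$ starting at $y_0$, where it already equals $1$.

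Combining the pointwise bound with \eqref{intergral small} yields
\[\max_{[-1,1]}(\phi_-+\phi_+)\le C_\ast\Bigl(\frac{C}{K}+CK\,\epsilon^{1/16}\Bigr).\]
Since $C_\ast$ and the constants in \eqref{intergral small} are universal, I will fix $K$ large enough that $C_\ast C/K\le 1/4$. The remaining term $C_\ast CK\,\epsilon^{1/16}$ tends to $0$ with $\varepsilon$, because $\epsilon=A_0(r)\le A_0(6R/7)\lesssim\varepsilon^{1-\sigma}$ by Proposition \ref{first lower bound}; thus for all $\varepsilon$ sufficiently small it is $\le 1/4$ as well. This gives $\max_{[-1,1]}(\phi_-+\phi_+)\le 1/2$ and completes the proof.
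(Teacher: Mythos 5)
Your proof is correct, and it takes a genuinely different route from the paper's. The paper derives the differential inequality $\frac{d^2}{dy^2}\,e^{-\sqrt{2}(\tilde f_1-\tilde f_0)}\ge -C\,e^{-2\sqrt{2}(\tilde f_1-\tilde f_0)}-C\epsilon^{1/16}e^{-\sqrt{2}(\tilde f_1-\tilde f_0)}$ from the Toda system \eqref{Toda system 3} and then invokes the Choi--Schoen small-integral $\varepsilon$-regularity estimate to upgrade \eqref{intergral small} to a pointwise bound; this requires repeating the argument symmetrically for the other exponential. You instead use only the crude one-sided Hessian bounds $|\tilde f_0''|\le C$, $\tilde f_1''\le C$, $\tilde f_{-1}''\ge -C$ (which the paper already records on $(-K,K)\supset(-2,2)$, and which by $\tilde f_\alpha''=\epsilon^{-1}f_\alpha''(\epsilon^{-1/2}\cdot)$ follow from \eqref{assumption on second derivative 1}--\eqref{assumption on second derivative 3}) to obtain semi-convexity of $\log\phi_\pm$, and then integrate directly to get a uniform mean-value inequality $\phi_\pm(y_0)\le c_0^{-1}\int_{-2}^2\phi_\pm$. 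This is more elementary, avoids the Choi--Schoen lemma altogether, and treats $\phi_+$ and $\phi_-$ by a single argument. The trade-off is that your constant $c_0=e^{-\sqrt{2}C}$ is tied to the universal Hessian bound rather than to the coupling constants of the Toda system, but that is entirely adequate here since the chain of inequalities only needs a universal constant.

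One small remark on the technical heart: the lower bound $I(p,y_0)\ge c_0$ does not actually require the case analysis on the vertex $y^*$. Since $y_0\in[-1,1]$, the unit interval starting at $y_0$ in the direction $\mathrm{sgn}(p)$ (either direction if $p=0$) lies inside $[-2,2]$, and on that interval $p(y-y_0)\ge 0$ and $(y-y_0)^2\le 1$, so the integrand is bounded below by $e^{-\sqrt{2}C}$. Your description of the case $y^*\in[-2,2]$ as capturing "a fixed fraction of the Gaussian's mass" is slightly imprecise --- the interval around the vertex remaining in $[-2,2]$ can be arbitrarily short on one side --- but the one-directional argument sidesteps this entirely, so the conclusion stands. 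Also, as in the paper, the final step needs $\epsilon$ (equivalently $\varepsilon$) small in addition to $K$ large, which is consistent with the context of Proposition \ref{improved distance}; you correctly observe that $\epsilon=A_0(r)\to 0$ as $\varepsilon\to 0$ using the earlier lower-bound estimates.
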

\begin{proof}
By \eqref{Toda system III}, in $(-2\epsilon^{-1/2},2\epsilon^{-1/2})$,
\[f_1^{\prime\prime}\leq \frac{4A_{-1}^2}{\sigma_0}e^{-\sqrt{2}\left(f_1-f_0\right)}\left(1+|f_1^\prime|^2\right)^{3/2}+O\left(\varepsilon^{7/6}\right).\]
By the proof of Lemma \ref{comparison of distance 2},
\begin{itemize}
\item either
$f_1-f_0\geq\sqrt{2}|\log\varepsilon|$, which implies that
 \[e^{-\sqrt{2}\left(f_1-f_0\right)}\lesssim\varepsilon^2;\]
\item or $|f_1^\prime-f_0^\prime|\leq\epsilon^{1/4}$, which together with \eqref{assumption on first derivative 1} implies that
\[\big|f_1^\prime\big|\leq 2\epsilon^{1/4}.\]
\end{itemize}
Therefore, because $e^{-\sqrt{2}\left(\tilde{f}_1-\tilde{f}_0\right)}\leq\epsilon$, we obtain
\[f_1^{\prime\prime}\leq \frac{4A_{-1}^2}{\sigma_0}e^{-\sqrt{2}\left(f_1-f_0\right)}+O(\epsilon^{7/6}).\]
After a rescaling this gives
\[\tilde{f}_1^{\prime\prime}\leq\frac{4A_{-1}^2}{\sigma_0}e^{-\sqrt{2}\left(\tilde{f}_1-\tilde{f}_0\right)}+O(\epsilon^{1/16}), \quad\mbox{in } (-2,2).\]
By \eqref{Toda system 3},
\begin{equation}\label{exponential equation}
\left(\tilde{f}_1-\tilde{f}_0\right)^{\prime\prime}\leq \frac{8A_{-1}^2}{\sigma_0}e^{-\sqrt{2}\left(\tilde{f}_1-\tilde{f}_0\right)}+O(\epsilon^{1/16}), \quad\mbox{in } (-2,2).
\end{equation}

Then
\begin{eqnarray*}
\frac{d^2}{dy^2}e^{-\sqrt{2}\left(\tilde{f}_1-\tilde{f}_0\right)}&\geq&-\sqrt{2}e^{-\sqrt{2}\left(\tilde{f}_1-\tilde{f}_0\right)}\left(\tilde{f}_1-\tilde{f}_0\right)^{\prime\prime}\\
&\geq&-Ce^{-2\sqrt{2}\left(\tilde{f}_1-\tilde{f}_0\right)}-C\epsilon^{1/16}e^{-\sqrt{2}\left(\tilde{f}_1-\tilde{f}_0\right)}.
\end{eqnarray*}
By the estimate of Choi-Schoen \cite{Choi-Schoen}, there exists a universal constant $\eta_\ast$ such that if
\begin{equation}\label{ep regularity}
\int_{-2}^2e^{-\sqrt{2}\left(\tilde{f}_1-\tilde{f}_0\right)}\leq \eta_\ast,
\end{equation}
then
\[\sup_{[-1,1]}e^{-\sqrt{2}\left(\tilde{f}_1-\tilde{f}_0\right)}\leq\frac{1}{4}.\]
In \eqref{intergral small}, we can first choose $K$ small and then let $\epsilon$ be small enough so that \eqref{ep regularity} holds. Then the claim follows by proving the same bound on $\sup_{[-1,1]}e^{-\sqrt{2}\left(\tilde{f}_0-\tilde{f}_{-1}\right)}$.
\end{proof}

\bigskip

\appendix{}

\section{Some facts about the one dimensional solution}\label{sec 1d solution}
\setcounter{equation}{0}

It is known that the following identity holds for $g$,
\begin{equation}\label{first integral}
g^\prime(t)=\sqrt{2W(g(t))}>0, \quad \forall t\in\R.
\end{equation}
Moreover, as $t\to\pm\infty$, $g(t)$ converges exponentially to $\pm1$ and  the following quantity is well defined
\[\sigma_0:=\int_{-\infty}^{+\infty}\left[\frac{1}{2}g^\prime(t)^2+W(g(t))\right]dt\in (0,+\infty).\]

In fact,  as $t\to\pm\infty$, the following expansions hold. There exists a positive constants $A_{1}$ such that
for all $t>0$ large,
\[g(t)=1-A_1 e^{-\sqrt{2} t}+O(e^{-2\sqrt{2}  t}),\]
\[g^\prime(t)=\sqrt{2} A_1 e^{-\sqrt{2}  t}+O(e^{-2\sqrt{2}  t}),\]
\[g^{\prime\prime}(t)=-2 A_1 e^{-\sqrt{2}  t}+O(e^{-2\sqrt{2}  t}),\]
and a similar expansion holds as $t\to-\infty$ with $A_1$ replaced by another positive constant $A_{-1}$.

\medskip

The following result describes the interaction between two one dimensional profiles.
\begin{lem}\label{lem form of interaction}
For all $T>0$ large, we have the following expansion:
\begin{equation}\label{exapnsion 3}
\int_{-\infty}^{+\infty}\left[W^{\prime\prime}(g(t))-W^{\prime\prime}(1)\right]\left[g(-t-T)+1\right]g^\prime(t)dt=-4A_{-1}^2e^{-\sqrt{2}T}+O\left(e^{-\frac{4\sqrt{2}}{3}T}\right).
\end{equation}
\begin{equation}\label{exapnsion 4}
\int_{-\infty}^{+\infty}\left[W^{\prime\prime}(g(t))-W^{\prime\prime}(1)\right]\left[g(T-t)-1\right]g^\prime(t)dt=4A_1^2e^{-\sqrt{2}T}+O\left(e^{-\frac{4\sqrt{2}}{3}T}\right).
\end{equation}
\end{lem}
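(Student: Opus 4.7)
The plan is to exploit the identity $[W''(g(t))-2]g'(t) = F'(t)$ where $F(t) := W'(g(t))-2g(t)$, turning the integrand into a total derivative times the remaining factor. The asymptotics of $g$ yield $F(+\infty)=-2$, $F(-\infty)=2$, with $F(t)+2=O(e^{-2\sqrt{2}t})$ as $t\to+\infty$ and $F(t)-2=O(e^{2\sqrt{2}t})$ as $t\to-\infty$. For \eqref{exapnsion 3}, integration by parts in $t$ followed by the substitution $u=-t-T$ gives
\begin{equation*}
I_1 = \bigl[F(t)(g(-t-T)+1)\bigr]_{-\infty}^{+\infty} + \int F(-T-u)\, g'(u)\, du = -4 + \int F(-T-u)\, g'(u)\, du,
\end{equation*}
where the boundary evaluates as $F(+\infty)\cdot 0 - F(-\infty)\cdot 2 = -4$.

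A second integration by parts in the remaining integral has vanishing boundary (since $F(-T-u)\,g(u)\to 2$ at both ends), and after a further change of variable the integral becomes $\int g(-T-u)\, F'(u)\, du$. For $T$ large I would then insert the asymptotic $g(-T-u) = -1 + A_{-1}e^{-\sqrt{2}(T+u)} + O(e^{-2\sqrt{2}(T+u)})$ valid on $u\ge -T$; the left tail $u<-T$ contributes only $O(e^{-2\sqrt{2}T})$ because $|F'(u)|=O(e^{2\sqrt{2}u})$ there. The $-1$ term integrates to $-[F(+\infty)-F(-\infty)]=4$, which exactly cancels the $-4$ above, while the next term yields the principal contribution $A_{-1}e^{-\sqrt{2}T}\int e^{-\sqrt{2}u}F'(u)\,du$.

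The core step is to evaluate this last integral by a Wronskian identity. Since $\phi(u):=g'(u)$ satisfies the linearized equation $\phi''=W''(g)\phi$ and $\psi(u):=e^{-\sqrt{2}u}$ satisfies $\psi''=2\psi$, Green's identity gives
\begin{equation*}
\int_{-\infty}^{+\infty}[W''(g(u))-2]\, g'(u)\, e^{-\sqrt{2}u}\,du = \bigl[\psi\phi'-\phi\psi'\bigr]_{-\infty}^{+\infty} = 0 - 4A_{-1} = -4A_{-1},
\end{equation*}
the $+\infty$ boundary vanishing from the exponential decay of $g',g''$, and the $-\infty$ value computed using $g'(u)\sim\sqrt{2}A_{-1}e^{\sqrt{2}u}$ and $g''(u)\sim 2A_{-1}e^{\sqrt{2}u}$. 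Multiplying by $A_{-1}e^{-\sqrt{2}T}$ then produces the asserted leading term $-4A_{-1}^2 e^{-\sqrt{2}T}$.

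For the remainder I would integrate the $O(e^{-2\sqrt{2}(T+u)})$ correction against $|F'(u)|$: since $e^{-2\sqrt{2}u}|F'(u)|$ is merely bounded (not integrable) as $u\to-\infty$, the resulting estimate is $O(Te^{-2\sqrt{2}T})$, which is absorbed into $O(e^{-\frac{4\sqrt{2}}{3}T})$. The proof of \eqref{exapnsion 4} is formally identical: the roles of $\pm\infty$ are swapped, the analogous Wronskian with $\psi=e^{+\sqrt{2}u}$ gives $\int [W''(g)-2]g'\,e^{\sqrt{2}u}\,du=-4A_1$, and the two boundary computations flip sign to produce $+4A_1^2 e^{-\sqrt{2}T}$. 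The only real subtlety throughout is careful bookkeeping of the boundary terms at the two IBPs so that the $O(1)$ contributions cancel; once that is arranged, the Wronskian identity supplies all of the nontrivial content.
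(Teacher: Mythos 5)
Your proof is correct, but the two integrations by parts at the start are circular and accomplish nothing: with $F' = [W''(g)-2]g'$, the starting integral is literally $I_1 = \int [g(-T-u)+1]\,F'(u)\,du$, and after your two IBPs plus the $-1$-term cancellation you are back at exactly that expression (the $-4$ and $+4$ are an identity $0=0$). What actually carries all the content is what follows: inserting the expansion $g(-T-u)+1 = A_{-1}e^{-\sqrt{2}(T+u)} + O(e^{-2\sqrt{2}(T+u)})$, handling the far tail where it fails, and evaluating the key integral $\int_{-\infty}^{\infty}[W''(g)-2]\,g'\,e^{-\sqrt{2}u}\,du = -4A_{-1}$. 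That is exactly the paper's Steps 3-4, up to cosmetics: the paper splits the domain at $\pm 3T/4$ (rather than at $-T$ as you implicitly do), and it computes the key integral by writing $W''(g)g' = g'''$ and integrating by parts twice — which is identical in content to your Green's/Wronskian identity, since $(g''\psi - g'\psi')' = g'''\psi - g'\psi'' = [W''(g)-2]g'\psi$ for $\psi''=2\psi$. Your error bookkeeping is a little loose (the statement that the expansion is "valid on $u\ge -T$" requires the usual care that the big-$O$ constant is uniform once $e^{-2\sqrt{2}(T+u)}$ is no longer small, and the region $-T \le u \lesssim 0$ needs the observation that $e^{-2\sqrt{2}u}|F'(u)|$ is merely bounded there, giving the $T e^{-2\sqrt{2}T}$ you mention), but these gaps are routine to fill and all land inside $O(e^{-4\sqrt{2}T/3})$ as claimed. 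So: correct, but not genuinely different — strip away the vestigial IBPs and you have the paper's argument.
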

\begin{proof}
We only prove the first expansion.

{\bf Step 1.} Note that
\[\big|W^{\prime\prime}(g(t))-2\big|\lesssim g^\prime(t).\]
Therefore the integral in $(-\infty,-3T/4)$ is controlled by
\[
\int_{-\infty}^{-3T/4}g^\prime(t)^2g^\prime(-t-T)dt \lesssim  \int_{-\infty}^{-3T/4}e^{2\sqrt{2}t}dt\lesssim e^{-\frac{3\sqrt{2}}{2}T}.
\]

{\bf Step 2.}  Similarly the integral in $(3T/4,+\infty)$ is controlled by
\[
\int_{3T/4}^{+\infty}g^\prime(t)^2g^\prime(-t-T)dt \lesssim \int_{3T/4}^{+\infty}e^{-3\sqrt{2}t-\sqrt{2}T}dt \lesssim  e^{-\frac{3\sqrt{2}}{2}T}.
\]

{\bf Step 3.} In $(-3T/4,3T/4)$,
\[g(-t-T)+1=A_{-1} e^{-\sqrt{2}t-\sqrt{2}T}+O\left(e^{-2\sqrt{2}t-2\sqrt{2}T}\right).\]
Because
\begin{eqnarray*}
\Big|\int_{-3T/4}^{3T/4}\left[W^{\prime\prime}(g(t))-2\right]g^\prime(t)e^{-2\sqrt{2}t-2\sqrt{2}T}dt\Big| \lesssim e^{-2\sqrt{2}T}\int_{-3T/4}^{3T/4}g^\prime(t)^2e^{-2\sqrt{2}t}dt
&\lesssim& Te^{-2\sqrt{2}T}\\
&\lesssim&e^{-\frac{3\sqrt{2}}{2}T},
\end{eqnarray*}
we have
\[\int_{-\infty}^{+\infty}\left[W^{\prime\prime}(g(t))-2\right]g^\prime(t)g^\prime(t+T)dt=A_{-1} e^{-\sqrt{2}T}\int_{-3T/4}^{3T/4}\left[W^{\prime\prime}(g(t))-2\right]g^\prime(t)e^{-\sqrt{2}t}dt+O\left(e^{-\frac{3\sqrt{2}}{2}T}\right).
\]
As in Step 1 and Step 2, we have
\[\Big|\int_{-\infty}^{-3T/4}\left[W^{\prime\prime}(g(t))-2\right]g^\prime(t)e^{-\sqrt{2}t}dt\Big|\lesssim e^{-\frac{3\sqrt{2}}{4}T},\]
\[\Big|\int_{3T/4}^{+\infty}\left[W^{\prime\prime}(g(t))-2\right]g^\prime(t)e^{-\sqrt{2}t}dt\Big|\lesssim e^{-\frac{3\sqrt{2}}{4}T}.\]
Therefore
\[\int_{-\infty}^{+\infty}\left[W^{\prime\prime}(g(t))-2\right]\left[g(-t-T)+1\right]g^\prime(t)dt=A_{-1} e^{-\sqrt{2}T}\int_{-\infty}^{+\infty}\left[W^{\prime\prime}(g(t))-2\right]g^\prime(t)e^{-\sqrt{2}t}dt+O\left(e^{-\frac{3\sqrt{2}}{2}T}\right).
\]

{\bf Step 4.} It remains to determine the integral
\[\int_{-\infty}^{+\infty}
\left[W^{\prime\prime}(g(t))-2\right]g^\prime(t)e^{-\sqrt{2}t}dt.\]
Note that the integrand converges to $0$ exponentially as $t\to\pm\infty$. As in Step 1 and Step 2, we have
\[\int_{-\infty}^{+\infty}
\left[W^{\prime\prime}(g(t))-2\right]g^\prime(t)e^{-\sqrt{2}t}dt
=\lim_{L\to+\infty}\int_{-L}^{L}
\left[W^{\prime\prime}(g(t))-2\right]g^\prime(t)e^{-\sqrt{2}t}dt.\]
Because $W^{\prime\prime}(g(t))g^\prime(t)=g^{\prime\prime\prime}(t)$, integrating by parts gives
\begin{eqnarray*}
\int_{-L}^{L}
\left[W^{\prime\prime}(g(t))-2\right]g^\prime(t)e^{-\sqrt{2}t}dt
&=&g^{\prime\prime}(L)e^{-\sqrt{2}L}+\sqrt{2}g^\prime(L)e^{-\sqrt{2}L}-\left[g^{\prime\prime}(-L)e^{\sqrt{2}L}+\sqrt{2}g^\prime(-L)e^{\sqrt{2}L}\right]\\
&=&-4A_{-1}+O(e^{-2\sqrt{2}L}).
\end{eqnarray*}
After letting $L\to+\infty$ we finish the proof.
\end{proof}

Next we discuss the spectrum  of the linearized operator at $g$,
\[\mathcal{L}=-\frac{d^2}{dt^2}+W^{\prime\prime}(g(t)).\]
 By a direct differentiation we see $g^\prime(t)$ is an eigenfunction of $\mathcal{L}$ corresponding to eigenvalue $0$.
By \eqref{first integral}, $0$ is the lowest eigenvalue. In other words, $g$ is stable.

Concerning the second eigenvalue, we have
\begin{thm}\label{second eigenvalue for 1d}
There exists a constant $\mu>0$ such that for any $\varphi\in H^1(\R)$ satisfying
\begin{equation}\label{orthogonal condition 1d}
\int_{-\infty}^{+\infty}\varphi(t)g^\prime(t)dt=0,
\end{equation}
we have
\[\int_{-\infty}^{+\infty}\left[\varphi^\prime(t)^2+W^{\prime\prime}(g(t))\varphi(t)^2\right]dt\geq\mu\int_{-\infty}^{+\infty}\varphi(t)^2dt.\]
\end{thm}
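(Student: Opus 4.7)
The plan is to read this as a spectral statement: the linearized operator $\mathcal{L}=-d^2/dt^2+W''(g(t))$ acts on $L^2(\mathbb R)$, and I want to show that $0$ is a simple eigenvalue isolated from the rest of the spectrum. Once that is known, the inequality is just the variational characterization of the second spectral point, evaluated on the orthogonal complement of the ground state.

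First I would compute $\sigma_{\mathrm{ess}}(\mathcal{L})$. Since $g(t)\to\pm1$ exponentially and $W''(\pm1)=2$, the potential $V(t):=W''(g(t))-2$ tends to $0$ exponentially as $t\to\pm\infty$; in particular $V$ is relatively compact with respect to $-d^2/dt^2$. Weyl's theorem then gives $\sigma_{\mathrm{ess}}(\mathcal{L})=\sigma_{\mathrm{ess}}(-d^2/dt^2+2)=[2,+\infty)$, so the spectrum of $\mathcal{L}$ in $(-\infty,2)$ consists of at most finitely many isolated eigenvalues of finite multiplicity. Next, differentiating the ODE \eqref{1d problem} gives $\mathcal{L}g'=0$, and the first-integral identity \eqref{first integral} shows $g'(t)>0$ for all $t\in\mathbb R$; combined with the exponential decay of $g'$ at infinity, $g'$ lies in $H^1(\mathbb R)$. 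Since a strictly positive $L^2$-eigenfunction of a one-dimensional Schr\"odinger operator must correspond to its ground state (this is the standard Perron--Frobenius/Sturm--Liouville argument: any lower eigenfunction would have to be sign-changing and orthogonal to $g'$, contradicting positivity of $g'$), we conclude that $0=\inf\sigma(\mathcal{L})$ and that this eigenvalue is simple with eigenspace $\mathbb R g'$.

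Finally, define
\[
\mu:=\inf\bigl(\sigma(\mathcal{L})\setminus\{0\}\bigr).
\]
By the two previous steps, either $\mu=2$ (if there are no eigenvalues in $(0,2)$) or $\mu$ equals the second eigenvalue, which is strictly positive because $0$ is a simple isolated eigenvalue. In either case $\mu>0$. The variational/min-max characterization applied to the orthogonal complement of $g'$ in $L^2(\mathbb R)$ yields
\[
\int_{-\infty}^{+\infty}\bigl[\varphi'(t)^2+W''(g(t))\varphi(t)^2\bigr]dt\geq\mu\int_{-\infty}^{+\infty}\varphi(t)^2 dt
\]
for every $\varphi\in H^1(\mathbb R)$ with $\int \varphi\,g'=0$, which is the desired inequality.

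The only delicate point is Step~2, establishing that a positive kernel element is automatically the ground state: one has to argue that any hypothetical eigenfunction with eigenvalue $<0$ would be $L^2$-orthogonal to $g'$ while having a definite sign region where it is bounded below by a positive constant, contradicting positivity of $g'$. All other steps are routine applications of Weyl's theorem and the spectral theorem for self-adjoint operators with purely discrete spectrum below the essential one.
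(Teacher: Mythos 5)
Your proof is correct. The paper offers no proof at all beyond the remark that the theorem ``can be proved via a contradiction argument,'' which signals the standard compactness route: assume a normalizing sequence $\varphi_n$ with $\int\varphi_n g'=0$, $\|\varphi_n\|_{L^2}=1$, and $\int(\varphi_n')^2+W''(g)\varphi_n^2\to0$; extract a weak $H^1$-limit, rule out total loss of mass by using $W''(g(t))\to2$ at infinity, and show the nonzero limit would have to be a multiple of $g'$, contradicting the orthogonality constraint. Your route is genuinely different: you work directly with the operator $\mathcal{L}=-d^2/dt^2+W''(g)$, invoke Weyl's theorem to pin down $\sigma_{\mathrm{ess}}(\mathcal{L})=[2,\infty)$, use the strict positivity and $L^2$-decay of $g'$ together with the Perron--Frobenius/Sturm--Liouville principle to identify $0$ as the simple ground-state eigenvalue, and then read off the spectral gap $\mu=\inf\bigl(\sigma(\mathcal{L})\setminus\{0\}\bigr)>0$ from the min--max characterization on $\{g'\}^\perp$. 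The compactness argument is more elementary and self-contained (it needs no spectral machinery), while your spectral-theory argument is shorter once the Weyl and ground-state facts are granted, and it additionally identifies $\mu$ explicitly as the spectral gap (in fact $\mu=2$ here if $\mathcal{L}$ has no eigenvalues in $(0,2)$). Both are standard and both succeed; the only cosmetic point is that in Step~2 the clean statement of the Perron--Frobenius/Sturm--Liouville input is that a strictly positive $L^2$-eigenfunction must be the ground state, because any ground state can be chosen nonnegative and eigenfunctions at distinct eigenvalues are orthogonal, which forces $\int\psi_0\,g'=0$ and hence $\psi_0\equiv0$ if $\lambda_0<0$ -- you state this correctly in your closing remark.
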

This can be proved via a contradiction argument.

\section{Derivation of \eqref{Toda system}}\label{sec derivation of Toda}
\setcounter{equation}{0}

We estimate those
 terms  in \eqref{H eqn} one by one.
\begin{enumerate}
\item Differentiating \eqref{orthogonal condition 2} twice leads to
\begin{equation}\label{orthogonal condition 2.1}
\int_{-\delta R}^{\delta R}\frac{\partial\phi}{\partial y^i}g_\alpha^\prime+(-1)^\alpha\phi
g_\alpha^{\prime\prime}\frac{\partial h_\alpha}{\partial y^i}=0,
\end{equation}
\begin{equation}\label{orthogonal condition 2.2}
\int_{-\delta R}^{\delta R}\frac{\partial^2\phi}{\partial y^i\partial
y^j}g_\alpha^\prime+(-1)^\alpha\frac{\partial\phi}{\partial y^i}
g^{\prime\prime}_\alpha\frac{\partial h_\alpha}{\partial
y^j}+(-1)^\alpha\frac{\partial\phi}{\partial y^j}
g^{\prime\prime}_\alpha\frac{\partial h_\alpha}{\partial y^i}+(-1)^\alpha\phi
g^{\prime\prime}_\alpha\frac{\partial^2 h_\alpha}{\partial y^i\partial
y^j}+\phi g^{\prime\prime\prime}_\alpha\frac{\partial h_\alpha}{\partial
y^i}\frac{\partial h_\alpha}{\partial y^j}=0.
\end{equation}
Therefore
\begin{equation}\label{orthogonal condition 2.3}
\int_{-\delta R}^{\delta R}\Delta_0\phi(y,z)g_\alpha^\prime=(-1)^{\alpha-1}\Delta_0h_\alpha\int_{-\delta R}^{\delta R}\phi
g_\alpha^{\prime\prime}+2(-1)^\alpha\int_{-\delta R}^{\delta R}
g^{ij}(y,0)\frac{\partial\phi}{\partial y^i}\frac{\partial
h_\alpha}{\partial y^j}g_\alpha^{\prime\prime}-|\nabla_0h_\alpha|^2\int_{-\delta R}^{\delta R}\phi
g_\alpha^{\prime\prime\prime}.
\end{equation}
Then by \eqref{error in z 5},
\begin{eqnarray}
\int_{-\delta R}^{\delta R}\Delta_z\phi(y,z)g_\alpha^\prime&=&\int_{-\delta R}^{\delta R}\Delta_0\phi(y,z)g_\alpha^\prime+O\left(\varepsilon\right)\int_{-\delta R}^{\delta R}\left(|\nabla_y^2\phi(y,z)| +|\nabla_y\phi(y,z)|\right)|z|e^{-\sqrt{2}|z|}dz \nonumber\\
&=&(-1)^{\alpha-1}\Delta_0h_\alpha\int_{-6|\log\varepsilon|}^{6|\log\varepsilon|}\phi g_\alpha^{\prime\prime}+O(|\nabla h_\alpha(y)|^2)\int_{-6|\log\varepsilon|}^{6|\log\varepsilon|}|\phi(y,z)|e^{-\sqrt{2}|z|}dz \nonumber\\
&+&O(|\nabla
h_\alpha(y)|+\varepsilon)\int_{-6|\log\varepsilon|}^{6|\log\varepsilon|}\left(|\nabla_y^2\phi(y,z)|+|\nabla_y\phi(y,z)|\right)\left(1+|z|\right)e^{-\sqrt{2}|z|}dz\\
&=&(-1)^{\alpha-1}\Delta_0h_\alpha\int_{-6|\log\varepsilon|}^{6|\log\varepsilon|}\phi g_\alpha^{\prime\prime}  \nonumber\\
&+&O(|\nabla
h_\alpha(y)|+\varepsilon)\sup_{(-6|\log\varepsilon|,6|\log\varepsilon|)}\left(|\nabla_y^2\phi(y,z)|+|\nabla_y\phi(y,z)|+|\phi(y,z)|\right)e^{-\left(\sqrt{2}-\sigma\right)|z|}. \nonumber
\end{eqnarray}

\item By integrating by parts in $z$ and using \eqref{error in z 4}, we obtain
\begin{eqnarray*}
\int_{-\delta R}^{\delta R}H^\alpha(y,z)\phi_z
g_\alpha^\prime&=&-\int_{-\delta R}^{\delta R}\frac{\partial H^\alpha}{\partial z}(y,z)\phi
g_\alpha^\prime+(-1)^{\alpha-1}H^\alpha(y,z)\phi g_\alpha^{\prime\prime}\\
&=&-\int_{-\delta R}^{\delta R}\frac{\partial H^\alpha}{\partial z}(y,z)\phi
g_\alpha^\prime+(-1)^{\alpha-1}\left(H^\alpha(y,z)-H^\alpha(y,0)\right)\phi g_\alpha^{\prime\prime}\\
&&-(-1)^{\alpha-1}H^\alpha(y,0)\int_{-\delta R}^{\delta R}\phi g_\alpha^{\prime\prime}\\
&=&-(-1)^{\alpha-1}H^\alpha(y,0)\int_{-\delta R}^{\delta R}\phi
g_\alpha^{\prime\prime}+O(\varepsilon^2)\sup_{|z|<6|\log\varepsilon|}\left(|\phi(y,z)|e^{-\left(\sqrt{2}-\sigma\right)|z|}\right)\\
&=&-(-1)^{\alpha-1}H^\alpha(y,0)\int_{-\delta R}^{\delta R}\phi
g_\alpha^{\prime\prime}+O(\varepsilon^2).
\end{eqnarray*}

\item By integrating by parts in $z$ and using \eqref{1d eqn}, we obtain
\[
\int_{-\delta R}^{\delta R}g_\alpha^\prime\partial_{zz}\phi=\int_{-\delta R}^{\delta R}W^{\prime\prime}(g_\alpha)g_\alpha^\prime\phi
+\bar{\xi}_\alpha\phi=\int_{-\delta R}^{\delta R}W^{\prime\prime}(g_\alpha)g_\alpha^\prime\phi
+O(\varepsilon^2).
\]

\item
By \eqref{error in z 4}, we have
\begin{eqnarray*}
\int_{-\delta R}^{\delta R}H^\alpha(y,z)|g_\alpha^\prime|^2&=&H^\alpha(y,0)\int_{-\delta R}^{\delta R}|g_\alpha^\prime|^2+O\left(\varepsilon^2\int_{-\delta R}^{\delta R}|z||g_\alpha^\prime|^2\right)\\
&=&\left(\sigma_0+O(\varepsilon)^2\right)
H^\alpha(y,0)+O\left(\varepsilon^2\int_{-\delta R}^{\delta R}|z||g_\alpha^\prime|^2\right)\\
&=&\sigma_0
H^\alpha(y,0)+O(\varepsilon^2).
\end{eqnarray*}

\item
By \eqref{error in z 5}, we obtain
 \begin{eqnarray*}
\int_{-\delta R}^{\delta R}\Delta_zh_\alpha(y)|g_\alpha^\prime|^2&=&\Delta_0h_\alpha(y)\int_{-\delta R}^{\delta R}|g_\alpha^\prime|^2+O\left(\varepsilon(|\nabla^2h_\alpha(y)|+|\nabla h_\alpha(y)|)\right)\int_{-\delta R}^{\delta R}|z||g_\alpha^\prime|^2\\
&=&\left(\sigma_0+O(\varepsilon)^2\right)
\Delta_0h_\alpha(y)+O\left(\varepsilon(|\nabla^2h_\alpha(y)|+|\nabla h_\alpha(y)|)\right)\\
&=&\sigma_0\Delta_0h_\alpha(y)+O(\varepsilon^2)+O\left(|\nabla^2h_\alpha(y)|^2+|\nabla h_\alpha(y)|^2\right).
\end{eqnarray*}

\item By integrating by parts in $z$ and using \eqref{metirc tensor}, we obtain
\[
\int_{-\delta R}^{\delta R} g_\alpha^{\prime\prime}g_\alpha^\prime|\nabla_z
h_\alpha|^2
=-\frac{1}{2}\int_{-\delta R}^{\delta R}\frac{dg^{ij}(y,z)}{dz}\frac{\partial
h_\alpha}{\partial y^i}(y)
\frac{\partial h_\alpha}{\partial y^j}(y)|g_\alpha^\prime|^2=O\left(\varepsilon |\nabla h_\alpha(y)|^2\right).\]

\item
For $\beta\neq\alpha$, if $g_\alpha^\prime\neq0$ and $g_\beta^\prime\neq0$ at the same time, then $|z|\leq 6|\log\varepsilon|$, $|d_\beta(y,z)|\leq 6|\log\varepsilon|$. Therefore by Lemma \ref{comparison of distances}, \[d_\beta(y,z)=z+d_\beta(y,0)+O(\varepsilon^{1/3}).\]

Then
\begin{eqnarray*}
g_\beta^\prime(y,z)&=&\bar{g}^\prime\left((-1)^{\beta-1}\left(d_\beta(y,z)-h_\beta\left(\Pi_\beta(y,z)\right)\right)\right)\\
&=&\bar{g}^\prime\left((-1)^{\beta-1}\left(z+d_\beta(y,0)-h_\beta\left(\Pi_\beta(y,z)\right)+O(\varepsilon^{1/3})\right)\right).
\end{eqnarray*}

By \eqref{error in z 4} and \eqref{error in z 5}, we have
\[g_\alpha^\prime\mathcal{R}_{\alpha,1}=g_\alpha^\prime\left[H^\alpha(y,0)+\Delta_0^\alpha h_\alpha(y)\right]+O\left(\varepsilon^2+\varepsilon\left(|\nabla^2h_\alpha(y)|+|\nabla h_\alpha(y)|\right)\right)|z|g_\alpha^\prime.\]
Therefore
\begin{eqnarray*}
&&\int_{-\delta R}^{\delta R}g_\alpha^\prime g_\beta^\prime \mathcal{R}_{\beta,1}\\
&\lesssim&\int_{-\delta R}^{\delta R}\big|H^\beta\left(\Pi_\beta(y,z)\right)+\Delta_0^\beta h_\beta\left(\Pi_\beta(y,z)\right)\big|\bar{g}^\prime\left((-1)^{\alpha-1}(z-h_\alpha(y))\right)\\
&&\quad \times\bar{g}^\prime\left((-1)^{\beta-1}\left(z+d_\beta(y,0)-h_\beta\left(\Pi_\beta(y,z)\right)+O(\varepsilon^{1/3})\right)\right)dz\\
&+&O(\varepsilon^2)\int_{-\delta R}^{\delta R} |z+d_\beta(y,0)+O(\varepsilon^{1/3})|\bar{g}^\prime\left((-1)^{\alpha-1}(z-h_\alpha(y))\right)\\
&&\quad \times\bar{g}^\prime\left((-1)^{\beta-1}\left(z+d_\beta(y,0)-h_\beta\left(\Pi_\beta(y,z)\right)+O(\varepsilon^{1/3})\right)\right)dz\\
&+&O(\varepsilon)\int_{-\delta R}^{\delta R} \left(|\nabla^2h_\beta\left(\Pi_\beta(y,z)\right)|+|\nabla h_\beta\left(\Pi_\beta(y,z)\right)|\right)\big|z+d_\beta(y,0)+O(\varepsilon^{1/3})\big|\\
&&\times\bar{g}^\prime\left((-1)^{\alpha-1}(z-h_\alpha(y))\right)\bar{g}^\prime\left((-1)^{\beta-1}\left(z+d_\beta(y,0)-h_\beta\left(\Pi_\beta(y,z)\right)+O(\varepsilon^{1/3})\right)\right)dz\\
&\lesssim& |d_\beta(y,0)|e^{-\sqrt{2}|d_\beta(y,0)|}\left[\sup_{B_{\varepsilon^{1/3}}(y)}|H^\beta+\Delta^\beta h_\beta|+\varepsilon^2
\left(|d_\beta(y,0)|+\varepsilon^{1/3}\right)\right]\\
&+&\varepsilon|d_\beta(y,0)|e^{-\sqrt{2}|d_\beta(y,0)|}
\left(|d_\beta(y,0)|+\varepsilon^{1/3}\right)\sup_{B_{\varepsilon^{1/3}}(y)}\left(|\nabla^2h_\beta|+|\nabla h_\beta|\right)\\
&\lesssim& |d_\beta(y,0)|e^{-\sqrt{2}|d_\beta(y,0)|}\left[\sup_{B_{\varepsilon^{1/3}}(y)}|H^\beta+\Delta^\beta h_\beta|+\varepsilon |\log\varepsilon|\sup_{B_{\varepsilon^{1/3}}(y)}\left(|\nabla^2h_\beta|+|\nabla h_\beta|\right)\right]+\varepsilon^2.
\end{eqnarray*}

\item
By the same reasoning,
for $\beta\neq\alpha$,
\[\int_{-\delta R}^{\delta R}g_\alpha^\prime g_\beta^{\prime\prime} \mathcal{R}_{\beta,2}
\lesssim|d_\beta(y,0)|e^{-\sqrt{2}|d_\beta(y,0)|}\sup_{B_{\varepsilon^{1/3}}(y)}
|\nabla h_\beta\big|^2.\]

\item By Taylor expansion,
\begin{eqnarray*}
\int_{-\delta R}^{\delta R}\left[W^\prime(g_\ast+\phi)- W^\prime(g_\ast)\right]g_\alpha^\prime
&=&\int_{-\delta R}^{\delta R}W^{\prime\prime}(g_\ast)\phi g_\alpha^\prime+O\left(\int_{-\delta R}^{\delta R}\phi^2 g_\alpha^\prime\right)\\
&=&\int_{-\delta R}^{\delta R}W^{\prime\prime}(g_\alpha)g_\alpha^\prime\phi
+O\left(\int_{-\delta R}^{\delta R}\phi^2 g_\alpha^\prime\right)\\
&+&O\left(\int_{-\delta R}^{\delta R}|\phi|\left(|g_{\alpha-1}^2-1|+|g_{\alpha+1}^2-1|\right) g_\alpha^\prime\right)\\
\end{eqnarray*}
The first integral cancels with the term in (3). Next
\[\int_{-\delta R}^{\delta R}\phi^2 g_\alpha^\prime\lesssim\sup_{z\in(-6|\log\varepsilon|,6|\log\varepsilon|)}|\phi(y,z)|^2.\]
Finally as in (7),
\begin{eqnarray*}
\int_{-\delta R}^{\delta R}|\phi|\left(|g_{\alpha-1}^2-1|+|g_{\alpha+1}^2-1|\right) g_\alpha^\prime
&\lesssim&D_\alpha e^{-\sqrt{2}D_\alpha}\sup_{z\in(-6|\log\varepsilon|,6|\log\varepsilon|)}|\phi(y,z)|\\
&\lesssim&e^{-\frac{3\sqrt{2}}{2}D_\alpha}+\sup_{z\in(-6|\log\varepsilon|,6|\log\varepsilon|)}|\phi(y,z)|^2.
\end{eqnarray*}

\item
To determine the integral
\[\int_{-\delta R}^{\delta R}\left[W^\prime(g_\ast)-\sum_{\beta} W^\prime(g_\beta)\right]g_\alpha^\prime,\]
 consider for each $\beta$, the integral on $(-\delta R,\delta R)\cap\mathcal{M}_\beta$, which is an interval $(\rho_\beta^-(y),\rho_\beta^+(y))$.
If $\beta\neq\alpha$, by Lemma \ref{upper bound on interaction}, in $(\rho_\beta^-(y),\rho_\beta^+(y))$,
\[
\Big|W^\prime(g_\ast)-\sum_{\beta} W^\prime(g_\beta)\Big|
\lesssim e^{-\sqrt{2}(|d_\beta|+|d_{\beta-1}|)}+e^{-\sqrt{2}(|d_\beta|+|d_{\beta+1}|)}+\varepsilon^2.\]
We only consider the case $\beta>\alpha$ and estimate
\[\int_{\rho_\beta^-(y)}^{\rho_\beta^+(y)}e^{-\sqrt{2}(|d_\beta|+|d_{\beta-1}|)}g_\alpha^\prime.\]

If $|z|$, $|d_\beta|$ and $|d_{\beta-1}|$ are all smaller than $6|\log\varepsilon|$, by Lemma \ref{comparison of distances},
\begin{equation}\label{B1}
d_\beta(y,z)=z+d_\beta(y,0)+O(\varepsilon^{1/3}),
\end{equation}
\begin{equation}\label{B2}
d_{\beta-1}(y,z)=z+d_{\beta-1}(y,0)+O(\varepsilon^{1/3}).
\end{equation}
Note that since $\beta>\alpha$, $z>0$ while $d_\beta(y,0)<d_{\beta-1}(y,0)\leq 0$.

We have
\begin{eqnarray*}
\int_{\rho_\beta^-(y)}^{\rho_\beta^+(y)}e^{-\sqrt{2}(|d_\beta|+|d_{\beta-1}|)}g_\alpha^\prime&\lesssim&\int_{\rho_\beta^-(y)}^{\rho_\beta^+(y)}e^{-\sqrt{2}\left(|z|+|z+d_{\beta-1}(y,0)|+|z+d_\beta(y,0)|\right)}\\
&\lesssim&\int_{\rho_\beta^-(y)}^{-d_\beta(y,0)}e^{-\sqrt{2}\left(z+d_{\beta-1}(y,0)-d_\beta(y,0)\right)}+\int_{-d_\beta(y,0)}^{\rho_\beta^+(y)}e^{-\sqrt{2}\left(3z+d_{\beta-1}(y,0)+d_\beta(y,0)\right)}\\
&\lesssim&e^{-\sqrt{2}\left(d_{\beta-1}(y,0)-d_\beta(y,0)\right)-\sqrt{2}\rho_\beta^-(y)}+e^{-\sqrt{2}\left(d_{\beta-1}(y,0)-2d_\beta(y,0)\right)}.
\end{eqnarray*}
By definition,
\[-d_\beta(y,\rho_\beta^-(y))=d_{\beta-1}(y,\rho_\beta^-(y)).\]
Thus by \eqref{B1} and \eqref{B2},
\[\rho_\beta^-(y)=-\frac{d_{\beta-1}(y,0)+d_\beta(y,0)}{2}+O(\varepsilon^{1/3}).\]
Substituting this into the above estimate gives
\[
\int_{\rho_\beta^-(y)}^{\rho_\beta^+(y)}e^{-\sqrt{2}(|d_\beta|+|d_{\beta-1}|)}g_\alpha^\prime \lesssim e^{-\frac{\sqrt{2}}{2}\left(d_{\beta-1}(y,0)-3d_\beta(y,0)\right)}+e^{-\sqrt{2}\left(d_{\beta-1}(y,0)-2d_\beta(y,0)\right)}.
\]

If $\beta=\alpha+1$, because $d_{\beta-1}(y,0)=0$, this is bounded by $O(e^{\frac{3\sqrt{2}}{2}d_{\alpha+1}(y,0)})$.

If $\beta\geq\alpha+2$, this is bounded by $O(e^{\sqrt{2}d_{\alpha+2}(y,0)})$.

It remains to consider the integration in $(\rho_\alpha^-(y),\rho_\alpha^+(y))$. In this case we use Lemma \ref{interaction term}, which gives
\begin{eqnarray}\label{7.1}
&&\int_{\rho_\alpha^-(y)}^{\rho_\alpha^+(y)}\left[W^\prime(g_\ast)-\sum_{\beta}  W^\prime(g_\beta)\right]g_\alpha^\prime \\
&=&\int_{\rho_\alpha^-(y)}^{\rho_\alpha^+(y)}\left[W^{\prime\prime}(g_\alpha)-2\right]\left[g_{\alpha-1}-(-1)^{\alpha-1}\right]g_\alpha^\prime+
\left[W^{\prime\prime}(g_\alpha)-2\right]\left[g_{\alpha+1}+(-1)^\alpha\right]g_\alpha^\prime\nonumber\\
&+&\int_{\rho_\alpha^-(y)}^{\rho_\alpha^+(y)}\left[O\left(e^{-2\sqrt{2}d_{\alpha-1}}+e^{2\sqrt{2}d_{\alpha+1}}\right)+O\left(e^{-\sqrt{2}d_{\alpha-2}-\sqrt{2}|z|}+e^{\sqrt{2}d_{\alpha+2}-\sqrt{2}|z|}\right)\right]g_\alpha^\prime.\nonumber
\end{eqnarray}

Because $g_\alpha^\prime\lesssim e^{-\sqrt{2}|z|}$ and
\[e^{-2\sqrt{2}d_{\alpha-1}}\lesssim e^{-2\sqrt{2}d_{\alpha-1}(y,0)-2\sqrt{2}z}+\varepsilon^2,\]
we get
\begin{eqnarray*}
\int_{\rho_\alpha^-(y)}^{\rho_\alpha^+(y)}e^{-2\sqrt{2}d_{\alpha-1}}g_\alpha^\prime
&\lesssim&\varepsilon^2+e^{-2\sqrt{2}d_{\alpha-1}(y,0)}\left[\int_{\rho_\alpha^-(y)}^0e^{-\sqrt{2}z}dz+ \int_0^{\rho_\alpha^+(y)}e^{-3\sqrt{2}z}dz\right]\\
&\lesssim&\varepsilon^2+e^{-2\sqrt{2}d_{\alpha-1}(y,0)-\sqrt{2}\rho_\alpha^-(y)}\\
&\lesssim&\varepsilon^2+e^{-\frac{3}{2}\sqrt{2}d_{\alpha-1}(y,0)}.
\end{eqnarray*}

Similarly, we have
\[\int_{\rho_\alpha^-(y)}^{\rho_\alpha^+(y)}e^{2\sqrt{2}d_{\alpha+1}}g_\alpha^\prime\lesssim\varepsilon^2+e^{\frac{3}{2}\sqrt{2}d_{\alpha+1}(y,0)},\]
\[\int_{\rho_\alpha^-(y)}^{\rho_\alpha^+(y)}O\left(e^{-\sqrt{2}d_{\alpha-2}-\sqrt{2}|z|}+e^{\sqrt{2}d_{\alpha+2}-\sqrt{2}|z|}\right)g_\alpha^\prime\lesssim e^{-\sqrt{2}d_{\alpha-2}}+e^{\sqrt{2}d_{\alpha+2}}.\]

To determine the first integral in \eqref{7.1}, arguing as above, if both $g_\alpha^\prime$ and $g_{\alpha-1}-(-1)^{\alpha-1}$ are nonzero, then
\[g_{\alpha-1}(y,z)=\bar{g}\left((-1)^{\alpha}(z+d_{\alpha-1}(y,0)+h_{\alpha-1}(\Pi_{\alpha-1}(y,z))+O(\varepsilon^{1/3}))\right).\]
Therefore
\begin{eqnarray*}
&&\int_{\rho_\alpha^-(y)}^{\rho_\alpha^+(y)}\left[W^{\prime\prime}(g_\alpha)-2\right]\left(g_{\alpha-1}-(-1)^{\alpha-1}\right)g_\alpha^\prime\\
&=&\int_{\rho_\alpha^-(y)}^{\rho_\alpha^+(y)}\left[W^{\prime\prime}\left(\bar{g}\left((-1)^{\alpha-1}(z-h_\alpha(y))\right)\right)-2\right]\bar{g}^\prime\left((-1)^{\alpha-1}(z-h_\alpha(y))\right)\\
&&\times\left[\bar{g}\left((-1)^{\alpha}(z+d_{\alpha-1}(y,0)+h_{\alpha-1}(\Pi_{\alpha-1}(y,z))+O(\varepsilon^{1/3}))\right)-(-1)^{\alpha-1}\right]
dz\\
&=&\int_{-\infty}^{+\infty}\left[W^{\prime\prime}\left(\bar{g}\left((-1)^{\alpha-1}(z-h_\alpha(y))\right)\right)-2\right]\bar{g}^\prime\left((-1)^{\alpha-1}(z-h_\alpha(y))\right)\\
&&\times\left[\bar{g}\left((-1)^{\alpha}(z+d_{\alpha-1}(y,0)+h_{\alpha-1}(\Pi_{\alpha-1}(y,z))+O(\varepsilon^{1/3}))\right)-(-1)^{\alpha-1}\right]
dz\\
&+&O(e^{-\frac{3\sqrt{2}}{2}d_{\alpha-1}(y,0)}) \\
&=&(-1)^\alpha 4A_{(-1)^\alpha}^2e^{-\sqrt{2}d_{\alpha-1}(y,0)}+O\left(|h_\alpha(y)|+|h_{\alpha-1}(\Pi_{\alpha-1}(y,z))|+\varepsilon^{1/3}\right)e^{-\sqrt{2}d_{\alpha-1}(y,0)}\\
&+&O(e^{-\frac{3\sqrt{2}}{2}d_{\alpha-1}(y,0)}) .
\end{eqnarray*}

In conclusion we get
\begin{eqnarray*}
&&\int_{-\delta R}^{\delta R}\left[W^\prime(g_\ast )-\sum_{\beta} W^\prime(g_\beta)\right]g_\alpha^\prime\\
&=&(-1)^\alpha\left[ 4A_{(-1)^\alpha}^2e^{-\sqrt{2}d_{\alpha-1}(y,0)}-4A_{(-1)^{\alpha-1}}^2e^{\sqrt{2}d_{\alpha+1}(y,0)}\right]+O(\varepsilon^2)\\
&+&O\left(|h_\alpha(y)|+|h_{\alpha-1}(\Pi_{\alpha-1}(y,z))|+\varepsilon^{1/3}\right)e^{-\sqrt{2}d_{\alpha-1}(y,0)}\\
&+&O\left(|h_\alpha(y)|+|h_{\alpha+1}(\Pi_{\alpha+1}(y,z))|+\varepsilon^{1/3}\right)e^{\sqrt{2}d_{\alpha+1}(y,0)}\\
&+&O(e^{-\frac{3\sqrt{2}}{2}d_{\alpha-1}(y,0)})+O(e^{\frac{3\sqrt{2}}{2}d_{\alpha+1}(y,0)})+O(e^{-\sqrt{2}d_{\alpha-2}(y,0)})+O(e^{\sqrt{2}d_{\alpha+2}(y,0)}).
\end{eqnarray*}

\item Finally, by the definition of $\xi_\beta$ (see Section \ref{sec optimal approximation}), for any $\beta$,
\[\int_{-\delta R}^{\delta R}\xi_\beta g_\alpha^\prime=O(\varepsilon^2).\]

\end{enumerate}

\medskip

Substituting all of these into \eqref{H eqn}, we obtain
\begin{eqnarray*}
&&(-1)^{\alpha-1}\left(\Delta_0h_\alpha+H^\alpha(y,0)\right)\int_{-\delta R}^{\delta R}\phi g_\alpha^{\prime\prime}+O(\varepsilon^2)\\
&+&O(|\nabla
h_\alpha(y)|+\varepsilon)\sup_{(-6|\log\varepsilon|,6|\log\varepsilon|)}\left(|\nabla_y^2\phi(y,z)|+|\nabla_y\phi(y,z)|+|\phi(y,z)|\right)e^{-\left(\sqrt{2}-\sigma\right)|z|}\\
&=&(-1)^\alpha\left[ 4A_{(-1)^\alpha}^2e^{-\sqrt{2}d_{\alpha-1}(y,0)}-4A_{(-1)^{\alpha-1}}^2e^{\sqrt{2}d_{\alpha+1}(y,0)}\right]\\
&+&O\left(|h_\alpha(y)|+|h_{\alpha-1}(\Pi_{\alpha-1}(y,z))|+\varepsilon^{1/3}\right)e^{-\sqrt{2}d_{\alpha-1}(y,0)}\\
&+&O\left(|h_\alpha(y)|+|h_{\alpha+1}(\Pi_{\alpha+1}(y,z))|+\varepsilon^{1/3}\right)e^{\sqrt{2}d_{\alpha+1}(y,0)}\\
&+&O(e^{-\frac{3\sqrt{2}}{2}d_{\alpha-1}(y,0)})+O(e^{\frac{3\sqrt{2}}{2}d_{\alpha+1}(y,0)})+O(e^{-\sqrt{2}d_{\alpha-2}(y,0)})+O(e^{\sqrt{2}d_{\alpha+2}(y,0)})\\
&+&\sup_{z\in(-6|\log\varepsilon|,6|\log\varepsilon|)}|\phi(y,z)|^2\\
&+&(-1)^\alpha\sigma_0\left[H^\alpha(y,0)+\Delta_0h_\alpha(y)\right]+O(|\nabla^2h_\alpha(y)|^2+|\nabla h_\alpha(y)|^2)+O(\varepsilon^2)\\
&+&\sum_{\beta\neq\alpha}|d_\beta(y,0)|e^{-\sqrt{2}|d_\beta(y,0)|}\left[\sup_{B_{\varepsilon^{1/3}}(y)}|H^\beta+\Delta^\beta h_\beta|+\varepsilon |\log\varepsilon|\sup_{B_{\varepsilon^{1/3}}(y)}\left(|\nabla^2h_\beta|+|\nabla h_\beta|\right)\right]\\
&+&\sum_{\beta\neq\alpha}|d_\beta(y,0)|e^{-\sqrt{2}|d_\beta(y,0)|}\sup_{B_{\varepsilon^{1/3}}(y)}
|\nabla h_\beta|^2.
\end{eqnarray*}
Then \eqref{Toda system} follows after some simplifications. In particular, we use Lemma \ref{control on h_0} to bound
$|\nabla^2h_\alpha(y)|^2+|\nabla h_\alpha(y)|^2$, and by considering the cases $|d_\beta(y,0)|>2|\log\varepsilon|$ and $|d_\beta(y,0)|<2|\log\varepsilon|$ separately, we get
\begin{eqnarray*}
|d_\beta(y,0)|e^{-\sqrt{2}|d_\beta(y,0)|}\varepsilon |\log\varepsilon|\sup_{B_{\varepsilon^{1/3}}(y)}\left(|\nabla^2h_\beta|+|\nabla h_\beta|\right)
&\lesssim &\varepsilon^{1/2}|d_\beta(y,0)|e^{-\sqrt{2}|d_\beta(y,0)|}\\
&\lesssim&\varepsilon^{1/3}e^{-\sqrt{2}|d_\beta(y,0)|}+\varepsilon^2.
\end{eqnarray*}

\section{Proof of Lemma \ref{lem 8.1}}\label{sec proof of Lem 8.1}
\setcounter{equation}{0}

In $\mathcal{N}_\alpha^2(r)$,
\begin{eqnarray*}
&&\Delta_z\phi-H^\alpha(y,z)\partial_z\phi+\partial_{zz}\phi\\
&=&W^{\prime\prime}(g_\ast)\phi+O(\phi^2)+O(e^{-\sqrt{2}D_\alpha(y)})+O(\varepsilon^2)-(-1)^{\alpha}g_\alpha^\prime\left[H^\alpha(y,0)+\Delta_0 h_\alpha(y)\right]-g_\alpha^{\prime\prime}|\nabla_zh_\alpha|^2\\
 &+&\left[O(\varepsilon^2)+O\left(|\nabla^2h_\alpha(y)|^2+|\nabla h_\alpha(y)|^2\right)\right]|z|g_\alpha^\prime\\
&+&\sum_{\beta\neq\alpha}O\left(e^{-\sqrt{2}|d_\beta(y,z)|}\right)\left[|H^\beta\left(\Pi_\beta(y,z)\right)+\Delta_0^\beta h_\beta(\Pi_\beta(y,z))|+|\nabla h_\beta\left(\Pi_\beta(y,z)\right)|^2\right]\\
&+&\sum_{\beta\neq\alpha}O\left(|d_\beta(y,z)|e^{-\sqrt{2}|d_\beta(y,z)|}\right)\left[\varepsilon^2+|\nabla^2 h_\beta\left(\Pi_\beta(y,z)\right)|^2+|\nabla h_\beta\left(\Pi_\beta(y,z)\right)|^2\right].
\end{eqnarray*}

We estimate the right hand side term by term.
\begin{itemize}
\item In $\mathcal{N}_\alpha^2(r)$,
\[W^{\prime\prime}(g_\ast)+O(\phi)\geq 2-Ce^{-cL}\geq c>0.\]

\item By the exponential decay of $g^\prime$ at infinity,
\[\Big|g_\alpha^\prime\left[H^\alpha(y,0)+\Delta_0 h_\alpha(y)\right]\Big|\lesssim e^{-cL}\big|H^\alpha(y,0)+\Delta_0 h_\alpha(y)\big|,\]
\[O(\varepsilon^2)|z|g_\alpha^\prime=O(\varepsilon^2).\]

\item Still by the exponential decay of $g^\prime$ at infinity,
\[
O\left(|\nabla^2h_\alpha(y)|^2+|\nabla h_\alpha(y)|^2\right)|z|g_\alpha^\prime+\big|g_\alpha^{\prime\prime}\big||\nabla_zh_\alpha|^2\lesssim e^{-cL}O\left(|\nabla^2h_\alpha(y)|^2+|\nabla h_\alpha(y)|^2\right).\]

\item For $\beta>\alpha$, if $e^{-\sqrt{2}|d_\beta|}\geq\varepsilon^2$,
\begin{eqnarray*}
|d_\beta(y,z)|=|d_\beta(y,0)|-z+O(\varepsilon^{1/3})&\geq&|d_{\alpha+1}(y,0)|-z+O(\varepsilon^{1/3})\\
&\geq&\frac{1}{2}|d_{\alpha+1}(y,0)|-C.
\end{eqnarray*}
Therefore
\begin{eqnarray*}
&&e^{-\sqrt{2}|d_\beta|}\left[|H^\beta\circ\Pi_\beta+(\Delta_0^\beta h_\beta)\circ\Pi_\beta|+|(\nabla h_\beta)\circ\Pi_\beta|^2\right]+e^{-\sqrt{2}|d_\beta|}\left[\varepsilon^2|d_\beta|+|(\nabla^2 h_\beta)\circ\Pi_\beta|^2+\varepsilon^2\right]\\
&\lesssim&\varepsilon^2+e^{-\frac{\sqrt{2}}{2}D_\alpha}|\left[|(\nabla h_\beta )\circ\Pi_\beta|^2+|(\nabla^2 h_\beta )\circ\Pi_\beta|^2\right]+e^{-\frac{\sqrt{2}}{2}D_\alpha}|H^\beta \circ\Pi_\beta+(\Delta_0^\beta h_\beta)\circ\Pi_\beta|\\
&\lesssim&\varepsilon^2+e^{-\sqrt{2}D_\alpha}+|H^\beta\circ\Pi_\beta+(\Delta_0^\beta h_\beta)\circ\Pi_\beta|^2+|(\nabla h_\beta)\circ\Pi_\beta|^4+|(\nabla^2 h_\beta)\circ\Pi_\beta|^4.
\end{eqnarray*}

\end{itemize}

\medskip

Putting these together we obtain
\begin{eqnarray*}
|E_\alpha^2(y,z)|&\lesssim& \varepsilon^2 + e^{-\sqrt{2}D_\alpha(y)}+|\nabla_0^2h_\alpha(y)|^2+|\nabla_0h_\alpha(y)|^2+e^{-cL}\big|H^\alpha(y,0)+\Delta_0 h_\alpha(y)\big|\\
&+&\sum_{\beta\neq\alpha}\sup_{B_{\varepsilon^{1/3}}(y)}\left[|H^\beta+\Delta_0^\beta h_\beta|^2+|\nabla h_\beta|^4+|\nabla^2 h_\beta|^4\right]\\
\end{eqnarray*}
By Lemma \ref{control on h_0}, this can be transformed into
\begin{eqnarray*}
|E_\alpha^2(y,z)|&\lesssim& \varepsilon^2+e^{-\sqrt{2}D_\alpha(y)}+|\nabla_y^2\phi(y,0)|^2+|\nabla_y\phi(y,0)|^2+e^{-cL}\big|H^\alpha(y,0)+\Delta_0 h_\alpha(y)\big|\\
&+&\sum_{\beta\neq\alpha}\sup_{B_{\varepsilon^{1/3}}(y)}\left[|H^\beta+\Delta_0^\beta h_\beta|^2+|\nabla h_\beta|^4+|\nabla^2 h_\beta|^4\right].
\end{eqnarray*}

\section{Proof of Lemma \ref{lem 8.2}}\label{sec proof of Lem 8.2}
\setcounter{equation}{0}

By Lemma \ref{upper bound on interaction}, in $\mathcal{M}_\alpha^0$ we have
\begin{eqnarray*}
&&\Delta_z\phi-H^\alpha(y,z)\partial_z\phi+\partial_{zz}\phi\\
&=&W^{\prime\prime}(g_\alpha)\phi+O(\phi^2)+O(e^{-\sqrt{2}D_\alpha(y)})+O(\varepsilon^2)-(-1)^{\alpha}g_\alpha^\prime\left[H^\alpha(y,0)+\Delta_0 h_\alpha(y)\right]-g_\alpha^{\prime\prime}|\nabla_zh_\alpha|^2\\
&+&\left[O\left(\varepsilon^2|z|\right)+O\left(\varepsilon|z|\right)O\left(|\nabla_0^2h_\alpha(y)|+|\nabla_0h_\alpha(y)|\right)\right]g_\alpha^\prime \\
&+&\sum_{\beta\neq\alpha}O\left(e^{-\sqrt{2}|d_\beta|}\right)\left[|H^\beta\left(\Pi_\beta(y,z)\right)+\Delta_0^\beta h_\beta(\Pi_\beta(y,z))|+|\nabla h_\beta\left(\Pi_\beta(y,z)\right)|^2+|\nabla^2 h_\beta\left(\Pi_\beta(y,z)\right)|^2\right]\\
&+&\sum_{\beta\neq\alpha}O\left(e^{-\sqrt{2}|d_\beta|}\right)\left[\varepsilon^2|d_\beta|+\varepsilon^2|d_\beta|^2\right].
\end{eqnarray*}
Lemma \ref{lem 8.2} follows from the following estimates.

\begin{itemize}

\item By the exponential decay of $g^\prime$ at infinity,
\begin{eqnarray*}
g_\alpha^{\prime\prime}|\nabla_zh_\alpha(y)|^2&=& O\left(|\nabla h_\alpha(y)|^2\right)e^{-\sqrt{2}|z|}.
\end{eqnarray*}

\item Still by the exponential decay of $g^\prime$ at infinity,
\[
\left[O\left(\varepsilon^2|z|\right)+O\left(\varepsilon|z|\right)O\left(|\nabla_0^2h_\alpha(y)|+|\nabla_0h_\alpha(y)|\right)\right]g_\alpha^\prime
=O(\varepsilon^2)+O\left(|\nabla_0^2h_\alpha(y)|^2+|\nabla_0h_\alpha(y)|^2\right)e^{-\sqrt{2}|z|}.\]

\item Finally, because in $\mathcal{N}_\alpha^1$,
\[e^{-\sqrt{2}|d_\beta|}\lesssim e^{-\sqrt{2}D_\alpha(y)},\]
the last two terms are bounded by $O\left(e^{-\sqrt{2}D_\alpha(y)}\right)$.

\end{itemize}

\section{Proof of Lemma \ref{Holder for RHS}}\label{sec proof of Lem 9.1}
\setcounter{equation}{0}

We estimate the H\"{o}lder norm of the right hand side of \eqref{error equation} term by term.

\begin{enumerate}

\item Because
\[\mathcal{R}(\phi)=W^\prime(g_\ast+\phi)-W^\prime(g_\ast)-W^{\prime\prime}(g_\ast)\phi,\]
we get
\[\|\mathcal{R}(\phi)\|_{C^{\theta}(B_1(y,z))}\lesssim\|\phi\|_{C^{\theta}(B_1(y,z))}^2.\]

\item By Lemma \ref{Holder bound on interaction}, in $\mathcal{M}_\alpha$,
\[\|W^\prime(g_\ast)-\sum_{\beta=1}^Q(-1)^{\beta-1}W^\prime(g_\beta)\|_{C^{\theta}(B_1(y,z))}\lesssim \sup_{B_1(y)}e^{-\sqrt{2}D_\alpha}+\varepsilon^2.\]

\item Take the decomposition
\begin{eqnarray*}
    g_\alpha^\prime\left[H^\alpha(y,z)+\Delta_zh_\alpha(y)\right]&=&g_\alpha^\prime\left[H^\alpha(y,0)+\Delta_0h_\alpha(y)\right]\\
    &+&g_\alpha^\prime\left[H^\alpha(y,z)-H^\alpha(y,0)\right]+g_\alpha^\prime\left[\Delta_0h_\alpha(y)-\Delta_zh_\alpha(y)\right].
    \end{eqnarray*}
First we have
\begin{eqnarray*}
&&\|g_\alpha^\prime\left[H^\alpha(y,z)-H^\alpha(y,0)\right]\|_{C^{\theta}(B_1(y,z))}\\
&\lesssim&\|g_\alpha^\prime\left[H^\alpha(y,z)-H^\alpha(y,0)\right]\|_{Lip(B_1(y,z))}\\
&\lesssim& \sup_{B_1(y,z)} e^{-\sqrt{2}|z|}\left[H^\alpha(y,z)-H^\alpha(y,0)\right]|+ \sup_{B_1(y,z)} |z||e^{-\sqrt{2}|z|}||A^\alpha(y,0)||\nabla A^\alpha(y,0)|\\
&\lesssim&\varepsilon^2.
\end{eqnarray*}

Next because
\[\Delta_0h_\alpha(y)-\Delta_zh_\alpha(y)=\sum_{i,j=1}^{n-1}\left(g^{ij}(y,z)-g^{ij}(y,0)\right)\frac{\partial^2h_\alpha}{\partial y_i\partial y_j}+\sum_{i=1}^{n-1}\left(b^i(y,z)-b^i(y,0)\right)\frac{\partial h_\alpha}{\partial y_i},\]
we get
\begin{eqnarray*}
&&\|g_\alpha^\prime\left[\Delta_0h_\alpha(y)-\Delta_zh_\alpha(y)\right]\|_{C^{\theta}(B_1(y,z))}\\
&\lesssim& \sup_{B_1(y,z)} e^{-\sqrt{2}|z|}\left[|g^{ij}(y,z)-g^{ij}(y,0)||\nabla^2h_\alpha(y)|+|b^i(y,z)-b^i(y,0)||\nabla h_\alpha(y)|\right]\\
&+& \sup_{B_1(y,z)} e^{-\sqrt{2}|z|}\left(|\nabla^2h_\alpha(y)|+|\nabla h_\alpha(y)|\right)\\
&&\times\left(\|g^{ij}(y,z)-g^{ij}(y,0)\|_{C^{\theta}(B_1(y,z))}+\|b^i(y,z)-b^i(y,0)\|_{C^{\theta}(B_1(y,z))}\right)\\
&+&\|h_\alpha\|_{C^{2,\theta}(B_1(y))}\sup_{B_1(y,z)} e^{-\sqrt{2}|z|}\left(|g^{ij}(y,z)-g^{ij}(y,0)|+|b^i(y,z)-b^i(y,0)|\right)\\
&\lesssim&\varepsilon\|h_\alpha\|_{C^{2,\theta}(B_1(y))}\\
&\lesssim&\varepsilon^2+\|h_\alpha\|_{C^{2,\theta}(B_1(y))}^2  \quad \quad \mbox{(by Cauchy inequality)}\\
&\lesssim&\varepsilon^2+\|\phi\|_{C^{2,\theta}(B_1(y,0))}^2+\sup_{B_1(y)}e^{-2\sqrt{2}D_\alpha}. \quad \quad \mbox{(by Lemma \ref{control on h_0})}
\end{eqnarray*}

\item In $\mathcal{M}_\alpha$, by Lemma \ref{control on h_0},
\begin{eqnarray*}
\|g_\alpha^{\prime\prime}|\nabla_zh_\alpha|^2\|_{C^{\theta}(B_1(y,z))}&\lesssim &\|\nabla h_\alpha\|_{L^\infty(B_1(y))}^2+\|\nabla h_\alpha\|_{L^\infty(B_1(y))}\|\nabla^2 h_\alpha\|_{L^\infty(B_1(y))}\\
&\lesssim& \|\phi\|_{C^{2,\theta}(B_1(y,0))}^2+\sup_{B_1(y)}e^{-2\sqrt{2}D_\alpha}.
\end{eqnarray*}

\item For $\beta\neq\alpha$, if $g_\beta^\prime\neq 0$, $|d_{\beta}(y,z)|<6|\log\varepsilon|$. Hence by Lemma \ref{comparison of distances}, $\mbox{dist}_\beta(\Pi_\beta(y,z),\Pi_\beta(y))\leq\varepsilon^{1/3}$.
Then similar to (3),
\begin{eqnarray*}
\|g_\beta^\prime\mathcal{R}_{\beta,1}\|_{C^{\theta}(B_1(y,z))}
&\lesssim& \varepsilon^2+e^{-\sqrt{2}|d_\beta(y,z)|}\left(\|\phi\|_{C^{2,\theta}(B_2^\beta(y,0))}^2+\sup_{B_2(y)}e^{-2\sqrt{2}D_\beta}\right)\\
&+&e^{-\sqrt{2}|d_\beta(y,z)|}\|H_\beta+\Delta_0^\beta h_\beta\|_{C^\theta(B_2^\beta(y,0))}.
\end{eqnarray*}

\item For $\beta\neq\alpha$, if $g_\beta^{\prime\prime}\neq 0$, $|d_{\beta}(y,z)|<6|\log\varepsilon|$. Hence by Lemma \ref{comparison of distances}, $\mbox{dist}_\beta(\Pi_\beta(y,z),\Pi_\beta(y))\leq\varepsilon^{1/3}$.
Then
\begin{eqnarray*}
\|g_\beta^{\prime\prime}\mathcal{R}_{\beta,2}\|_{C^{\theta}(B_1(y,z))}&\lesssim &e^{-\sqrt{2}|d_\beta(y,z)|}\left(\|\nabla h_\beta\|_{L^\infty(B_2(y))}^2+\|\nabla h_\beta\|_{L^\infty(B_2(y))}\|\nabla^2 h_\beta\|_{L^\infty(B_2(y))}\right)\\
&\lesssim& e^{-\sqrt{2}|d_\beta(y,z)|}\left(\|\phi\|_{C^{2,\theta}(B_2^\beta(y,0))}^2+\sup_{B_2(y)}e^{-2\sqrt{2}D_\beta}\right).
\end{eqnarray*}

\item For any $\beta\in\{1,\cdots, Q\}$,
\[\|\xi_\beta\|_{C^{\theta}(B_1(y,z))}\lesssim \|\xi_\beta\|_{Lip(B_1(y,z))}\lesssim \varepsilon^3.\]

\end{enumerate}

\medskip

Putting these together we finish the proof of Lemma \ref{Holder for RHS}.

\section{Derivation of \eqref{Schauder 2}}\label{sec proof of 9.3}
\setcounter{equation}{0}

First by \eqref{H eqn} and \eqref{orthogonal condition 2.1}-\eqref{orthogonal condition 2.3}, we have
\begin{eqnarray*}
&&\int_{-\delta R}^{\delta R}\left(\Delta_z\phi-\Delta_0\phi\right)g_\alpha^\prime +(-1)^{\alpha-1}\left(\int_{-\delta R}^{\delta R}\phi g_\alpha^{\prime\prime}\right)\Delta_0h_\alpha
+2(-1)^{\alpha-1}\int_{-\delta R}^{\delta R}g_\alpha^{\prime\prime}g^{ij}(y,0)\frac{\partial\phi}{\partial y_i}\frac{\partial h_\alpha}{\partial y_j}\\
&-&\left(\int_{-\delta R}^{\delta R}\phi g_\alpha^{\prime\prime\prime}\right)|\nabla_0 h_\alpha(y)|^2+\int_{-\delta R}^{\delta R}H^\alpha(y,z)g_\alpha^\prime\phi_z+\int_{-\delta R}^{\delta R}\xi_\alpha^\prime\phi\\
&=&\int_{-\delta R}^{\delta R}\left[W^{\prime\prime}(g_\ast)-W^{\prime\prime}(g_\alpha)\right]g_\alpha^\prime\phi+\int_{-\delta R}^{\delta R}\mathcal{R}(\phi)g_\alpha^\prime\\
&+&\int_{-\delta R}^{\delta R}\left[W^\prime(g_\ast)-\sum_{\beta=1}^Q W^\prime(g_\beta)\right]g_\alpha^\prime-(-1)^{\alpha}\left(\int_{-\delta R}^{\delta R}|g_\alpha^\prime|^2\right)\left[H^\alpha(y,0)+\Delta_0h_\alpha(y)\right]\\
&-&(-1)^{\alpha }\int_{-\delta R}^{\delta R}|g_\alpha^\prime|^2\left[H^\alpha(y,z)-H^\alpha(y,0)\right]-(-1)^{\alpha }\int_{-\delta R}^{\delta R}|g_\alpha^\prime|^2\left[\Delta_0h_\alpha(y)-\Delta_zh_\alpha(y)\right]\\
&+&\frac{1}{2}\left(\int_{-\delta R}^{\delta R}|g_\alpha^\prime|^2\frac{\partial}{\partial z}g^{ij}(y,z)\right)\frac{\partial h_\alpha}{\partial y_i}\frac{\partial h_\alpha}{\partial y_j}\\
&-&\sum_{\beta\neq\alpha}(-1)^{\beta }\int_{-\delta R}^{\delta R}g_\alpha^\prime g_\beta^\prime\mathcal{R}_{\beta,1}-\sum_{\beta\neq\alpha}\int_{-\delta R}^{\delta R}g_\alpha^\prime g_\beta^{\prime\prime}\mathcal{R}_{\beta,2}-\sum_{\beta\neq\alpha}\int_{-\delta R}^{\delta R}g_\alpha^\prime \xi_\beta.
\end{eqnarray*}

We estimate the H\"{o}lder norm of these terms one by one.

\begin{enumerate}
\item By \eqref{error in z 5},
 \begin{eqnarray*}
\Big\|\int_{-\delta R}^{\delta R}\left(\Delta_z\phi-\Delta_0\phi\right)g_\alpha^\prime\Big\|_{C^{\theta}(B_1(y))}&\lesssim&\varepsilon\sup_{|z|<6|\log\varepsilon|}e^{-(\sqrt{2}-\sigma)|z|}\|\phi\|_{C^{2,\theta}(B_1(y,z))}\\
&\lesssim&\varepsilon^2+\sup_{|z|<6|\log\varepsilon|}e^{-2(\sqrt{2}-\sigma)|z|}\|\phi\|_{C^{2,\theta}(B_1(y,z))}^2.
\end{eqnarray*}

\item
By the exponential decay of $g^\prime$ and Lemma \ref{control on h_0},
\begin{eqnarray*}
\Big\|\left(\int_{-\delta R}^{\delta R}\phi g_\alpha^{\prime\prime}\right)\Delta_0h_\alpha\Big\|_{C^{\theta}(B_1(y))}&\lesssim&\|h_\alpha\|_{C^{2,\theta}(B_1(y))}\sup_{|z|<6|\log\varepsilon|}e^{-(\sqrt{2}-\sigma)|z|}\|\phi\|_{C^{\theta}(B_1(y,z))}\\
&\lesssim&\|h_\alpha\|_{C^{2,\theta}(B_1(y))}^2+\sup_{|z|<6|\log\varepsilon|}e^{-2(\sqrt{2}-\sigma)|z|}\|\phi\|_{C^{\theta}(B_1(y,z))}^2\\
&\lesssim&\sup_{B_1(y)}e^{-2\sqrt{2}D_\alpha}+\sup_{|z|<6|\log\varepsilon|}e^{-2(\sqrt{2}-\sigma)|z|}\|\phi\|_{C^{\theta}(B_1(y,z))}^2.
\end{eqnarray*}

\item
By the exponential decay of $g^\prime$ and Lemma \ref{control on h_0},
 \begin{eqnarray*}
\Big\|\int_{-\delta R}^{\delta R}g_\alpha^{\prime\prime}g^{ij}(y,0)\frac{\partial\phi}{\partial y_i}\frac{\partial h_\alpha}{\partial y_j}\Big\|_{C^{\theta}(B_1(y))}
&\lesssim&\|h_\alpha\|_{C^{1,\theta}(B_1(y))}\sup_{|z|<6|\log\varepsilon|}e^{-(\sqrt{2}-\sigma)|z|}\|\phi\|_{C^{1,\theta}(B_1(y,z))}\\
&\lesssim&\|h_\alpha\|_{C^{1,\theta}(B_1(y))}^2+\sup_{|z|<6|\log\varepsilon|}e^{-2(\sqrt{2}-\sigma)|z|}\|\phi\|_{C^{1,\theta}(B_1(y,z))}^2\\
&\lesssim&\sup_{B_1(y)}e^{-2\sqrt{2}D_\alpha}+\sup_{|z|<6|\log\varepsilon|}e^{-2(\sqrt{2}-\sigma)|z|}\|\phi\|_{C^{\theta}(B_1(y,z))}^2.
\end{eqnarray*}

\item By the exponential decay of $g^\prime$ and Lemma \ref{control on h_0},
\begin{eqnarray*}
\Big\|\left(\int_{-\delta R}^{\delta R}\phi g_\alpha^{\prime\prime\prime}\right)|\nabla_0 h_\alpha(y)|^2\Big\|_{C^{\theta}(B_1(y))}&\lesssim&\|h_\alpha\|_{C^{1,\theta}(B_1(y))}^2\sup_{|z|<6|\log\varepsilon|}e^{-(\sqrt{2}-\sigma)|z|}\|\phi\|_{C^{\theta}(B_1(y,z))}\\
&\lesssim&\|h_\alpha\|_{C^{2,\theta}(B_1(y))}^2\\
&\lesssim&\sup_{B_1(y)}e^{-2\sqrt{2}D_\alpha}+\|\phi\|_{C^{\theta}(B_1(y,0))}^2.
\end{eqnarray*}

\item By \eqref{bound on 3rd derivatives} and the exponential decay of $g^\prime$ ,
\begin{eqnarray*}
\Big\|\int_{-\delta R}^{\delta R}H^\alpha(y,z)g_\alpha^\prime\phi_z\Big\|_{C^{\theta}(B_1(y))}&\lesssim&\varepsilon\sup_{|z|<6|\log\varepsilon|}e^{-(\sqrt{2}-\sigma)|z|}\|\phi\|_{C^{1,\theta}(B_1(y,z))}\\
&\lesssim&\varepsilon^2+\sup_{|z|<6|\log\varepsilon|}e^{-2(\sqrt{2}-\sigma)|z|}\|\phi\|_{C^{1,\theta}(B_1(y,z))}^2.
\end{eqnarray*}

\item
The $C^{\theta}(B_1(y))$ norm of $\int_{-\delta R}^{\delta R}\left[W^{\prime\prime}(g_\ast)-W^{\prime\prime}(g_\alpha)\right]g_\alpha^\prime\phi$ is bounded by
\begin{eqnarray*}
&&\|\phi\|_{C^{\theta}(B_1(y)\times(-6|\log\varepsilon|,6|\log\varepsilon|)}\sup_{B_1(y)}\left(\int_{-\delta R}^{\delta R}\left(|g_{\alpha-1}^2-1|+|g_{\alpha+1}^2-1|\right)g_\alpha^\prime\right)\\
&\lesssim&\|\phi\|_{C^{\theta}(B_1(y)\times(-6|\log\varepsilon|,6|\log\varepsilon|)}\sup_{B_1(y)}\left(D_\alpha e^{-\sqrt{2}D_\alpha}\right)\\
&\lesssim&\|\phi\|_{C^{\theta}(B_1(y)\times(-6|\log\varepsilon|,6|\log\varepsilon|)}^2+\sup_{B_1(y)}e^{-\frac{3\sqrt{2}}{2}D_\alpha}.
\end{eqnarray*}

\item The $C^{\theta}(B_1(y))$ norm of $\int_{-\delta R}^{\delta R}\mathcal{R}(\phi)g_\alpha^\prime$ is bounded by
\[\sup_{|z|<6|\log\varepsilon|}e^{-2(\sqrt{2}-\sigma)|z|}\|\phi\|_{C^{\theta}(B_1(y,z))}^2.\]

\item  By Lemma \ref{Holder bound on interaction}, the $C^{\theta}(B_1(y))$ norm of $\int_{-\delta R}^{\delta R}\left[W^\prime(g_\ast)-\sum_{\beta=1}^Q(-1)^{\beta-1}W^\prime(g_\beta)\right]g_\alpha^\prime$ is bounded by
$\sup_{B_1(y)}e^{-\sqrt{2}D_\alpha}$.

\item By the definition of $\bar{g}$ (see Subsection \ref{sec optimal approximation}), the $C^{\theta}(B_1(y))$ norm of $\int_{-\delta R}^{\delta R}|g_\alpha^\prime|^2-\sigma_0$ is bounded by $O(\varepsilon^2)$.

\item By \eqref{bound on 3rd derivatives} and \eqref{error in z 4}, the $C^{\theta}(B_1(y))$ norm of $\int_{-\delta R}^{\delta R}|g_\alpha^\prime|^2\left[H^\alpha(y,z)-H^\alpha(y,0)\right]$ is bounded by $O(\varepsilon^2)$.

\item
By \eqref{error in z 5} and Lemma \ref{control on h_0}, the $C^{\theta}(B_1(y))$ norm of $\int_{-\delta R}^{\delta R}|g_\alpha^\prime|^2\left[\Delta_0h_\alpha(y)-\Delta_zh_\alpha(y)\right]$ is bounded by
\[
\varepsilon\|h_\alpha\|_{C^{2,\theta}(B_1(y))}
\lesssim \varepsilon^2+\|h_\alpha\|_{C^{2,\theta}(B_1(y))}^2\\
\lesssim \sup_{B_1(y)}e^{-2\sqrt{2}D_\alpha}+\|\phi\|_{C^{\theta}(B_1(y,0))}^2.\]

\item By \eqref{metirc tensor} and  \eqref{metirc tensor 0}, the $C^{\theta}(B_1(y))$ norm of $\left(\int_{-\delta R}^{\delta R}|g_\alpha^\prime|^2\frac{\partial}{\partial z}g^{ij}(y,z)\right)\frac{\partial h_\alpha}{\partial y_i}\frac{\partial h_\alpha}{\partial y_j}$ is bounded by
\[
\varepsilon\|h_\alpha\|_{C^{1,\theta}(B_1(y))}^2\lesssim\sup_{B_1(y)}e^{-2\sqrt{2}D_\alpha}+\|\phi\|_{C^{\theta}(B_1(y,0))}^2.\]

\item
As in Case (11) and (12), for $\beta\neq\alpha$, the $C^{\theta}(B_1(y))$ norm of $\int_{-\delta R}^{\delta R}g_\alpha^\prime g_\beta^\prime\mathcal{R}_{\beta,1}$ is bounded by
\begin{eqnarray*}
&&\varepsilon^2+\left(\sup_{B_2(y)}\int_{-\delta R}^{\delta R}e^{-\sqrt{2}\left(|z|+|d_\beta(y,z)|\right)}\right)\left(\|\phi\|_{C^{2,\theta}(B_2^\beta(y,0))}^2+
\|H^\beta+\Delta_0^\beta h_\beta\|_{C^{\theta}(B_2^\beta(y,0))}\right)\\
&\lesssim&\varepsilon^2+\left(\sup_{B_2(y)}D_\alpha e^{-\sqrt{2}D_\alpha}\right)\left(\|\phi\|_{C^{2,\theta}(B_2^\beta(y,0))}^2+
\|H^\beta+\Delta_0^\beta h_\beta\|_{C^{\theta}(B_2^\beta(y,0))}\right)\\
&\lesssim&\varepsilon^2+\sup_{B_2(y)}e^{-\frac{3\sqrt{2}}{2}D_\alpha}+\|\phi\|_{C^{2,\theta}(B_2^\beta(y,0))}^4+
\|H^\beta+\Delta_0^\beta h_\beta\|_{C^{\theta}(B_2^\beta(y,0))}^2.
\end{eqnarray*}

\item
For $\beta\neq\alpha$, the $C^{\theta}(B_1(y))$ norm of $\int_{-\delta R}^{\delta R}g_\alpha^\prime g_\beta^{\prime\prime}\mathcal{R}_{\beta,2}$ is bounded by
\begin{eqnarray*}
&&\varepsilon^2+\left(\sup_{B_2(y)}\int_{-\delta R}^{\delta R}e^{-\sqrt{2}\left(|z|+|d_\beta(y,z)|\right)}\right)\left(\|\phi\|_{C^{2,\theta}(B_2^\beta(y,0))}^2+\sup_{B_2(y)}e^{-2\sqrt{2}D_\beta}\right)\\
&\lesssim&\varepsilon^2+\left(\sup_{B_2(y)}D_\alpha e^{-\sqrt{2}D_\alpha}\right)\left(\|\phi\|_{C^{2,\theta}(B_2^\beta(y,0))}^2+\sup_{B_2(y)}e^{-2\sqrt{2}D_\beta}\right)\\
&\lesssim&\varepsilon^2+\sup_{B_2(y)}e^{-\frac{3\sqrt{2}}{2}D_\alpha}+\|\phi\|_{C^{2,\theta}(B_2^\beta(y,0))}^4+\sup_{B_2(y)}e^{-4\sqrt{2}D_\beta}.
\end{eqnarray*}

\item By the definition of $\xi$, the $C^{\theta}(B_1(y))$ norm of $\sum_{\beta\neq\alpha}(-1)^{\beta-1}\int_{-\delta R}^{\delta R}g_\alpha^\prime \xi_\beta$ is bounded by
 $O(\varepsilon^2)$.

\end{enumerate}

\medskip

In conclusion, we get
\begin{eqnarray*}
\|H^\alpha+\Delta_0h_\alpha\|_{C^{\theta}(B_1(y))}&\lesssim&\varepsilon^2+\sup_{B_2(y)}e^{-\sqrt{2}D_\alpha}+\|\phi\|_{C^{2,\theta}(B_2(y)\times(-6|\log\varepsilon|,6|\log\varepsilon|)}^2\\
&+&\sum_{\beta\neq\alpha}\left[\|\phi\|_{C^{2,\theta}(B_2^\beta(y,0))}^4+
\|H^\beta+\Delta_0^\beta h_\beta\|_{C^{\theta}(B_2^\beta(y,0))}^2+\sup_{B_2^\beta(y,0)}e^{-4\sqrt{2}D_\beta}\right].
\end{eqnarray*}
Hence
\begin{eqnarray}
\|H^\alpha+\Delta_0h_\alpha\|_{C^{\theta}(B_r)}&\lesssim&\varepsilon^2+ \sup_{B_{r+L}}e^{-\sqrt{2}D_\alpha}+ \|\phi\|_{C^{2,\theta}(\mathcal{D}_{r+L})}^2
+ \sum_{\beta\neq\alpha}\|H^\beta+\Delta_0^\beta h_\beta\|_{C^{\theta}(B_{r+L})}^2+ \sum_{\beta\neq\alpha}\sup_{B_2(y)}e^{-4\sqrt{2}D_\beta}\nonumber\\
&\leq&C\varepsilon^2+C\sum_{\beta}\sup_{B_{r+L}}e^{-\sqrt{2}D_\beta}+\sigma\|\phi\|_{C^{2,\theta}(\mathcal{D}_{r+L})}
+\sigma\sum_{\beta}\|H^\beta+\Delta_0^\beta h_\beta\|_{C^{\theta}(B_{r+L})}.
\end{eqnarray}
\eqref{Schauder 2} follows after some simplification.

\section{Proof of Lemma \ref{bound on remainder term}}\label{sec proof of Lem 10.1}
\setcounter{equation}{0}
In this appendix we prove Lemma \ref{bound on remainder term} by estimating each term in $E_i$.
\begin{enumerate}
\item First we prove
\begin{lem}\label{commutator 1}
\[\frac{\partial}{\partial y_i}\Delta_z\varphi-\Delta_z\varphi_i=O(\varepsilon)\left(|\nabla^2\varphi(y)|+|\nabla\varphi(y)|\right).\]
\end{lem}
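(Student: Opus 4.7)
The proof is a direct coordinate computation, and the bulk of the work has already been done in Sections~6.2--6.3 of the paper. Using the local expression for $\Delta_z$ in Fermi coordinates,
\[
\Delta_z \varphi \;=\; \sum_{j,k=1}^{n-1} g^{jk}(y,z)\,\frac{\partial^2 \varphi}{\partial y_j\partial y_k} \;+\; \sum_{j=1}^{n-1} b^j(y,z)\,\frac{\partial \varphi}{\partial y_j},
\]
with $b^j(y,z) = \tfrac{1}{2}\sum_k g^{jk}(y,z)\,\partial_{y_k}\log\det(g_{\cdot\cdot}(y,z))$, I would apply $\partial_{y_i}$ to both sides. Because $\varphi$ depends only on $y$ (so $\partial_{y_i}$ commutes with $\partial_{y_j}\partial_{y_k}$ and with $\partial_{y_j}$ acting on $\varphi$), subtracting $\Delta_z \varphi_i$ leaves only the terms where $\partial_{y_i}$ falls on the coefficients:
\[
\frac{\partial}{\partial y_i}(\Delta_z\varphi) - \Delta_z \varphi_i \;=\; \sum_{j,k} \frac{\partial g^{jk}}{\partial y_i}(y,z)\,\frac{\partial^2 \varphi}{\partial y_j\partial y_k} \;+\; \sum_j \frac{\partial b^j}{\partial y_i}(y,z)\,\frac{\partial \varphi}{\partial y_j}.
\]

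The lemma then reduces to showing that both $|\partial_{y_i} g^{jk}(y,z)|$ and $|\partial_{y_i} b^j(y,z)|$ are $O(\varepsilon)$ uniformly on $\mathcal{M}_\alpha^2(r+1)$. The first bound is precisely \eqref{derivatives of metric tensor}. For the second, differentiating the defining formula for $b^j$ once more in $y_i$ produces terms that either contain another factor of $\partial_y g^{jk}$ (already $O(\varepsilon)$ by \eqref{derivatives of metric tensor}) or require control on $|\nabla_y^2 g_{jk}(y,z)|$. This second-derivative bound in turn follows by differentiating the explicit expansion \eqref{metirc tensor} twice in $y$: the coefficients of the polynomial in $z$ are built from $g_{jk}(y,0)$ and $A_{jk}(y,0)$, both of which have $y$-derivatives of order up to two bounded by $O(\varepsilon)$ on the relevant region -- this is exactly \eqref{bound on 3rd derivatives} for $A$, together with the pointwise formula \eqref{metirc tensor 0} for $g_{jk}(y,0)$ combined with $|\nabla f_\alpha|\le C$ and $|\nabla^2 f_\alpha|\lesssim \varepsilon$ from \eqref{3.1}.

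Putting these two bounds into the commutator identity yields the stated estimate. The main (mild) obstacle is the second-derivative metric bound $|\nabla_y^2 g_{jk}(y,z)|\lesssim\varepsilon$, which is not stated as a separate lemma earlier but is a direct consequence of \eqref{metirc tensor}, \eqref{metirc tensor 0}, \eqref{3.1}, and \eqref{bound on 3rd derivatives}. Nothing else is required: no stability, no orthogonality, no blow-up -- this commutator estimate is a purely geometric bookkeeping step that feeds into the more substantial estimates on $E_i$ that follow in Lemma \ref{bound on remainder term}.
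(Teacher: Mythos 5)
Your proposal is correct and follows essentially the same route as the paper's Appendix~G proof: both expand $\Delta_z$ in coordinates, observe that the commutator with $\partial_{y_i}$ only hits the coefficients $g^{jk}(y,z)$ and $b^j(y,z)$, and then bound $|\nabla_y g^{jk}|$ and $|\nabla_y^2 g^{jk}|$ (hence $|\nabla_y b^j|$) by $O(\varepsilon)$ using \eqref{metirc tensor}, \eqref{metirc tensor 0}, \eqref{3.1}, and \eqref{bound on 3rd derivatives}. The only difference is that you spell out the step that $|\nabla_y b^j|=O(\varepsilon)$ requires the second-order metric bound $|\nabla_y^2 g_{jk}|\lesssim\varepsilon$, which the paper states but does not isolate as a separate lemma; that is a fair and harmless expansion.
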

\begin{proof}
Because $|\nabla f_\alpha|\leq C$, $|\nabla^2f_\alpha|\lesssim\varepsilon$ and
\[g_{ij}(y,0)=\delta_{ij}+f_{\alpha,i}(y)f_{\alpha,j}(y),\]
we have
\[\big|\nabla_y g_{ij}(y,0)\big|\lesssim\varepsilon.\]
By Lemma \ref{bound on 3rd derivatives}, we see
\[
|\nabla_y g^{ij}(y,z)|+|\nabla_y^2 g^{ij}(y,z)|=O(\varepsilon).\]
Then
\begin{eqnarray*}
\frac{\partial}{\partial y_i}\Delta_z\varphi&=&\Delta_z\varphi_i+\sum_{k,l=1}^{n-1}\left(\frac{\partial}{\partial y_i}g^{kl}(y,z)\right)\frac{\partial^2\varphi}{\partial y_k\partial y_l}(y)+\sum_{k=1}^{n-1}\left(\frac{\partial}{\partial y_i}b^k(y,z)\right)\frac{\partial\varphi}{\partial y_k}(y)\\
&=&\Delta_z\varphi_i+O(\varepsilon)\left(|\nabla^2\varphi(y)|+|\nabla\varphi(y)|\right). \qedhere
\end{eqnarray*}
\end{proof}

Using the above lemma, we get
\[
\|\Delta_z\phi_i-\partial_{y_i}\Delta_z\phi\|_{L^\infty(\mathcal{M}_\alpha^2(r))}\lesssim \varepsilon^2+\|\phi\|_{C^{2,\theta}(\mathcal{M}_\alpha^2(r))}^2.
\]

\item
Because
\[
\frac{\partial H^\alpha}{\partial y_i}(y,z)=\frac{\partial}{\partial y_i}\sum_{k,l=1}g^{kl}(y,z)\left[\left(I-zA^\alpha(y,0)\right)^{-1}A^\alpha(y,0)\right]_{kl}=O(\varepsilon),\]
the second term
\[\Big\|\frac{\partial H^\alpha}{\partial y_i}(y,z)\phi_z\Big\|_{L^\infty(\mathcal{M}_\alpha^2(r))}=O(\varepsilon)\|\nabla\phi\|_{L^\infty(\mathcal{M}_\alpha^2(r))}\lesssim \varepsilon^2+\|\phi\|_{C^{2,\theta}(\mathcal{M}_\alpha^2(r))}^2.\]

\item Similarly,
\[\|H^\alpha(y,z)\partial_z\phi_i\|_{L^\infty(\mathcal{M}_\alpha^2(r))}=O(\varepsilon)\|\nabla^2\phi\|_{L^\infty(\mathcal{M}_\alpha^2(r))}\lesssim \varepsilon^2+\|\phi\|_{C^{2,\theta}(\mathcal{M}_\alpha^2(r))}^2.\]

\item In $\mathcal{M}^2_\alpha(r)$, we have
\begin{eqnarray*}
\Big|W^{\prime\prime}(g_\ast+\phi)-W^{\prime\prime}(g_\alpha)\Big|&\lesssim&|\phi|+\sum_{\beta\neq\alpha}g_\beta^\prime\\
&\lesssim&\|\phi\|_{L^\infty(\mathcal{M}_\alpha^2(r))}+\sup_{B_r}e^{-\frac{\sqrt{2}}{2}D_\alpha}+\varepsilon.
\end{eqnarray*}
By the Cauchy inequality we obtain
\[
\Big|\left[W^{\prime\prime}(g_\ast+\phi)-W^{\prime\prime}(g_\alpha)\right]\phi_i\Big|\lesssim\|\phi\|_{C^{2,\theta}(\mathcal{M}_\alpha^2(r))}^2
+\|\phi\|_{C^{2,\theta}(\mathcal{M}_\alpha^2(r))}\sup_{B_r}e^{-\frac{\sqrt{2}}{2}D_\alpha}+\varepsilon^2.
\]

\item Similarly
\[
\Big|\left[W^{\prime\prime}(g_\ast+\phi)-W^{\prime\prime}(g_\alpha)\right]g_\alpha^\prime h_{\alpha,i}(y)\Big|\lesssim\|\phi\|_{C^{2,\theta}(\mathcal{M}_\alpha^2(r))}^2+\|\phi\|_{C^{2,\theta}(\mathcal{M}_\alpha^2(r))}\sup_{B_r}e^{-\frac{\sqrt{2}}{2}D_\alpha}+\varepsilon^2.
\]

\item For $\beta\neq\alpha$, if $g_\beta^\prime\neq0$, by Lemma \ref{comparison of distances},
\[\Big|\frac{\partial d_\beta}{\partial y_i}\Big|\lesssim\varepsilon^{1/5}.\]
Therefore
\begin{eqnarray*}
&&\Big|\left[W^{\prime\prime}(g_\ast+\phi)-W^{\prime\prime}(g_\beta)\right] g_\beta^\prime\left[\frac{\partial d_\beta}{\partial y_i}-\sum_{j=1}^{n-1} h_{\beta,j}\left(\Pi_\beta(y,z)\right)\frac{\partial\Pi_\beta^j}{\partial y_i}(y,z)\right]\Big|\\
&\lesssim&\left(|\phi|+\sum_{\beta\neq\alpha}g_\beta^\prime\right)g_\beta^\prime \left(\varepsilon^{\frac{1}{5}}+\sup_{B_{\varepsilon^{1/3}(y)}}|\nabla h_\beta|\right)\\
&\lesssim&\left(\|\phi\|_{L^\infty(\mathcal{M}_\alpha^2(r))}+\sup_{B_r}e^{-\frac{\sqrt{2}}{2}D_\alpha}\right) \left(\varepsilon^{\frac{1}{5}}+\sup_{B_{\varepsilon^{1/3}(y)}}|\nabla h_\beta|\right)\sup_{B_r}e^{-\frac{\sqrt{2}}{2}D_\alpha}\\
&\lesssim&\left(\varepsilon^{\frac{1}{5}}+\sup_{B_{\varepsilon^{1/3}(y)}}|\nabla h_\beta|\right)\left(\|\phi\|_{L^\infty(\mathcal{M}_\alpha^2(r))}^2+\sup_{B_r}e^{-\sqrt{2} D_\alpha}\right) \\
&\lesssim&\|\phi\|_{C^{2,\theta}(\mathcal{M}_\alpha^2(r))}^2+\sup_{B_r}e^{-2\sqrt{2}D_\alpha}+\varepsilon^{1/5}\sup_{B_r}e^{-\sqrt{2}D_\alpha}.
\end{eqnarray*}

\item Because
\[
H^\alpha(y,z)+\Delta_z h_\alpha(y)=H^\alpha(y,0)+\Delta_0 h_\alpha(y)+O(\varepsilon^2|z|)+O(\varepsilon|z|)\left(|\nabla^2h_\alpha(y)|+|\nabla h_\alpha|\right),
\]
in $\mathcal{M}_\alpha^2(r)$, we have
\begin{eqnarray*}
&&\Big| g_\alpha^{\prime\prime}h_{\alpha,i}(y)\left[H^\alpha(y,z)+\Delta_z h_\alpha(y)\right]\Big|\\
&\lesssim& \big|h_{\alpha,i}(y)\big|\big|H^\alpha(y,0)+\Delta_0 h_\alpha(y)\big|+O(\varepsilon)\big|h_{\alpha,i}(y)\big|\\
&\lesssim&\|\phi\|_{C^{2,\theta}(\mathcal{M}_\alpha^2(r))}^2+\sup_{B_r}\big|H^\alpha(y,0)+\Delta_0 h_\alpha(y)\big|^2+\varepsilon^2+\sup_{B_r}e^{-2\sqrt{2}D_\alpha}.
\end{eqnarray*}

\item First by \eqref{A(z)} we have
\[\frac{\partial H^\alpha}{\partial y_i}(y,z)=\frac{\partial H^\alpha}{\partial y_i}(y,0)+O(\varepsilon^2).
\]
Next by Lemma \ref{commutator 1},
\[
\frac{\partial}{\partial y_i}\left(\Delta_z h_\alpha(y)\right)=\Delta_z h_{\alpha,i}(y)+O(\varepsilon)\left(|\nabla^2h_\alpha(y)|+|\nabla h_\alpha(y)|\right).
\]
Combining these two estimates we obtain
\[g_\alpha^\prime \left[\frac{\partial H^\alpha}{\partial y_i}(y,z)+\frac{\partial H^\alpha}{\partial y_i}(y,0)-\frac{\partial}{\partial y_i}\left(\Delta_z h_\alpha(y)\right)-\Delta_0h_{\alpha,i}(y)\right]=O\left(\varepsilon^2+\|\phi\|_{C^{2,\theta}(\mathcal{M}_\alpha^2(r))}^2+\sup_{B_r}e^{-2\sqrt{2}D_\alpha}\right).\]

\item By a direct expansion, we get
\[(-1)^\alpha g_\alpha^{\prime\prime\prime}|\nabla_zh_\alpha|^2h_{\alpha,i}(y)+ g_\alpha^{\prime\prime}\frac{\partial}{\partial y_i}|\nabla_zh_\alpha|^2 =O\left(\|\phi\|_{C^{2,\theta}(\mathcal{M}_\alpha^2(r))}^2+\sup_{B_r}e^{-2\sqrt{2}D_\alpha}\right).\]

\item
We have
\begin{eqnarray}\label{G 1}
&&\frac{\partial}{\partial y_i} \left[g_\beta^\prime\mathcal{R}_{\beta,1}\left(\Pi_\beta(y,z),d_\beta(y,z)\right)\right] \\
&=&(-1)^\beta g_\beta^{\prime\prime}\mathcal{R}_{\beta,1}\sum_{j=1}^{n-1}\frac{\partial h_\beta}{\partial y_j}\left(\Pi_\beta(y,z)\right)\frac{\partial\Pi_\beta^j}{\partial y_i}(y,z)+g_\beta^\prime\sum_{j=1}^{n-1}\frac{\partial\mathcal{R}_{\beta,1}}{\partial y_j}\left(\Pi_\beta(y,z)\right)\frac{\partial\Pi_\beta^j}{\partial y_i}(y,z). \nonumber
\end{eqnarray}
The first term in the right hand side is bounded by
\begin{eqnarray*}
&&e^{-\sqrt{2}|d_\beta(y,z)|}|\nabla h_\beta(\Pi_\beta(y,z))| \big|\mathcal{R}_{\beta,1}(\Pi_\beta(y,z))\big|\\
&\lesssim& e^{-\sqrt{2}|d_\beta(y,z)|}|\nabla h_\beta(\Pi_\beta(y,z))| \\
&&\times\left[\big|H^\beta(\Pi_\beta(y,z))+\Delta_0^\beta h_\beta(\Pi_\beta(y,z))\big|+\varepsilon^2|d_\beta|+\varepsilon |d_\beta|\left(|\nabla^2 h_\beta(\Pi_\beta(y,z))|+|\nabla h_\beta(\Pi_\beta(y,z))|\right)\right]
\\
&\lesssim&\varepsilon^2+\sup_{B_{r+1}}e^{-2\sqrt{2}D_\alpha}+\|\phi\|_{C^{2,\theta}(\mathcal{M}_\beta^0(r+1))}^4+\sup_{B_{r+1}^\beta}\big|H^\beta+\Delta_0^\beta h_\beta\big|^2.
\end{eqnarray*}

Next, similar to Case (8), we have
\[\Big|\frac{\partial\mathcal{R}_{\beta,1}}{\partial y_j}\left(\Pi_\beta(y,z)\right)\Big|\lesssim\Big|\nabla H_\beta+\Delta_0^\beta \nabla h_\beta\Big|+\varepsilon^2+\sup_{B^\beta_{r+1}}e^{-2\sqrt{2}D_\beta} + \|\phi\|_{C^2(\mathcal{M}_\beta^0(r+1))}^2.\]
Therefore the second term in the right hand side of \eqref{G 1} is bounded by
\begin{eqnarray*}
&&\left(\sup_{B_{r+1}}e^{-\frac{\sqrt{2}}{2}D_\alpha}\right)\left[\Big|\nabla H_\beta+\Delta_0^\beta \nabla h_\beta\Big|+\varepsilon^2+\sup_{B^\beta_{r+1}}e^{-2\sqrt{2}D_\beta} + \|\phi\|_{C^2(\mathcal{M}_\beta^0(r+1))}^2\right].
\end{eqnarray*}

\item
We have
\begin{eqnarray*}
&&\frac{\partial}{\partial y_i} \left[g_\beta^{\prime\prime}\mathcal{R}_{\beta,2}\left(\Pi_\beta(y,z),d_\beta(y,z)\right)\right]\\
&=&-g_\beta^{\prime\prime\prime}\mathcal{R}_{\beta,2}\sum_{j=1}^{n-1}\frac{\partial h_\beta}{\partial y_j}\left(\Pi_\beta(y,z)\right)\frac{\partial\Pi_\beta^j}{\partial y_i}(y,z)+g_\beta^{\prime\prime}\sum_{j=1}^{n-1}\frac{\partial\mathcal{R}_{\beta,2}}{\partial y_j}\left(\Pi_\beta(y,z)\right)\frac{\partial\Pi_\beta^j}{\partial y_i}(y,z).
\end{eqnarray*}
Therefore this term is controlled by
\[\left(\sup_{B_{r+1}}e^{-\frac{\sqrt{2}}{2}D_\alpha}\right) \sum_{\beta\neq\alpha}\|\phi\|_{C^2(\mathcal{M}_\beta^0(r+1))}^2.\]

\item Because $\xi_\beta^\prime=O(\varepsilon^3)$,
\[\sum_\beta (-1)^\beta\xi_\beta^\prime \sum_{j=1}^{n-1}h_{\beta,j}\left(\Pi_\beta(y,z)\right)\frac{\partial\Pi_\beta^j}{\partial y_i}(y,z)=O(\varepsilon^2).\]

\end{enumerate}

\medskip

Putting these together we finish the proof of Lemma \ref{bound on remainder term}.


\end{document}